\documentclass[11pt,reqno]{amsart}

\usepackage{amssymb,amsthm,amsmath,amscd,caption,float,extarrows}
\usepackage[backref=page, colorlinks=true, linkcolor=magenta, citecolor=cyan]{hyperref}
\usepackage{mathtools}
\usepackage{soul}
\usepackage{times}

\RequirePackage[dvipsnames,usenames]{color}

\input{kmacros3.sty}
\usepackage{quiver}

\usepackage{tikz}
\usepackage{tikz-cd}
\usetikzlibrary{cd}
\usepackage{graphicx}
\usepackage[all,cmtip]{xy}

\usepackage{mabliautoref}
\numberwithin{equation}{theorem}

\usepackage{bm}
\usepackage{pifont}
\usepackage{upgreek}

\usepackage{eucal}
\usepackage{ulem}
\normalem

\usepackage{stmaryrd}

\numberwithin{equation}{theorem}

\usepackage[left=1in,top=1in,right=1in,bottom=1in]{geometry}

\usepackage{setspace}
%\singlespacing, \doublespacing, \onehalfspacing,
\spacing{1.05}

\usepackage{enumitem}
\usepackage{calc}

\usepackage{verbatim}
\usepackage{alltt}

%The todo box!

%The comment box!

%\newcommand{\ra}{\implies}

\newcommand{\p}{{\mathfrak p}}
\newcommand{\fm}{{\mathfrak m}}

\DeclareMathOperator{\ev}{eval}

\DeclareMathOperator{\Pure}{Pure}
\DeclareMathOperator{\CPure}{CPure}
\DeclareMathOperator{\rad}{rad}
\newtheoremstyle{cited}{.5\baselineskip\@plus.2\baselineskip\@minus.2\baselineskip}{.5\baselineskip\@plus.2\baselineskip\@minus.2\baselineskip}{\itshape}{}{\bfseries}{\bfseries .}{5pt plus 1pt minus 1pt}{\thmname{#1}\thmnumber{ #2}\thmnote{ \normalfont#3}}

\theoremstyle{cited}
\newtheorem{citedthm}[theorem]{Theorem}

\newtheoremstyle{citeddef}{.5\baselineskip\@plus.2\baselineskip\@minus.2\baselineskip}{.5\baselineskip\@plus.2\baselineskip\@minus.2\baselineskip}{}{}{\bfseries}{\bfseries .}{5pt plus 1pt minus 1pt}{\thmname{#1}\thmnumber{ #2}\thmnote{ \normalfont#3}}
\theoremstyle{citeddef}

%The todo box!

%The comment box!

%Other project commands

\newcommand{\xrightarrowdbl}[2][]{%
  \xrightarrow[#1]{#2}\mathrel{\mkern-14mu}\rightarrow
}

\makeatletter
\def\@tocline#1#2#3#4#5#6#7{\relax
  \ifnum #1>\c@tocdepth % then omit
  \else
    \par \addpenalty\@secpenalty\addvspace{#2}%
    \begingroup \hyphenpenalty\@M
    \@ifempty{#4}{%
      \@tempdima\csname r@tocindent\number#1\endcsname\relax
    }{%
      \@tempdima#4\relax
    }%
    \parindent\z@ \leftskip#3\relax \advance\leftskip\@tempdima\relax
    \rightskip\@pnumwidth plus4em \parfillskip-\@pnumwidth
    #5\leavevmode\hskip-\@tempdima
      \ifcase #1
       \or\or \hskip 1em \or \hskip 2em \else \hskip 3em \fi%
      #6\nobreak\relax
    \hfill\hbox to\@pnumwidth{\@tocpagenum{#7}}\par% <---- \dotfill -> \hfill
    \nobreak
    \endgroup
  \fi}

  \newenvironment{dedication}
  {%\clearpage           % we want a new page
   \thispagestyle{empty}% no header and footer
   %\vspace*{\stretch{1}}% some space at the top 
   \itshape             % the text is in italics
   \center          % flush to the right margin
  }
  {\par % end the paragraph
   \vspace{\stretch{2}} % space at bottom is three times that at the top
   %\clearpage           % finish off the page
  }
\makeatother

\begin{document}

\title{Mittag-Leffler modules and variants on intersection flatness}
\author{Rankeya Datta}
\address{Department of Mathematics, University of Missouri, Columbia, MO 65212 USA}
\email{rankeya.datta@missouri.edu}
\thanks{Datta was partially supported by an AMS-Simons travel grant and a grant from the Simons Foundation MP-TSM-00002400}
\author{Neil Epstein}
\address{Department of Mathematical Sciences, George Mason University, Fairfax, VA 22030, USA}
\email{nepstei2@gmu.edu}
%\thanks{Neil's grant info here.}
\author{Kevin Tucker}
\address{Department of Mathematics, University of Illinois at Chicago, Chicago, IL 60607, USA}
\email{kftucker@uic.edu}
\thanks{Tucker was supported in part by NSF grant DMS-2200716}

\begin{abstract}
    We systematically study the intersection flatness and Ohm-Rush properties for modules over a commutative ring, drawing inspiration from the work of Ohm and Rush and of Hochster and Jeffries.  We establish new structural results for modules that are intersection flat/Ohm-Rush by exhibiting intimate connections between these notions and the seminal work of Raynaud and Gruson on Mittag-Leffler modules. In particular, we develop a theory of Ohm-Rush modules that is parallel to the theory of Mittag-Leffler modules. 
    %Our motivation for doing so is that the Ohm-Rush condition is weaker than intersection flatness, and it will be shown in future work that one only needs the Ohm-Rush property of Frobenius on an excellent regular ring to prove the existence of test elements for homomorphic images of the ring. 
    We also obtain descent and local-to-global results for intersection flat/Ohm-Rush modules. Our investigations reveal a particularly pleasing picture for flat modules over a complete local ring, in which case many otherwise distinct properties coincide. 
\end{abstract}

\maketitle

\begin{dedication}
    Dedicated to the memory of David Rush.
\end{dedication}

\tableofcontents

\vspace{-4ex}
\section{Introduction}

Let $R$ be a commutative ring and $M$ be a \emph{flat} $R$-module. For ideals $I, J$ of $R$, the exact sequence
\[
0 \to R/I\cap J \to R/I \oplus R/J      
\]
gives rise to an exact sequence $0 \to M/(I \cap J)M \to M/IM \oplus M/JM$. Taking the kernel, we get $(I \cap J)M = IM \cap JM$. In other words, expansions of ideals to a module commutes with finite intersections for flat modules. Replacing a finite collection of ideals by a collection $\{I_\alpha \colon \alpha \in A\}$ indexed by an arbitrary set $A$, one can naturally ask when 
\begin{equation}
    \label{eq:expansion-commutes-contraction}
\bigcap_{\alpha \in A} I_\alpha M = \left(\bigcap_{\alpha \in A} I_\alpha\right)M?    
\end{equation}

This question was first studied systematically by Ohm and Rush \cite{OhmRu-content}. They observed that for any $R$-module $M$, one can always define a function
\[
\textrm{$c_M \colon M \to \{$ideals of $R\}$},   
\]
called the \emph{content of $M$}, where for $x \in M$, $c_M(x)$ is the intersection of all ideals $I$ of $R$ such that $x \in IM$. The reason for using the term `content' for this function is that when $M = R[x]$ is the polynomial ring, then for $f \in R[x]$, $c_{R[x]}(f)$ is the usual content of $f$, namely the ideal generated by the nonzero coefficients of $f$. Ohm and Rush observed that \autoref{eq:expansion-commutes-contraction} always holds precisely when for all $x \in M$, $x \in c_M(x)M$. That is, \autoref{eq:expansion-commutes-contraction} holds if and only if for any element of $x \in M$, there is a \emph{unique smallest ideal} $I$ of $R$ such that $x \in IM$. In honor of Ohm and Rush, and following \cite{EpsteinShapiroOhmRushI,EpsteinShapiroOhmRushII,EpsteinShapiroOhmRushIII}, we will call modules that satisfy \autoref{eq:expansion-commutes-contraction} \emph{Ohm-Rush} in this paper (\autoref{def:Ohm-Rush}), although they were called \emph{content modules} in \cite{OhmRu-content}. 

There is also a natural generalization of \autoref{eq:expansion-commutes-contraction} to submodules of modules. For an $R$-module $L$ and a submodule $U$ of $L$, let $UM$ denote the image of the canonical map $U \otimes_R M \to L \otimes_R M$. Note that when $L = R$ and $U$ is an ideal of $R$, then $UM$ is naturally identified with the expansion of $U$ to $M$. If $L$ is finitely generated and $\{U_\alpha \colon \alpha \in A\}$ is a family of submodules of $L$, one can study the equality
\begin{equation}
    \label{eq:intersection-flat}
    \bigcap_{\alpha \in A} U_\alpha M = \left(\bigcap_{\alpha \in A} U_\alpha\right)M
\end{equation}
in the module $L \otimes_R M$. We call $M$ \emph{intersection flat} (\autoref{def:IF}) following \cite{%KatzmanParameterTestIdealOfCMRings,
HochsterHunekeFRegularityTestElementsBaseChange,HochsterJeffriesintflatness} if \autoref{eq:intersection-flat} holds for all families of submodules $\{U_\alpha \colon \alpha \in A\}$ of all finitely generated $R$-modules $L$. It is clear that modules that are intersection flat are Ohm-Rush, but the converse fails in general (see \autoref{eg:OR-not-preserved-base-change-polynomial}). 

Our interest in the intersection flatness and Ohm-Rush properties originated from the study of singularities via the Frobenius map. Recall Kunz's celebrated theorem \cite{KunzCharacterizationsOfRegularLocalRings} which states that a Noetherian ring $R$ of prime characteristic $p > 0$ is regular precisely when the Frobenius $F \colon R \to R$ or $p$-th power map $r \mapsto r^p$
is flat. Denoting the target copy of $R$ with module structure induced by restriction of scalars along Frobenius as $F_*R$, Hochster and Huneke studied when $F_*R$ is an intersection flat $R$-module in \cite{HochsterHunekeFRegularityTestElementsBaseChange}. 
They encountered this condition in their quest to show the existence of certain uniform multipliers in tight closure known as \emph{test elements}. The \emph{test ideal} generated by the test elements has found many applications and is mysteriously related to multiplier ideals from complex algebraic geometry \cite{SmithTestMultiplierIdeal, HaraTestMultiplierIdeal}. Nevertheless, it remains a long-standing open conjecture that prime characteristic excellent reduced rings admit test elements. 

Let $S$ be a regular ring of prime characteristic $p > 0$. It was observed by Sharp \cite{SharpBigTestElements} that the intersection flatness of $F_*S$ implies the existence of test elements for reduced homomorphic images of $S$. The following are the known results about intersection flatness and Ohm-Rush properties of $F_*S$.
\begin{enumerate}
    \item[(1)] $F_*S$ was to shown to be an intersection flat $S$-module when $S$ is complete regular local in \cite{KatzmanParameterTestIdealOfCMRings};
    \item[(2)] $F_*S$ was shown to be an Ohm-Rush $S$-module when $S$ is an excellent regular local ring in \cite{KatzmanLyubeznikZhangOnDiscretenessAndRationality};
    \item[(3)] $F_*S$ is an intersection flat $S$-module when  $S$ is $F$-finite \cite{BlickleMustataSmithDiscretenessAndRationalityOfFThresholds,KatzmanParameterTestIdealOfCMRings,SharpBigTestElements};
\end{enumerate}
Our main goal in this paper is to systematically study intersection flatness, Ohm-Rush and related properties. 
In forthcoming work \cite{DESTtate} the techniques of this article will then be used to then exhibit new cases of the existence of test elements.

One property that is related to Ohm-Rush and intersection flatness relies on \emph{trace ideals}, which have been studied in numerous algebraic contexts. Let $M$ be an $R$-module and suppose $\varphi \in M^* \coloneqq \Hom_R(M,R)$. Then for any $x \in M$ and for any ideal $I$ of $R$ such that $x \in IM$, the linearity of $\varphi$ implies that $\varphi(x) \in I$. Thus, the ideal 
$
    \Tr_M(x) \coloneqq \im(M^* \xrightarrow{\ev @ x} R),
$
called the \emph{trace of $x$}, is contained in $c_M(x)$. Consequently, if $x \in \Tr_M(x)M$, we get $\Tr_M(x) = c_M(x)$ because $c_M(x)$ is the intersection of all ideals $I$ of $R$ such that $x \in IM$. We will call an $R$-module $M$ \emph{Ohm-Rush trace} (\autoref{def:Ohm-Rush-trace}) if for all $x \in M$, one has 
\begin{equation}
    \label{eq:ORT}
x \in \Tr_M(x)M.
\end{equation} 
We will say a ring map $R \to S$ is \emph{Ohm-Rush trace} if $S$ is an Ohm-Rush trace $R$-module. We note that an Ohm-Rush trace $R$-module $M$ is automatically flat (see \autoref{rem:ORT-modules}(g)).

The Ohm-Rush trace condition was also studied in \cite{OhmRu-content}, where modules satisfying \autoref{eq:ORT} were called \emph{trace modules}. However, we feel it is less confusing to use our more verbose terminology because the term `trace' is used abundantly in mathematics. Projective modules are prototypes of Ohm-Rush trace modules (see \autoref{lem:projective-ORT}), and more examples of Ohm-Rush trace modules can be constructed from them because this notion satisfies a number of stability properties; see \autoref{lem:composing-ORT} and \autoref{prop:ORT-indeterminate}. 
One can think of the Ohm-Rush trace property as codifying not just that $M^*$ is non-trivial, but that there are `sufficiently many' functionals on $M$. For instance, for an Ohm-Rush trace module $M$, the canonical map $M \to M^{**}$ is not just injective but also cyclically pure (\autoref{lem:ORT-cyclic-purity}). Recall that a map of $R$-modules $M \to N$ is \emph{(cyclically) pure} if for all (cyclic) modules $P$, the induced map $M \otimes_R P \to N \otimes_R P$ is injective. Pure maps are also called \emph{universally injective} in the literature \cite[\href{https://stacks.math.columbia.edu/tag/058H}{Tag 058H}]{stacks-project}.

The three properties of modules we have introduced so far are related as follows:
\[\begin{tikzcd}[cramped]
	{\textrm{Ohm-Rush trace}} & {\textrm{Ohm-Rush}} & {\textrm{Intersection flat}}
	\arrow[Rightarrow, from=1-1, to=1-2]
	\arrow[Rightarrow, from=1-3, to=1-2]
\end{tikzcd}\]
We would like to add more implications in the diagram. The way forward is via the theory of Mittag-Leffler and strictly Mittag-Leffler modules developed by Raynaud and Gruson in \cite{rg71}. They utilized these latter notions in their proof of the faithfully flat descent of projectivity. 
A module $M$ over a commutative ring $R$ is \emph{Mittag-Leffler} if for all $R$-linear maps $g \colon P \to M$ from a finitely presented $R$-module $P$, there exists an $R$-linear map $f \colon P \to Q$ such that $Q$ is finitely presented
\[\begin{tikzcd}
	& P \\
	Q && M
	\arrow["{\exists f}"', dashed, from=1-2, to=2-1]
	\arrow["g", from=1-2, to=2-3]%removed smallskip
\end{tikzcd}\]
and such that for all $R$-modules $N$, $\ker(f \otimes_R \id_N) = \ker(g \otimes_R \id_N)$ (see \autoref{def:Mittag-Leffler}). Such an $f$ is called a \emph{stabilizer} of $g$. One should think of the condition on kernels as saying that $g$, which maps to a possibly highly non-finitely presented module $M$, behaves as if it were a map between finitely presented modules. 

The Mittag-Leffler property generalizes the notion of a finitely presented module, and the condition on equality of kernels is intimately related to the notion of purity. Indeed, one can show (\autoref{lem:domination} (3)) that $\ker(f \otimes_R \id_N) = \ker(g \otimes_R \id_N)$ holds for all $R$-modules $N$ precisely when $f'$ and $g'$ are pure maps of modules in the pushout

\[
    \xymatrix{P \ar[r]_ g \ar[d]_ f &  M \ar[d]^{f'} \\  Q \ar[r]^{g'} &  T .}
\]

Note that for any commutative diagram of linear maps 
\[\begin{tikzcd}
	& P \\
	Q && M
	\arrow["f"', from=1-2, to=2-1]
	\arrow["g", from=1-2, to=2-3]
	\arrow["\varphi"', from=2-1, to=2-3]
\end{tikzcd}\]
one always has 
$
\ker(f \otimes_R \id_N) \subseteq \ker((\varphi \circ f) \otimes_R \id_N) = \ker(g \otimes_R \id_N).
$
Thus, if you have two maps $f \colon P \to Q$ and $g \colon P \to M$ that both factor each other, then $\ker(f \otimes_R \id_N) = \ker(g \otimes_R \id_N)$ for all $R$-modules $N$. This observation leads to a strengthening of the notion of a  Mittag-Leffler module. 
We say and $R$-module $M$ is \emph{strictly Mittag-Leffler} if for all linear maps $g \colon P \to M$ where $P$ is finitely presented, there exists a linear map $f \colon P \to Q$ such that $Q$ is finitely presented, and $f, g$ factor through each other (\autoref{def:strict-Mittag-Leffler}). The above discussion shows that a strictly Mittag-Leffler module is always Mittag-Leffler. Moreover, projective modules are strictly Mittag-Leffler (\autoref{rem:Mittag-Leffler} (h)) and over Noetherian complete local rings, Matlis duality (\autoref{lem:Auslander-Warfield-lemma}) can be used to show that the Mittag-Leffler and strict Mittag-Leffler properties coincide (\autoref{prop:ML-SML}).

In general, the strict Mittag-Leffler property is significantly stronger than the Mittag-Leffler property. For instance, if $M$ is strictly Mittag-Leffler, then any pure map $P \to M$ from a finitely presented module $P$ is automatically split (\autoref{rem:Mittag-Leffler} (g)). 
Using this observation one can show that over a local ring $(R,\fm)$, a flat strictly Mittag-Leffler module $M$ can be expressed as a filtered union of finite free submodules that are direct summands of $M$, whereas for a flat Mittag-Leffler module $M$ these finite free submodules need only be pure in $M$ in general; see \cite{rg71} or \autoref{prop:ML-SML-modules-colimit-pure-split-free}. In fact, we will show that further removing the restriction that the union must be filtered characterizes flat Ohm-Rush modules (\autoref{prop:analog-RG219}). This explains key differences between flat strictly Mittag-Leffler, flat Mittag-Leffler, and flat Ohm-Rush modules in the local setting.

From the structure of a strictly Mittag-Leffler module $M$ over a local ring $R$ described above, it follows that $M$ typically $M$ typically has many non-trivial linear functionals. This is reminiscent of the Ohm-Rush trace property, and indeed, Raynaud and Gruson showed:

\begin{theorem*}\cite[Part II, Prop.\ 2.3.4]{rg71}
    \label{thm:ORT-SML}
Let $R$ be a commutative ring and $M$ be an $R$-module. Then $M$ is Ohm-Rush trace if and only if $M$ is a flat strictly Mittag-Leffler $R$-module.
\end{theorem*}

\noindent
A feature result of this article is an analog of %\autoref{thm:ORT-SML} 
the above result for the Mittag-Leffler and intersection flatness properties.

\begin{theorem*}[\autoref{thm:MittagORIF}]
    \label{thm:ML-IF}
    Let $M$ be a flat module over a commutative ring $R$. Then the following are equivalent:
    \begin{enumerate}
        \item[$(1)$] $M$ is Mittag-Leffler.
        \item[$(2)$] $M$ is intersection flat.
        \item[$(3)$] $M \otimes_R S$ is an intersection flat $S$-module for all $R$-algebras $S$.  
    \end{enumerate}
\end{theorem*}

This theorem reveals that the intersection flatness property is substantially more restrictive than the Ohm-Rush property because while intersection flatness is preserved under arbitrary base change, the Ohm-Rush property is not. In fact, the Ohm-Rush property fails to be preserved even under even smooth base change (see \autoref{eg:OR-not-preserved-base-change-polynomial}). The above two theorems allow us to use the substantial arsenal developed in \cite{rg71} for (strictly) Mittag-Leffler modules to study the Ohm-Rush trace property and intersection flatness.

%removed smallskip
In summary, we have the following implications for \emph{flat} modules over an arbitrary commutative ring (the implications that hold without flatness assumptions are also indicated):
\begin{figure}[H]
    \begin{tikzcd}
	& {\textrm{Ohm-Rush}} \\
	{\textrm{Ohm-Rush trace}} && {\textrm{Intersection flat}} \\
	\\
	{\textrm{strictly Mittag-Leffler}} && {\textrm{Mittag-Leffler}}
	\arrow["\textrm{flatness not needed}", curve={height=-12pt}, Rightarrow, from=2-1, to=1-2]
	\arrow[curve={height=12pt}, Rightarrow, 2tail reversed, from=2-1, to=4-1]
	\arrow[curve={height=-12pt}, Rightarrow, 2tail reversed, from=2-3, to=4-3]
	\arrow["\textrm{flatness not needed}"', shift right=1, curve={height=18pt}, Rightarrow, from=4-1, to=4-3]
	\arrow["\textrm{flatness not needed}"', curve={height=12pt}, Rightarrow, from=2-3, to=1-2]
\end{tikzcd}
\caption{Diagram of implications for flat modules} \label{fig:implications}
\end{figure}

\par
The implications in the above diagram that are not equivalences are all strict. In forthcoming work \cite{DESTtate}, we will show that the Frobenius maps of appropriately chosen prime characteristic DVRs can be used to illustrate this. Surprisingly, our analysis together with results from \cite{rg71} and \cite{HochsterJeffriesintflatness} show that all five notions of modules from \autoref{fig:implications} are equivalent in the complete local case. We obtain several new characterizations as well.

      \begin{theorem*}[\autoref{thm:ML-SML-ORT-intersection-flat}]
        \label{thm:flat-all-notion-equivalent-complete}
        Let $(R,\fm,\kappa)$ be a Noetherian local ring that is $\fm$-adically complete. Let $M$ be a flat $R$-module and let $\widehat{M}$ denote its $\fm$-adic completion. Then the following are equivalent:
        \begin{enumerate}
            \item[$(1)$] $M$ is intersection flat.
            
            \item[$(2)$] $M$ is Mittag-Leffler.
            
            \item[$(3)$] $M$ is strictly Mittag-Leffler.
            
            \item[$(4)$] $M$ is an Ohm-Rush trace $R$-module.
        
            \item[$(5)$] $M$ is an Ohm-Rush $R$-module.
            
            \item[$(6)$] The 
            canonical map $M \to \widehat{M}$ is a pure map of $R$-modules.
            
            \item[$(7)$] The canonical map $M \to \Hom_R(\Hom_R(M,R),R)$ is a pure map of $R$-modules.
            
            \item[$(8)$] The canonical map $M \to \Hom_R(\Hom_R(M,R),R)$ is cyclically pure as a
            map of $R$-modules.
            
            \item[$(9)$] For all finitely generated $R$-modules $L$,
            $L \otimes_R M$ is $\fm$-adically separated.
        
            \item[$(10)$] For all cyclic $R$-modules $L$, $L \otimes_R M$ is $\fm$-adically separated.
            
            \item[$(11)$] For any finitely generated submodule $P$ of $M$, there exists a finitely generated submodule $L$ of $M$ containing $P$ such that $L$ is free and is a direct  summand of $M$.
        
            \item[$(12)$] For any cyclic submodule $P$ of $M$, there exists a finitely generated submodule $L$ of $M$  containing $P$ such that $L$ is free and is a direct summand of $M$.
        \end{enumerate}
        Thus, if $M$ is flat and $\fm$-adically complete, then $M$ satisfies the equivalent conditions $(1)-(12)$.
      \end{theorem*}

For flat modules over an arbitrary commutative ring, the Ohm-Rush property is the most general. Improving upon Sharp's result \cite{SharpBigTestElements} referenced earlier, in forthcoming work \cite{DESTPhantom} we will show the existence of test elements and exhibit a robust test ideal theory for reduced quotients of an excellent regular ring $S$ for which $F_*S$ has the Ohm-Rush property.
Thus, it is natural to desire a better understanding of the structure of flat Ohm-Rush modules, along the lines of the structure theory of (strictly) Mittag-Leffler modules developed in \cite{rg71}. We develop this theory in \autoref{sec:Stabilizers and cyclic stabilizers}.

Our first observation is that while the Mittag-Leffler property captures purity, the Ohm-Rush condition captures cyclic purity. Let $M$ be a module over a commutative ring $R$ and let $g \colon P \to M$ be a linear map from a finitely presented $R$-module $P$. We will say that a linear map $f \colon P \to Q$ is a \emph{cyclic stabilizer of $g$} if $Q$ is finitely presented and for all cyclic $R$-modules $N$ we have $\ker(f \otimes_R \id_N) = \ker(g \otimes_R \id_N)$. Thus, the notion of a cyclic stabilizer is a weakening of the notion of a stabilizer because for the latter one requires the equality of kernels to hold for all $R$-modules and not just cyclic ones.

Just as stabilizers are intimately related to purity, cyclic stabilizers are related to cyclic purity (see \autoref{lem:cyclic-stab-pushout}). 
We first show that cyclic stabilizers capture the notion of a flat Ohm-Rush module. Note that flatness is a mild assumption for Ohm-Rush modules. For instance, if $R$ is a domain, then an Ohm-Rush $R$-module is flat if and only if it is torsion-free (\autoref{prop:OR-flat-torsionfree}).

\begin{theorem*}[\autoref{thm:OR-equivalences}]
    \label{thm:cyclic-stab-OR}
Let $R$ be a commutative ring and $M$ be a flat $R$-module. Then the following are equivalent:
\begin{enumerate}%[itemsep = 1mm]
    \item[$(1)$] $M$ is an Ohm-Rush $R$-module.
    \item[$(2)$] Every linear map $v\colon R \to M$ admits a cyclic stabilizer $u \colon R \to F$ where
    $F$ is free of finite rank and such that $u$ factors $v$.
    \item[$(3)$] Every linear map $v \colon R \to M$ admits a cyclic stabilizer that factors $v$.
    \item[$(4)$] Every linear map $v\colon R \to M$ admits a cyclic stabilizer $u \colon R \to P$ such 
    that $u(1) \in c_P(u(1))P$.
\end{enumerate}
\end{theorem*}

\noindent
 
\noindent Surprisingly, we also show the notion of a flat Ohm-Rush module is characterized by the existence of stabilizers (and not just cyclic stabilizers) for maps $R \to M$. This result is summarized below along with an analogous result for Mittag-Leffler modules established in \cite{rg71}.

    \begin{theorem*}
        \label{thm:flat-OR-vs-ML}
        Let $R$ be a commutative ring and $M$ be a flat $R$-module. Then we have the following:
        \begin{enumerate}
            \item[$(1)$] \cite{rg71} $M$ is Mittag-Leffler if and only if for all positive integers $n$, every $R$-linear map $R^{\oplus n} \to M$ admits a stabilizer.
            \item[$(2)$] (\autoref{cor:OR-equivalences-addition}) $M$ is Ohm-Rush if and only if every $R$-linear map $R \to M$ admits a stabilizer.  
        \end{enumerate}
    \end{theorem*}

Another application of (cyclic) stabilizers is to deduce descent statements for Ohm-Rush and intersection flatness. Descent for stabilizers was established in \cite{rg71}, while we prove descent for cyclic stabilizers in \autoref{cor:descent-cyclic-stabilizers}. As a consequence, one then has the following result.
\begin{theorem*}[\autoref{thm:descentOhm-Rush}, \autoref{cor:pure-descent-IF}]
    \label{thm:descent-OR-IF-ML}
    Let $R \to S$ be a homomorphism of commutative rings and $M$ an $R$-module. Then we have the following:
    \begin{enumerate}
        %\item[$(1)$] \cite{rg71} If $R \to S$ is pure and $S \otimes_R M$ is a Mittag-Leffler $S$-module, then $M$ is a Mittag-Leffler $R$-module.
        \item[$(1)$] If $R \to S$ is pure and $S \otimes_R M$ is an intersection flat $S$-module, then $M$ is an intersection flat $R$-module.
        \item[$(2)$] If $R \to S$ is cyclically pure, $M$ is flat and $S \otimes_R M$ is an Ohm-Rush $S$-module, then $M$ is an Ohm-Rush $R$-module.  
    \end{enumerate}
\end{theorem*}
\noindent Note that we do not know if the Ohm-Rush property descends along cyclically pure ring maps without the assumption that $M$ is flat over the base ring $R$. One application (\autoref{cor:IF-universally-OR}) of descent of intersection flatness and \autoref{thm:ML-SML-ORT-intersection-flat} is that over a Noetherian local ring, the intersection flat modules are precisely those flat Ohm-Rush modules such that the Ohm-Rush property is preserved under arbitrary base change. %The latter result was claimed in greater generality in \cite[Proposition 6]{Pic-contenu}, although its author no longer believes his proof \cite{Picemail}.

Yet another application of (cyclic) stabilizers is to deduce openness of certain pure loci. Suppose $M$ is a module over a commutative ring $R$ and $g \colon P \to M$ is a linear map from a finitely presented $R$-module $P$ that admits a stabilizer $f \colon P \to Q$. Then for all prime ideals $\p$ of $R$, $f_\p$ is also a stabilizer of $g_\p$ as maps of $R_\p$-modules, and as $\coker(f)$ is a finitely presented $R$-module we have
\[
\textrm{$\{\p \in \Spec(R) \colon g_\p$ is $R_\p$-pure$\} 
= \{\p \in \Spec(R) \colon f_\p$ splits in $\Mod_{R_\p}\}$}.
\]
by \cite[Cor.\ 2.4]{Lazard}.
The latter locus is open because splittings of maps of finitely presented modules spread. Thus, if $g \colon P \to M$ admits a stabilizer, then the pure locus of $g$ is open in $\Spec(R)$. As a consequence, we have:

\begin{prop*}[\autoref{cor:ML-open-pure-loci}, \autoref{lem:purity-open-locus}]
    \label{prop:Intro-openness-pure-loci}
    Let $R$ be a commutative ring, $M$ a flat $R$-module, $P$ a finitely presented $R$-module and $g \colon P \to M$ a linear map. Let $\Pure(g)$ (resp. $\CPure(g)$) be the locus of primes $\p \in \Spec(R)$ such that $g_\p$ is $R_\p$-pure (resp. $R_\p$-cyclically pure). Then 
    \[\Pure(g) = \CPure(g).\] Furthermore,
    \begin{enumerate}
        \item[(1)] If $M$ is intersection flat, then $\Pure(g)$ is open in $\Spec(R)$.
        \item[(2)] If $M$ is Ohm-Rush and $P = R$, then $\Pure(g) = \Spec(R) \setminus \mathbf{V}(c_M(g(1)))$. 
    \end{enumerate}
\end{prop*}

Openness of pure loci is not only implied by the intersection flat and Ohm-Rush properties, but also implies these properties when the modules are known to be locally intersection flat and Ohm-Rush.
The main local-to-global result in this direction, relying also on the descent results above, is the following:

\begin{theorem*}[\autoref{thm:local-to-global-ML-OR}, \autoref{prop:local-to-global-Prufer}]
    \label{thm:Intro-local-global-IF-OR}
    Let $R$ be a commutative ring and $M$ a flat $R$-module.
    \begin{enumerate}
        \item[$(1)$] Suppose $M_\p$ is an intersection flat $R_\p$-module for all $\p \in \Spec(R)$. Then $M$ is intersection flat if and only if for all integers $n \geq 0$ and linear maps $g \colon R^{\oplus n} \to M$, the cyclically pure locus of $g$ is open in $\Spec(R)$. 
        \item[$(2)$] Suppose $M_\p$ is an Ohm-Rush $R_\p$-module for all prime ideals $\p$ of $R$. 
        \begin{enumerate}
            \item[$(a)$] $M$ is Ohm-Rush if for all integers $n \geq 0$ and linear maps $g \colon R^{\oplus n} \to M$, the cyclically pure locus of $g$ is open in $\Spec(R)$. %removed smallskip
            \item[$(b)$]   If $R$ is a Pr{\"u}fer domain (e.g., a Dedekind domain), then $M$ is Ohm-Rush if and only if for all linear maps $g \colon R \to M$, the cyclically pure locus of $g$ is open in $\Spec(R)$.
        \end{enumerate}  
    \end{enumerate}
\end{theorem*}

\noindent Note that the case $n = 1$ above is already quite interesting. For instance, if $M$ is locally Ohm-Rush and one has the openness of pure loci for maps $R \to M$, then one can recover the content function of $M$ up to radical (\autoref{prop:close-to-Ohm-Rush}); that is, for all $x \in M$, there exists a smallest ideal $I = \sqrt{I}$ such that $x \in IM$.

\section{Preliminaries}

\subsection{Conventions and abbreviations}

All rings in this paper are \emph{commutative}. We will frequently work over non-Noetherian rings because \cite{rg71,HochsterJeffriesintflatness} develop the theories of Mittag-Leffler modules and intersection flat modules over arbitrary commutative rings, and one of our goals in this paper is to link Raynaud and Gruson's classic work with notions that commutative algebraists have studied in relation to Hochster and Huneke's theory of tight closure \cite{HochsterHunekeTCandBrianconSkoda,HochsterHunekeFRegularityTestElementsBaseChange}. In addition, even in the study of singularities of Noetherian rings, which is the main application we have in mind, one has to frequently study non-Noetherian rings such as perfections, absolute integral closures and big Cohen-Macaulay algebras.  We will specify whenever Noetherian hypotheses are needed in the statements of results. When we use the term `regular ring' it is implicit that such rings are Noetherian. Additionally, for us a `local ring' is not necessarily Noetherian. However, when we say a local ring is `complete', we mean it is Noetherian and complete with respect to the ideal adic topology induced by the maximal ideal.  

The following abbreviations are used freely in the text.
\begin{enumerate}
    \item ML for Mittag-Leffler (\autoref{def:Mittag-Leffler}),
    \item SML for strictly Mittag-Leffler (\autoref{def:strict-Mittag-Leffler}),
    \item ORT for Ohm-Rush trace (\autoref{def:Ohm-Rush-trace}).
\end{enumerate}

\subsection{Pure maps} Recall  that given a ring $R$, a map of $R$-modules $M \to N$ is \emph{pure} if for all $R$-modules $P$, the induced map $M \otimes_R P \to N \otimes_R P$ is injective. Similarly, a map of $R$-modules $M \to N$ is \emph{cyclically pure} provided $M \otimes_R P \to N \otimes_R P$ is injective for all cyclic $R$-modules $P$. A ring homomorphism $R \to S$ is \emph{pure} (or \emph{cyclically pure}) if it is  pure (respectivley cyclically pure) as map of $R$-modules. If $M$ is a submodule of an $R$-module $N$ such that the inclusion $M \hookrightarrow N$ is pure, then we will often say that $M$ is \emph{pure in $N$}. %removed smallskip

Pure maps of modules are also called \emph{universally injective} maps of modules in the literature. While the latter terminology is more descriptive, we will primarily use the former terminology for its brevity.%removed smallskip

The following fact about pure maps is presumably well-known, but for lack of a good reference we include a proof here. Recall that an $R$-algebra $S$ is a \emph{first order nilpotent thickening} of $R$ if $S \cong R/I$, where $I$ is an ideal of $R$ such that $\mathcal{I}^2 = 0$ \cite[\href{https://stacks.math.columbia.edu/tag/04EX}{Tag 04EX}]{stacks-project}.

\begin{lemma}
    \label{lem:pure-is-pure-arbitrary-base-change}
    Let $R$ be a ring and $\varphi \colon M \to N$ a map of $R$-modules. Then $\varphi$ is pure if and only if for any $R$-algebra $S$ that is also a first order nilpotent thickening of $R$, $\varphi \otimes_R \id_S$ is an injective $S$-linear map.
\end{lemma}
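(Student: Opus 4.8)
I would prove the two implications separately. The forward direction is essentially a tautology, so the substance is in the converse, and the idea there is that \emph{every} base change $-\otimes_R P$ can be recovered from base change along a single carefully chosen first-order nilpotent thickening of $R$, namely the trivial square-zero extension $R\oplus P$.

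For ``$\varphi$ pure $\Rightarrow$ injective after every thickening'': if $S$ is a first-order nilpotent thickening of $R$, then in particular $S$ is an $R$-module, so purity of $\varphi$ gives that $\varphi\otimes_R\id_S\colon M\otimes_R S\to N\otimes_R S$ is injective; this map is $S$-linear by construction, and since injectivity of a function does not depend on the ambient module structure, it is an injective $S$-linear map, as required.

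For the converse, suppose $\varphi\otimes_R\id_S$ is injective for every first-order nilpotent thickening $S$ of $R$, and let $P$ be an arbitrary $R$-module. I would form the $R$-algebra $S_P\coloneqq R\oplus P$ with multiplication $(r,x)(r',x')=(rr',\,rx'+r'x)$. One checks routinely that this is a commutative $R$-algebra (via $r\mapsto(r,0)$) in which the ideal $0\oplus P$ has square zero and $S_P/(0\oplus P)\cong R$, so $S_P$ is a first-order nilpotent thickening of $R$. Since tensor product commutes with direct sums, the canonical isomorphisms $M\otimes_R S_P\cong M\oplus(M\otimes_R P)$ and $N\otimes_R S_P\cong N\oplus(N\otimes_R P)$ identify $\varphi\otimes_R\id_{S_P}$ with the direct sum $\varphi\oplus(\varphi\otimes_R\id_P)$. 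A direct sum of maps is injective if and only if each summand is, so applying the hypothesis to $S=S_P$ forces $\varphi\otimes_R\id_P$ to be injective. As $P$ was arbitrary, $\varphi$ is pure.

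The lemma is elementary, and I do not anticipate a genuine obstacle: the only step that needs to be written out is the verification that $S_P=R\oplus P$ with the trivial multiplication really is a first-order nilpotent thickening (which is routine), so the ``difficulty'' is just in spotting the right auxiliary algebra. In fact the argument shows the slightly stronger statement that it suffices to test injectivity of $\varphi\otimes_R\id_S$ on the trivial square-zero extensions $S=R\oplus P$, $P$ ranging over all $R$-modules.
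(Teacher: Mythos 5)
Your proof is correct and uses essentially the same approach as the paper: both arguments hinge on the Nagata idealization $R \ltimes P = R \oplus P$ as the auxiliary first-order nilpotent thickening. The only cosmetic difference is how injectivity of $\varphi \otimes_R \id_P$ is extracted — you decompose $\varphi \otimes_R \id_{R \ltimes P}$ as a direct sum of maps, while the paper uses the split (hence pure) inclusion $P \hookrightarrow R \ltimes P$ and a commuting square — but these are the same argument.
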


\begin{proof}
    The non-trivial implication is to show that if $\varphi \otimes_R \id_S$ is an injective $S$-linear map for all $R$-algebras $S$ that are a first order nilpotent thickening of $R$, then $\varphi$ is a pure $R$-linear map. Let $P$ be an $R$-module and let $R \ltimes P$ be the $R$-algebra obtained by Nagata's principle of idealization \cite{NagataLocalRings}. That is, the underlying additive group of $R \ltimes P$ is $R \oplus P$ with multiplication given by $(r_1,p_1)\cdot (r_2,p_2) = (r_1r_2, r_1p_2 + r_2p_1)$. The $R$-algebra structure is induced by the ring homomorphism
    \begin{align*}
        R &\to R \ltimes P.\\
        r & \mapsto (r,0)
    \end{align*}
    and coincides with the $R$-module structure on $R \oplus P$.
    Note that $R \ltimes P$ is a first order nilpotent thickening of $R$ because the ideal $I = \{(0,p) \colon p \in P\}$ satisfies $I^2 = 0$ and we have $(R \ltimes P)/I \cong R$.
    The canonical injection $P \hookrightarrow R \ltimes P$ given by mapping $p \to (0,p)$ is a split $R$-linear map (the usual projection $R \ltimes P \twoheadrightarrow P$ is a left-inverse), and hence, it is $R$-pure. Therefore, in the commutative diagram
    \[
    \xymatrix@C+1pc{M \otimes_R P \ar[r]_{\varphi \otimes_R \id_P} \ar[d] &  N \otimes_R P \ar[d] \\  M \otimes_R (R \ltimes P) \ar[r]^{\varphi \otimes_R \id_{R \ltimes P}} & N \otimes_R (R \ltimes P),}
    \]
    the vertical maps are injective. Since the bottom horizontal map is injective by assumption so is the top horizontal map.
\end{proof}

There is a related deformation-theoretic criterion for purity.

\begin{lemma}
    \label{lem:purity-deformation-theoretic}
    Let $R$ be a ring and $I$ be a nilpotent ideal of $R$ (i.e. $I^n = 0$ for some integer $n > 0$). Let $\varphi \colon M \to N$ be an $R$-linear map where $N$ is $R$-flat. If $\varphi \otimes_R \id_{R/I} \colon M/IM \to N/IM$ is $R/I$-pure, then $\varphi$ is $R$-pure.
\end{lemma}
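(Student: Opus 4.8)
The plan is to reduce the claim to the first-order nilpotent thickening criterion of \autoref{lem:pure-is-pure-arbitrary-base-change} and then run a dévissage on the filtration $M \supseteq IM \supseteq I^2M \supseteq \cdots \supseteq I^nM = 0$. By \autoref{lem:pure-is-pure-arbitrary-base-change}, it suffices to show that for every $R$-module $P$ the map $\varphi \otimes_R \id_P \colon M \otimes_R P \to N \otimes_R P$ is injective (equivalently, one could test only on first-order thickenings, but it is cleaner here to argue directly with an arbitrary $P$). Fix such a $P$. Because $N$ is $R$-flat, tensoring the short exact sequence $0 \to I^kP \to P \to P/I^kP \to 0$ with $N$ stays exact, so $N \otimes_R (I^kP)$ is a submodule of $N \otimes_R P$ for every $k$; the same need not hold for $M$, but that turns out not to matter.

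First I would set up the induction on the nilpotency index. Consider the commutative diagram with rows obtained by tensoring $0 \to IP \to P \to P/IP \to 0$ with $\varphi$:
\[
\xymatrix@C+1pc{
M \otimes_R IP \ar[r] \ar[d]^{\varphi \otimes \id} & M \otimes_R P \ar[r] \ar[d]^{\varphi \otimes \id} & M \otimes_R (P/IP) \ar[r] \ar[d]^{\varphi \otimes \id} & 0 \\
N \otimes_R IP \ar[r] & N \otimes_R P \ar[r] & N \otimes_R (P/IP) \ar[r] & 0.
}
\]
The rightmost vertical map is $\varphi \otimes_R \id_{P/IP}$, which factors as $(\varphi \otimes_R \id_{R/I}) \otimes_{R/I} \id_{P/IP}$, and this is injective since $\varphi \otimes_R \id_{R/I}$ is $R/I$-pure by hypothesis. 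The leftmost vertical map is $\varphi \otimes_R \id_{IP}$: here $IP$ is an $R$-module annihilated by $I^{n-1}$, so by the inductive hypothesis applied to the ring $R/I^{n-1}$ — note $N/I^{n-1}N$ is $R/I^{n-1}$-flat and $\varphi \otimes_R \id_{R/I}$ remains $R/I$-pure — the map $\varphi \otimes_{R/I^{n-1}} \id_{R/I^{n-1}}$ is $R/I^{n-1}$-pure, hence $\varphi \otimes_R \id_{IP}$ is injective. A diagram chase (the snake lemma, or a direct element chase using that the bottom-left map is injective because $N$ is flat) then forces the middle vertical map to be injective. This closes the induction, the base case $n=1$ being the hypothesis itself.

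The main point to be careful about is the inductive step: one must verify that reducing mod $I^{n-1}$ genuinely lowers the nilpotency index while preserving both hypotheses — flatness of the target ($N/I^{n-1}N$ flat over $R/I^{n-1}$ since $N$ is $R$-flat) and cyclic purity of the reduction mod $I$ (which is unchanged, as $(R/I^{n-1})/(I/I^{n-1}) \cong R/I$ and $(N/I^{n-1}N) \otimes_{R/I^{n-1}} R/I \cong N/IN$ compatibly with $\varphi$). The other subtle point is the diagram chase: injectivity of the bottom-left horizontal arrow $N \otimes_R IP \to N \otimes_R P$ uses flatness of $N$ crucially, and without it the snake lemma would only give that the middle map's kernel maps injectively into $M \otimes_R (P/IP)$, which is not enough. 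Granting flatness, the chase is routine. I expect the inductive bookkeeping, rather than any genuine difficulty, to be the only real obstacle.
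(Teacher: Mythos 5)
Your argument is correct, but it takes a different route from the paper: the paper's proof is essentially a two-line citation — it notes that $R/I$-purity of $\varphi \otimes_R \id_{R/I}$ implies its $R$-purity by restriction of scalars, and then invokes \cite[Part I, Lem.\ 4.2.1]{rg71}. You instead give a self-contained d\'evissage on the nilpotency index, which in effect reproves the Raynaud--Gruson lemma in the case at hand. The induction is set up correctly: $(I/I^{n-1})^{n-1}=0$ so the index genuinely drops, $N/I^{n-1}N$ is $R/I^{n-1}$-flat by base change, and the mod-$I$ reduction of $\varphi \otimes_R \id_{R/I^{n-1}}$ is canonically identified with $\varphi \otimes_R \id_{R/I}$, so the hypothesis persists. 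The identifications $M \otimes_R (P/IP) \cong (M/IM)\otimes_{R/I}(P/IP)$ and $M \otimes_R IP \cong (M/I^{n-1}M)\otimes_{R/I^{n-1}} IP$ make the outer vertical maps injective, and the chase using injectivity of $N \otimes_R IP \to N \otimes_R P$ (flatness of $N$) closes the argument; your remark that this last injectivity is where flatness of $N$ enters is exactly the right point to flag. Two cosmetic quibbles: the opening appeal to \autoref{lem:pure-is-pure-arbitrary-base-change} is a red herring since you then test against arbitrary $P$ directly, and the expression ``$\varphi \otimes_{R/I^{n-1}} \id_{R/I^{n-1}}$'' should read $\varphi \otimes_R \id_{R/I^{n-1}}$. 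What your approach buys is independence from the external reference; what the paper's buys is brevity.
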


\begin{proof}
    Since purity is preserved under restriction of scalars, the $R/I$-purity of $\varphi \otimes_R \id_{R/I}$ implies the $R$-purity of $\varphi \otimes_R \id_{R/I}$. The result now follows by \cite[Part I, Lem.\ 4.2.1]{rg71}.
\end{proof}

\begin{lemma}[cf. {\cite[Thm.\ 3]{Warfieldpure}}]
    \label{lem:Auslander-Warfield-lemma}
Let $(R, \fm)$ be a Noetherian local ring. Let $\varphi \colon M \to N$ be a map
of $R$-modules, where $M$ is finitely generated and $N$ is an arbitrary $R$-module. Let $\widehat{M}$ denote the $\fm$-adic completion of $M$. Then
the following are equivalent:
\begin{enumerate}
    \item[$(a)$] $\varphi$ is a pure map of $R$-modules.
    \item[$(b)$] The canonical map $\Hom_R(N, \widehat{M}) \xrightarrow{- \circ \varphi} \Hom_R(M, \widehat{M})$ is surjective.
\end{enumerate}
Thus, if $(R, \fm)$ is additionally $\fm$-adically complete, then $\varphi$ is pure
if and only if $\varphi$ admits an $R$-linear left-inverse.
\end{lemma}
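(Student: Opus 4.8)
The plan is to treat the two implications separately: $(b)\Rightarrow(a)$ is essentially formal, whereas $(a)\Rightarrow(b)$ rests on the one nontrivial fact that $\widehat{M}$ is a pure-injective $R$-module. For $(b)\Rightarrow(a)$ I would first note that the canonical completion map $\iota\colon M\to\widehat{M}$ is a \emph{pure} homomorphism of $R$-modules: since $M$ is finitely generated over the Noetherian ring $R$ one has $\widehat{M}\cong M\otimes_R\widehat{R}$, and $M\to M\otimes_R\widehat{R}$ is universally injective because $\widehat{R}$ is faithfully flat over $R$. Applying the hypothesis $(b)$ to $\iota\in\Hom_R(M,\widehat{M})$ produces $\rho\colon N\to\widehat{M}$ with $\rho\circ\varphi=\iota$. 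Then for every $R$-module $P$ the identity $\iota\otimes_R\id_P=(\rho\otimes_R\id_P)\circ(\varphi\otimes_R\id_P)$ forces $\varphi\otimes_R\id_P$ to be injective, since the left-hand side is; hence $\varphi$ is pure.

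For $(a)\Rightarrow(b)$ I would recall that an $R$-module $E$ is \emph{pure-injective} when $\Hom_R(-,E)$ carries pure monomorphisms to surjections, and then invoke two standard facts. First, for any injective $R$-module $E$ and any $R$-module $A$, the module $\Hom_R(A,E)$ is pure-injective: this falls out of the adjunction $\Hom_R(-,\Hom_R(A,E))\cong\Hom_R(-\otimes_R A,E)$, because $-\otimes_R A$ preserves pure monomorphisms while $\Hom_R(-,E)$ turns injections into surjections. Second, by classical Matlis duality the finite generation of $M$ over $(R,\fm,\kappa)$ yields $\widehat{M}\cong\Hom_R(\Hom_R(M,E),E)$, where $E=E_R(\kappa)$ is the injective hull of the residue field. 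Combining the two facts, $\widehat{M}$ is a pure-injective $R$-module; since $\varphi$ is pure and hence a pure monomorphism, $\Hom_R(N,\widehat{M})\xrightarrow{-\circ\varphi}\Hom_R(M,\widehat{M})$ is surjective, which is exactly $(b)$.

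For the final assertion: when $R$ is $\fm$-adically complete the finitely generated module $M$ is itself $\fm$-adically complete, so $\widehat{M}=M$ and condition $(b)$ becomes the surjectivity of $\Hom_R(N,M)\xrightarrow{-\circ\varphi}\Hom_R(M,M)$. This in turn is equivalent to $\id_M$ lying in the image (if $\id_M=\rho\circ\varphi$ then any $\sigma\colon M\to M$ equals $(\sigma\circ\rho)\circ\varphi$), i.e. to $\varphi$ admitting an $R$-linear left inverse. Combined with the equivalence $(a)\Leftrightarrow(b)$, this gives the displayed consequence.

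The main obstacle is the pure-injectivity of $\widehat{M}$, and the one delicate point there is quoting the biduality isomorphism in the correct form: over a \emph{non-complete} Noetherian local ring $\Hom_R(\Hom_R(M,E),E)$ recovers $\widehat{M}$ rather than $M$, so the classical Matlis statement must be cited with care. An equally good alternative is to first restrict scalars along $R\to\widehat{R}$ — pure-injectivity is preserved under restriction of scalars — and then apply Matlis duality over the complete ring $\widehat{R}$, where $\Hom_{\widehat R}(\Hom_{\widehat R}(\widehat M,E),E)\cong\widehat M$ holds directly. Everything else in the argument is routine manipulation of tensor products and Hom.
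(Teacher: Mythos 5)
Your proof is correct and follows essentially the same route as the paper's: both directions hinge on the purity of the completion map $M\to\widehat{M}$ and on Matlis duality identifying $\widehat{M}$ with the double dual $\Hom_R(\Hom_R(M,E),E)$, combined with Hom-tensor adjunction and the injectivity of $E=E_R(R/\fm)$. The only difference is presentational — you package the $(a)\Rightarrow(b)$ direction as ``$\widehat{M}$ is pure-injective,'' whereas the paper carries out the same dualization/adjunction computation directly on the injective map $\varphi\otimes 1_{M^\vee}$.
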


\begin{proof}
    First suppose the canonical map is surjective.  Let $j: M \to \widehat M$ be the completion map.  Then there is some $h\in \Hom_R(N, \widehat{M})$ such that $h \circ \varphi = j$.  But since $j$ is faithfully flat, it is pure, and hence $\varphi$ is pure.
    
    Conversely, suppose $\varphi$ is pure.  Then $\varphi \otimes 1_{M^\vee} :M \otimes_R M^\vee \to N \otimes_R M^\vee$ is injective, where $(-)^\vee = \Hom_R(-, E_R(R/\fm))$ is the Matlis duality functor.  Applying said functor to the resulting map, we get a surjective map $(N \otimes_R M^\vee)^\vee \onto (M \otimes_R M^\vee)^\vee$.  But then by Hom-tensor adjointness, along with the fact that the double Matlis dual of a finite module is its completion, we get $\Hom_R(N,\widehat M) \cong \Hom_R(N, M^{\vee \vee}) \cong (N \otimes_R M^\vee)^\vee \onto (M \otimes_R M^\vee)^\vee \cong \Hom_R(M, M^{\vee \vee}) = \Hom_R(M, \widehat{M})$.  Moreover, since everything is natural, the map in question is the canonical map given in (b).
    
    For the final statement, in this case we have $M = \widehat{M}$, so if $\varphi$ is pure, then by what we have proved, the canonical map $\Hom_R(N,M) \to \Hom_R(M,M)$ is surjective.  In particular, there exists $h \in \Hom_R(N,M)$ with $1_M = h \circ \varphi$.  The converse is trivial.
\end{proof}

The next lemma gives a criterion for when the completion of a pure map of modules over a Noetherian local ring is pure over the completion. The result, however, is stated more generally. Note that no finiteness restrictions are imposed on the modules.

\begin{lemma}
    \label{lem:completion-purity}
    Let $R$ be a ring and $I$ a finitely generated ideal of $R$ such that $R/I$ is Noetherian. Let $M$ be a flat $R$-module. Then we have the following:
    \begin{enumerate}%[itemsep = 1mm]
        \item[$(1)$] $\widehat{M}^I$ is a flat module over the Noetherian ring $\widehat{R}^I$.
        \item[$(2)$] If $\varphi \colon N \to M$ is a pure map of $R$-modules, then the induced map on $I$-adic completions 
        $
        \widehat{\varphi}^I \colon \widehat{N}^I \to \widehat{M}^I    
        $
        is a pure map of $\widehat{R}^I$-modules.  
    \end{enumerate}
\end{lemma}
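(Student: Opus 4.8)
The plan is to prove both parts by reducing everything to the finite quotients $R/I^n$, using that since $I$ is finitely generated there are canonical identifications $\widehat{R}^I/I^n\widehat{R}^I \cong R/I^n$ and $\widehat{M}^I/I^n\widehat{M}^I \cong M/I^nM$ for every $n\ge 1$, and that the $I$-adic completion of any $R$-module is itself $I$-adically complete, hence $I$-adically separated.

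For $(1)$ I would first record that $\widehat{R}^I$ is Noetherian: since $I$ is finitely generated and $R/I$ is Noetherian, induction on the exact sequences $0\to I^n/I^{n+1}\to R/I^{n+1}\to R/I^n\to 0$ (whose left term is a finitely generated $R/I$-module) shows each $R/I^n$ is Noetherian, and then the standard associated-graded criterion applies to the $I\widehat{R}^I$-adically complete ring $\widehat{R}^I$, whose associated graded ring is $\gr_I(R)$, a finitely generated $R/I$-algebra. Next, $M$ flat over $R$ forces $M/I^nM = M\otimes_R R/I^n$ to be flat over $R/I^n = \widehat{R}^I/I^n\widehat{R}^I$ for all $n$; since $\widehat{M}^I$ is $I$-adically complete and $\widehat{R}^I$ is Noetherian, the local criterion for flatness (in the form valid for complete, not necessarily finitely generated, modules) then yields that $\widehat{M}^I$ is flat over $\widehat{R}^I$. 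Equivalently, $\widehat{M}^I$ is the $I\widehat{R}^I$-adic completion of the flat $\widehat{R}^I$-module $M\otimes_R\widehat{R}^I$, so this is the assertion that the completion of a flat module over a Noetherian ring is flat.

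For $(2)$, since a pure map is injective I would set $C\coloneqq\coker(\varphi)$, so that $0\to N\xrightarrow{\varphi}M\to C\to 0$ is a pure exact sequence of $R$-modules. Purity is preserved under arbitrary base change, and tensoring a pure exact sequence with any module keeps it exact, so for each $n$ one obtains a pure exact sequence of $R/I^n$-modules
\[
0 \longrightarrow N/I^nN \longrightarrow M/I^nM \longrightarrow C/I^nC \longrightarrow 0 .
\]
By the observation in $(1)$, $M/I^nM$ is flat over $R/I^n$, and since a pure quotient of a flat module is flat, $C/I^nC$ is flat over $R/I^n$; hence, exactly as in $(1)$, $\widehat{C}^I$ is flat over $\widehat{R}^I$. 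The three inverse systems $\{N/I^nN\}_n$, $\{M/I^nM\}_n$, $\{C/I^nC\}_n$ have surjective transition maps, so $\{N/I^nN\}_n$ satisfies the Mittag-Leffler condition and the relevant $\varprojlim^1$ vanishes; passing to inverse limits gives a short exact sequence of $\widehat{R}^I$-modules
\[
0 \longrightarrow \widehat{N}^I \xrightarrow{\ \widehat{\varphi}^I\ } \widehat{M}^I \longrightarrow \widehat{C}^I \longrightarrow 0 .
\]
Finally, $\widehat{C}^I$ flat over $\widehat{R}^I$ means $\Tor_1^{\widehat{R}^I}(\widehat{C}^I,P)=0$ for every $\widehat{R}^I$-module $P$, so tensoring the last sequence with $P$ stays left exact; in particular $\widehat{\varphi}^I\otimes_{\widehat{R}^I}\id_P$ is injective, i.e. $\widehat{\varphi}^I$ is pure.

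I expect the only genuinely delicate point to be the appeal to the local criterion for flatness in $(1)$ (reused for $\widehat{C}^I$ in $(2)$): one must make sure the cited version really applies to a possibly non-finitely-generated but $I$-adically complete module over the Noetherian ring $\widehat{R}^I$, and that the identifications $\widehat{R}^I/I^n\widehat{R}^I\cong R/I^n$ and $\widehat{M}^I/I^n\widehat{M}^I\cong M/I^nM$ (and likewise for $C$) do hold — all of which hinge on $I$ being finitely generated. Everything else is formal: stability of purity under base change, the fact that a pure quotient of a flat module is flat, exactness of $\otimes$ against a pure exact sequence, and the vanishing of $\varprojlim^1$ for an inverse system with surjective transition maps.
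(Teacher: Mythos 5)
Your argument is correct and follows essentially the same route as the paper: pass to the cokernel $C$ of $\varphi$, show the completed sequence $0\to\widehat{N}^I\to\widehat{M}^I\to\widehat{C}^I\to 0$ is exact with $\widehat{C}^I$ flat over $\widehat{R}^I$, and conclude purity from injectivity plus flat cokernel. The only differences are cosmetic: where the paper cites the Stacks project for the completion-of-a-flat-module-is-flat statement, the exactness of the completed sequence, and the final purity step, you unpack those citations via the local flatness criterion for $I$-adically complete modules and a $\varprojlim^1$-vanishing argument, and you verify flatness of $C/I^nC$ level by level rather than first recording that $C$ itself is $R$-flat.
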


\begin{proof}
    (1) follows by \cite[\href{https://stacks.math.columbia.edu/tag/0AGW}{Tag 0AGW}]{stacks-project}. %removed smallskip

    (2) Consider the short exact sequence 
    \[
    0 \to N  \xrightarrow{\varphi} M \to \coker(\varphi) \to 0.   
    \]
    Since $\varphi$ is $R$-pure and $M$ is $R$-flat, both $N$ and $\coker(\varphi)$ are flat $R$-modules. This follows easily by the long exact sequence on $\Tor$ and purity, but see for instance, \cite[\href{https://stacks.math.columbia.edu/tag/058P}{Tag 058P}]{stacks-project} for a more elementary diagram-chase proof. We then get a short exact sequence of flat $\widehat{R}^I$-modules
    \[
    0 \to \widehat{N}^I \xrightarrow{\widehat{\varphi}^I} \widehat{M}^I \to \widehat{\coker{\varphi}}^I \to 0    
    \]
    by (1) and \cite[\href{https://stacks.math.columbia.edu/tag/0315}{Tag 0315} (3)]{stacks-project}. Then $\widehat{\varphi}^I$ is a pure map of $\widehat{R}^I$-modules by looking at the long-exact sequence on Tor because $\widehat{\coker(\varphi)}^I$ is $\widehat{R}^I$-flat, or alternatively, by using \cite[\href{https://stacks.math.columbia.edu/tag/058M}{Tag 058M}]{stacks-project}.
\end{proof}

We also recall the following helpful result about pure maps.

\begin{lemma} \cite[Cor.\ 2.4]{Lazard}
    \label{lem:pure-iff-split}
    Let $\varphi \colon M \to N$ be a pure map of $R$-modules such that $\coker(\varphi)$ is a finitely presented $R$-module. Then $\varphi$ splits, that is, $\varphi$ has a left-inverse in $\Mod_R$.
\end{lemma}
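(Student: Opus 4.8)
The plan is to reduce the lemma to the assertion that a pure short exact sequence whose cokernel is finitely presented is split, and then to produce the splitting explicitly from a finite presentation of $\coker(\varphi)$; the only genuine input will be the standard characterization of pure (universally injective) maps via solvability of finite linear systems.

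First, since $\varphi$ is pure it is in particular injective (apply $-\otimes_R R$), so, writing $C \coloneqq \coker(\varphi)$, we have a short exact sequence $0 \to M \xrightarrow{\varphi} N \xrightarrow{\pi} C \to 0$ that remains exact after tensoring with any $R$-module. By hypothesis $C$ is finitely presented; fix a presentation $R^{\oplus a} \xrightarrow{B} R^{\oplus b} \xrightarrow{q} C \to 0$, so that $\im(B) = \ker(q)$. Since $R^{\oplus b}$ is free (hence projective), $q$ lifts through $\pi$: choose $h \colon R^{\oplus b} \to N$ with $\pi \circ h = q$. Then $\pi \circ (h\circ B) = q\circ B = 0$, so $h\circ B$ has image in $\varphi(M) = \ker\pi$; as $\varphi$ is injective, there is a unique $\psi\colon R^{\oplus a}\to M$ with $\varphi\circ\psi = h\circ B$.

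The crux is the claim that $\psi$ extends over $B$, i.e.\ there exists $\theta\colon R^{\oplus b}\to M$ with $\theta\circ B = \psi$. Granting this, $h-\varphi\circ\theta\colon R^{\oplus b}\to N$ kills $\im(B) = \ker(q)$, hence descends to a map $s\colon C\to N$ with $s\circ q = h - \varphi\circ\theta$; and $\pi\circ s\circ q = \pi\circ h - \pi\circ\varphi\circ\theta = q$ forces $\pi\circ s = \id_C$ because $q$ is surjective. Thus $\pi$ is split, $N = \varphi(M)\oplus s(C)$, and composing the projection $N\to\varphi(M)$ with the inverse isomorphism $\varphi(M)\to M$ produces the desired left inverse of $\varphi$. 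Note that no flatness of $M$ or $N$ enters anywhere.

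It remains to prove the claim, and this is the one place purity is used. Write $B = (b_{ji})$, and set $n_j \coloneqq h(f_j)\in N$ and $m_i \coloneqq \psi(e_i)\in M$ for the standard bases of $R^{\oplus b}$ and $R^{\oplus a}$. The identity $h\circ B = \varphi\circ\psi$ says precisely that, inside $N$ (identifying $M$ with $\varphi(M)$), the elements $n_j$ solve the finite system of $R$-linear equations $\sum_j b_{ji}x_j = m_i$ for $i = 1,\dots,a$, whose right-hand sides all lie in $M$. By the classical characterization of purity --- $\varphi$ is pure if and only if $\varphi\otimes_R\id_P$ is injective for every finitely presented $P$, equivalently every finite linear system with coefficients in $R$ and right-hand sides in $M$ that is solvable over $N$ is already solvable over $M$ (\cite{Lazard}) --- this system has a solution $(\mu_j)$ with $\mu_j\in M$, and setting $\theta(f_j)\coloneqq \mu_j$ gives $\theta\circ B = \psi$. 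I expect this solvability criterion to carry all the weight: once it is in hand the rest is diagram chasing. (An even shorter packaging: a pure exact sequence $0\to M\to N\to C\to 0$ has $\Hom_R(P,-)$ exact on it for every finitely presented $P$, by the same criterion; taking $P = C$ and lifting $\id_C$ through $\pi$ directly yields the section.)
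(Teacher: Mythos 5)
Your proof is correct. The paper itself gives no argument for this lemma --- it is quoted from Lazard, Cor.\ 2.4 --- so there is nothing internal to compare against; your write-up is the standard proof. The diagram chase is sound: lifting the presentation $R^{\oplus a}\xrightarrow{B}R^{\oplus b}\xrightarrow{q}C\to 0$ through $\pi$, producing $\psi$ with $\varphi\circ\psi=h\circ B$, and observing that an extension $\theta$ of $\psi$ over $B$ yields a section $s$ of $\pi$ is all correct, as is the passage from ``$\pi$ splits'' to ``$\varphi$ splits.'' The one load-bearing step, the existence of $\theta$, is exactly where purity must enter, and the linear-equations formulation you invoke (a finite system with constants in $M$ solvable in $N$ is solvable in $M$) is a genuine equivalent of universal injectivity, not a restatement of the conclusion: one sees the implication directly by noting that ``$\psi$ factors through $B$'' means the class of $\sum_i e_i^*\otimes\psi(e_i)$ dies in $\coker(B^*)\otimes_R M$, and $\coker(B^*)\otimes_R M\to\coker(B^*)\otimes_R N$ is injective by purity. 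Your closing remark is the cleanest packaging and is essentially Lazard's own: purity of $0\to M\to N\to C\to 0$ makes $\Hom_R(P,-)$ exact on it for every finitely presented $P$, and taking $P=C$ lifts $\id_C$ to a section of $\pi$. No gaps.
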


The next result is well-known, but for lack of an appropriate reference, we include a proof here.

\begin{lemma}
    \label{lem:pure-submodule-flat}
    Let $R$ be a ring and $M$ be a flat $R$-module. Let $N$ be a pure submodule of $M$ (i.e. the inclusion $\iota \colon N \hookrightarrow M$ is pure). Then $N$ is also a flat $R$-module.
\end{lemma}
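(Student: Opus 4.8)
The plan is to run the long exact sequence of $\Tor$ against the short exact sequence defining $N$, using purity to control exactness after tensoring. First I would write down
\[
0 \to N \xrightarrow{\iota} M \to \coker(\iota) \to 0,
\]
and record that since $\iota$ is pure, this sequence is \emph{pure exact}: for every $R$-module $P$, applying $-\otimes_R P$ yields a short exact sequence
\[
0 \to N \otimes_R P \to M \otimes_R P \to \coker(\iota) \otimes_R P \to 0,
\]
where left-exactness is precisely the purity hypothesis and the rest is automatic from right-exactness of tensor.

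Next I would deduce that $\coker(\iota)$ is flat. Fixing an arbitrary $R$-module $P$ and looking at the tail of the long exact sequence of $\Tor_\bullet^R(-,P)$ attached to the sequence above,
\[
\Tor_1^R(M,P) \to \Tor_1^R(\coker(\iota),P) \to N \otimes_R P \to M \otimes_R P,
\]
the left term vanishes because $M$ is flat, and the last map is injective by the previous paragraph; hence $\Tor_1^R(\coker(\iota),P) = 0$. Since $P$ was arbitrary, $\coker(\iota)$ is flat. Finally, applying the long exact sequence once more to the same short exact sequence, the segment
\[
\Tor_2^R(\coker(\iota),P) \to \Tor_1^R(N,P) \to \Tor_1^R(M,P)
\]
has both outer terms zero (the first because $\coker(\iota)$ is now known to be flat, the second because $M$ is flat), so $\Tor_1^R(N,P) = 0$ for every $R$-module $P$, i.e.\ $N$ is flat.

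I do not expect a genuine obstacle here; the only thing needing care is the bookkeeping of which $\Tor$ modules vanish and why, the crucial input being that purity of $\iota$ is exactly what keeps $N \otimes_R P \to M \otimes_R P$ injective. If one prefers to avoid the second invocation of the long exact sequence, an alternative is to check flatness of $N$ directly via the criterion that $I \otimes_R N \to N$ be injective for every finitely generated ideal $I$: in the commutative square with corners $I \otimes_R N$, $I \otimes_R M$, $N$, $M$, the top arrow is injective by purity of $\iota$, the right arrow by flatness of $M$, and the bottom arrow is the inclusion $N \hookrightarrow M$, so the left arrow is injective as well.
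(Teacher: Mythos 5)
Your main argument is correct, but it takes a different route from the paper. The paper's proof is a one-step diagram chase: given an arbitrary injection $f \colon P \to Q$, it forms the commutative square with horizontal arrows $f \otimes_R \id_N$, $f \otimes_R \id_M$ and vertical arrows $\id_P \otimes_R \iota$, $\id_Q \otimes_R \iota$; the left vertical arrow is injective by purity and the bottom horizontal arrow by flatness of $M$, so $f \otimes_R \id_N$ is injective. Your route instead runs the long exact sequence of $\Tor$ twice against $0 \to N \to M \to \coker(\iota) \to 0$, first to get $\Tor_1^R(\coker(\iota), P) = 0$ from purity plus flatness of $M$, then to get $\Tor_1^R(N,P) = 0$. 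Both are sound; your version is longer but yields the flatness of the quotient $\coker(\iota)$ as a byproduct, which is exactly the fact the paper needs separately in its Lemma on completions of pure maps (where it cites the same $\Tor$ argument you give). Your closing alternative -- the square with corners $I \otimes_R N$, $I \otimes_R M$, $N$, $M$ for a finitely generated ideal $I$ -- is essentially the paper's diagram chase specialized to the ideal criterion for flatness, and is also correct.
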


\begin{proof}
    Let $f \colon P \to Q$ be an injective $R$-linear map. We get a commutative diagram
    \[
        \xymatrix{P \otimes_R N \ar[r]^{f \otimes_R \id_N} \ar[d]_{\id_P \otimes_R \iota} &  Q \otimes_R N \ar[d]^{\id_Q \otimes_R \iota} \\  P \otimes_R M \ar[r]_{f \otimes_R \id_M} &  Q \otimes_R M }
        \]
    The map $\id_P \otimes_R \iota$ is injective by purity of $\iota$ and $f \otimes_R \id_M$ is injective by flatness of $M$. Thus, by the commutativity of the above diagram, $f \otimes_R \id_N$ is also injective, that is, $N$ is $R$-flat.
\end{proof}

\subsection{Expansions of radical ideals}
We will need the following result on the preservation of radical ideals under expansion along the completion map.

\begin{lemma}
    \label{cor:expansion-radical-ideals}
    Let $A$ be a Noetherian ring such that for all prime ideals $\p$ of $A$, the fibers of $A_\p \to \widehat{A_\p}$ are reduced. Then for any radical ideal $I$ of $A$, $I\widehat{A_\p}$ is a radical ideal of $\widehat{A_\p}$. Thus, for all ideals $J$ of $A$, $\sqrt{J}\widehat{A_\p} = \sqrt{J\widehat{A_\p}}$.
\end{lemma}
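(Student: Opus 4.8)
\emph{Strategy.} The plan is to fix a prime $\p$ of $A$, set $R=A_\p$ (a Noetherian local ring with Noetherian, faithfully flat completion $\widehat R=\widehat{A_\p}$), and reduce the whole statement to the following standard ascent principle: \emph{if $B\to C$ is a flat homomorphism of Noetherian rings, $B$ is reduced, and every fibre of $B\to C$ is reduced, then $C$ is reduced} (see e.g. \cite{stacks-project} or Matsumura's \emph{Commutative Ring Theory}, \S23; concretely it follows from the ascent of the Serre conditions $(R_0)$ and $(S_1)$ along flat maps whose fibres satisfy the corresponding condition, so only \emph{reduced} — not \emph{geometrically reduced} — fibres are needed here because the base is Noetherian). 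First I would observe that for an ideal $I$ of $A$ one has $I\widehat R=(IR)\widehat R$, and that $IR$ is radical whenever $I$ is (localization commutes with radicals); hence it is enough to prove that $\fa\widehat R$ is radical in $\widehat R$ for every radical ideal $\fa$ of $R$. Note also that the hypothesis on $A$, \emph{applied to the prime $\p$}, says precisely that every fibre of $R\to\widehat R$ is reduced — no further passage to other localizations is required.

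\emph{The local computation.} Given a radical ideal $\fa$ of $R$, I would base change the flat map $R\to\widehat R$ along $R\to R/\fa$ to obtain a flat homomorphism of Noetherian rings
\[
\bar\varphi\colon R/\fa \longrightarrow \widehat R/\fa\widehat R \cong (R/\fa)\otimes_R\widehat R ,
\]
which is just the completion map of $R/\fa$ at its maximal ideal. Its source is reduced because $\fa$ is radical. The key point is the identification of its fibres: for a prime $\bar\bq$ of $R/\fa$ coming from a prime $\bq\supseteq\fa$ of $R$, one has $\kappa(\bar\bq)\cong\kappa(\bq)$, and therefore
\[
(\widehat R/\fa\widehat R)\otimes_{R/\fa}\kappa(\bar\bq)\;\cong\;\widehat R\otimes_R\kappa(\bq),
\]
which is exactly the fibre of $R\to\widehat R$ over $\bq$, hence reduced by hypothesis. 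So $\bar\varphi$ is a flat map of Noetherian rings with reduced source and reduced fibres, whence $\widehat R/\fa\widehat R$ is reduced; equivalently $\fa\widehat R$ is a radical ideal of $\widehat R$. Taking $\fa=IR$ proves the first assertion.

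\emph{The ``thus'' clause and the main obstacle.} For an arbitrary ideal $J$ of $A$, apply what was just proved to the radical ideal $\sqrt J$: then $\sqrt J\,\widehat{A_\p}$ is radical. Since $J\widehat{A_\p}\subseteq\sqrt J\,\widehat{A_\p}$ we get $\sqrt{J\widehat{A_\p}}\subseteq\sqrt{\sqrt J\,\widehat{A_\p}}=\sqrt J\,\widehat{A_\p}$. Conversely, if $x\in\sqrt J$ then $x^m\in J$ for some $m\ge 1$, so the image of $x$ in $\widehat{A_\p}$ lies in $\sqrt{J\widehat{A_\p}}$; as the latter is an ideal, it contains the submodule generated by these images, namely $\sqrt J\,\widehat{A_\p}$. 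Combining the two inclusions gives $\sqrt J\,\widehat{A_\p}=\sqrt{J\widehat{A_\p}}$. There is no genuinely hard step here; the only things to be careful about are (i) confirming that the cited ascent statement requires only reduced, not geometrically reduced, fibres over a Noetherian base — which is why I would phrase it via $(R_0)$ and $(S_1)$ — and (ii) the clean bookkeeping in the fibre identification $(\widehat R/\fa\widehat R)\otimes_{R/\fa}\kappa(\bar\bq)\cong\widehat R\otimes_R\kappa(\bq)$, which is where the hypothesis on the formal fibres of $A$ is actually used.
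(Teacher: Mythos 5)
Your proof is correct and follows essentially the same route as the paper's: localize at $\p$, base change the completion map along $R\to R/\fa$ to get a flat map of Noetherian rings with reduced source and reduced fibers, and invoke ascent of reducedness (the paper cites \cite[\href{https://stacks.math.columbia.edu/tag/0C21}{Tag 0C21}]{stacks-project} for exactly this step, and likewise leaves the ``thus'' clause as an easy consequence). Your extra care about reduced versus geometrically reduced fibers and the explicit fibre identification are fine but not points of divergence.
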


\begin{proof}
    Since the property of being a radical ideal is preserved under arbitrary localicalization, we may assume $A$ is local and $\p$ is the maximal ideal $\fm$ of $A$. We let $\widehat{A}$ denote the $\fm$-adic completion of $A$. By hypothesis, the fibers of $A \to \widehat{A}$ are reduced. Let $I$ be a radical ideal of $A$. Then 
    $
    A/I \to \widehat{A}/I\widehat{A}.    
    $
    is also a flat map of Noetherian rings with reduced fibers. Since $A/I$ is reduced by assumption, $\widehat{A}/I\widehat{A}$ is reduced by \cite[\href{https://stacks.math.columbia.edu/tag/0C21}{Tag 0C21}]{stacks-project}. Thus, $I\widehat{A}$ is a radical ideal of $A$.

    The second statement follows easily from the first and its proof is omitted.
\end{proof}

\section{Stabilizers and cyclic stabilizers}
\label{sec:Stabilizers and cyclic stabilizers}
In this section, we will discuss in some detail the notions of domination and stabilizers developed in \cite[Part II]{rg71}. The motivation for doing so is to create a natural framework for introducing the notion of a Mittag-Leffler module (\autoref{def:Mittag-Leffler}) that makes the relation of the notion to the well-studied concept of a pure map of modules most transparent. 
For completeness and the convenience of the reader, we shall give detailed proofs of the most important results needed from \cite[Part II]{rg71}. We will also define a natural weakening of the notion of stabilizers, called cyclic stabilizers, and develop some of their basic properties. While stabilizer captures the Mittag-Leffler condition, we will see that cyclic stabilizers will be related to a notion that we call Ohm-Rush (\autoref{def:Ohm-Rush}). Ohm-Rush modules were first introduced by Ohm and Rush \cite{OhmRu-content} in relation with the content function for modules. The Ohm-Rush condition has been studied for the Frobenius homomorphism in prime characteristic commutative algebra in connection with test ideals \cite{BlickleMustataSmithDiscretenessAndRationalityOfFThresholds,SharpBigTestElements,SharpTestElementsforFpure}. 

\subsection{Domination and stabilizers}
In this subsection $R$ denotes an arbitrary commutative ring that is not necessarily Noetherian. Also, a local ring for us is just a ring with a unique maximal ideal, that is, local rings are not necessarily Noetherian unless otherwise specified.

\begin{definition}
\label{def:domination}
Let $R$ be a ring and $f \colon M \to P$ and $g \colon M \to Q$ be two
maps of $R$-modules with common domain $M$. We say $g$ 
\emph{dominates} $f$ if for all $R$-modules $N$,
$
\ker(f \otimes_R \id_N) \subseteq \ker(g \otimes_R \id_N).
$
\end{definition}

The notion of domination satisfies the following properties:

\begin{lemma}
\label{lem:domination}
Let $R$ be a ring and $f \colon M \to P$ and $g \colon M \to Q$ be
$R$-linear maps. Then we have the following:
\begin{enumerate}%[itemsep = 1mm]
    \item[$(1)$] $g$ dominates $f$ $\Longleftrightarrow$ for all finitely presented $R$-modules $N$, 
    $\ker(f \otimes_R \id_N) \subseteq 
    \ker(g \otimes_R \id_N)$.
    
    \item[$(2)$] If $g$ factors through $f$, that is, if there exists
    a map $\varphi \colon P \to Q$ such that $g = \varphi \circ f$,
    then $g$ dominates $f$. The converse holds if $\coker(f)$ is 
    finitely presented as an $R$-module.
    
    \item[$(3)$] Consider the pushout of $f$ and $g$
    \[
    \xymatrix{ M \ar[r]_ f \ar[d]_ g &  P \ar[d]^{g'} \\  Q \ar[r]^{f'} &  T }
    \]
    Then $g$ dominates $f$ $\Longleftrightarrow$ $f'$ is a pure map of $R$-modules.
    
    \item[$(4)$] Suppose there is a pure $R$-linear map $\varphi \colon P \to Q$ such that $g = \varphi \circ f$. 
    Then $g$ and $f$ dominate each other.
    
    \item[$(5)$] Consider a commutative diagram of $R$ modules
    \[
    \xymatrix{&M \ar[dl]_f \ar[d]_g \ar[dr]^h  \\
    P \ar[r] &Q \ar[r] &R}
    \]
    where $f$ and $h$ dominate each other. Then $f, g, h$  dominate each other in pairs.
    
    \item[$(6)$] Consider a commutative diagram of $R$-modules
    \[
    \xymatrix{&M \ar[dl]_f \ar[d]_g  \\
    P \ar[d]_h  &Q \\
    R\ar[ur]}
    \]
    If $f$ and $g$ dominate each other, then $h \circ f$ and $g$
     dominate each other.

    \item[$(7)$] Suppose $M$ is a finitely presented $R$-module and $Q$ is a flat $R$-module. Then any $R$-linear map $g \colon M \to Q$ dominates an $R$-linear map $f \colon M \to P$, where $P$ is a free $R$-module of finite rank.
        
    \item[$(8)$] Suppose we have a surjective $R$-linear map 
        $\varphi \colon M' \twoheadrightarrow M$. Then $g \circ \varphi$ dominates
        $f \circ \varphi$ if and only if $g$ dominates $f$.
\end{enumerate}
\end{lemma}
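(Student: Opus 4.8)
The plan is to establish first a one-line fact used throughout, namely that domination is transitive ($\ker(f\otimes_R\id_N)\subseteq\ker(g\otimes_R\id_N)\subseteq\ker(h\otimes_R\id_N)$ for all $N$ when $g$ dominates $f$ and $h$ dominates $g$), and then prove the eight items in an order in which most reduce to $(2)$ and $(3)$. For $(1)$, only the implication from finitely presented $N$ to arbitrary $N$ needs work: writing $N=\colim_i N_i$ as a filtered colimit of finitely presented modules and using that $-\otimes_R N$ commutes with filtered colimits, any $x\in M\otimes_R N$ with $(f\otimes_R\id_N)(x)=0$ lifts to some $x_i\in M\otimes_R N_i$, and after increasing $i$ we may assume $(f\otimes_R\id_{N_i})(x_i)=0$; applying the finitely presented hypothesis to $N_i$ kills $(g\otimes_R\id_{N_i})(x_i)$, hence $(g\otimes_R\id_N)(x)=0$. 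The forward half of $(2)$ is the identity $g\otimes_R\id_N=(\varphi\otimes_R\id_N)\circ(f\otimes_R\id_N)$, and $(4)$ is the same identity together with the injectivity of $\varphi\otimes_R\id_N$ when $\varphi$ is pure, giving $\ker(g\otimes_R\id_N)=\ker(f\otimes_R\id_N)$ for every $N$.

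The technical heart is $(3)$. Since $-\otimes_R N$ is right exact it preserves the pushout, so $T\otimes_R N$ is the pushout of $f\otimes_R\id_N$ and $g\otimes_R\id_N$; from the explicit description of a pushout of modules as a quotient of $P\otimes_R N\oplus Q\otimes_R N$ one reads off that the structure map $f'\otimes_R\id_N\colon Q\otimes_R N\to T\otimes_R N$ has kernel exactly $(g\otimes_R\id_N)\bigl(\ker(f\otimes_R\id_N)\bigr)$. Thus $f'\otimes_R\id_N$ is injective precisely when $g\otimes_R\id_N$ annihilates $\ker(f\otimes_R\id_N)$, i.e. when $\ker(f\otimes_R\id_N)\subseteq\ker(g\otimes_R\id_N)$; letting $N$ vary gives the stated equivalence. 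With $(3)$ available, the converse of $(2)$ follows: pushouts preserve cokernels, so $\coker(f')\cong\coker(f)$ is finitely presented, hence the pure map $f'$ splits by \autoref{lem:pure-iff-split}; if $s\colon T\to Q$ is a splitting then $\varphi\coloneqq s\circ g'\colon P\to Q$ satisfies $\varphi\circ f=s\circ g'\circ f=s\circ f'\circ g=g$.

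Parts $(5)$ and $(6)$ are then formal. In both diagrams one map factors through another via an explicit composite, so $(2)$ produces a cycle of dominations — in $(5)$: $f$ is dominated by $g$, which is dominated by $h$, which by hypothesis is dominated by $f$; in $(6)$: $g=k\circ(h\circ f)$ is dominated by $h\circ f$, which in turn is dominated by $f$, which by hypothesis dominates and is dominated by $g$ — and transitivity collapses each cycle to mutual domination of every pair. For $(7)$ I would invoke Lazard's theorem: the flat module $Q$ is a filtered colimit of finite free modules $F_i$, and since $M$ is finitely presented $g$ factors as $M\xrightarrow{\,g_i\,}F_i\to Q$ for some $i$; taking $f\coloneqq g_i$ and $P\coloneqq F_i$, part $(2)$ gives that $g$ dominates $f$. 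Finally $(8)$: for every $N$ the map $\varphi\otimes_R\id_N$ is surjective and $\ker\bigl((f\circ\varphi)\otimes_R\id_N\bigr)=(\varphi\otimes_R\id_N)^{-1}\bigl(\ker(f\otimes_R\id_N)\bigr)$, likewise for $g$; since taking preimages along a surjection reflects inclusions, $\ker((f\varphi)\otimes_R\id_N)\subseteq\ker((g\varphi)\otimes_R\id_N)$ holds for all $N$ iff $\ker(f\otimes_R\id_N)\subseteq\ker(g\otimes_R\id_N)$ holds for all $N$.

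I expect no real difficulty: the only genuinely non-formal inputs are the pushout kernel computation in $(3)$, the splitting of pure maps with finitely presented cokernel (\autoref{lem:pure-iff-split}) feeding the converse of $(2)$, and Lazard's structure theorem for flat modules in $(7)$. The main thing to get exactly right is the bookkeeping in $(3)$ — identifying $T\otimes_R N$ with the pushout and computing the kernel of its structure map — since that is the bridge between domination and purity on which $(2)$, $(5)$, $(6)$ also ultimately rest.
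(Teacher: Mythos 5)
Your proof is correct, and on parts (4)--(6) and (8) it coincides with the paper's argument; the difference is that where the paper outsources the three nontrivial ingredients to citations, you prove them from scratch. For (1) the paper cites the Stacks project, whereas you give the filtered-colimit reduction directly; for (3) the paper again cites Stacks, whereas you compute explicitly that $\ker(f'\otimes_R\id_N)=(g\otimes_R\id_N)\bigl(\ker(f\otimes_R\id_N)\bigr)$ from the presentation of the pushout as a quotient of $P\oplus Q$ --- this computation is right and is exactly the content of the cited tag. For the converse of (2) the paper cites yet another Stacks tag, while you derive it from your (3) together with $\coker(f')\cong\coker(f)$ and \autoref{lem:pure-iff-split}; this is a clean, self-contained route and the isomorphism of cokernels is correct. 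For (7) the paper cites Lazard's Lemma 1.1 directly, while you deduce the factorization from Lazard's colimit theorem plus finite presentation of $M$ --- logically fine, though historically that lemma is the input to the colimit theorem rather than a consequence. One small caution: in your summary of (6) you write that $g$ ``is dominated by'' $h\circ f$ and that $h\circ f$ ``is dominated by'' $f$, but under the paper's convention factoring through gives the opposite inclusions ($\ker(f\otimes N)\subseteq\ker((h\circ f)\otimes N)\subseteq\ker(g\otimes N)$); since the hypothesis forces $\ker(f\otimes N)=\ker(g\otimes N)$ the cycle closes either way and the argument survives, but you should state the inclusions in the derivable direction.
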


\begin{proof}
$(1)$ is \cite[\href{https://stacks.math.columbia.edu/tag/059C}{Tag 059C}]{stacks-project}.

For $(2)$, if $g$ factors through $f$ via $\varphi$, then it follows that for any $R$-module $N$,
\[
\ker(g \otimes_R \id_N) = \ker((\varphi \otimes_R \id_N) \circ (f \otimes_R \id_N)) \supseteq \ker(f \otimes_R \id_N).
\]
Hence $g$ dominates $f$. The converse follows by 
\cite[\href{https://stacks.math.columbia.edu/tag/059D}{Tag 059D}]{stacks-project}.

$(3)$ follows by \cite[\href{https://stacks.math.columbia.edu/tag/0AUM}{Tag 0AUM}]{stacks-project}.

$(4)$ Since $\varphi$ is pure, for all $R$-modules $N$, $\ker(\varphi \otimes_R \id_N) = 0$. Thus,
\[
\ker(g \otimes_R \id_N) = \ker((\varphi \otimes_R \id_N) \circ (f \otimes_R \id_N)) =
\ker(f \otimes_R \id_N),
\]
that is, $g$ and $f$  dominate each other.

$(5)$ Since $h$ factors through $g$ and $g$ factors through $f$, for all $R$-modules 
$N$, we have
\[
\ker(h \otimes_R \id_N) = \ker(f \otimes_R \id_N) \subseteq \ker(g \otimes_R \id_N)
\subseteq \ker(h \otimes_R \id_N) = \ker(f \otimes_R \id_N).
\]
Thus, $g$ and $h$  dominate each other and  $g$ and $f$  
dominate each other.

$(6)$ Note $g$ dominates $h \circ f$ by (2). On the other hand, for all $R$-modules $N$
\[
\ker((h \circ f) \otimes_R \id_N) = \ker((h \otimes_R \id_N) \circ (f \otimes_R \id_N)) \supseteq \ker(f \otimes_R \id_N) = \ker(g \otimes_R \id_N),
\]
where the last equality follows because $f$ and $g$  dominate each other. This shows that $h \circ f$ dominates $g$.

$(7)$ is \cite[Lem.\ 1.1]{Lazard}. It is the key lemma that leads to a proof of Lazard's Theorem on flat modules being precisely the ones that are filtered colimits of free modules of finite rank \cite[Thm.\ 1.2]{Lazard}.
    
    $(8)$ Since tensor product is right exact, for every $R$-module
    $N$, the induced map
    $
    \varphi \otimes_R \id_N \colon M' \otimes_R N \to M \otimes_R N
    $
    is also surjective. 
    Now note that 
    $\ker\left((g \circ \varphi) \otimes_R \id_N\right)$ 
    (resp. $\ker\left((f \circ \varphi) \otimes_R \id_N\right)$) is
    the pre-image of $\ker(g \otimes_R \id_N)$ (resp. 
    $\ker(f \otimes_R \id_N)$) along the surjective map 
    $\varphi \otimes_R \id_N$. Thus, we must have 
    \[
    \ker(g \otimes_R \id_N) \supseteq \ker(f \otimes_R \id_N)
    \Longleftrightarrow \ker\left((g \circ \varphi) \otimes_R \id_N\right) \supseteq \ker\left((f \circ \varphi) \otimes_R \id_N\right).
    \]
    This, $g$ dominates $f$ if and only if $g \circ \varphi$ dominates
    $f \circ \varphi$.
\end{proof}

\begin{corollary}
    \label{cor:mutual-domination-pure-loci}
    Let $R$ be a ring and $f \colon M \to P$ and $g \colon M \to Q$ be $R$-linear maps. If $g$ dominates $f$, then
$
\textrm{$\{\p \in \Spec(R) \colon g_\p$ is $R_\p$-pure$\} \subseteq \{\p \in \Spec(R) \colon f_\p$ is $R_\p$-pure$\}$}.
$
\end{corollary}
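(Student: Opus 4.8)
The plan is to reduce the statement to two elementary observations: that the domination relation passes to localizations, and that a map dominated by a pure map is itself pure.

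First I would verify that if $g$ dominates $f$ over $R$, then for every prime $\p$ the localized map $g_\p \colon M_\p \to Q_\p$ dominates the localized map $f_\p \colon M_\p \to P_\p$ over $R_\p$. This is immediate from the behavior of tensor products under restriction of scalars: for an arbitrary $R_\p$-module $N$, viewing $N$ as an $R$-module, the canonical identification $M_\p \otimes_{R_\p} N \cong (M \otimes_R R_\p) \otimes_{R_\p} N \cong M \otimes_R N$ (and its analogue for $P$ and $Q$) is compatible with $f_\p \otimes_{R_\p} \id_N$ and $f \otimes_R \id_N$, and likewise with the maps induced by $g$. Hence the hypothesis $\ker(f \otimes_R \id_N) \subseteq \ker(g \otimes_R \id_N)$ transfers verbatim to $\ker(f_\p \otimes_{R_\p} \id_N) \subseteq \ker(g_\p \otimes_{R_\p} \id_N)$ for all $R_\p$-modules $N$. (One could instead invoke \autoref{lem:domination}$(1)$ to reduce to finitely presented $N$, but the direct identification makes no such reduction necessary.)

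Next, I would record the trivial fact that, given that $g_\p$ dominates $f_\p$, purity of $g_\p$ implies purity of $f_\p$: unwinding definitions, if $g_\p$ is $R_\p$-pure then $\ker(g_\p \otimes_{R_\p} \id_N) = 0$ for every $R_\p$-module $N$, and domination forces $\ker(f_\p \otimes_{R_\p} \id_N) \subseteq \ker(g_\p \otimes_{R_\p} \id_N) = 0$, so $f_\p$ is $R_\p$-pure. Combining this with the previous step for every $\p$ in the locus where $g_\p$ is pure yields the asserted inclusion of loci.

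I do not expect any genuine obstacle here; the only point demanding a moment's care is the compatibility of tensor products under restriction of scalars along $R \to R_\p$ that is used to localize the domination relation in the first step, and this is entirely standard.
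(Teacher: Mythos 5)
Your proof is correct, but it takes a different (and slightly more direct) route than the paper. The paper's proof forms the pushout of $f$ and $g$, invokes \autoref{lem:domination}~(3) to see that the induced map $f' \colon Q \to T$ is $R$-pure, localizes (purity being preserved under base change, $f'_\p$ is $R_\p$-pure for every $\p$), and then observes that if $g_\p$ is pure, so is $f'_\p \circ g_\p = g'_\p \circ f_\p$, whence $f_\p$ is pure. You instead localize the domination relation itself: since the hypothesis $\ker(f \otimes_R \id_N) \subseteq \ker(g \otimes_R \id_N)$ holds for \emph{all} $R$-modules $N$, it holds in particular for every $R_\p$-module $N$ viewed by restriction of scalars, and the canonical identification $M_\p \otimes_{R_\p} N \cong M \otimes_R N$ converts this into the statement that $g_\p$ dominates $f_\p$ over $R_\p$; purity of $g_\p$ then kills all the kernels of $f_\p \otimes_{R_\p} \id_N$ at once. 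Both arguments are short and elementary; what yours buys is independence from the pushout characterization (you need only the definition of domination plus the standard base-change isomorphism), while the paper's version reuses machinery already established in \autoref{lem:domination} and packages the localization step as the familiar fact that a single pure map stays pure after localization. The one point worth making explicit, which you do flag, is that your localization of domination works precisely because the defining condition quantifies over all $N$, so restricting attention to $R_\p$-modules loses nothing.
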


\begin{proof}
    By \autoref{lem:domination} (3) the map $f'$ in the pushout square
     \[
        \xymatrix{ M \ar[r]_ f \ar[d]_ g &  Q \ar[d]^{g'} \\  P \ar[r]^{f'} &  N }
        \]
    is $R$-pure. Hence for all $\p \in \Spec(R)$, $f'_\p$ is a pure map of $R_\p$-modules. Thus, if $g_\p$ is $R_\p$-pure, then 
    $
    g'_\p \circ f_\p = f'_\p \circ g_\p
    $
    is $R_\p$-pure as well, and consequently, $f_\p$ is a pure map of $R_\p$-modules.
\end{proof}

\begin{definition}
\label{def:stabilizer}
\cite[Part II, D\'ef.\ 2.1.3]{rg71}
Let $R$ be a ring and $f \colon M \to P$ and $g \colon M \to Q$ be $R$-linear maps.
Assume that both $M$ and $P$ are finitely presented $R$-modules.
We say
 $f$ \emph{stabilizes} $g$ (or $f$ is a \emph{stabilizer} of $g$) if
    $f$ and $g$  dominate one another. We will say that $g$ \emph{admits a stabilizer} if there exists an $f$ (with the same domain as $g$ and a finitely presented codomain) that stabilizes $g$.
\end{definition}

We have the following:

\begin{lemma}
\label{lem:stabilizers}
Let $R$ be a ring and $g \colon M \to Q$ be a map of $R$-modules such that $M$ is a finitely presented $R$-module. Then we have the following:
\begin{enumerate}%[itemsep = 1mm]
    \item[$(1)$] If $f \colon M \to P$ is a stabilizer of $g$, then $g$ factors through
    $f$.
    
    \item[$(2)$] Let $\{(L_i, u_{ij}) \colon i \in I\}$ be a direct system of finitely presented $R$-modules indexed by a filtered poset $(I, \leq)$ such that
    $
    Q = \colim_i L_i.
    $
    Let $u_i \colon L_i \to Q$ be the associated map for each $i \in I$. If $g$ admits a stabilizer (resp. admits a stabilizer that factors through $g$), then there exists an index $i \in I$ such that for  all $j \geq i$, we have a map 
    $f_j \colon M \to L_j$ that satisfies
    $
    g = u_j \circ f_j, \hspace{2mm} f_j = u_{ij} \circ f_i.
    $
    Furthermore, $f_j$ stabilizes $g$ (resp. $f_j$ stabilizes $g$ and factors through $g$).
    
    \item[$(3)$] In the situation of $(2)$, suppose the index $i \in I$ and the maps $f_j \colon M \to L_j$ for $j \geq i$ are chosen to satisfy the conclusion of $(2)$. Then for any $R$-module $N$ and for all $j \geq i$,
    \[
    \im\left(\Hom_R(L_j, N) \xrightarrow{- \circ f_j} \Hom_R(M, N)\right) = 
    \im\left(\Hom_R(L_i, N) \xrightarrow{- \circ f_i} \Hom_R(M, N)\right)
    \]
\end{enumerate}
\end{lemma}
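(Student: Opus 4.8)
The plan is to derive all three parts from \autoref{lem:domination}, whose part $(2)$ is the engine here: it upgrades a domination relation to an honest factorization as soon as the relevant cokernel is finitely presented. For $(1)$, I would observe that a stabilizer $f$ in particular has $g$ dominating $f$, and that $\coker(f) = P/\im(f)$ is finitely presented because $P$ is finitely presented and $\im(f)$ is finitely generated (as $M$ is). Then \autoref{lem:domination}$(2)$ immediately yields a factorization $g = \varphi\circ f$.

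For $(2)$, I would start from this factorization $g = \varphi\circ f$, with $f\colon M\to P$ the given stabilizer and $P$ finitely presented. Since $Q = \colim_i L_i$ is a filtered colimit and $P$ is finitely presented, $\varphi$ factors through some stage: $\varphi = u_i\circ\psi_i$ for an index $i\in I$ and a map $\psi_i\colon P\to L_i$. Setting $f_i := \psi_i\circ f$ and $f_j := u_{ij}\circ f_i$ for $j\geq i$, the identities $u_j\circ f_j = g$ and $f_j = u_{ij}\circ f_i$ are immediate. To check that each $f_j$ stabilizes $g$: the relation $g = u_j\circ f_j$ shows $g$ dominates $f_j$ via \autoref{lem:domination}$(2)$, while $f_j$ factors through $f$ (hence dominates $f$), and $f$ dominates $g$ since $f$ stabilizes $g$; by transitivity of domination, $f_j$ dominates $g$, and since $L_j$ is finitely presented this makes $f_j$ a stabilizer of $g$. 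The parenthetical strengthening comes for free by composition: if the given stabilizer additionally satisfies $f = \theta\circ g$ for some $\theta\colon Q\to P$, then $f_j = u_{ij}\circ\psi_i\circ\theta\circ g$ factors through $g$ as well.

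For $(3)$, the inclusion $\im(- \circ f_j)\subseteq\im(- \circ f_i)$ falls out of $f_j = u_{ij}\circ f_i$ with no work, since any $\gamma\circ f_j$ equals $(\gamma\circ u_{ij})\circ f_i$. The reverse inclusion is where the content lies: I would use that $f_i$ and $f_j$ both stabilize $g$ (by part $(2)$), hence dominate each other, and that $\coker(f_j) = L_j/\im(f_j)$ is finitely presented, to invoke \autoref{lem:domination}$(2)$ and obtain $\sigma\colon L_j\to L_i$ with $f_i = \sigma\circ f_j$; then for any $\beta\in\Hom_R(L_i,N)$ we get $\beta\circ f_i = (\beta\circ\sigma)\circ f_j\in\im(- \circ f_j)$. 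The one place to be careful is exactly this step: the visible relation $f_j = u_{ij}\circ f_i$ produces only one of the two inclusions, so the symmetric one must be extracted by promoting the mutual domination of $f_i$ and $f_j$ to an actual factorization — legitimate precisely because every module in sight, and hence every cokernel, is finitely presented. Everything else is routine bookkeeping with kernels of tensored maps.
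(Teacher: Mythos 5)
Your proposal is correct and follows essentially the same route as the paper's proof: part (1) via \autoref{lem:domination}(2) after noting $\coker(f)$ is finitely presented, part (2) by lifting the factorization $g=\varphi\circ f$ through a stage of the colimit and combining the two domination relations, and part (3) by promoting the mutual domination of $f_i$ and $f_j$ to a factorization $f_i=\sigma\circ f_j$ using that $\coker(f_j)$ is finitely presented. The only cosmetic difference is that you invoke transitivity of domination where the paper writes out the chain of kernel inclusions explicitly.
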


\begin{proof}
$(1)$ If $f$ stabilizes $g$ then by definition $P$ is a finitely presented $R$-module. Then $g$ dominates $f$ and $\coker(f)$ is finitely
presented because this cokernel is the quotient of a finitely presented module by a finitely
generated submodule; see \cite[\href{https://stacks.math.columbia.edu/tag/0519}{Tag 0519} (4)]{stacks-project}. Thus, $g$ factors through $f$ by \autoref{lem:domination}$(2)$.

$(2)$ Let $f \colon M \to P$ be a stabilizer of $g$. Then $g$ factors through $f$
by $(1)$. Hence choose a map $\varphi \colon P \to Q$ such that
$
g = \varphi \circ f.
$
Since $M$ and $P$ are finitely presented and $Q$ is the
filtered colimit of finitely presented modules $L_i$, there exists an index $i \in I$
such that $\varphi \colon P \to Q$ admits a lift to a map
    $\varphi_i \colon P \to L_i$ 
    along $u_i$, that is, 
    \[
    P \xlongrightarrow{\varphi} Q = P \xlongrightarrow{\varphi_i} L_i \xlongrightarrow{u_i} Q.
    \]
    
For all $j \geq i$, define
\[
f_j \coloneqq M \xlongrightarrow{f} P \xlongrightarrow{\varphi_i} L_i \xlongrightarrow{u_{ij}} L_j.
\]
Then
\[
u_j \circ f_j = ((u_j \circ u_{ij}) \circ \varphi_i ) \circ f =  (u_i \circ \varphi_i) \circ f = \varphi \circ f = g
\]
by the choice of the index $i$. Moreover, since $u_{ii} = \id_{L_i}$, it follows
that $f_i = \varphi_i \circ f$, and so,
$
f_j = u_{ij} \circ f_i
$
by definition of $f_j$ for all $j \geq i$. It remains to show that $f_j$ stabilizes $g$.

Since $g$ factors through $f_j$, it follows that $g$ dominates $f_j$ by 
\autoref{lem:domination}$(2)$. On the other hand, for any $R$-module $N$
\[
\ker(f_j \otimes_R \id_N) = \ker((u_{ij} \circ \varphi_i \circ f) \otimes_R \id_N) \supseteq \ker(f \otimes_R \id_N) = \ker(g \otimes_R \id_N),
\]
where the last equality follows because $f$ stabilizes $g$. This shows that $f_j$
dominates $g$, and so, $g$ and $f_j$  dominate each other, that is, $f_j$ stabilizes $g$, as desired.

Now suppose the stabilizer $f$ also factors through $g$, that is, there exists $\phi \colon Q \to P$ such that
$
f = \phi \circ g.    
$
For the index $i \in I$ and the maps $f_j \colon M \to L_j$ for $i \leq j$ chosen as above, if we define
\[
\phi_j \coloneqq Q \xrightarrow{\phi} P \xrightarrow{\varphi_i} L_i \xrightarrow{u_{ij}} L_j,   
\]
then 
$
\phi_j \circ g = (u_{ij} \circ \varphi_i \circ \phi) \circ g = u_{ij} \circ \varphi_i \circ f = f_j,
$
where the last equality follows by the definition of $f_j$. Thus $f_j$ stabilizes $g$ and factors through $g$  for all $j \geq i$. 

$(3)$ For brevity, we will write the map $\Hom_R(L_j, N) \xrightarrow{- \circ f_j} \Hom_R(M, N)$ as $\Hom_R(f_j, N)$. In this new notation, we have to show that for all
$j \geq i$, 
\[
\im\left(\Hom_R(f_j, N)\right) = \im\left(\Hom_R(f_i, N)\right).
\]
Since $f_j = u_{ij} \circ f_i$ for all $j \geq i$, it follows that
\[
\im\left(\Hom_R(f_j,N)\right) = \im\left(\Hom_R(f_i,N) \circ \Hom_R(u_{ij},N)\right)
\subseteq \im\left(\Hom_R(f_i,N)\right).
\]
For all $j \geq i$, we have a commutative diagram
\[
    \xymatrix{&M \ar[dl]_{f_i} \ar[d]_{f_j} \ar[dr]^g  \\
    L_i \ar[r]_{u_{ij}} &L_j \ar[r]_{u_j} &Q.}
    \]
Since $f_i$ and $g$  dominate each other, by \autoref{lem:stabilizers} (5), for all $j \geq i$, $f_i$ and $f_j$ also  dominate each other. 
Since $\coker(f_j)$
is finitely presented, it follows that $f_i$ factors through $f_j$, that is, there
exists $v_{ji} \colon L_j \to L_i$ such that 
\[
M \xlongrightarrow{f_i} L_i = M \xlongrightarrow{f_j} L_j \xlongrightarrow{v_{ji}} L_i.
\]
Then
$
\im\left(\Hom_R(f_i,N)\right) = \im\left(\Hom_R(f_j,N) \circ \Hom_R(v_{ji},N)\right)
\subseteq \im\left(\Hom_R(f_j,N)\right).
$
\end{proof}

Using the previous lemmas, one can draw the following interesting consequence for stabilizers of maps to a flat module over a local ring. The result is \cite[Part II, Lem.\ 2.1.9]{rg71}, and its proof is provided for the convenience of the reader.

\begin{proposition}\cite[Part II, Lem.\ 2.1.9]{rg71} (cf. \autoref{lem:domination} (7))
\label{prop:RG-2.1.9}
Let $(R, \fram, \kappa)$ be a local ring (not necessarily Noetherian). Let $M$ be a flat $R$-module and $g \colon F \to M$ be an $R$-linear map, where $F$ is a finitely presented $R$-module. If $g$ admits a stabilizer, then there exists a submodule $L$ of $M$ such that all the following conditions are satisfied:
\begin{enumerate}
    \item[$(1)$] $L$ is free of finite rank.
    \item[$(2)$] $L \subseteq M$ is a pure extension.
    \item[$(3)$] $\im(g) \subseteq L$.
\end{enumerate}
If $L$ satisfies $(1)-(3)$, then the map $f \colon F \twoheadrightarrow \im(g) \hookrightarrow L$ stabilizes $g$.
\end{proposition}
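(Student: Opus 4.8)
The plan splits into the easy implication (the final sentence) and the main existence statement; I will dispatch the easy part first.

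\textbf{The final sentence.} Suppose $L \subseteq M$ satisfies $(1)$--$(3)$. Write $\iota \colon L \hookrightarrow M$ for the inclusion and $f \colon F \twoheadrightarrow \im(g) \hookrightarrow L$ for the indicated map, so that $g = \iota \circ f$. Since $g$ factors through $f$, it dominates $f$ by \autoref{lem:domination}$(2)$; since $\iota$ is pure, \autoref{lem:domination}$(4)$ shows that $f$ and $g$ dominate each other. As $L$ is free of finite rank it is finitely presented, so $f$ is a stabilizer of $g$. This also takes care of verifying $(1)$--$(3)$ once such an $L$ is produced.

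\textbf{Reduction for the existence of $L$.} I will produce a finitely generated submodule of $M$ that is pure in $M$ and contains $\im(g)$; this is enough. Indeed, such a submodule $L$ is flat by \autoref{lem:pure-submodule-flat} (being a pure submodule of the flat module $M$), and a finitely generated flat module over a local ring is free of finite rank; hence $(1)$--$(3)$ hold.

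\textbf{Construction of $L$ (the main work).} Using Lazard's theorem, write $M = \colim_{i \in I} L_i$ as a filtered colimit of finite free $R$-modules, with transition maps $u_{ij}$ and structure maps $u_i \colon L_i \to M$. Since $F$ is finitely presented and $g$ admits a stabilizer, \autoref{lem:stabilizers}$(2)$ supplies an index $i_0 \in I$ and, for every $j \geq i_0$, a map $f_j \colon F \to L_j$ with $g = u_j \circ f_j$, $f_j = u_{i_0 j} \circ f_{i_0}$, and $f_j$ a stabilizer of $g$. Because $f_j$ and $g$ dominate one another, taking $N = R$ in the resulting equality of kernels gives $\ker(f_j) = \ker(g)$ inside $F$; combined with $g = u_j \circ f_j$ this forces $\im(f_j) \cap \ker(u_j) = 0$, so $u_j$ restricts to an isomorphism $\im(f_j) \xrightarrow{\ \sim\ } \im(g) \subseteq M$. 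The remaining step is to enlarge $\im(f_j)$ to a finite free \emph{summand} $L'$ of some $L_j$ on which $u_j$ is still injective with pure image; then $L := u_j(L')$ is the desired submodule. For this, one uses that for $i_0 \leq j \leq k$ the maps $f_j$ and $f_k$ stabilize each other, hence (their cokernels being finitely presented quotients of finite free modules) factor through each other by \autoref{lem:domination}$(2)$, yielding retractions $v_{kj} \colon L_k \to L_j$ with $v_{kj} \circ u_{jk}$ restricting to the identity on $\im(f_j)$; passing to a sufficiently large index and analyzing this compatible data over the local ring $R$ — where finite free summands of the $L_j$ are again finite free — produces the summand $L'$. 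Carrying out this last extraction is the crux of the argument and is the step I expect to be the main obstacle; the details are in \cite[Part II, Lem.\ 2.1.9]{rg71}.
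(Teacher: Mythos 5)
Your treatment of the final sentence and your reduction to finding a finitely generated pure submodule containing $\im(g)$ are both correct (the reduction via \autoref{lem:pure-submodule-flat} plus ``finitely generated flat over local is free'' is a clean way to package conditions (1)--(3)). The opening of your construction also matches the paper: Lazard's theorem plus \autoref{lem:stabilizers}(2) to get the compatible stabilizers $f_j \colon F \to L_j$, and your observation that $u_j$ is injective on $\im(f_j)$ is fine. But the proof has a genuine gap exactly where you say it does: the extraction of the free pure submodule is the entire content of the proposition, and you do not carry it out — you defer to \cite[Part II, Lem.\ 2.1.9]{rg71}, which is the statement being proved. A proof that outsources its crux to the result itself is not a proof.

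Concretely, the missing mechanism in the paper runs through the \emph{duals}, not through the mutual factorizations $v_{kj}$ you set up. One invokes \autoref{lem:stabilizers}(3) to get that the images $S \coloneqq \im(\Hom_R(f_j,R)) \subseteq \Hom_R(F,R)$ are independent of $j \geq i$; $S$ is finitely generated, so one takes a minimal generating set of size $n = \dim_\kappa(S/\fram S)$, lifts the surjection $\Hom_R(L_i,R) \twoheadrightarrow S$ through $p \colon R^{\oplus n} \twoheadrightarrow S$ using projectivity, and applies Nakayama to conclude that the resulting maps $q \circ \Hom_R(u_{ij},R) \colon \Hom_R(L_j,R) \to R^{\oplus n}$ are surjective for \emph{every} $j \geq i$. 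Dualizing once more gives $t \colon \Hom_R(R^{\oplus n},R) \to L_i$ with $t \circ s = f_i$, and each $u_{ij} \circ t$ is the dual of a surjection of free modules, hence split injective; since a filtered colimit of split maps is pure, $u_i \circ t$ is pure and $L \coloneqq \im(u_i\circ t) \cong R^{\oplus n}$ works. Note that this uniformity over all $j \geq i$ is essential: finding a free summand $L'$ of a single $L_j$ containing $\im(f_j)$ (your stated goal) would not by itself make $u_j(L')$ pure in $M$, because purity of the colimit requires the splittings to persist along the whole directed system. Your sketch gestures at ``analyzing this compatible data'' but never identifies the invariant (the stable dual image $S$ and its minimal number of generators) that makes the analysis go through.
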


\begin{proof}
If $L$ satisfies $(1)-(3)$ then the map $f$ stabilizes $g$ by \autoref{lem:domination} (4). So now we show the existence of such an $L$. 

By Lazard's Theorem \cite[Thm.\ 1.2]{Lazard}, $M$ is a colimit of a system $\{(L_{i}, u_{ij}) \colon i, j \in I, i\leq j\}$ of free $R$-modules of finite rank indexed by a filtered poset $(I,\leq)$. Here $u_{ij}$ denotes the transition map $L_i \to L_j$ for $i \leq j$. We also let $u_i \colon L_i \to M$ denote the associated maps for all $i$. Since $g$ admits a stabilizer, by \autoref{lem:stabilizers} parts (2) and (3), there exists an index $i \in I$ such that for all $j \geq i$, we have a map
$
f_j \colon F \to L_j
$
such that
$g = u_j \circ f_j, f_j = u_{ij} \circ f_i$, $f_j$ stabilizes $g$ and
$
\im(\Hom_R(f_j,R)) = \im(\Hom_R(f_i,R)).
$
Let us call this stable image as $S$. Note that $S$ is a finitely generated $R$-module, since it is a quotient of the free module $\Hom_R(L_i,R)$. Let $n$ be the minimal number of generators of $S$, that is,
$
n = \dim_k(S/\fram S).
$
We then have a surjective $R$-linear map
$
p \colon R^{\oplus n} \twoheadrightarrow S.
$
By the choice of $n$, the induced map
$
p \otimes_R \id_\kappa \colon \kappa^{\oplus n} \to S/\fram S
$
is an isomorphism.
Consider the diagram
\[
\begin{tikzcd}
   &\Hom_R(L_i,R) \arrow[d, twoheadrightarrow] \\
  R^{\oplus n} \arrow[r, twoheadrightarrow, "p"]
&S \end{tikzcd}
\]
The vertical map is just the one obtained by restricting the codomain of $\Hom_R(f_i,R)$ to $S = \im(\Hom_R(f_i,R))$. Since $\Hom_R(L_i,R)$ is a free, hence projective $R$-module, there exists an $R$-linear map 
$q \colon \Hom_R(L_i,R) \to R^{\oplus n}$ 
such that
\begin{equation}
\label{eq:pq}
\Hom_R(L_i,R) \twoheadrightarrow S = p \circ q.
\end{equation}
After tensoring by $\kappa$ and using that $p \otimes_R \id_\kappa$ is an isomorphism, it follows that $q \otimes_R \id_\kappa$ is also surjective. Then by Nakayama's lemma, we see that $q$ is surjective. Since the composition
\[
\Hom_R(L_j,R) \xrightarrow{\Hom_R(u_{ij}, R)} \Hom_R(L_i,R) \twoheadrightarrow S
\]
is surjective for all $j \geq i$ ($S$ is the stable image $\im(\Hom_R(f_j,R))$ for all $j \geq i$), it follows by the exact same reasoning as above that for all $j \geq i$,
\begin{equation}
    \label{eq:surj-free-modules}
q \circ \Hom_R(u_{ij}, R) \colon \Hom_R(L_j,R) \to R^{\oplus n}
\end{equation}
is also surjective. Applying $\Hom_R(\hspace{2mm}, R)$ to the maps
$
R^{\oplus n} \xlongrightarrow{p} S \hookrightarrow \Hom_R(F,R)
$
and
$
\Hom_R(L_i,R) \xrightarrow{q} R^{\oplus n},
$
we get maps
\[
s \coloneqq F \to \Hom_R(\Hom_R(F,R),R) \to \Hom_R(S,R) \xhookrightarrow{\Hom_R(p,R)} \Hom_R(R^{\oplus n}, R)
\]
and
\[
t \coloneqq \Hom_R(R^{\oplus n}, R) \xhookrightarrow{\Hom_R(q,R)} \Hom_R(\Hom_R(L_i,R),R) \xrightarrow{\simeq} L_i.
\]
Here $F \to \Hom_R(\Hom_R(F,R),R)$ is the canonical map to the double dual, and we can make the identification $\Hom_R(\Hom_R(L_i,R),R) \cong L_i$ since $L_i$ is free of finite rank. By \autoref{eq:pq},
$
t\circ s = f_i.
$
Then $u_i \circ (t \circ s) = u_i \circ f_i = g$, and so,
\begin{equation}
\label{eq:image}
\im(g) \subseteq \im(u_i \circ t).
\end{equation}
We also have
$
u_i \circ t = \colim_{j \geq i} u_{ij} \circ t,
$
where each $u_{ij} \circ t$ admits a left-inverse because it can be identified with the map obtained by applying $\Hom_R(\hspace{2mm}, R)$ to the surjective map of free modules \autoref{eq:surj-free-modules}
$
q \circ \Hom_R(u_{ij},R)
$
which always has a right-inverse. Since a filtered colimit of split maps is pure, it follows that 
$u_i \circ t \colon 
\Hom_R(R^{\oplus n},R) \to M$ 
is a pure map. Taking $L \coloneqq \im(u_i \circ t) \cong R^{\oplus n}$, 
\autoref{eq:image} shows that $L$ satisfies properties $(1)-(3)$.
\end{proof}

\subsection{Mittag-Leffler and strictly Mittag-Leffler modules}
The notion of a Mittag-Leffler module was introduced by Raynaud and Gruson to study faithfully flat descent of projectivity \cite[Part II]{rg71}.  There is a long  history of Mittag-Leffler modules, and, in particular, there are many equivalent definitions/characterizations for it. We choose Raynaud and Gruson's original definition, in part because this perspective makes the connection between the Mittag-Leffler property and pure (aka universally injective) maps of modules clearest.

\begin{definition}\cite[Part II, D\'ef. 2.1.3]{rg71}
    \label{def:Mittag-Leffler}
    Let $R$ be a ring. An $R$-module $M$ is \emph{Mittag-Leffler}, abbreviated ML, if any finitely presented $R$-module $P$ and $R$-linear map $g: P \to M$ admits a stabilizer. That is, there exists a finitely presented module $Q$ and an $R$-linear map $f: P \to Q$ such that $f$ and $g$  dominate each other.
\end{definition}

\begin{example}
    Any finitely presented $R$-module is automatically ML.
\end{example}

In \autoref{def:Mittag-Leffler}, since $f \colon P \to Q$ is a map of finitely presented $R$-modules,
$\coker(f)$ is also a finitely presented $R$-module by \cite[\href{https://stacks.math.columbia.edu/tag/0519}{Tag 0519} (4)]{stacks-project} because
$\im(f)$ is a finitely generated $R$-module and one has a short exact sequence
\[
0 \to \im(f) \to Q \to \coker(f) \to 0.
\]
Hence by \autoref{lem:domination}$(2)$,
the map $g \colon P \to M$ factors through $f$. However, $f \colon P \to Q$ does not necessarily factor through $g$ because $\coker(g)$ is not necessarily finitely presented -- in fact, $\coker(g)$
may not even be finitely generated if $M$ is not finitely generated. 
This observation leads to the strengthening of the notion of a Mittag-Leffler module.

\begin{definition}\cite[Part II, D\'ef. 2.3.1]{rg71}
\label{def:strict-Mittag-Leffler}
An $R$-module $M$ is \emph{strictly Mittag-Leffler}, abbreviated SML, if for any finitely presented $R$-module $P$ and
any $R$-linear map $g \colon P \to M$, there exists a finitely presented $R$-module $Q$ and
an $R$-linear map $f \colon P \to Q$ such that $g$ factors through $f$ and $f$ factors through
$g$.
\end{definition}

\begin{remark}
    \label{rem:Mittag-Leffler}
    Let $R$ be a ring and $M$ be an $R$-module.
    \begin{enumerate}%[itemsep = 1mm]
      \item If $M$ is a SML $R$-module then $M$ is a ML $R$-module by \autoref{lem:domination}$(2)$. The converse holds for modules over a Noetherian complete local ring; see \autoref{prop:ML-SML}.
      
      \item One can show that an $R$-module $M$ is ML if and only if for any family of $R$-modules $\{Q_i\}_{i \in \Lambda}$, the canonical map
      $
      \left( \prod _{i \in \Lambda } Q_{i} \right) \otimes_R M \to \prod _{i \in \Lambda } (Q_{i} \otimes_R M)
      $
      is injective \cite[Part II, Prop.\ 2.1.5]{rg71} (alternate reference \cite[\href{https://stacks.math.columbia.edu/tag/059M}{Tag 059M}]{stacks-project}). This alternate characterization implies that the property of being Mittag-Leffler is preserved under arbitrary base change. That is, if $M$ is a ML $R$-module and $R \to S$ is a ring map, then $S \otimes_R M$ is a ML $S$-module. Indeed, if $\{Q_i\}_{i \in \Lambda}$ is a collection of $S$-modules, then the natural map
      \[
      \left(\prod_{i \in \Lambda} Q_i\right) \otimes_S (S \otimes_R M) \to \prod_{i \in \Lambda} (Q_i \otimes_S (S \otimes_R M))
      \]
      can be identified with
      \[
      \left(\prod_{i \in \Lambda} Q_i\right) \otimes_R M \to \prod_{i \in \Lambda} (Q_i \otimes_R M).
      \]
      The latter map is injective because $M$ is a ML $R$-module.
      
      \item If $M$ is a ML (resp. SML) $R$-module and $N$ is a pure submodule of $M$, then $N$ is a ML (resp. SML) $R$-module. Indeed, if $g \colon P \to N$ is an $R$-linear map such that $P$ is finitely presented, then the composition $g' \coloneqq P \xrightarrow{g} N \subseteq M$ admits a stabilizer $f \colon P \to Q$ (resp. admits a stabilizer that $g'$ factors) because $M$ is ML (resp. is SML). Purity of the inclusion $N \subseteq M$ implies that for all $R$-modules $T$, $\ker(g \otimes_R \id_T) = \ker(g' \otimes_R \id_T) = \ker(f \otimes_R \id_T)$. Thus, $f$ and $g$  dominate each other, and so, $f$ factors $g$ by \autoref{lem:domination} (2) because $\coker(f)$ is finitely presented. If $g'$ factors $f$, then $g$ also factors $f$ because $g$ factors $g'$ by definition $g'$. Thus, $N$ is ML (resp. is SML).
      
      \item As a consequence of \autoref{lem:domination} (8),
      we have that an $R$-module $M$ is ML if and only if for every
      free module $F$ of finite rank, every $R$-linear map 
      $g \colon F \to M$ admits a stabilizer. In other words, in 
      \autoref{def:Mittag-Leffler} it suffices to assume that
      $P$ is a free module of finite rank. The key point here is that
      if $g' \colon P \to M$ is an $R$-linear map from a finitely 
      presented $R$-module $P$ that is not necessarily free, 
      then choosing an $R$-linear surjection
      $\varphi \colon F \twoheadrightarrow P$ from a free module 
      $F$ of finite rank, one sees that if $f \colon F \to Q$ is a map
      of finitely presented $R$-modules such that $f$ and $g' \circ \varphi$  dominate one another, then $f$ factors through $P$ along the surjective map $\varphi$.
      That is, there exists an $R$-linear map $f' \colon P \to Q$ such
      that $f = f' \circ \varphi$. This is because the hypothesis of 
      mutual domination between $f$ and $g' \circ \varphi$ ensures
      that $\ker(f) = \ker(g' \circ \varphi)  \supseteq \ker(\varphi)$.
      Since $f = f' \circ \varphi$ and $g' \circ \varphi$  dominate each other and $\varphi$
      is surjective, $f'$ and $g'$ also  dominate
      one another by \autoref{lem:domination} (8). 

      \item Let $\{(L_{i}, \varphi_{ij}) \colon i, j \in I, i \leq j\}$ be a system of $R$-modules indexed by a filtered poset $(I, \leq)$ such that each $L_i$ is a ML $R$-module and the transition maps $\varphi_{ij}$ are pure. Then $M \coloneqq \colim_{i \in I} L_i$ is also a ML $R$-module. Indeed, the associated maps $\varphi_i \colon L_i \to M$ are $R$-pure (being a filtered colimit of pure maps). If $P$ is a finitely presented $R$-module and $g \colon P \to M$ is an $R$-linear map, then there exists $i \in I$ and $g_i \colon P \to L_i$ such that
      $ g = \varphi_i \circ g_i.
        $
      Then $g_i$ and $g$  dominate each other by \autoref{lem:domination} (4).  
      Since $L_i$ is ML, $g_i$ admits a stabilizer $f \colon P \to Q$, that is, $Q$ is finitely presented and $g_i$ and $f$  dominate each other. Hence $g$ and $f$ also  dominate each other, that is, $f$ stabilizes $g$.

      \item As an application of (b) one can show the following fact about the behavior of ML modules under restriction of scalars. Let $R \to S$ be a ML ring map and let $M$ be a flat and ML $S$-module. Then $M$ is a ML $R$-module. The interested reader can see \cite[\href{https://stacks.math.columbia.edu/tag/05CT}{Tag 05CT}]{stacks-project} for a proof.

      \item Let $\varphi \colon P \to M$ be a pure $R$-linear map, where $P$ is finitely presented and $M$ is SML. Then $\varphi$ splits. Indeed, let $Q$ be a finitely presented $R$-module and $\phi \colon P \to Q$ be a linear map such that $\varphi$ factors through $\phi$ and $\phi$ factors through $\varphi$. Since $\varphi$ is $R$-pure and factors through $\phi$, it follows that $\phi$ is also $R$-pure. Since the cokernel of a map of finitely presented modules is also finitely presented, it follows that $\phi$ splits (\autoref{lem:pure-iff-split}), that is, there exists $u \colon Q \to P$ such that $u \circ \phi = \id_P$. Let $f \colon M \to Q$ be a linear map such that
      $
        \phi = f \circ \varphi.
      $
      Then $\id_P = u \circ \phi = (u \circ f) \circ \varphi$, that is, $\varphi$ splits.

      \item A free $R$-module $F$ is SML and hence is also Mittag-Leffler. Indeed, let $P$ be a finitely presented $R$-module and $g \colon P \to F$ be a linear map. Since $\im(g)$ is finitely generated, let $Q$ be a finitely generated free summand of $F$ that contains $\im(g)$ and let $f \colon P \to Q$ be the composition $P \xrightarrow{g} \im(g) \hookrightarrow Q$. By construction, $g$ factors through $f$ via the inclusion $Q \hookrightarrow F$. Also, since $Q$ is a summand of $F$, choosing a left-inverse $u \colon F \to Q$ of $Q \hookrightarrow F$, we get $u \circ g = f$. Thus, $f$ factors through $g$. 

      \item A consequence of (h) and (c) is that any projective $R$-module is SML and hence is also ML. Furthermore, if $M$ admits a countable generating set as an $R$-module, then $M$ is projective if and only if $M$ is a flat SML $R$-module \cite[Part II, Cor.\ 2.2.2]{rg71}.
    \end{enumerate}
    \end{remark}

    Let $\{(L_i, \varphi_{ij}) \colon i, j \in I, i \leq j\}$ be a direct system of $R$-modules indexed by a filtered poset $(I, \leq)$, and let 
    $
    M \coloneqq \colim_{i \in I} L_i.    
    $
    For all $i \in I$, let $\varphi_i \colon L_i \to M$ be the canonical maps. Then for any $R$-module $N$, we get a filtered inverse system of $R$-modules $\{(\Hom_R(L_i, N), \Hom_R(\varphi_{ij}, N)) \colon i, j \in I, i \leq j\}$. Note that for all $j \geq j$, 
    $
    \Hom_R(\varphi_i, N) = \Hom_R(\varphi_{ij}, N) \circ \Hom_R(\varphi_j, N),    
    $
    and so,
    $
        \im(\Hom_R(\varphi_i, N)) \subseteq  \bigcap_{j \geq i} \im(\Hom_R(\varphi_{ij}, N)).    
    $
    
    There are two natural questions one can ask about the above inverse system (that depends on $N$):
    \begin{enumerate}
        \item[($\bigstar$)]\label{it:MLstar} For each $i \in I$, does $\bigcap_{j \geq i} \im(\Hom_R(\varphi_{ij}, N))$ stabilize? That is, is there some $j_0 \in I$ (depending on $i$) such that $j_0 \geq i$ and
        $
            \bigcap_{j \geq i} \im(\Hom_R(\varphi_{ij}, N)) = \im(\Hom_R(\varphi_{ij_0}, N))?
        $
        Such a $j_0$ exists if and only if there exists a $j_0$ such that $j_0 \geq i$ and for all $k \geq j_0$,
        $
        \im(\Hom_R(\varphi_{ij_0}, N)) = \im(\Hom_R(\varphi_{ik}, N)).    
        $

        \item[($\bigstar\bigstar$)] A stronger question one can ask is: does  $\bigcap_{j\geq i} \im(\Hom_R(\varphi_{ij}, N))$ stabilize for each $i \in I$ and, moreover, does this stable set coincide with $\im(\Hom_R(\varphi_i, N))$? Equivalently, for each $i \in I$ is there a $j_0 \in I$ (depending on $i$) such that $j_0 \geq i$ and
        $
            \im(\Hom_R(\varphi_i, N)) =  \im(\Hom_R(\varphi_{ij_0}, N))?
        $
        Such a $j_0$ exists if and only if there exists a $j_0$ such that $j_0 \geq i$ and for all $k \geq j_0$,
        $
            \im(\Hom_R(\varphi_i, N)) =  \im(\Hom_R(\varphi_{ik}, N)).
        $
    \end{enumerate}

    The difference between ($\bigstar$) and ($\bigstar\bigstar$) is the difference between a ML $R$-module and a SML $R$-module.

    \begin{theorem}
        \label{thm:ML-systems}
        Let $\{(L_i, \varphi_{ij}) \colon i, j \in I, i \leq j\}$ be system of \emph{finitely presented} $R$-modules indexed by a filtered poset $(I, \leq)$, and let 
    $
    M \coloneqq \colim_{i \in I} L_i.    
    $
    For all $i \in I$, let $\varphi_i \colon L_i \to M$ be the canonical maps. Then we have the following:
    \begin{enumerate}
        \item[$(1)$] $M$ is ML if and only if for all $R$-modules $N$ and for all $i \in I$, $\bigcap_{j \geq i} \im(\Hom_R(\varphi_{ij}, N))$ stabilizes in the sense of \emph{($\bigstar$)}.
        %removed smallskip
        \item[$(2)$] $M$ is SML if and only if for all $R$-modules $N$ and for all $i \in I$, $\bigcap_{j \geq i} \im(\Hom_R(\varphi_{ij}, N))$ stabilizes to $\im(\Hom_R(\varphi_i, N))$ in the sense of \emph{($\bigstar\bigstar$)}.
    \end{enumerate}
    \end{theorem}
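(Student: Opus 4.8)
The plan is to reduce both equivalences to statements about the canonical maps $\varphi_i\colon L_i\to M$ and then apply \autoref{lem:stabilizers}. Two elementary facts drive the reduction. First, precomposition preserves mutual domination: if $\psi\colon A\to B$ and $\psi'\colon A\to B'$ dominate each other and $h\colon C\to A$ is $R$-linear, then for every $N$ one has $\ker((\psi\circ h)\otimes_R\id_N)=(h\otimes_R\id_N)^{-1}\bigl(\ker(\psi\otimes_R\id_N)\bigr)$, and likewise with $\psi'$, so $\psi\circ h$ and $\psi'\circ h$ dominate each other. Second, because $M=\colim_i L_i$ with each $L_i$ finitely presented, every $R$-linear $g\colon P\to M$ out of a finitely presented module factors as $g=\varphi_j\circ g_j$ for suitable $j$ and $g_j\colon P\to L_j$. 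Combining these: if $f$ stabilizes $\varphi_j$ then $f\circ g_j$ stabilizes $g$, and if in addition $f$ factors through $\varphi_j$ then $f\circ g_j$ factors through $g$. Hence $M$ is ML (resp.\ SML) precisely when every $\varphi_i$ admits a stabilizer (resp.\ a stabilizer that factors through $\varphi_i$), and it suffices to treat the maps $\varphi_i$.

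For the forward direction of $(1)$, assume $M$ is ML and fix $i$ and an $R$-module $N$. Since $L_i$ is finitely presented, $\varphi_i$ admits a stabilizer, so \autoref{lem:stabilizers}~(2) and~(3), applied to the system $\{L_j\}$ with colimit $M$, yield (we may take $a\geq i$) an index $a$ and maps $f_j\colon L_i\to L_j$ for $j\geq a$ with $\varphi_j\circ f_j=\varphi_i$, with $f_j=\varphi_{aj}\circ f_a$, with each $f_j$ a stabilizer of $\varphi_i$, and with $\im(\Hom_R(f_j,N))=\im(\Hom_R(f_a,N))$ for all $j\geq a$. From $\varphi_j\circ f_j=\varphi_i=\varphi_j\circ\varphi_{ij}$ and finite generation of $L_i$, filteredness provides $j_2\geq a$ with $\varphi_{aj_2}\circ f_a=\varphi_{aj_2}\circ\varphi_{ia}$, hence $f_{j_2}=\varphi_{ij_2}$ and then $f_j=\varphi_{j_2j}\circ f_{j_2}=\varphi_{ij}$ for all $j\geq j_2$. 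So the decreasing chain $\bigl(\im(\Hom_R(\varphi_{ij},N))\bigr)_{j\geq i}$ is constant for $j\geq j_2$, which is $(\bigstar)$. For the forward direction of $(2)$, rerun this with the parenthetical ``resp.''\ clauses of \autoref{lem:stabilizers}~(2): the $f_j$ may be taken to also factor through $\varphi_i$, so for $j\geq j_2$ the map $\varphi_{ij}$ factors through $\varphi_i$, while (being a stabilizer of $\varphi_i$) $\varphi_i$ factors through it by \autoref{lem:stabilizers}~(1); applying $\Hom_R(-,N)$ to both factorizations gives $\im(\Hom_R(\varphi_{ij},N))=\im(\Hom_R(\varphi_i,N))$ for $j\geq j_2$, i.e.\ $(\bigstar\bigstar)$.

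The converse directions need two further ingredients, and the first is the \emph{main obstacle}: upgrading ``for each $N$ there is a $j_0$'' in $(\bigstar)$/$(\bigstar\bigstar)$ to a single $j_0$ valid uniformly in $N$. I would obtain this by a diagonal argument. Fix $i$; were no uniform $j_0$ to exist, then for each $j_0\in I$ with $j_0\geq i$ we could pick a module $N_{j_0}$ and an index $k_{j_0}\geq j_0$ with $\im(\Hom_R(\varphi_{ik_{j_0}},N_{j_0}))\subsetneq\im(\Hom_R(\varphi_{ij_0},N_{j_0}))$. Applying the hypothesis to $N\coloneqq\prod_{j_0\geq i}N_{j_0}$, and using that $\Hom_R(L_i,-)$ sends this product to the product of the $\Hom_R(L_i,N_{j_0})$ so that $\im(\Hom_R(\varphi_{ik},N))=\prod_{j_0}\im(\Hom_R(\varphi_{ik},N_{j_0}))$ (and similarly with $\varphi_i$), the stabilization we get, inspected in the coordinate indexed by the resulting $j_0^{\ast}$, contradicts the choice of $N_{j_0^{\ast}}$ and $k_{j_0^{\ast}}$. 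This produces, for each $i$, an index $j_0$ with $\im(\Hom_R(\varphi_{ik},N))=\im(\Hom_R(\varphi_{ij_0},N))$ for all $k\geq j_0$ and all $N$ (resp.\ $=\im(\Hom_R(\varphi_i,N))$ in the strict case).

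For $(2)$ this already finishes: with $N=L_{j_0}$ and $k=j_0$, the identity of $L_{j_0}$ shows $\varphi_{ij_0}\in\im(\Hom_R(\varphi_{ij_0},L_{j_0}))=\im(\Hom_R(\varphi_i,L_{j_0}))$, so $\varphi_{ij_0}$ factors through $\varphi_i$ and hence dominates $\varphi_i$ by \autoref{lem:domination}~(2); since $\varphi_i=\varphi_{j_0}\circ\varphi_{ij_0}$ dominates $\varphi_{ij_0}$, the map $\varphi_{ij_0}$ is a stabilizer of $\varphi_i$ through which $\varphi_i$ factors, and $M$ is SML by the reduction. For $(1)$ the stabilizer need not factor through $\varphi_i$, so the second ingredient is Pontryagin duality $(-)^{\vee}=\Hom_{\bZ}(-,\bQ/\bZ)$: from $\Hom_R(L_i,N^{\vee})\cong(L_i\otimes_R N)^{\vee}$ and the exactness and faithfulness of $(-)^{\vee}$ one gets $\Hom_{\bZ}\bigl(\ker(\varphi_{ik}\otimes_R\id_N),\bQ/\bZ\bigr)\cong\coker\bigl(\Hom_R(\varphi_{ik},N^{\vee})\bigr)$, whence the images $\im(\Hom_R(\varphi_{ik},N^{\vee}))$ stabilize in $k$ (at a given index) iff the kernels $\ker(\varphi_{ik}\otimes_R\id_N)$ do. Feeding the modules $N^{\vee}$ into the uniform statement thus gives a $j_0$ with $\ker(\varphi_{ik}\otimes_R\id_N)=\ker(\varphi_{ij_0}\otimes_R\id_N)$ for all $k\geq j_0$ and all $N$; since $\ker(\varphi_i\otimes_R\id_N)=\bigcup_{k\geq i}\ker(\varphi_{ik}\otimes_R\id_N)$, this forces $\ker(\varphi_i\otimes_R\id_N)=\ker(\varphi_{ij_0}\otimes_R\id_N)$ for every $N$, so $\varphi_{ij_0}$ and $\varphi_i$ dominate each other, $\varphi_{ij_0}$ stabilizes $\varphi_i$, and $M$ is ML by the reduction. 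The delicate points are the uniformity upgrade and, in case $(1)$, the transfer between stabilization of $\Hom$-images and of tensor-kernels; the rest is bookkeeping with \autoref{lem:domination} and \autoref{lem:stabilizers}.
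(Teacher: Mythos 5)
Your proof is correct, and it is genuinely different from what the paper does: the paper proves neither direction of either part, deferring entirely to \cite[Part II, Prop.\ 2.1.4 and Prop.\ 2.3.2]{rg71} (with only the remark that the forward implication of (1) follows from \autoref{lem:stabilizers}). Your forward directions are exactly the route the paper gestures at — reduce to the canonical maps $\varphi_i$, invoke \autoref{lem:stabilizers}~(2)--(3), and identify $f_j$ with $\varphi_{ij}$ for large $j$ by equalizing two maps out of the finitely presented module $L_i$ at a finite stage — and your reduction of the ML/SML property of $M$ to the existence of (factoring) stabilizers for the $\varphi_i$ alone is clean and correct. What you supply that the paper does not is a self-contained proof of the converses, and the two ingredients you add are exactly the right ones: the product/diagonal trick to upgrade ``for each $N$ there is a stabilization index'' to a single index uniform in $N$ (valid because $\Hom_R(L_i,-)$ and image-taking commute with arbitrary products), and, for part (1), the duality $\Hom_R(L_i,N^{\vee})\cong(L_i\otimes_R N)^{\vee}$ with $(-)^{\vee}=\Hom_{\bZ}(-,\bQ/\bZ)$, under which $\im(\Hom_R(\varphi_{ik},N^{\vee}))$ is the annihilator of $\ker(\varphi_{ik}\otimes_R\id_N)$, so that stabilization of $\Hom$-images for all modules forces stabilization of tensor-kernels (the injective-cogenerator property of $\bQ/\bZ$ makes $K\mapsto K^{\perp}$ injective). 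Combined with $\ker(\varphi_i\otimes\id_N)=\bigcup_k\ker(\varphi_{ik}\otimes\id_N)$ this yields a stabilizer $\varphi_{ij_0}$ of $\varphi_i$; your observation that for (2) the duality can be bypassed entirely by evaluating the uniform statement at $N=L_{j_0}$ to extract a factorization of $\varphi_{ij_0}$ through $\varphi_i$ is a nice simplification. These are essentially the same tools Raynaud--Gruson use (the $\bQ/\bZ$-duality also underlies the product characterization of ML quoted in \autoref{rem:Mittag-Leffler}~(b)), so what you have written is a faithful and complete reconstruction of the cited result rather than a shortcut through the paper's other machinery; it could serve as a standalone replacement for the citation.
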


    \begin{proof}
        (1) follows from \cite[Part II, Prop. 2.1.4$(i)\Leftrightarrow(iii)$]{rg71} (alternate reference \cite[\href{https://stacks.math.columbia.edu/tag/059E}{Tag 059E}]{stacks-project}). Note that the implication $\implies$ in (1) follows from \autoref{lem:stabilizers}. The assertion in (2) follows from \cite[Part II, Prop.\ 2.3.2$(i)\Leftrightarrow(iii)$]{rg71}.
    \end{proof}

    Recall that Lazard's Theorem characterizes flat $R$-modules as precisely those that can be written as a filtered colimit of free modules of finite rank. In general, one cannot guarantee the maps from these free modules to the flat module to be well-behaved in any way (for example, one cannot guarantee that a flat $R$-module is a filtered union of free submodules of finite rank). However, \autoref{prop:RG-2.1.9} has the following straightforward consequence for flat ML modules and flat SML modules over local rings.

\begin{proposition}
    \label{prop:ML-SML-modules-colimit-pure-split-free}
    Let $(R, \fm, \kappa)$ be a local ring (not necessarily Noetherian) and let $M$ be an $R$-module. We have the following:
    \begin{enumerate}%[itemsep = 1mm]
        \item[$(1)$] $M$ is flat and ML if and only if $M$ is a filtered union of finitely generated free submodules that are pure in $M$.
        
        \item[$(2)$] $M$ is flat and SML if and only if $M$ is a filtered union of finitely generated free submodules that are direct summands of $M$.
    \end{enumerate}
    
\end{proposition}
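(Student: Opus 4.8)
The plan is to reduce both equivalences to \autoref{prop:RG-2.1.9}. I would first isolate the following \emph{covering lemma}: if $M$ is a flat ML $R$-module, then every finitely generated submodule $P \subseteq M$ is contained in some finitely generated free submodule $L \subseteq M$ with $L$ pure in $M$. To prove it, pick a generating tuple for $P$, giving an $R$-linear surjection $g \colon R^{\oplus n} \twoheadrightarrow P$, which we regard as a map $R^{\oplus n} \to M$ from a finitely presented module. Since $M$ is ML, $g$ admits a stabilizer, and since $M$ is flat and $R$ is local, \autoref{prop:RG-2.1.9} produces a submodule $L \subseteq M$ that is free of finite rank, pure in $M$, and contains $\im(g) = P$; that is exactly the covering lemma.

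Granting the covering lemma, the forward implication of $(1)$ follows quickly. I would let $\mathcal{F}$ be the family of all finitely generated free submodules of $M$ that are pure in $M$. This family is directed by inclusion — given $L_1, L_2 \in \mathcal{F}$, apply the covering lemma to the finitely generated submodule $L_1 + L_2$ to obtain $L_3 \in \mathcal{F}$ containing both — and it covers $M$, since for each $x \in M$ the covering lemma applied to $Rx$ produces a member of $\mathcal{F}$ containing $x$. Hence $M = \bigcup_{L \in \mathcal{F}} L$ is a filtered union of the required kind. For the forward implication of $(2)$ I would run the same argument, noting in addition that each such $L$, being free of finite rank, is finitely presented, so the pure inclusion $L \hookrightarrow M$ splits by \autoref{rem:Mittag-Leffler}(g) (recall SML implies ML by \autoref{rem:Mittag-Leffler}(a), so the covering lemma still applies). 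Thus the corresponding family consists of finitely generated free direct summands of $M$, and the directedness-plus-covering argument goes through verbatim.

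For the converses, suppose $M = \bigcup_i L_i$ is a filtered union of finitely generated free submodules. Then $M$ is a filtered colimit of free, hence flat, $R$-modules, so $M$ is flat. Given a finitely presented $R$-module $P$ and an $R$-linear map $g \colon P \to M$, the image $\im(g)$ is finitely generated, so by directedness there is an index $i$ with $\im(g) \subseteq L_i$, giving a factorization $P \xrightarrow{h} L_i \hookrightarrow M$ with $L_i$ finitely presented. In case $(1)$ the inclusion $L_i \hookrightarrow M$ is pure, so $h$ and $g$ dominate each other by \autoref{lem:domination}(4), i.e. $h$ stabilizes $g$, and $M$ is ML. In case $(2)$ the inclusion splits via some $\pi \colon M \to L_i$, so $h = \pi \circ g$; then $g$ factors through $h$ (via the inclusion) and $h$ factors through $g$ (via $\pi$), which is precisely the defining condition for $M$ to be SML.

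I do not anticipate a genuine obstacle here: all the real work is already packaged in \autoref{prop:RG-2.1.9} (which itself rests on Lazard's Theorem \cite[Thm.\ 1.2]{Lazard}), and what remains is bookkeeping. The one place to be careful is the converse, where one must use directedness of the union to place $\im(g)$ inside a single $L_i$, and where in case $(1)$ the cleanest route is the direct domination argument via \autoref{lem:domination}(4) rather than an appeal to \autoref{rem:Mittag-Leffler}(e).
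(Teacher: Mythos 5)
Your proof is correct and follows essentially the same route as the paper's: the forward implications rest on \autoref{prop:RG-2.1.9} (plus \autoref{rem:Mittag-Leffler}(g) for the SML case), and the converses use flatness of filtered colimits of free modules together with factoring $\im(g)$ through a single $L_i$ by directedness. The only cosmetic difference is that in the converses you take the corestriction $h\colon P\to L_i$ itself as the (strict) stabilizer, whereas the paper routes (1) through \autoref{rem:Mittag-Leffler}(e) and (2) through the statement that a filtered union of SML direct summands is SML; both amount to the same domination/factorization argument.
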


\begin{proof}
    (1) A filtered colimit of free modules is flat. Thus, the `if' implication follows by \autoref{rem:Mittag-Leffler} (e) because free modules of finite rank are finitely presented, and hence, ML. Now suppose $M$ is a flat ML $R$-module. We know that $M$ is a filtered union of its finitely generated submodules. Thus, it suffices to show that if $N$ is a finitely generated submodule of $M$, then there exists a submodule $L$ of $M$ such that $N \subseteq L$, $L$ is free of finite rank and $L \hookrightarrow M$ is pure. But this follows upon choosing a map $g \colon F \to M$ from a free module $F$ of finite rank that surjects onto $N$ and observing that since $g$ admits a stabilizer ($M$ is ML), it must admit one $f \colon F \to L$ where $L$ has the desired properties by \autoref{prop:RG-2.1.9}.

    (2) Since a SML module is ML, it follows by (a) that a flat and SML $R$-module is a filtered union of finitely generated free submodules that are pure in $M$. But such a submodule must also be direct summand of $M$ by \autoref{rem:Mittag-Leffler} (g). This proves the forward implication. Conversely, suppose $M$ is a filtered union of finitely generated free submodules that are direct summands of $M$. Then $M$ is flat since it is a filtered colimit of free modules. Moreover, free modules are SML by \autoref{rem:Mittag-Leffler} (h). 
    
    Thus, it suffices to show that if $M$ is a filtered union of SML submodules that are direct summands of $M$, then $M$ is also SML. Let $P$ be a finitely presented $R$-module and $g \colon P \to M$ be $R$-linear. Then there exists a SML direct summand $M'$ of $M$ such that $\im(g) \subseteq M'$. Let $u \colon M \to M'$ be a left-inverse of the inclusion $M' \hookrightarrow M$. Let $g' \colon P \to M'$ be the composition $P \xrightarrow{g} \im(g) \hookrightarrow M'$. Since $M'$ is SML, there exists a finitely presented $R$-module $Q$ and a map $f \colon P \to Q$ such that $g'$ factors through $f$ and $f$ factors through $g'$. Since $g$ factors through $g'$, it follows that $g$ factors through $f$. Furthermore, since $g'$ factors through $g$ via $u$, it follows that $f$ also factors through $g$. Thus, $M$ is SML.
\end{proof}

    The next result shows that the ML and SML properties coincide for modules over a Noetherian complete local ring.

    \begin{proposition}
        \label{prop:ML-SML}
        Let $(R, \fm)$ be a Noetherian local ring that is $\fm$-adically complete. Let $M$ be an $R$-module. Then the following are equivalent:
        \begin{enumerate}%[itemsep = 1mm]
        \item[$(1)$] $M$ is a ML $R$-module.
        
        \item[$(2)$] $M$ is a SML $R$-module.
        \end{enumerate}
    \end{proposition}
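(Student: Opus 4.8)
The plan is to dispatch $(2)\Rightarrow(1)$ formally and to concentrate on $(1)\Rightarrow(2)$, which is the only place completeness is used, via \autoref{lem:Auslander-Warfield-lemma}. For $(2)\Rightarrow(1)$: if $M$ is SML and $g\colon P\to M$ is linear with $P$ finitely presented, then the witnessing $f\colon P\to Q$ of \autoref{def:strict-Mittag-Leffler} has $g$ factoring through $f$ and $f$ factoring through $g$, so $f$ and $g$ dominate each other by \autoref{lem:domination}$(2)$; hence $f$ is a stabilizer of $g$, and $M$ is ML.

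Now assume $M$ is ML and verify the SML condition of \autoref{def:strict-Mittag-Leffler}. Fix a finitely presented $R$-module $P$ and an $R$-linear map $g\colon P\to M$, and use the ML hypothesis to choose a stabilizer $f\colon P\to Q$ with $Q$ finitely presented and $f,g$ mutually dominating. Since $Q$ is finitely presented and $\im f$ is finitely generated, $\coker f$ is finitely presented, so domination of $f$ by $g$ forces $g$ to factor through $f$ by \autoref{lem:domination}$(2)$ (this is \autoref{lem:stabilizers}$(1)$). The only remaining point — the genuine content of ``SML beyond ML'' — is to factor $f$ through $g$.

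For this, form the pushout
\[
\xymatrix{ P \ar[r]^{f} \ar[d]_{g} & Q \ar[d]^{g'} \\ M \ar[r]^{f'} & T, }
\]
so that $g'\circ f = f'\circ g$. Because $f$ and $g$ dominate each other, \autoref{lem:domination}$(3)$ applies to both maps and shows that the two structure maps $g'\colon Q\to T$ and $f'\colon M\to T$ are pure; in particular $g'$ is a pure $R$-linear map out of the finitely generated module $Q$. Since $R$ is Noetherian local and $\fm$-adically complete, \autoref{lem:Auslander-Warfield-lemma} now yields an $R$-linear left inverse $r\colon T\to Q$ of $g'$, i.e.\ $r\circ g'=\id_Q$. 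Setting $\phi\coloneqq r\circ f'\colon M\to Q$ and using commutativity of the square, $\phi\circ g = r\circ f'\circ g = r\circ g'\circ f = f$, so $f$ factors through $g$. Thus $Q$ and $f\colon P\to Q$ witness the SML condition for $g$, proving $(1)\Rightarrow(2)$.

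The essential step — the one place one needs more than the generalities of domination and stabilizers — is the splitting of $g'$: the gap between ML and SML is precisely the potential failure of pure monomorphisms out of finitely generated modules to split, and over a complete Noetherian local ring this failure disappears by \autoref{lem:Auslander-Warfield-lemma} (finitely generated modules over such rings are pure-injective). One care point is to apply \autoref{lem:domination}$(3)$ to the correct leg of the pushout: it is purity of the map $g'$ out of the finitely presented module $Q$, rather than purity of the map $f'$ out of $M$, that is needed, which is why mutual domination — and not just a single domination — is used.
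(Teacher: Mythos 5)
Your proof is correct and follows the same route as the paper: reduce to $(1)\Rightarrow(2)$, take a stabilizer $f$ of $g$, form the pushout, deduce purity of $g'\colon Q\to T$ from mutual domination via \autoref{lem:domination}(3), split $g'$ using \autoref{lem:Auslander-Warfield-lemma} over the complete local ring, and compose to factor $f$ through $g$. The closing remark correctly identifies the splitting of the pure map out of the finitely generated module $Q$ as the one place completeness enters.
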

    
    \begin{proof}
        Since we are working over a Noetherian ring, finitely generated
        is the same as being finitely presented for modules.
        
        Given \autoref{rem:Mittag-Leffler} (a), for the equivalence of $(1)$ and $(2)$ it suffices to show that if $M$ is ML then $M$ is SML. Let $P$ be a finitely generated $R$-module and let
        $g \colon P \to M$ be an $R$-linear map. Since $M$ is ML, there exists a finitely generated $R$-module $Q$
        and an $R$-linear map $f \colon P \to Q$ such that $g$ and $f$ 
        dominate one another. Thus, if one forms the pushout square
        \[
        \xymatrix{P \ar[r]_ f \ar[d]_ g &  Q \ar[d]^{g'} \\  M \ar[r]^{f'} &  N ,}
        \]
        then both $f'$ and $g'$ are pure maps of $R$-modules by \autoref{lem:domination} (3).
        Since $\coker(f)$ is finitely presented, it follows by
        \autoref{lem:domination}$(2)$ that $g$ factors through $f$.
        Thus, it remains to show by \autoref{def:strict-Mittag-Leffler}
        that $f$ factors through $g$.
        
        The map $g' \colon Q \to N$ is a pure map of modules over a Noetherian complete local ring such that $Q$ is a finitely generated $R$-module. Thus,
        by \autoref{lem:Auslander-Warfield-lemma}, $g'$ 
        admits an $R$-linear left-inverse 
$\varphi \colon N \to Q,$ 
        that is, $\varphi \circ g' = \id_Q$. Then by
        the commutativity of the above diagram, 
       $
        (\varphi \circ f') \circ g = \varphi \circ (f' \circ g) = \varphi \circ (g' \circ f) = (\varphi \circ g') \circ f = f.
        $
        That is, $f$ factors through $g$ as desired.
    \end{proof}

\subsection{Cyclic domination and cyclic stabilizers}

Recall that we say that a map of $R$-modules $M \to N$ is 
\emph{cyclically pure} if for all cyclic $R$-modules $P$, the induced
map $M \otimes_R P \to N \otimes_R P$ is injective.

It is clear that pure maps are always cyclically pure. We have the following general situation where the converse holds; see also \cite{HochsterCyclicPurity} for other non-trivial instances where cyclic purity implies purity.

\begin{lemma}
\label{lem:cyclic-purity-flat}
Let $R$ be a ring and $\varphi \colon M \to N$ be an $R$-linear map such that $N$ is 
a flat $R$-module. Then $\varphi$ is cyclically pure if and only if
$\varphi$ is pure.
\end{lemma}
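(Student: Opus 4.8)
The plan is to reduce the non-trivial implication to a short $\Tor$-computation. The forward direction requires nothing: a pure map $\varphi \colon M \to N$ is cyclically pure by definition, since cyclic modules form a subclass of all $R$-modules.

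For the converse, suppose $\varphi \colon M \to N$ is cyclically pure with $N$ flat over $R$, and the aim is to prove $\varphi$ pure. First I would observe that $\varphi$ is injective: applying cyclic purity to the cyclic module $P = R$ shows that $M = M \otimes_R R \to N \otimes_R R = N$ is injective. Thus we may regard $M$ as a submodule of $N$ and work with the short exact sequence $0 \to M \xrightarrow{\varphi} N \to N/M \to 0$.

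The main step is to show that $N/M$ is a flat $R$-module. Fix an ideal $\mathfrak{a}$ of $R$ and tensor the above sequence with $R/\mathfrak{a}$. Since $N$ is flat we have $\Tor_1^R(N, R/\mathfrak{a}) = 0$, so the associated long exact sequence identifies $\Tor_1^R(N/M, R/\mathfrak{a})$ with the kernel of the map $M/\mathfrak{a}M \to N/\mathfrak{a}N$. But $R/\mathfrak{a}$ is cyclic, so cyclic purity of $\varphi$ makes this map injective; hence $\Tor_1^R(N/M, R/\mathfrak{a}) = 0$ for every ideal $\mathfrak{a}$. By the standard $\Tor$-criterion for flatness (vanishing of $\Tor_1$ against $R/\mathfrak{a}$ for all finitely generated ideals $\mathfrak{a}$ already suffices), $N/M$ is flat.

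Finally, flatness of $N/M$ forces $\Tor_1^R(N/M, P) = 0$ for every $R$-module $P$, so tensoring $0 \to M \to N \to N/M \to 0$ with $P$ yields an exact sequence; in particular $\varphi \otimes_R \id_P$ is injective for all $P$, which is precisely the assertion that $\varphi$ is pure. I do not anticipate a genuine obstacle here; the only points that call for care are extracting injectivity of $\varphi$ from cyclic purity before forming the long exact $\Tor$-sequence, and citing the flatness criterion used in the third paragraph. (The computations in the last two paragraphs can also be packaged as the statement that, when $N$ is flat, a submodule $M \subseteq N$ is pure if and only if $N/M$ is flat.)
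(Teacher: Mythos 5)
Your proof is correct. The paper itself gives no argument here — it simply cites \cite[\href{https://stacks.math.columbia.edu/tag/0AS5}{Tag 0AS5}]{stacks-project} — and your $\Tor$-computation (cyclic purity kills $\Tor_1^R(N/M,R/\mathfrak{a})$ for every ideal $\mathfrak{a}$, hence $N/M$ is flat, hence $\varphi$ is pure) is the standard complete proof of that statement, with the one necessary preliminary (injectivity of $\varphi$ via $P=R$) correctly attended to.
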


\begin{proof}
This is \cite[\href{https://stacks.math.columbia.edu/tag/0AS5}{Tag 0AS5}]{stacks-project}. 
\end{proof}

Given that the notion of domination is intimately tied to the notion of purity (see \autoref{lem:domination} (3)), it natural to develop a cyclic version of domination that will relate to cyclic purity.

\begin{definition}
\label{def:cyclic-domination}
Let $R$ be a ring and $f \colon M \to P$ and $g \colon M \to Q$ be
two maps of $R$-modules. Then $g$ \emph{cyclically dominates} $f$
if for all cyclic $R$-modules $N$, $\ker(f \otimes_R \id_N) \subseteq \ker(g \otimes_R \id_N)$.
\end{definition}

\begin{lemma}
\label{lem:cyclic-stab-pushout}
Let $R$ be a ring and $f \colon M \to P$ and $g \colon M \to Q$ be
$R$-linear maps. Consider the pushout of $f$ and $g$
    \[
    \xymatrix{ M \ar[r]_ f \ar[d]_ g &  P \ar[d]^{g'} \\  Q \ar[r]^{f'} &  T }
    \]
    Then $g$ cyclically dominates $f$ if and only if $f'$ is a cyclically pure map of $R$-modules. 
\end{lemma}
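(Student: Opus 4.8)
The plan is to run the proof of \autoref{lem:domination}(3) verbatim, merely restricting the class of test modules from all $R$-modules to cyclic ones. The only two ingredients that proof uses are: (i) the functor $-\otimes_R N$ is right exact, hence preserves pushouts; and (ii) an explicit description of the kernel of the parallel leg of a pushout square of modules. Neither ingredient is affected by passing to cyclic test modules, since a cyclic module stays cyclic and at no point does one need to tensor with a non-cyclic module.

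First I would fix an $R$-module $N$ and apply $-\otimes_R N$ to the given pushout square; by right exactness this yields a pushout square
\[
\xymatrix{ M\otimes_R N \ar[r]_{f\otimes\id_N} \ar[d]_{g\otimes\id_N} & P\otimes_R N \ar[d]^{g'\otimes\id_N} \\ Q\otimes_R N \ar[r]^{f'\otimes\id_N} & T\otimes_R N. }
\]
Next I would record the elementary fact that in any pushout square of $R$-modules $\xymatrix{A\ar[r]^a\ar[d]_b & B\ar[d] \\ C\ar[r]^{a'} & D}$ one has $D=(B\oplus C)/\{(a(x),-b(x)):x\in A\}$ with $a'(c)=\overline{(0,c)}$, so that $a'(c)=0$ precisely when there is $x\in A$ with $a(x)=0$ and $c=b(x)$; in other words $\ker(a')=b(\ker a)$. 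Applying this to the tensored square gives
\[
\ker(f'\otimes\id_N)=(g\otimes\id_N)\bigl(\ker(f\otimes\id_N)\bigr).
\]

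From this the equivalence is immediate: $f'\otimes\id_N$ is injective if and only if $(g\otimes\id_N)(\ker(f\otimes\id_N))=0$, i.e.\ if and only if $\ker(f\otimes\id_N)\subseteq\ker(g\otimes\id_N)$. Quantifying over all cyclic $R$-modules $N$, the left-hand condition becomes ``$f'$ is cyclically pure'' and the right-hand condition becomes ``$g$ cyclically dominates $f$'' (\autoref{def:cyclic-domination}), which is the claim. I do not anticipate any genuine obstacle here; the only point requiring a moment's care is to confirm that the argument for \autoref{lem:domination}(3) never forces one outside the class of cyclic modules, which it does not. (Alternatively, one could simply invoke \cite[\href{https://stacks.math.columbia.edu/tag/0AUM}{Tag 0AUM}]{stacks-project} with cyclic test modules, but the self-contained computation above is just as short.)
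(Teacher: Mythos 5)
Your proof is correct and takes essentially the same route as the paper: the paper's entire proof of \autoref{lem:cyclic-stab-pushout} is ``the proof of \cite[\href{https://stacks.math.columbia.edu/tag/0AUM}{Tag 0AUM}]{stacks-project} readily adapts; we omit the details,'' and you have simply written out that adaptation (right exactness of $-\otimes_R N$ preserves the pushout, and $\ker(f'\otimes\id_N)=(g\otimes\id_N)(\ker(f\otimes\id_N))$). Your verification that nothing in the argument forces one outside the class of cyclic test modules is exactly the point the paper leaves to the reader.
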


\begin{proof}
The proof of \cite[\href{https://stacks.math.columbia.edu/tag/0AUM}{Tag 0AUM}]{stacks-project} readily adapts to yield the assertion. We omit the details.
\end{proof}

\begin{lemma}
    \label{lem:factor-cyclically-pure}
    Suppose we have a commutative diagram of $R$-modules
    \[
    \xymatrix{&M \ar[dl]_f \ar[dr]^g \\  
    Q \ar[rr]_{\varphi}  && P}
    \]
    where $\varphi$ is cyclically pure. Then $f$ and $g$  cyclically dominate each other.
\end{lemma}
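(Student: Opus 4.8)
The plan is to follow the proof of \autoref{lem:domination}~(4) essentially verbatim, replacing ``pure'' by ``cyclically pure'' and restricting the class of test modules accordingly. Commutativity of the given triangle means precisely that $g = \varphi \circ f$, so for \emph{every} $R$-module $N$ (cyclic or not) we have the factorization $g \otimes_R \id_N = (\varphi \otimes_R \id_N) \circ (f \otimes_R \id_N)$. This immediately yields $\ker(f \otimes_R \id_N) \subseteq \ker(g \otimes_R \id_N)$, so $g$ dominates $f$ in the strong sense, and in particular $g$ cyclically dominates $f$.

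For the other direction I would invoke the hypothesis that $\varphi$ is cyclically pure: by definition, for every cyclic $R$-module $N$ the map $\varphi \otimes_R \id_N$ is injective, i.e.\ $\ker(\varphi \otimes_R \id_N) = 0$. Plugging this into the factorization above gives, for all cyclic $N$,
\[
\ker(g \otimes_R \id_N) = \ker\bigl((\varphi \otimes_R \id_N) \circ (f \otimes_R \id_N)\bigr) = \ker(f \otimes_R \id_N),
\]
so $f$ cyclically dominates $g$ as well. Combining the two inclusions shows that $f$ and $g$ cyclically dominate each other.

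There is no real obstacle here; the only thing to be careful about is to keep the test modules cyclic exactly where cyclic purity of $\varphi$ is used (the reverse inclusion), while noting that the forward inclusion is formal and holds for arbitrary $N$. One could also phrase this as a one-line consequence of \autoref{lem:cyclic-stab-pushout} applied to the pushout of $f$ and $g$ together with \autoref{lem:domination}-style bookkeeping, but the direct kernel computation above is shortest.
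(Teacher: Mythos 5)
Your argument is exactly the one the paper intends: the paper omits the proof and refers to Lemma~\ref{lem:domination}~(4), whose kernel computation you reproduce verbatim with ``pure'' replaced by ``cyclically pure'' and the test modules restricted to cyclic ones in the only step where injectivity of $\varphi \otimes_R \id_N$ is used. The proof is correct and matches the paper's approach.
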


\begin{proof}
    Proof is similar to the proof of \autoref{lem:domination} (4) and is omitted.
\end{proof}

\begin{lemma}
    \label{lem:cyclic-stab-diagram}
    Let $R$ be a ring and consider a commutative diagram of $R$-modules
     \[
        \xymatrix{&M \ar[dl]_f \ar[d]_g \ar[dr]_h  \\
        P \ar[r] &Q \ar[r] &R}
        \]
        where $f$ and $h$  cyclically dominate each other. Then $f, g, h$  cyclically dominate each other in pairs.
    \end{lemma}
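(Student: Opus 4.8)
The plan is to mirror the proof of \autoref{lem:domination}(5), simply replacing "all $R$-modules" by "all cyclic $R$-modules" throughout. First I would unwind the given commutative diagram: writing the horizontal arrows as $P \to Q$ and $Q \to R$, one reads off that $g$ factors through $f$, that $h$ factors through $g$, and hence (composing) that $h$ factors through $f$ as well.

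Next I would record the elementary fact that if a linear map $\beta$ factors through a linear map $\alpha$ with the same domain, say $\beta = \gamma \circ \alpha$, then $\beta$ cyclically dominates $\alpha$: for any $R$-module $N$ (in particular any cyclic one) we have $\ker(\beta \otimes_R \id_N) = \ker\big((\gamma \otimes_R \id_N)\circ(\alpha \otimes_R \id_N)\big) \supseteq \ker(\alpha \otimes_R \id_N)$. This is just the easy half of \autoref{lem:domination}(2) and requires no finiteness hypotheses, so there is no issue restricting the test modules to cyclic ones. Applying this three times yields: $g$ cyclically dominates $f$, $h$ cyclically dominates $g$, and $h$ cyclically dominates $f$.

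Finally, combining these with the standing hypothesis that $f$ and $h$ cyclically dominate one another, for every cyclic $R$-module $N$ I would run the chain of inclusions
\[
\ker(f \otimes_R \id_N) \subseteq \ker(g \otimes_R \id_N) \subseteq \ker(h \otimes_R \id_N) = \ker(f \otimes_R \id_N),
\]
which forces all three kernels to coincide. Since this holds for every cyclic $N$, the maps $f$, $g$, $h$ cyclically dominate each other in pairs, as claimed.

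I do not anticipate any genuine obstacle here: the proof is a routine transcription of the module-theoretic argument, and the one point worth noting is that it uses only the unconditional direction "factoring $\Rightarrow$ (cyclic) domination," never its converse (which would need finite presentation of a cokernel). If one prefers, this lemma could instead be deduced as the verbatim cyclic analogue of \autoref{lem:domination}(5).
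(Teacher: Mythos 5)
Your proof is correct and is exactly the argument the paper intends: its own proof of this lemma simply says to adapt the kernel-chain argument from \autoref{lem:domination}, which is what you have written out, using only the unconditional direction ``factoring implies (cyclic) domination.'' No gaps.
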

    
    \begin{proof}
    One can readily adapt the proof of the analogous result for mutual domination instead of mutual cyclic domination given in \autoref{lem:domination} (6). 
    \end{proof}

\begin{definition}
\label{def:cyclic-stabilizer}
Let $R$ be a ring and $f \colon M \to P$ and $g \colon M \to Q$ be $R$-linear maps,
where $M$ and $P$ are finitely presented $R$-modules. We say
$f$ \emph{cyclically stabilizes} $g$ (or $f$ is a \emph{cyclic stabilizer} of $g$) if 
$f$ and $g$  cyclically dominate each other. We will say $g$ \emph{admits a cyclic stabilizer} if there exists a finitely presented $R$-module $P$ and an $R$-linear map $f \colon M \to P$ such that $f$ cyclically stabilizes $g$.
\end{definition}

\begin{proposition}
    \label{prop:cyclic-stabilizers}
    Let $R$ be a ring and let $\{(L_i,u_{ij}) \colon i, j \in I, i\leq j\}$ be a system of finitely
    presented $R$-modules indexed by a filtered poset $(I, \leq)$. 
    %Here $u_{ij}$ denotes the transition map $L_i \to L_j$ for $i \leq j$. 
    Let
    $
    M \coloneqq \colim_i L_i
    $
    and let $u_i \colon L_i \to M$ denote the associated map for all $i \in I$.
    Suppose $v \colon G \to M$ is a map of $R$-modules where $G$ is finitely
    presented. If $v$ admits a cyclic stabilizer that $v$ dominates,
    then there exists an index $i \in I$ along with a map 
    $
    v_i \colon G \to L_i
 $
    such that $u_i \circ v_i = v$ and for all $j \geq i$, 
    $
    v_j \coloneqq (G \xrightarrow{v_i} L_i \xrightarrow{u_{ij}} L_j)
    $
    is a cyclic stabilizer of $v$ that $v$ dominates.
    \end{proposition}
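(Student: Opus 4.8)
The plan is to mimic the proof of \autoref{lem:stabilizers} (2), adapted to cyclic stabilizers, the key extra ingredient being that the hypothesis ``$v$ dominates the cyclic stabilizer'' is precisely what is needed to turn a cyclic domination into an honest factorization.

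First I would fix a cyclic stabilizer $f \colon G \to P$ of $v$ such that $v$ dominates $f$. Since $P$ is finitely presented and $\im(f)$ is finitely generated, $\coker(f)$ is finitely presented, so from the fact that $v$ dominates $f$ together with \autoref{lem:domination} (2) one obtains a map $\varphi \colon P \to M$ with $v = \varphi \circ f$. Because $G$ and $P$ are finitely presented and $M = \colim_i L_i$ is a filtered colimit, the canonical map $\colim_i \Hom_R(P, L_i) \to \Hom_R(P, M)$ is bijective, so there is an index $i \in I$ and a map $\varphi_i \colon P \to L_i$ with $\varphi = u_i \circ \varphi_i$. I would then set $v_i \coloneqq \varphi_i \circ f$ and, for $j \geq i$, $v_j \coloneqq u_{ij} \circ v_i = u_{ij} \circ \varphi_i \circ f$; a direct check gives $u_i \circ v_i = u_i \circ \varphi_i \circ f = \varphi \circ f = v$ and, using $u_j \circ u_{ij} = u_i$, $u_j \circ v_j = v$ for all $j \geq i$.

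It then remains to verify, for each $j \geq i$, that $v_j$ is a cyclic stabilizer of $v$ that $v$ dominates (note $L_j$ is finitely presented). Since $v = u_j \circ v_j$, the map $v$ factors through $v_j$, so $v$ dominates $v_j$ by \autoref{lem:domination} (2); in particular $v$ cyclically dominates $v_j$. In the other direction, $v_j = (u_{ij} \circ \varphi_i) \circ f$ factors through $f$, hence $\ker(f \otimes_R \id_N) \subseteq \ker(v_j \otimes_R \id_N)$ for every $R$-module $N$, while $\ker(v \otimes_R \id_N) \subseteq \ker(f \otimes_R \id_N)$ for every cyclic $N$ because $f$ cyclically dominates $v$; composing these inclusions shows $v_j$ cyclically dominates $v$. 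Thus $v$ and $v_j$ cyclically dominate each other, so $v_j$ is a cyclic stabilizer of $v$, and it is one that $v$ dominates.

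Regarding difficulty: the argument is essentially routine given the earlier machinery, and I expect the only points requiring care to be (i) recognizing that the hypothesis ``$v$ dominates $f$'', rather than mere cyclic domination, is exactly what makes \autoref{lem:domination} (2) applicable to produce $\varphi$, since cyclic domination alone is too weak for that step; (ii) using finite presentation of $G$, $P$, and the $L_j$ both for the finite presentation of $\coker(f)$ and for lifting $\varphi$ across the filtered colimit; and (iii) keeping the two kernel inclusions in the last paragraph oriented correctly.
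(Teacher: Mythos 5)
Your proposal is correct and follows essentially the same route as the paper: produce the factorization $v=\varphi\circ f$ from domination plus finite presentation of $\coker(f)$, lift $\varphi$ through the filtered colimit, and then verify mutual cyclic domination of $v$ and $v_j$. The only cosmetic difference is that the paper cites \autoref{lem:cyclic-stab-diagram} for the final step, whereas you inline the two kernel inclusions directly; both are fine.
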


    \begin{proof}
    Suppose $f \colon G \to P$ is a cyclic stabilizer of $v$ that $v$ dominates. Since
    $P$ is a finitely presented $R$-module, $v$ factors through $f$ (\autoref{lem:domination} (2)), that is, there exists an $R$-linear map
    $
    \varphi \colon P \to M
    $
    such that the following diagram commutes
    \[
    \xymatrix{&G \ar[dl]_f \ar[d]_v\\
    P \ar[r]_\varphi &M.}
    \]
    Since $P$ is a finitely presented $R$-module and $M = \colim_i L_i$ is a filtered colimit of finitely presented modules $L_i$, there
    exists $i \in I$ such that $\varphi$ admits a lift
    $\varphi_i \colon P \to L_i$ along $u_i \colon L_i \to M$, that is, 
    the following diagram commutes
    \[
    \xymatrix{&L_i \ar[d]^{u_i}\\
    P \ar[ur]^{\varphi_i} \ar[r]_{\varphi} &M.
    }
    \]
    Now for all $j \geq i$, define
    $
    v_j \coloneqq (G \xrightarrow{f} P \xrightarrow{\varphi_i} L_i \xrightarrow{u_{ij}} L_j).
    $
    Then by the commutativity of the above diagrams,
    $
    u_j \circ v_j = ((u_j \circ u_{ij}) \circ \varphi_i) \circ f = 
    (u_i \circ \varphi_i) \circ f = \varphi \circ f = v.
    $
    Thus, for all $j \geq i$, $v$ dominates $v_j$ (\autoref{lem:domination} (2)). Moreover, for all
    $j \geq i$, we have a commutative diagram
     \[
        \xymatrix@C=3em@R=3em{&G \ar[dl]_f \ar[d]_{v_j} \ar[dr]^v  \\
        P \ar[r]_{u_{ij} \circ \varphi_i} &L_j \ar[r]_{u_j} &M.}
        \]
        Since $f$ and $v$ cyclically dominate each other, by
        \autoref{lem:cyclic-stab-diagram} we have that $v_j$ and $v$ cyclically
        dominate each other for all $j \geq i$.
    \end{proof}

One now has the following analog of \autoref{lem:stabilizers} for cylic stabilizers.

\begin{corollary}
\label{cor:cyclic-stabilizer-flat}
Let $M$ be a flat $R$-module. Let $v \colon G \to M$ be an $R$-linear map such that $G$ is a finitely presented $R$-module. If $v$
admits a cyclic stabilizer that $v$ dominates, then $v$ admits a cyclic
stabilizer of the form $u \colon G \to F$ such that $v$ dominates
$u$ and $F$ is free of finite rank.
\end{corollary}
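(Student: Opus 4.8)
The plan is to recognize that \autoref{cor:cyclic-stabilizer-flat} is an essentially immediate corollary of \autoref{prop:cyclic-stabilizers} once $M$ is written as a filtered colimit of finite free modules. First I would invoke Lazard's Theorem \cite[Thm.\ 1.2]{Lazard}: since $M$ is flat, we may write $M = \colim_{i \in I} L_i$ as a filtered colimit of free $R$-modules $L_i$ of finite rank, indexed by a filtered poset $(I,\leq)$, with transition maps $u_{ij} \colon L_i \to L_j$ for $i \leq j$ and canonical maps $u_i \colon L_i \to M$. Each $L_i$, being free of finite rank, is in particular finitely presented, so this is a presentation of the sort required to apply \autoref{prop:cyclic-stabilizers}.

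Next I would feed this data into \autoref{prop:cyclic-stabilizers}. The hypotheses are met verbatim: $G$ is finitely presented, and by assumption $v \colon G \to M$ admits a cyclic stabilizer that $v$ dominates. The conclusion then supplies an index $i \in I$ and an $R$-linear map $v_i \colon G \to L_i$ with $u_i \circ v_i = v$ such that for every $j \geq i$ the composite $v_j \coloneqq u_{ij} \circ v_i \colon G \to L_j$ is a cyclic stabilizer of $v$ that $v$ dominates. Taking $j = i$ and using $u_{ii} = \id_{L_i}$, the map $v_i$ is itself a cyclic stabilizer of $v$ that $v$ dominates.

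Finally I would set $F \coloneqq L_i$ and $u \coloneqq v_i \colon G \to F$. Then $F$ is free of finite rank by construction, $u$ is a cyclic stabilizer of $v$ by the previous paragraph, and $v = u_i \circ u$ factors through $u$, so $v$ dominates $u$ (this is already part of the conclusion of \autoref{prop:cyclic-stabilizers}, and also follows from \autoref{lem:domination} (2)). This gives exactly the asserted $u \colon G \to F$.

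There is no substantial obstacle here; the real work is carried out in \autoref{prop:cyclic-stabilizers}, and the only thing needing care is matching hypotheses and observing that in Lazard's presentation the approximating modules $L_i$ are already finite free, so no further refinement is required. Flatness of $M$ is used solely to make Lazard's Theorem applicable. I would also note, for contrast, that unlike the non-cyclic analogue \autoref{prop:RG-2.1.9} one should not expect to upgrade $u$ to the inclusion of a cyclically pure finite free submodule of $M$, since the argument there exploits that purity passes to submodules in a way that cyclic purity does not.
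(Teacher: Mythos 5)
Your proof is correct and is exactly the paper's argument: write $M$ as a filtered colimit of finite free modules via Lazard's Theorem and apply \autoref{prop:cyclic-stabilizers}, whose conclusion directly yields the desired $u \colon G \to F$ with $F = L_i$ free of finite rank. The paper's own proof is just these two steps stated tersely; your version merely spells out the bookkeeping.
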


\begin{proof}
By Lazard's theorem we can express $M$ as a filtered colimit of
free modules of finite rank. The Corollary now follows by \autoref{prop:cyclic-stabilizers}.
\end{proof}

\begin{remark}
 In \autoref{prop:cyclic-stabilizers} and \autoref{cor:cyclic-stabilizer-flat}, we not only need to assume that $v \colon G \to M$ admits a cyclic stabilizer, but that $v$ admits a cyclic stabilizer that it dominates. In certain situations, domination will come for free from cyclic domination; see, for instance, \autoref{lem:cyclic-honest-domination}.
\end{remark}

\subsection{Content function and Ohm-Rush modules}
\label{subsection:contentfunctionandORmods}
We will now see that cyclic domination and cyclic stabilizers are related to the content function for modules. Recall that for any $R$-module $M$, there is a function
$
c_M \colon M \to \textrm{$\{$ideals of $R\}$}
$
given by
$
c_M(x) \coloneqq \bigcap_{x \in IM} I.
$
Here $I$ denotes an ideal of $R$. The function $c_M$ is called the \emph{content function} on $M$ and the ideal $c_M(x)$ is called the \emph{content} of the element $x$. For a subset $N \subseteq M$ one can similarly define the \emph{content of $N$}, denoted $c_M(N)$, to be the intersection of all ideals $I$ of $R$ such that $N \subseteq IM$. It is clear that if $\langle N \rangle$ is the submodule of $M$ generated by $N$, then $c_M(N) = c_M(\langle N \rangle)$.

\begin{remark}
    \label{rem:content-restriction-of-scalars}
    Let $R \to S$ be a ring map and consider $S$ as an $R$-module by restriction of scalars. Let $J$ be an $R$-submodule of $S$ and let $JS$ denote the ideal of $S$ that is generated by $J$. Since for any ideal $I$ of $R$, $IS$ is an ideal of $S$, it follows that $J \subseteq IS \Longleftrightarrow JS \subseteq IS$. Thus, $c_S(J) = c_S(JS)$. This shows that instead of considering the content of $R$-submodules of $S$, one can look at the content of ideals of $S$. The latter is more natural to consider.
\end{remark}

\begin{definition}
\label{def:Ohm-Rush}
An $R$-module $M$ is \emph{Ohm-Rush} if for all $x \in M$, $x \in c_M(x)M$. A ring homomorphism $R \to S$ is \emph{Ohm-Rush} if $S$ is an Ohm-Rush $R$-module by restriction of scalars.
\end{definition}

In other words, $M$ is \emph{Ohm-Rush} if for all $x \in M$, the collection of ideals $I$ of $R$ such that $x \in IM$ has a unique smallest element under inclusion. 

What we are calling an Ohm-Rush module is called a \emph{content module} in \cite[Def.\ 1.1]{OhmRu-content}, which is where this notion was introduced. Our alternate terminology, first used in 
\cite{EpsteinShapiroOhmRushI, EpsteinShapiroOhmRushII}, honors the contributions
of Ohm and Rush.

\begin{remark}
    \label{rem:content-finitely-generated}
    If $x \in c_M(x)M$, then $c_M(x)$ has to be a finitely generated ideal (we are not assuming $R$ is Noetherian).  See \cite[p. 51]{OhmRu-content}. 
\end{remark}

\begin{lemma}
    \label{lem:content-subset}
    Let $M$ be an Ohm-Rush $R$-module. For any $N \subseteq M$, 
    $c_M(N) = \sum_{x \in N} c_M(x)$ 
    and $N \subseteq c_M(N)M$. In other words, $c_M(N)$ is the smallest ideal $I$ of $R$ such that $N \subseteq IM$.
\end{lemma}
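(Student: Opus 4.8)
The plan is to prove the two claims in sequence, with $N \subseteq c_M(N)M$ coming first since $c_M(N) = \sum_{x \in N} c_M(x)$ will follow formally once we know that $\sum_{x\in N} c_M(x)$ is an ideal $I$ with $N \subseteq IM$. First I would set $I := \sum_{x \in N} c_M(x)$, which is an ideal of $R$ (each $c_M(x)$ is an ideal — indeed finitely generated by \autoref{rem:content-finitely-generated} — and sums of ideals are ideals). For each $x \in N$ the Ohm-Rush hypothesis gives $x \in c_M(x)M \subseteq IM$, hence $N \subseteq IM$, and since the submodule generated by $N$ is the set of finite $R$-linear combinations of elements of $N$, also $\langle N \rangle \subseteq IM$. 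This shows $I$ is \emph{an} ideal with $N \subseteq IM$, so by definition of $c_M(N)$ as the intersection of all such ideals we get $c_M(N) \subseteq I$.

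For the reverse containment $I \subseteq c_M(N)$: for any ideal $J$ of $R$ with $N \subseteq JM$, we have $x \in JM$ for every $x \in N$, hence $c_M(x) \subseteq J$ by the very definition of the content of an element (it is the intersection of all ideals $J'$ with $x \in J'M$). Summing over $x \in N$ gives $I = \sum_{x\in N} c_M(x) \subseteq J$. Taking the intersection over all such $J$ yields $I \subseteq c_M(N)$. Combined with the previous paragraph, $c_M(N) = I = \sum_{x\in N} c_M(x)$.

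Finally, $N \subseteq c_M(N)M$ is now immediate: we showed $N \subseteq IM$ and $I = c_M(N)$. The ``smallest ideal'' phrasing is just a restatement: $c_M(N)$ is an ideal with $N \subseteq c_M(N)M$ (just shown), and any ideal $J$ with $N \subseteq JM$ satisfies $c_M(N) \subseteq J$ (by definition of $c_M(N)$ as an intersection). I do not anticipate a genuine obstacle here; the only subtlety is a bookkeeping one, namely being careful that ``$x \in c_M(x)M$ for each $x$'' upgrades to ``$N \subseteq c_M(N)M$'' rather than merely ``$N \subseteq \sum_x c_M(x)M$'', and this is handled precisely by the equality $c_M(N) = \sum_{x\in N} c_M(x)$ proved first. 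One should also note that without the Ohm-Rush hypothesis the inclusion $c_M(N) \subseteq \sum_x c_M(x)$ could fail, so the hypothesis is used exactly once, in the first paragraph.
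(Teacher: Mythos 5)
Your proposal is correct and follows essentially the same argument as the paper: set $I = \sum_{x \in N} c_M(x)$, use the Ohm-Rush hypothesis to get $N \subseteq IM$ (hence $c_M(N) \subseteq I$), and observe that any ideal $J$ with $N \subseteq JM$ contains every $c_M(x)$ and hence $I$, giving the reverse inclusion; the statement $N \subseteq c_M(N)M$ then follows. No gaps.
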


\begin{proof}
    Let $I \coloneqq \sum_{x \in N} c_M(x)$. Since $M$ is Ohm-Rush, for all $x \in N$, $x \in c_M(x)M \subseteq IM$. Thus, $N \subseteq IM$, and so, $c_M(N) \subseteq I = \sum_{x \in N} c_M(x)$. For the other inclusion, let $J$ be an ideal of $R$ such that $N \subseteq JM$. Then for all $x \in N$, $x \in JM$, which implies $c_M(x) \subseteq J$. Thus $\sum_{x \in N} c_M(x) \subseteq J$, for all ideals $J$ such that $N \subseteq JM$, that is, $\sum_{x \in N} c_M(x) \subseteq c_M(N)$.

    The fact that $N \subseteq c_M(N)M$ follows because for all $x \in N$, $x \in c_M(x)M \subseteq c_M(N)M$. Then by definition of $c_M(N)$, the latter is the smallest ideal $I$ of $R$ such that $N \subseteq IM$.
\end{proof}

As a consequence of the previous lemma, one can deduce the following:

\begin{corollary}
    \label{cor:content-union-sum}
    Let $M$ be an Ohm-Rush $R$-module. Let $\{N_\alpha \colon \alpha \in A\}$ be a family of subsets of $M$. Then
    \begin{enumerate}%[itemsep = 1mm]
        \item[$(1)$] $c_M(\bigcup_{\alpha \in A} N_\alpha)  = \sum_{\alpha \in A} c_M(N_\alpha)$.
        
        \item[$(2)$] If each $N_\alpha$ is a submodule of $M$, then $c_M(\sum_{\alpha \in A}N_\alpha) = \sum_{\alpha \in A} c_M(N_\alpha)$.
    \end{enumerate}
\end{corollary}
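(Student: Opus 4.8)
The plan is to deduce both parts directly from \autoref{lem:content-subset}, which already supplies the two facts we need: for any subset $N \subseteq M$, the ideal $c_M(N)$ equals $\sum_{x \in N} c_M(x)$, and it is the smallest ideal $I$ of $R$ with $N \subseteq IM$. I would work with the ``smallest ideal'' characterization rather than the formula as a sum over elements, since it sidesteps any bookkeeping about sums indexed by pairs $(\alpha, x)$; the underlying principle is just that a sum of ideals is the least upper bound of those ideals in the lattice of ideals.

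For part $(1)$, set $N \coloneqq \bigcup_{\alpha \in A} N_\alpha$ and $I \coloneqq \sum_{\alpha \in A} c_M(N_\alpha)$, and prove the two inclusions. For $c_M(N) \subseteq I$: by \autoref{lem:content-subset} each $N_\alpha \subseteq c_M(N_\alpha)M \subseteq IM$, so $N \subseteq IM$, and minimality of $c_M(N)$ among ideals $J$ with $N \subseteq JM$ gives $c_M(N) \subseteq I$. For $I \subseteq c_M(N)$: for each $\alpha$ we have $N_\alpha \subseteq N \subseteq c_M(N)M$, so by minimality of $c_M(N_\alpha)$ we get $c_M(N_\alpha) \subseteq c_M(N)$, and summing over $\alpha \in A$ yields $I \subseteq c_M(N)$. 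For part $(2)$, I would invoke the observation preceding \autoref{lem:content-subset} that $c_M(N) = c_M(\langle N \rangle)$ for $\langle N \rangle$ the submodule generated by $N$: when every $N_\alpha$ is a submodule, the submodule generated by $\bigcup_{\alpha \in A} N_\alpha$ is precisely $\sum_{\alpha \in A} N_\alpha$, so $c_M\bigl(\sum_{\alpha \in A} N_\alpha\bigr) = c_M\bigl(\bigcup_{\alpha \in A} N_\alpha\bigr)$, and $(2)$ follows from $(1)$.

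There is no real obstacle here: the corollary is a formal consequence of \autoref{lem:content-subset} together with the elementary fact that ideal sums and module sums are joins in the respective lattices. The only place the Ohm-Rush hypothesis enters is through the containment $N_\alpha \subseteq c_M(N_\alpha)M$, which is exactly the content of \autoref{lem:content-subset} and fails for general modules.
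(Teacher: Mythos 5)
Your proof is correct and follows essentially the same route as the paper: both parts are deduced directly from \autoref{lem:content-subset}, with $(2)$ reduced to $(1)$ via the observation that $\sum_{\alpha} N_\alpha$ is the submodule generated by $\bigcup_{\alpha} N_\alpha$. The only cosmetic difference is that the paper proves $(1)$ by reindexing the sum $\sum_{x \in \bigcup_\alpha N_\alpha} c_M(x)$, whereas you verify the two inclusions using the ``smallest ideal'' characterization; both are immediate consequences of the same lemma.
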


\begin{proof}
    (2) follows from (1) %and \autoref{eq:trace-submodule} 
    because $\sum_{\alpha \in A} N_\alpha$ is the submodule of $M$ generated by $\bigcup_{\alpha \in A} N_\alpha$. For (1), by a double application of \autoref{lem:content-subset}, we get
    \[
        c_M(\bigcup_{\alpha \in A} N_\alpha) = \sum_{x \in \bigcup_{\alpha \in A}N_\alpha} c_M(x) = \sum_{\alpha \in A} \sum_{x \in N_\alpha} c_M(x) = \sum_{\alpha \in A} c_M(N_\alpha).\qedhere
    \]
\end{proof}

\begin{remark}
    \label{rem:content-additivity}
    Even though the content function of Ohm-Rush modules is additive for submodules as \autoref{cor:content-union-sum} (2) shows, we warn the reader that the function is not additive for elements. Namely, while it is always true for an Ohm-Rush module $M$ that 
    $
    c_M(x_1+ \dots + x_n) \subseteq \sum_{i=1}^n c_M(x_i),    
    $
    equality does not hold in general. An easy example can be obtained by take $M = R$. Note that for all $x \in R$, $c_R(x) = xR$, but in general the ideal $(x+y)R$ does not equal $xR + yR$ (for instance if $R = k[x,y]$ and we choose the indeterminates as our two elements).
\end{remark}

\begin{example}
    \label{eg:free-OR}
    A free $R$-module $F$ is Ohm-Rush. Indeed, if $\{e_i \colon I \in I\}$ is a basis of $F$, then for all $x \in F$, if we write
    $
    x = r_1e_{i_1} + \dots + r_ne_{i_n},    
    $
    where $r_1,\dots,r_n \in R \setminus \{0\}$, then using linear independence it follows that
    $
    c_F(x) = (r_1,\dots,r_n).    
    $
\end{example}

A straightforward reinterpretation of the Ohm-Rush condition is the following:

\begin{lemma}\cite[(1.2)]{OhmRu-content}
    \label{lem:OR-weak-intersection-flat}
    Let $M$ be a module over a ring $R$. Then $M$ is Ohm-Rush if and  only if for any collection of ideals $\{I_\alpha\}_\alpha$, we have $\left(\bigcap_\alpha I_\alpha \right)M = \bigcap_\alpha I_\alpha M$.
\end{lemma}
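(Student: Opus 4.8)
The plan is to derive both implications directly from the definition of the content function, using the containment that holds for \emph{every} $R$-module, namely $\left(\bigcap_\alpha I_\alpha\right)M \subseteq \bigcap_\alpha I_\alpha M$ (each $I_\beta$ contains $\bigcap_\alpha I_\alpha$, so each $I_\beta M$ contains $\left(\bigcap_\alpha I_\alpha\right)M$).

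First I would prove the forward direction. Assume $M$ is Ohm-Rush and fix a family $\{I_\alpha\}_{\alpha}$ of ideals of $R$. Only the inclusion $\bigcap_\alpha I_\alpha M \subseteq \left(\bigcap_\alpha I_\alpha\right)M$ needs argument. Given $x \in \bigcap_\alpha I_\alpha M$, we have $x \in I_\alpha M$ for every $\alpha$, so $c_M(x) \subseteq I_\alpha$ for every $\alpha$ by the very definition of $c_M(x)$ as the intersection of all ideals $I$ with $x \in IM$; hence $c_M(x) \subseteq \bigcap_\alpha I_\alpha$. Since $M$ is Ohm-Rush, $x \in c_M(x)M \subseteq \left(\bigcap_\alpha I_\alpha\right)M$, as desired.

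For the converse, assume the displayed equality holds for every family of ideals, and fix $x \in M$. I would apply the hypothesis to the family $\mathcal{F}$ of \emph{all} ideals $I$ of $R$ with $x \in IM$; this family is nonempty since $R \in \mathcal{F}$, and by definition $\bigcap_{I \in \mathcal{F}} I = c_M(x)$. The hypothesis then gives $c_M(x)M = \left(\bigcap_{I \in \mathcal{F}} I\right)M = \bigcap_{I \in \mathcal{F}} IM$. Since $x \in IM$ for every $I \in \mathcal{F}$ by construction of $\mathcal{F}$, the element $x$ lies in this last intersection, i.e. $x \in c_M(x)M$. Hence $M$ is Ohm-Rush.

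I do not anticipate any genuine obstacle: the argument is a short unwinding of the definition of the content function in each direction. The only point requiring a moment's care is the nonemptiness of the family $\mathcal{F}$ of ideals $I$ with $x \in IM$ (guaranteed by $R$ itself), so that $c_M(x)$ and the intersections appearing in the converse are well defined.
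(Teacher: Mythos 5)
Your proof is correct and is exactly the standard unwinding of the definition of the content function; the paper itself gives no proof (it simply cites Ohm--Rush (1.2)), and your argument is the one intended there. Both directions are complete, and your remark about the nonemptiness of the family $\mathcal{F}$ (via $I = R$) is the only delicate point and you handle it properly.
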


\begin{corollary}
    \label{cor:OR-composition}
    Let $R \to S$ be an Ohm-Rush ring map and $M$ an Ohm-Rush $S$-module. Then $M$ is an Ohm-Rush $R$-module.
\end{corollary}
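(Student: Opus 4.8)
The plan is to prove a transitivity statement for the content construction: for each $x \in M$, the content of $x$ relative to the $R$-module structure can be computed by first passing through $S$. Fix $x \in M$ and let $J \coloneqq c_M(x)$ be the content of $x$ with respect to the given $S$-module structure on $M$; since $M$ is an Ohm-Rush $S$-module, $J$ is the smallest ideal of $S$ with $x \in JM$, and in particular $x \in JM$. Now regard $J$ as an $R$-submodule of $S$. Since $S$ is an Ohm-Rush $R$-module, \autoref{lem:content-subset} (applied with $S$ in place of $M$ and $J$ in place of $N$) produces the $R$-content $c_S(J)$ of the submodule $J$: this is the smallest ideal $I$ of $R$ with $J \subseteq IS$, and it satisfies $J \subseteq c_S(J)S$. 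The claim I would establish is that $c_S(J)$ is exactly the content of $x$ relative to the $R$-module structure on $M$. Granting this, the containment $J \subseteq c_S(J)S$ yields $x \in JM \subseteq (c_S(J)S)M = c_S(J)M$, so $x$ lies in its $R$-content times $M$; as $x$ was arbitrary, $M$ is Ohm-Rush over $R$ in the sense of \autoref{def:Ohm-Rush}. (Throughout I use that for an ideal $I$ of $R$ one has $(IS)M = IM$ as submodules of $M$, which is immediate from the definitions; compare \autoref{rem:content-restriction-of-scalars}.)

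For the claim itself I would argue by the two inclusions. For ``$\subseteq$'', the computation just made shows $x \in c_S(J)M$, so $c_S(J)$ is one of the ideals of $R$ whose expansion to $M$ contains $x$; since the $R$-content of $x$ is by definition the intersection of all such ideals, it is contained in $c_S(J)$. For ``$\supseteq$'', let $I$ be an arbitrary ideal of $R$ with $x \in IM$. Then $x \in IM = (IS)M$ and $IS$ is an ideal of $S$, so the definition of $c_M(x)$ as an intersection over ideals of $S$ whose expansion contains $x$ forces $J = c_M(x) \subseteq IS$. By the minimality property of $c_S(J)$ recalled above, $c_S(J) \subseteq I$. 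Intersecting over all such ideals $I$ of $R$ gives $c_S(J) \subseteq (\text{$R$-content of }x)$, completing the claim.

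I do not expect a serious obstacle here; the proof is really just the transitivity of the content construction, and the only points requiring care are bookkeeping ones: keeping the content functions of $M$ over $R$ and over $S$ distinct, and the harmless identification $(IS)M = IM$ for ideals $I$ of $R$. A slightly more hands-on alternative would be to invoke \autoref{rem:content-finitely-generated} to write $J = (s_1, \dots, s_n)S$, expand $x = \sum_i s_i m_i$, and use $s_i \in c_S(s_i)S$ together with $I_0 \coloneqq \sum_{i=1}^n c_S(s_i)$ to get $x \in I_0 M$; but identifying $I_0$ with the actual $R$-content of $x$ reproduces exactly the minimality argument above, so I would prefer the submodule-content formulation.
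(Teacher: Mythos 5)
Your proposal is correct. It does, however, take a different (element-by-element) route from the paper, so a brief comparison is worthwhile. The paper's proof is a one-line chain of equalities: it invokes \autoref{lem:OR-weak-intersection-flat} to reduce the Ohm-Rush property to the statement that expansion commutes with arbitrary intersections of ideals, and then for a family $\{I_\alpha\}$ of ideals of $R$ computes $\bigcap_\alpha I_\alpha M = \bigcap_\alpha (I_\alpha S)M = \left(\bigcap_\alpha I_\alpha S\right)M = \left(\left(\bigcap_\alpha I_\alpha\right)S\right)M = \left(\bigcap_\alpha I_\alpha\right)M$, using the $S$-Ohm-Rush property of $M$ for the second equality and the $R$-Ohm-Rush property of $S$ for the third. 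Your argument instead works directly with the content function of a single element and establishes the transitivity formula $c_{M/R}(x) = c_S\!\left(c_{M/S}(x)\right)$, where the outer content is that of an ideal of $S$ viewed as an $R$-submodule via \autoref{lem:content-subset}. Both proofs are short and rest on the same two hypotheses in the same places; the trade-off is that the paper's version is slightly more economical (no minimality bookkeeping), while yours extracts the extra, genuinely useful piece of information that the $R$-content of $x$ is computed by composing the two content functions, which the paper's proof does not record. All the supporting steps you use --- the identification $(IS)M = IM$, the minimality of $c_{M/S}(x)$ among ideals of $S$ whose expansion contains $x$, and the minimality of $c_S(J)$ among ideals $I$ of $R$ with $J \subseteq IS$ --- are correctly justified.
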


\begin{proof}
    Let $\{I_\alpha \colon \alpha \in A\}$ be a collection of ideals of $R$. Then 
    \[
    \bigcap_{\alpha \in A} I_\alpha M = \bigcap_{\alpha \in A} (I_\alpha S)M  = \left(\bigcap_{\alpha \in A} I_\alpha S\right)M = \left(\left(\bigcap_{\alpha \in A} I_\alpha \right)S\right)M =  \left(\bigcap_{\alpha \in A} I_\alpha \right)M.
    \]
    The first and fourth equalities follow because $M$ is an $S$-module, the second equality follows because $M$ is an Ohm-Rush $S$-module and the third equality follows because $S$ is an Ohm-Rush $R$-module. Thus, $M$ is an Ohm-Rush $R$-module by \autoref{lem:OR-weak-intersection-flat}.
\end{proof}

The next result highlights how the content function behaves with respect to scaling.

\begin{lemma}\label{lem:factorcontent}
    Let $R$ be a ring and let $M$ be an $R$-module.  Let  $r\in R$ and $x\in M$.  Then $c_M(rx) \subseteq rc_M(x)$ in the following situations:
    \begin{enumerate}
        \item[$(1)$] $r=0$ or $r$ is a nonzerodivisor on $R$ (e.g. if $R$ is a domain),
        \item[$(2)$] $x \in c_M(x)M$ (e.g. if $M$ is Ohm-Rush, or if $c_M(x)=R$).
    \end{enumerate}
    If $M$ is flat, then $rc_M(x) \subseteq c_M(rx)$.
    \end{lemma}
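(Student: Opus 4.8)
The plan is to treat the three assertions of the lemma in turn, each by unwinding the definition $c_M(y)=\bigcap_{y\in IM}I$.

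For the first bullet, the case $r=0$ is immediate: then $rx=0$, and since $0\in 0\cdot M$ one gets $c_M(0)=0=0\cdot c_M(x)$. When $r$ is a nonzerodivisor, I would begin from the observation that for any ideal $I$ with $x\in IM$ one has $rx\in r(IM)=(rI)M$ (here $rI$ is an ideal of $R$), hence $c_M(rx)\subseteq rI$; intersecting over all such $I$ yields $c_M(rx)\subseteq\bigcap_{x\in IM}rI$. The only point requiring care in this part is the identity $\bigcap_\alpha rI_\alpha=r\bigcap_\alpha I_\alpha$, valid for a nonzerodivisor $r$: the inclusion $\supseteq$ is trivial, and for $\subseteq$ one writes an element of the left-hand side as $ra_\alpha$ with $a_\alpha\in I_\alpha$ and invokes injectivity of multiplication by $r$ to see that all the $a_\alpha$ coincide with a single $a\in\bigcap_\alpha I_\alpha$. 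Combining these gives $c_M(rx)\subseteq r\,c_M(x)$.

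For the second bullet the argument is a single line: put $I_0:=c_M(x)$; the hypothesis $x\in I_0M$ gives $rx\in r(I_0M)=(rI_0)M$, so $c_M(rx)\subseteq rI_0=r\,c_M(x)$. The parenthetical cases are immediate, since Ohm-Rush modules satisfy $x\in c_M(x)M$ by definition and $c_M(x)=R$ forces $x\in M=c_M(x)M$.

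The flat case is the substantive one, and I would prove it by a colon-ideal argument. Fix an arbitrary ideal $J$ with $rx\in JM$ and set $(J:r)=\{a\in R: ra\in J\}$. Multiplication by $r$ induces an injection $R/(J:r)\hookrightarrow R/J$; tensoring with the flat module $M$ preserves injectivity, giving an injection $M/(J:r)M\hookrightarrow M/JM$ that sends the class of $m$ to the class of $rm$. Since $rx\in JM$, the class of $x$ in $M/(J:r)M$ maps to $0$, hence is itself $0$, i.e.\ $x\in(J:r)M$. Therefore $c_M(x)\subseteq(J:r)$, and so $r\,c_M(x)\subseteq r(J:r)\subseteq J$; intersecting over all $J$ with $rx\in JM$ gives $r\,c_M(x)\subseteq c_M(rx)$. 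I expect the main (though modest) obstacle to be recognizing this colon-ideal presentation and checking that flatness is being used in exactly the right place; everything else is bookkeeping with the definition of the content function.
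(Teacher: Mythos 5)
Your proposal is correct and follows essentially the same route as the paper's proof: the same splitting into the $r=0$ and nonzerodivisor cases (with the same cancellation identity $\bigcap_\alpha rI_\alpha = r\bigcap_\alpha I_\alpha$), the same one-line argument for the second bullet, and the same colon-ideal argument for the flat case — your injection $M/(J:r)M \hookrightarrow M/JM$ is exactly the paper's identity $(JM :_M r) = (J :_R r)M$. No gaps.
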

    
    \begin{proof}
    Suppose $r=0$.  Then $rx=0$, so $c_M(rx) = 0 = rc_M(x)$.
    
    Suppose $r$ is nonzerodivisor on $R$.  Let $I$ be an ideal with $x\in IM$.  Then $rx \in rIM$, so $c_M(rx) \subseteq rI$.  Hence, $c_M(rx)$ is contained in the intersection of all such ideals $rI$, which, since $r$ is a nonzerodivisor, coincides with $r$ times the intersection of the ideals $I$.  That is, $c(rx) \subseteq rc(x)$.
    
    If $x \in c_M(x)M$, then $rx \in rc_M(x)M$, whence $c_M(rx) \subseteq rc_M(x)$.
    
    On the other hand, suppose $M$ is flat.  Let $I$ be an ideal with $rx \in IM$.  Then $x \in (IM :_M r) = (I:_R r)M$ (since $M$ is flat), so that 
    $c_M(x) \subseteq (I : r),$ 
    whence $rc_M(x) \subseteq I$.  Thus, $rc_M(x)$ is contained in the intersection of all such ideals $I$, which is to say $rc_M(x) \subseteq c_M(rx)$. 
    \end{proof}

The first connection between cyclic domination and the content function is given by the folllowing Lemma.

\begin{lemma}
\label{lem:cyclic-domination-content}
Let $R$ be a ring and $f \colon M \to P$ and $g \colon M \to Q$ be $R$-linear maps. Suppose $g$ cyclically dominates $f$. Then for all $x \in M$ and for all ideals $I$ of $R$
\begin{enumerate}
    \item[$(1)$] $f(x) \in IP \implies g(x) \in IQ$.
    \item[$(2)$] $c_Q(g(x)) \subseteq c_P(f(x))$.
\end{enumerate}
\end{lemma}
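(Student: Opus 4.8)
The plan is to extract both conclusions by testing the cyclic domination hypothesis against the single cyclic module $R/I$, and then, for the content statement, passing to an intersection over all relevant $I$.

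For $(1)$, assume $f(x) \in IP$. I would apply \autoref{def:cyclic-domination} with the cyclic $R$-module $N \coloneqq R/I$. Under the canonical identification $L \otimes_R R/I \cong L/IL$, valid for every $R$-module $L$, the maps $f \otimes_R \id_{R/I}$ and $g \otimes_R \id_{R/I}$ become the reductions $M/IM \to P/IP$ and $M/IM \to Q/IQ$, with the class $x \otimes 1$ corresponding to $\bar{x} \in M/IM$, which is sent to $\overline{f(x)} \in P/IP$ and to $\overline{g(x)} \in Q/IQ$. Since $f(x) \in IP$ we have $\overline{f(x)} = 0$, so $\bar{x} \in \ker(f \otimes_R \id_{R/I})$. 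Cyclic domination gives $\ker(f \otimes_R \id_{R/I}) \subseteq \ker(g \otimes_R \id_{R/I})$, hence $\bar{x} \in \ker(g \otimes_R \id_{R/I})$, that is, $\overline{g(x)} = 0$, i.e. $g(x) \in IQ$.

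For $(2)$, this is a formal consequence of $(1)$ together with the definition of the content function. By definition $c_P(f(x)) = \bigcap_{f(x) \in IP} I$. For each ideal $I$ appearing in this intersection, part $(1)$ shows $g(x) \in IQ$, whence $c_Q(g(x)) \subseteq I$. Intersecting over all such $I$ gives $c_Q(g(x)) \subseteq \bigcap_{f(x) \in IP} I = c_P(f(x))$.

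I do not anticipate any genuine obstacle: the argument is essentially a translation of the definition of cyclic domination into the language of the content function. The only step worth a moment's thought is the identification of $\ker(f \otimes_R \id_{R/I})$ with the set of classes $\bar{x} \in M/IM$ for which $f(x) \in IP$, which is just the observation that tensoring with $R/I$ is reduction modulo $I$.
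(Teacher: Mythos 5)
Your proof is correct and follows exactly the paper's argument: test the cyclic domination hypothesis against $N = R/I$, identify $\ker(f \otimes_R \id_{R/I})$ with $\{\bar x \in M/IM : f(x) \in IP\}$ to get $(1)$, and deduce $(2)$ by intersecting over all ideals $I$ with $f(x) \in IP$. No issues.
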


\begin{proof}
For all $x \in M$ and for all ideals $I$ of $R$
\[
f(x) \in IP \Longleftrightarrow x + I \in \ker(f \otimes_R \id_{R/I}) \subseteq \ker(g \otimes_R \id_{R/I}) \implies g(x) \in IQ.
\]
This immediately implies that $c_Q(g(x)) \subseteq c_P(f(x))$.
\end{proof}

\begin{lemma}
\label{lem:content-cyclic-purity}
If $\varphi \colon N \to M$ is a cyclically pure map of $R$-modules, then $c_M \circ \varphi = c_N$.
\end{lemma}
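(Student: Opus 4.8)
The plan is to unwind both content functions into the families of ideals that define them and argue those families coincide. Fix $x \in N$. By definition, $c_N(x)$ is the intersection of all ideals $I$ of $R$ with $x \in IN$, and $c_M(\varphi(x))$ is the intersection of all ideals $I$ of $R$ with $\varphi(x) \in IM$. Hence it suffices to prove
\[
\{I : x \in IN\} \;=\; \{I : \varphi(x) \in IM\},
\]
where in both sets $I$ ranges over the ideals of $R$.

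First I would dispatch the inclusion $\subseteq$, which uses only the $R$-linearity of $\varphi$: if $x \in IN$, then $\varphi(x) \in \varphi(IN) \subseteq IM$. Taking intersections, this already yields $c_M(\varphi(x)) \subseteq c_N(x)$ for every $x$, with no purity hypothesis needed.

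For the reverse inclusion I would invoke cyclic purity, translated as follows: since every cyclic $R$-module is isomorphic to $R/I$ for some ideal $I$ of $R$, and tensoring is unaffected by such isomorphisms, the hypothesis says precisely that $\varphi \otimes_R \id_{R/I} \colon N/IN \to M/IM$ is injective for every ideal $I$. Now if $\varphi(x) \in IM$, then the image of $x$ in $M/IM$ (obtained along $N \to N/IN \hookrightarrow M/IM$) is zero, so by injectivity of $N/IN \to M/IM$ the class of $x$ in $N/IN$ is already zero, i.e.\ $x \in IN$. This gives the reverse inclusion, hence equality of the two families of ideals, and therefore $c_N(x) = c_M(\varphi(x))$. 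Since $x \in N$ was arbitrary, $c_N = c_M \circ \varphi$.

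The argument is entirely formal, so I do not anticipate a genuine obstacle; the only step that deserves a sentence of care is the reformulation of the cyclic-purity hypothesis as injectivity of each $N/IN \to M/IM$, after which each inclusion is a one-line chase (and, as the discussion preceding the lemma already notes, the inclusion $c_M(\varphi(x)) \subseteq c_N(x)$ holds for any linear map, while the reverse inclusion is exactly where cyclic purity is consumed).
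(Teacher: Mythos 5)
Your proof is correct and follows the same route as the paper: both reduce cyclic purity to injectivity of $\varphi \otimes_R \id_{R/I} \colon N/IN \to M/IM$ for every ideal $I$, deduce that $x \in IN$ if and only if $\varphi(x) \in IM$, and conclude by intersecting over all such $I$. Your write-up is just a more detailed unwinding of the paper's one-line argument.
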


\begin{proof}
For all $x \in N$ and ideals $I$ of $R$, the injectivity of the induced map $\varphi \otimes \id_{R/I} \colon N/IN \to M/IM$ shows that $x \in IN$ if and only if $\varphi(x)$ in $IM$. Thus, $c_N(x) = c_M(\varphi(x)) = c_M \circ \varphi(x)$, that is, $c_N = c_M \circ \varphi$. 
\end{proof}

In the next Lemma, we describe a situation where domination comes to us for free from cyclic domination.

\begin{lemma}
\label{lem:cyclic-honest-domination}
Let $R$ be a ring and $v \colon R \to M$ and $u \colon R \to F$ be $R$-linear maps, where $F$ is a free module. Then we have the following:
\begin{enumerate}
    \item[$(1)$] There exists $\varphi \colon F \to M$ such that $v = \varphi \circ u$ if and only if $v(1) \in c_F(u(1))M$.
    \item[$(2)$] If $v$ cyclically dominates $u$, then $v$ factors through $u$. Consequently, $v$ dominates $u$.
\end{enumerate}
\end{lemma}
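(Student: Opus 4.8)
The plan is to prove (1) by an explicit basis computation in $F$, and then to deduce (2) formally from (1) together with the fact (\autoref{eg:free-OR}) that free modules are Ohm-Rush.

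For (1), I would fix a basis $\{e_i\}$ of $F$ and write $u(1)=\sum_i r_ie_i$ with only finitely many nonzero coefficients, say $r_{i_1},\dots,r_{i_n}$. By \autoref{eg:free-OR}, $c_F(u(1))=(r_{i_1},\dots,r_{i_n})$, so that $c_F(u(1))M$ is precisely the set of elements of the form $\sum_{j=1}^n r_{i_j}m_j$ with $m_j\in M$. The forward implication is then immediate: if $v=\varphi\circ u$, then $v(1)=\varphi(u(1))=\sum_j r_{i_j}\varphi(e_{i_j})\in c_F(u(1))M$. For the converse, starting from an expression $v(1)=\sum_{j=1}^n r_{i_j}m_j$, I would define $\varphi\colon F\to M$ on the basis by $\varphi(e_{i_j})\coloneqq m_j$ for $1\le j\le n$ and $\varphi(e_i)\coloneqq 0$ for all other basis vectors (this uses that $F$ is free); then $\varphi(u(1))=v(1)$, and since $R$ is generated by $1$ this forces $\varphi\circ u=v$.

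For (2), put $I\coloneqq c_F(u(1))$. Since $F$ is Ohm-Rush, $u(1)\in c_F(u(1))F=IF$, so $1+I\in\ker(u\otimes_R\id_{R/I})$. As $R/I$ is a cyclic $R$-module and $v$ cyclically dominates $u$, we get $1+I\in\ker(v\otimes_R\id_{R/I})$, i.e.\ $v(1)\in IM=c_F(u(1))M$. Part (1) then supplies $\varphi\colon F\to M$ with $v=\varphi\circ u$, so $v$ factors through $u$; and $v$ dominates $u$ by \autoref{lem:domination}(2).

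I do not anticipate a genuine obstacle, as everything is a direct unwinding of definitions; the only point deserving a moment's care is that the right condition really is ``$v(1)\in c_F(u(1))M$'' and that $\varphi$ may be chosen to vanish on all basis vectors not appearing in $u(1)$ — this is fine because $c_F(u(1))$ is finitely generated (\autoref{eg:free-OR}, \autoref{rem:content-finitely-generated}) and $R$ is cyclic over itself.
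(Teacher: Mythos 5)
Your proof is correct and follows essentially the same route as the paper: an explicit basis computation for (1), and for (2) the observation that cyclic domination applied to the cyclic module $R/c_F(u(1))$ forces $v(1)\in c_F(u(1))M$ (you inline the argument that the paper delegates to \autoref{lem:cyclic-domination-content}(1)), after which (1) and \autoref{lem:domination}(2) finish the job.
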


\begin{proof}
(2) follows from (1). Indeed, if $v$ cyclically dominates $u$, then since $u(1) \in c_F(u(1))F$ (free modules are Ohm-Rush), by \autoref{lem:cyclic-domination-content} (1)
we have
$
v(1) \in c_F(u(1))M. 
$
Then $v$ factors through $u$ by (1).

We first prove the backward implication of (1). Let $\{e_i \colon i \in I\}$ be a free basis of $F$ and let 
$u(1) = r_1e_{i_1} + \dots + r_ne_{i_n},$ 
where $r_1,\dots,r_n \in R \setminus\{0\}$. By \autoref{eg:free-OR}, 
$c_F(u(1)) = (r_1,\dots,r_n).$ 
Since $v(1) \in c_F(u(1))M$, there exist $m_1,\dots,m_n \in M$ 
such that
$
v(1) = r_1m_1 + \dots + r_nm_n.
$
Now consider any $R$-linear map
\begin{align*}
\varphi \colon F &\to M\\
e_{i_j} &\mapsto m_j,
\end{align*}
for $j = 1, \dots, n$.
By construction,
$
\varphi \circ u (1) = \varphi\left(\sum_{j=1}^nr_je_{i_j}\right) = \sum_{j=1}^n r_jm_j = v(1).
$
Thus, $
\varphi \circ u = v.
$
Conversely, suppose there exists $\varphi \colon F \to M$ such that $v = \varphi \circ u$. Since $F$ is Ohm-Rush, $u(1) \in c_F(u(1))F$. Thus,
$
v(1) = \varphi(u(1)) \in c_F(u(1))M    
$
by linearity of $\varphi$. This completes the proof of (1).
\end{proof}

\begin{proposition}
    \label{prop:content-homomorphisms-purity-flatness}
    Let $M$ be an $R$-module and $\varphi \colon N \to M$ be an $R$-linear map. Then for all $x \in N$, $c_M(\varphi(x)) \subseteq c_N(x)$. Furthermore, if $M$ is Ohm-Rush, then we have the following:
    \begin{enumerate}%[itemsep = 1mm]
        \item[$(1)$] $\varphi$ is injective if and only if for all $x \in N$, $c_M(\varphi(x)) \neq (0)$. 
        
        \item[$(2)$] $\varphi$ is cyclically pure if and only if $N$ is Ohm-Rush and $c_M \circ \varphi = c_N$. 
     
        \item[$(3)$] $M$ is flat if and only if for all $r \in R$ and $x \in M$, $c_M(rx) = rc_M(x)$.
        
        \item[$(4)$] If $M$ is flat and $S$ is a multiplicative set, then for all $x \in M$ and $s \in S$, $c_M(x)(S^{-1}R) = c_{S^{-1}M}(x/s)$. Thus, for all submodules $N$ of $M$, $c_M(N)(S^{-1}R) = c_{S^{-1}M}(S^{-1}N)$.
    \end{enumerate} 
\end{proposition}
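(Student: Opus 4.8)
The plan is to treat the preliminary containment first and then the four numbered items in turn.

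\emph{Setup, preliminary containment, and $(1)$.} The engine throughout is that, because $M$ is Ohm-Rush, an element $y \in M$ lies in $IM$ exactly when $c_M(y) \subseteq I$ --- the forward direction is the definition of content, the backward direction uses the witness $y \in c_M(y)M$ --- so in particular $y = 0$ if and only if $c_M(y) = (0)$. The preliminary containment needs no hypothesis: $x \in IN$ forces $\varphi(x) \in IM$, so the family of ideals $I$ with $\varphi(x) \in IM$ contains the family with $x \in IN$, and intersecting gives $c_M(\varphi(x)) \subseteq c_N(x)$. For $(1)$, applying the equivalence between $y = 0$ and $c_M(y) = (0)$ to $y = \varphi(x)$ identifies $\ker\varphi$ with $\{x \in N : c_M(\varphi(x)) = (0)\}$, so $\varphi$ is injective precisely when $c_M(\varphi(x)) \neq (0)$ for every nonzero $x \in N$.

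\emph{$(2)$.} For the forward implication, \autoref{lem:content-cyclic-purity} gives $c_M \circ \varphi = c_N$, and feeding the ideal $I = c_N(x)$ into cyclic purity shows $x \in c_N(x)N$ if and only if $\varphi(x) \in c_N(x)M = c_M(\varphi(x))M$; the right-hand condition holds because $M$ is Ohm-Rush, so $N$ is Ohm-Rush. Conversely, if $N$ is Ohm-Rush and $c_M\circ\varphi = c_N$, then $\varphi(x) \in IM$ gives $c_N(x) = c_M(\varphi(x)) \subseteq I$, hence $x \in c_N(x)N \subseteq IN$; since $I$ is arbitrary, every $N/IN \rightarrow M/IM$ is injective, i.e.\ $\varphi$ is cyclically pure.

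\emph{$(3)$.} The forward implication is \autoref{lem:factorcontent}: flatness gives $r c_M(x) \subseteq c_M(rx)$ and Ohm-Rushness gives $c_M(rx) \subseteq r c_M(x)$. The converse is the crux, and I expect it to be the main obstacle; I would establish it by verifying the equational criterion for flatness, by induction on the length $n$ of a relation $\sum_{i=1}^{n} a_i x_i = 0$ in $M$. For $n = 1$: $a_1 c_M(x_1) = c_M(a_1 x_1) = c_M(0) = (0)$, so writing $c_M(x_1) = (d_1,\dots,d_p)$ --- finitely generated since $x_1 \in c_M(x_1)M$, by \autoref{rem:content-finitely-generated} --- and $x_1 = \sum_k d_k z_k$, each $d_k$ annihilates $a_1$ and the relation is trivialized. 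For $n > 1$: set $J = (a_1,\dots,a_{n-1})$; from $a_n x_n \in JM$ we get $a_n c_M(x_n) = c_M(a_n x_n) \subseteq J$, and Ohm-Rushness lets us write $x_n = \sum_k d_k z_k$ with $a_n d_k = \sum_{i<n} e_{ki} a_i$. Substituting back produces the length-$(n-1)$ relation $\sum_{i<n} a_i\bigl(x_i + \sum_k e_{ki} z_k\bigr) = 0$, trivialized by the inductive hypothesis over some finite generating set $\{y_j\}$; one checks that $\{y_j\} \cup \{z_k\}$ witnesses triviality of the original relation, the required identity for each generator $z_k$ being precisely $a_n d_k = \sum_{i<n} e_{ki} a_i$. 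Hence $M$ is flat.

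\emph{$(4)$.} Scaling by the unit $1/s$ preserves content, so $c_{S^{-1}M}(x/s) = c_{S^{-1}M}(x/1)$ and it suffices to prove $c_{S^{-1}M}(x/1) = c_M(x)S^{-1}R$. The inclusion $\subseteq$ holds since $x \in c_M(x)M$ gives $x/1 \in (c_M(x)S^{-1}R)S^{-1}M$. For $\supseteq$, take an ideal $I'$ of $S^{-1}R$ with $x/1 \in I' S^{-1}M$ and write $I' = S^{-1}I$; then $x/1 \in S^{-1}(IM)$, so $sx \in IM$ for some $s \in S$, whence $s\,c_M(x) = c_M(sx) \subseteq I$ by \autoref{lem:factorcontent} (using flatness of $M$), and therefore $c_M(x)S^{-1}R \subseteq S^{-1}(I :_R s) = S^{-1}I = I'$ since $s/1$ is a unit; intersecting over all such $I'$ gives $\supseteq$. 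The displayed consequence for a submodule $N$ follows the same pattern: $N \subseteq c_M(N)M$ (\autoref{lem:content-subset}) localizes to one containment, and for the other, each $x \in N$ admits $s_x \in S$ with $s_x x \in IM$, forcing $c_M(x)S^{-1}R \subseteq I'$, so summing over $x \in N$ and using $c_M(N) = \sum_{x\in N} c_M(x)$ (\autoref{lem:content-subset}) gives $c_M(N)S^{-1}R \subseteq I'$.
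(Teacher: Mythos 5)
Your proposal is correct, and it is more self-contained than the paper's own treatment. Where the paper actually argues --- the preliminary containment, part (1), and the submodule statement in (4) --- you do essentially the same thing. But for part (2), the converse of part (3), and the first half of (4) the paper simply defers to Ohm and Rush's original results (their Thm.\ 1.3, Cor.\ 1.6, and Thm.\ 3.1), whereas you supply complete proofs. The most substantive piece you add is the converse of (3): your induction on the length of a relation $\sum a_i x_i = 0$, using $a_n c_M(x_n) = c_M(a_n x_n) \subseteq (a_1,\dots,a_{n-1})$ together with a finite Ohm--Rush generating expression $x_n = \sum_k d_k z_k$ to reduce to a shorter relation, correctly verifies the equational criterion of flatness, and the bookkeeping at the end (the generators $z_k$ carry coefficients $-e_{ki}$ for $i<n$ and $d_k$ for $i=n$, which cancel by construction) is right. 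Your proof of the first half of (4) is also sound: the reduction $c_{S^{-1}M}(x/s) = c_{S^{-1}M}(x/1)$ via the unit $1/s$, and the use of $s\,c_M(x) \subseteq c_M(sx) \subseteq I$ from \autoref{lem:factorcontent} to get $c_M(x)S^{-1}R \subseteq S^{-1}I$, is exactly the kind of argument the cited reference contains. The only cosmetic point is in (1): the statement as written quantifies over all $x \in N$ (including $x=0$), and you correctly read it as intended, namely over nonzero $x$.
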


\begin{proof}
    Let $I$ be an ideal of $R$ such that $x \in IN$. Then by linearity, $\varphi(x) \in IM$. Thus, $c_M(\varphi(x)) \subset I$ for all such ideals $I$, and so, $c_M(\varphi(x)) \subseteq c_N(x)$.

    We now assume $M$ is Ohm-Rush, that is, for all $y \in M$, $y \in c_M(y)M$.
    Then it is clear that $y \neq 0$ if and only if $c_M(y) \neq (0)$. Part (1) readily follows by this observation.
    Part (2) essentially follows by the argument in \cite[Thm.\ 1.3]{OhmRu-content} and we omit the details. Note that $\varphi$ is cyclically pure $\implies c_M \circ \varphi = c_N$ by \autoref{lem:content-cyclic-purity}. Part (3) is part of \cite[Cor.\ 1.6]{OhmRu-content}. Note that the forward implication of $(3)$ is shown in \autoref{lem:factorcontent}.

    The first part of (4) follows by \cite[Thm.\ 3.1]{OhmRu-content} and (3). If $N$ is a submodule of $M$, then 
    \begin{align*}
    c_M(N)(S^{-1}R) &= \left(\sum_{x \in N}c_M(x)\right)S^{-1}R
    = \sum_{x \in N} c_M(x)(S^{-1}R)\\ 
    &= \sum_{s \in S}\sum_{x \in N}c_{S^{-1}M}(x/s) 
    = c_{S^{-1}M}(S^{-1}N).
    \end{align*}
    The first equality follows by \autoref{lem:content-subset} because $M$ is Ohm-Rush, the second equality follows because expansions of ideals commute with arbitrary sums of ideals, the third equality follows by the first assertion in this statement, and the fourth equality follows because $S^{-1}M$ is an Ohm-Rush $S^{-1}R$-module by the equality $c_M(x)(S^{-1}R) = c_{S^{-1}M}(x/s)$.
\end{proof}

\begin{remark}
    \autoref{prop:content-homomorphisms-purity-flatness} (2) shows that the Ohm-Rush property descends under cyclically pure maps.
    % \label{rem:cylic-purity-content}
\end{remark}

We now examine the behavior of the content function under base change. The main result is:

\begin{proposition}
    \label{prop:ascent-relativelypure-content}
    Let $\varphi \colon R \to S$ be a ring map. Let $f \colon N \to M$ be an $R$-linear map such that $N$ is a flat $R$-module and $M$ is an $S$-module. Assume that the induced $S$-linear map $\tilde{f} \colon N \otimes_R S \to M$ is $S$-cyclically pure. Let $x \in N$ and $y \coloneqq f(x)$. If $x \in c_N(x)N$, then $y \in c_M(y)M$ and $c_M(y) = c_N(x)S$. 
\end{proposition}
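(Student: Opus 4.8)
The plan is to reduce everything to a computation inside the $S$-module $N \otimes_R S$ and then transport the conclusion along $\tilde f$. Write $y = \tilde f(x \otimes 1)$ where $x \otimes 1 \in N \otimes_R S$. The key claim is that, with contents taken over $S$,
\[
c_{N \otimes_R S}(x \otimes 1) = c_N(x)S \qquad \text{and} \qquad x \otimes 1 \in \bigl(c_N(x)S\bigr)(N \otimes_R S).
\]
Granting this, \autoref{lem:content-cyclic-purity} applied to the $S$-cyclically pure map $\tilde f$ gives $c_M(y) = c_M(\tilde f(x \otimes 1)) = c_{N \otimes_R S}(x \otimes 1) = c_N(x)S$; and applying the $S$-linear map $\tilde f$ to the displayed membership yields $y \in \bigl(c_N(x)S\bigr)M = c_M(y)M$. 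These are exactly the two desired conclusions.

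To prove the claim, first note that since $x \in c_N(x)N$ we may write $x = \sum_i a_i n_i$ with $a_i \in c_N(x)$ and $n_i \in N$, so that $x \otimes 1 = \sum_i \varphi(a_i)(n_i \otimes 1) \in \bigl(c_N(x)S\bigr)(N \otimes_R S)$; this proves the membership and the inclusion $c_{N \otimes_R S}(x \otimes 1) \subseteq c_N(x)S$. For the reverse inclusion, let $J$ be any ideal of $S$ with $x \otimes 1 \in J(N \otimes_R S)$. There is a canonical identification $(N \otimes_R S)/J(N \otimes_R S) \cong N \otimes_R (S/J)$, under which the class of $x \otimes 1$ becomes $x \otimes \overline{1} = 0$. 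The cyclic $R$-submodule of $S/J$ generated by $\overline{1}$ is isomorphic to $R/\varphi^{-1}(J)$; since $N$ is $R$-flat, the induced map $N/\varphi^{-1}(J)N = N \otimes_R (R/\varphi^{-1}(J)) \to N \otimes_R (S/J)$ is injective, and it carries the class of $x$ to $x \otimes \overline{1} = 0$. Hence $x \in \varphi^{-1}(J)N$, so $c_N(x) \subseteq \varphi^{-1}(J)$ and therefore $c_N(x)S \subseteq J$. Intersecting over all such $J$ gives $c_N(x)S \subseteq c_{N \otimes_R S}(x \otimes 1)$, which completes the proof of the claim.

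The step I expect to require the most care is this reverse inclusion --- descending the condition $x \otimes 1 \in J(N \otimes_R S)$ to the honest containment $x \in \varphi^{-1}(J)N$ inside $N$ itself. This is precisely where the two hypotheses enter: flatness of $N$ (to keep the inclusion $R/\varphi^{-1}(J) \hookrightarrow S/J$ injective after tensoring with $N$), and the fact that only \emph{cyclic} quotients $S/J$ of $S$ intervene, which is what makes the \emph{cyclic} --- rather than full --- purity of $\tilde f$ sufficient. The remaining ingredients, namely the identification of the relevant quotients with tensor products and the invocation of \autoref{lem:content-cyclic-purity}, are routine.
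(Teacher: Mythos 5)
Your argument is correct and is essentially the same as the paper's: reduce to $N\otimes_R S$ via \autoref{lem:content-cyclic-purity}, get one inclusion from $x\in c_N(x)N$, and get the reverse by contracting an ideal $J$ of $S$ to $R$ and using flatness of $N$ together with the injectivity of $R/\varphi^{-1}(J)\to S/J$. No gaps.
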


\begin{proof}
    By \autoref{lem:content-cyclic-purity}, we have $c_{M} \circ \tilde{f} = c_{N \otimes_R S}$ since $\tilde{f}$ is $S$-cyclically pure. Thus, $c_M(y) = c_M(\tilde{f}(x \otimes 1)) = c_{N \otimes_R S}(x \otimes 1)$, and  we reduce to the case where $M = N \otimes_R S$, $f \colon N \to N \otimes_R S$ is the canonical map that sends $z \mapsto z \otimes 1$ and $y = x \otimes 1$. 

    Since $x \in c_N(x)N$, by linearity of $f$ we get $x \otimes 1 = f(x) \in (c_N(x)S)(N\otimes_R S)$. Thus, $c_{N \otimes_R S}(x \otimes 1) \subseteq c_N(x)S$. To finish the proof it remains to show $c_N(x)S \subseteq c_{N \otimes_R S}(x \otimes 1)$. Let $J$ be an ideal of $S$ such that $x \otimes 1 \in J(N \otimes_R S)$. Let $J^{\mathfrak c}$ denote the contraction of $J$ to $R$. Since the induced map $R/J^{\mathfrak c} \to S/J$ is injective, the flatness of $N$ implies that $N/J^{\mathfrak c}N \to (S/J) \otimes_R N$ is injective as well. This induced map sends $x + J^{\mathfrak c}N \mapsto 0$ since $x \otimes 1 \in J(S \otimes_R N)$. Thus, we have shown that if $J$ is an ideal of $R$ such that $x \otimes 1 \in J(N \otimes_R S)$, then $x \in J^{\mathfrak c}N$. In other words, $c_N(x) \subseteq J^{\mathfrak c}$ for all such ideals $J$. Since taking ideal contraction commutes with arbitrary intersections of ideals, and since the intersection of all the ideals $J$ of $S$ such that $x \otimes 1 \in J(N \otimes_R S)$ is precisely $c_{N \otimes_R S}(x \otimes 1)$, we get $c_N(x) \subseteq (c_{N \otimes_R S}(x \otimes 1))^{\mathfrak c}$, and so, $c_{N}(x)S \subseteq c_{N \otimes_R S}(x \otimes 1)$. 
\end{proof}

\begin{theorem}
\label{thm:content-equivalences}
Let $R$ be a ring and $M$ be an $R$-module. Let $v \colon R \to M$
be a linear map. Consider the following statements:
\begin{enumerate}%[itemsep = 1mm]
    \item[$(1)$] $v(1) \in c_M(v(1))M$.
    \item[$(2)$] $v$ admits a cyclic stabilizer $u \colon R \to F$ where
    $F$ is free of finite rank and such that $v$ dominates $u$.
    \item[$(3)$] $v$ admits a cyclic stabilizer that $v$ dominates.
    \item[$(4)$] $v$ admits a cyclic stabilizer $u \colon R \to P$ where
    $P$ is Ohm-Rush.
    \item[$(5)$] $v$ admits a cyclic stabilizer $u \colon R \to P$ such 
    that $u(1) \in c_P(u(1))P$.
\end{enumerate}
One always has $(2) \implies (3)$ and $(4) \implies (5)\implies (1)$. Moreover, if $M$ is a flat $R$-module, then 
$(1) \implies (2)$ and $(3) \implies (4)$. Hence all five
statements are equivalent for flat modules.
\end{theorem}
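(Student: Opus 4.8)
The plan is to run the cycle $(1)\Rightarrow(2)\Rightarrow(3)\Rightarrow(4)\Rightarrow(5)\Rightarrow(1)$, using \autoref{lem:cyclic-domination-content} as the bridge between cyclic domination and the content function, \autoref{lem:domination} for the bookkeeping with honest domination, and \autoref{cor:cyclic-stabilizer-flat} for the single step that genuinely invokes flatness.

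Three of the implications require no flatness hypothesis. The implication $(2)\Rightarrow(3)$ is immediate, since a free module of finite rank is finitely presented, so $u$ remains a cyclic stabilizer and the domination clause is retained verbatim. The implication $(4)\Rightarrow(5)$ is also immediate: by \autoref{def:Ohm-Rush} an Ohm-Rush module $P$ satisfies $u(1)\in c_P(u(1))P$. For $(5)\Rightarrow(1)$, let $u\colon R\to P$ be a cyclic stabilizer of $v$ with $u(1)\in c_P(u(1))P$. Since $u$ and $v$ cyclically dominate each other, \autoref{lem:cyclic-domination-content} (2) applied in both directions gives $c_M(v(1))=c_P(u(1))$; applying \autoref{lem:cyclic-domination-content} (1) to the cyclic domination of $u$ by $v$ with the ideal $I=c_P(u(1))=c_M(v(1))$, the containment $u(1)\in IP$ yields $v(1)\in c_M(v(1))M$, which is $(1)$.

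Now assume $M$ is flat. For $(1)\Rightarrow(2)$: the case $v(1)=0$ is trivial, so assume otherwise; by \autoref{rem:content-finitely-generated} the hypothesis $v(1)\in c_M(v(1))M$ forces $c_M(v(1))$ to be finitely generated, say $c_M(v(1))=(r_1,\dots,r_n)$ with all $r_i\neq 0$, and we may write $v(1)=\sum_{i=1}^n r_i m_i$ with $m_i\in M$. Put $F=R^{\oplus n}$ with standard basis $e_1,\dots,e_n$, define $u\colon R\to F$ by $u(1)=\sum_i r_i e_i$, and define $\varphi\colon F\to M$ by $\varphi(e_i)=m_i$; then $\varphi\circ u=v$, so $v$ dominates $u$ by \autoref{lem:domination} (2). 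It remains to check $u$ is a cyclic stabilizer of $v$. That $v$ cyclically dominates $u$ follows from the domination just established; conversely, by \autoref{eg:free-OR} we have $c_F(u(1))=(r_1,\dots,r_n)=c_M(v(1))$, so for an ideal $I$ the kernel of $u\otimes_R\id_{R/I}$ is the set of classes $r+I$ with $r\,c_M(v(1))\subseteq I$, and for each such $r$ the relation $v(1)\in c_M(v(1))M$ gives $r\,v(1)\in IM$; hence $\ker(u\otimes_R\id_{R/I})\subseteq\ker(v\otimes_R\id_{R/I})$, i.e.\ $u$ cyclically dominates $v$. For $(3)\Rightarrow(4)$: a cyclic stabilizer of $v$ that $v$ dominates feeds directly into \autoref{cor:cyclic-stabilizer-flat}, producing a cyclic stabilizer $u\colon R\to F$ of $v$ with $v$ dominating $u$ and $F$ free of finite rank, and $F$ is Ohm-Rush by \autoref{eg:free-OR}; this is $(4)$. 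Together with the previous paragraph this closes the cycle, so all five statements are equivalent when $M$ is flat.

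The individual steps are short once \autoref{lem:cyclic-domination-content}, \autoref{lem:domination}, and \autoref{cor:cyclic-stabilizer-flat} are in hand; the two points that need a little attention are extracting the honest equality $c_M(v(1))=c_P(u(1))$ from the two one-sided cyclic dominations in $(5)\Rightarrow(1)$, and verifying that the map $u$ built in $(1)\Rightarrow(2)$ cyclically dominates $v$ against every cyclic module $R/I$ and not merely at the generator $1$, which is exactly where the relation $v(1)\in c_M(v(1))M$ is used. Flatness is essential only in $(3)\Rightarrow(4)$, where it enters through the Lazard-type argument behind \autoref{cor:cyclic-stabilizer-flat}; the argument for $(1)\Rightarrow(2)$ above in fact does not use it, though we state it under the flatness hypothesis for uniformity.
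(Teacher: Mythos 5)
Your cycle is the same as the paper's (the paper proves $(2)\Rightarrow(3)$, $(4)\Rightarrow(5)\Rightarrow(1)$ directly and, under flatness, $(1)\Rightarrow(2)$ and $(3)\Rightarrow(4)$ via \autoref{cor:cyclic-stabilizer-flat}), and your treatment of $(5)\Rightarrow(1)$, $(3)\Rightarrow(4)$, and the easy implications is correct. But there is a genuine gap in $(1)\Rightarrow(2)$. With the paper's convention (\autoref{def:domination}), ``$g$ dominates $f$'' means $\ker(f\otimes_R\id_N)\subseteq\ker(g\otimes_R\id_N)$. To show $u$ is a cyclic stabilizer of $v$ you must establish \emph{both} inclusions $\ker(u\otimes_R\id_{R/I})\subseteq\ker(v\otimes_R\id_{R/I})$ and $\ker(v\otimes_R\id_{R/I})\subseteq\ker(u\otimes_R\id_{R/I})$. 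The first follows from $v=\varphi\circ u$, as you say. Your ``converse'' paragraph, however, again proves the \emph{first} inclusion (you compute $\ker(u\otimes_R\id_{R/I})=(I:c_M(v(1)))/I$ and show each such class lands in $\ker(v\otimes_R\id_{R/I})$), and then mislabels it as ``$u$ cyclically dominates $v$.'' The inclusion $\ker(v\otimes_R\id_{R/I})\subseteq\ker(u\otimes_R\id_{R/I})$ is never proved: given $rv(1)\in IM$ you must deduce $rc_M(v(1))\subseteq I$, and all you get for free is $c_M(rv(1))\subseteq I$. Passing from there to $rc_M(v(1))\subseteq I$ requires $rc_M(v(1))\subseteq c_M(rv(1))$, which is exactly where the paper invokes flatness of $M$ via \autoref{lem:factorcontent}. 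Your closing remark that $(1)\Rightarrow(2)$ ``does not use'' flatness is therefore the symptom of the gap rather than a bonus: the paper only asserts this implication for flat $M$, and the missing half of the kernel equality is precisely the flatness-dependent half.

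To repair the step, keep your construction of $u$ and add: if $r+I\in\ker(v\otimes_R\id_{R/I})$, then $rv(1)\in IM$, so $c_M(rv(1))\subseteq I$; since $M$ is flat, \autoref{lem:factorcontent} gives $rc_M(v(1))\subseteq c_M(rv(1))\subseteq I$, hence $r+I\in (I:c_M(v(1)))/I=\ker(u\otimes_R\id_{R/I})$. With that inclusion in place, the two kernels agree for every cyclic module and $u$ is a cyclic stabilizer of $v$; the rest of your argument then goes through as written.
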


\begin{proof}
The implications $(2) \implies (3)$ and $(4) \implies (5)$ are
clear. For $(5) \implies (1)$, suppose $u \colon R \to P$ is a cyclic stabilizer of $v$ such that
$
u(1) \in c_P(u(1))P.
$
Since $u$ and $v$  cyclically dominate each other, by \autoref{lem:cyclic-domination-content} (2) we have 
$
c_P(u(1)) = c_M(v(1)).
$
Then $v(1) \in c_P(u(1))M = c_M(v(1))M$ by \autoref{lem:cyclic-domination-content} (1).

Now assume that $M$ is a flat $R$-module.

$(1) \implies (2):$ Since $v(1) \in c_M(v(1))M$, 
$c_M(v(1))$ is a finitely generated ideal of $R$ (\autoref{rem:content-finitely-generated}). Suppose 
$
c_M(v(1)) = (r_1,\dots,r_n).
$
Now consider the map $u \colon R \to R^{\oplus n}$ such that $u(1) = (r_1,\dots,r_n)$. We claim that $u$ is a cyclic stabilizer of $v$. Then
the fact that $v$ dominates $u$ will follow from \autoref{lem:cyclic-honest-domination} (2). For an ideal $I$ of $R$, we have 
\begin{equation}
    \label{eq:content-colon}
\ker(u \otimes_R \id_{R/I}) = (I \colon (r_1,\dots,r_n))/I = \left(I \colon c_M(v(1))\right)/I.
\end{equation}
Now suppose $r \in R$ such that $r + I \in \ker(v \otimes_R \id_{R/I})$.
This is equivalent to saying that $rv(1) \in IM$, and so, $c_M(rv(1))
\subseteq I$. Since $M$ is flat, \autoref{lem:factorcontent}
shows that $rc_M(v(1)) \subseteq c_M(rv(1)) \subseteq I$, and so, 
$r \in (I \colon c_M(v(1)))$. Thus,
$
\ker(v \otimes_R \id_{R/I}) \subseteq (I \colon c_M(v(1)))/I.
$

Conversely, suppose $r \in (I \colon c_M(v(1)))$. Then $rc_M(v(1)) \subseteq I$, and so, $rc_M(v(1))M \subseteq IM$. Since $v(1) \in c_M(v(1))M$, we then get
$
v(r) = rv(1) \in rc_M(v(1))M \subseteq IM.
$
Hence $r + I \in \ker(v \otimes_R \id_{R/I})$, which establishes the other
inclusion
$
(I \colon c_M(v(1)))/I \subseteq \ker(v \otimes_R \id_{R/I}).
$
Then for all ideals $I$ of $R$
\[
\ker(v \otimes_R \id_{R/I}) = (I \colon c_M(v(1)))/I \stackrel{\autoref{eq:content-colon}}{=\joinrel =} \ker(u \otimes_R \id_{R/I}).
\]
Hence $v$ and $u$ cyclically dominate each other.%removed smallskip

Finally, if $M$ is flat then the implication $(3) \implies (4)$ follows by \autoref{cor:cyclic-stabilizer-flat} because free modules are Ohm-Rush by \autoref{eg:free-OR}.
\end{proof}

We now obtain the following characterizations of the Ohm-Rush property. The proof is omitted as the result follows readily from the definition of an Ohm-Rush module and \autoref{thm:content-equivalences}.

\begin{theorem}
\label{thm:OR-equivalences}
Let $R$ be a ring and $M$ be an $R$-module. Consider the following
statements:
\begin{enumerate}%[itemsep = 1mm]
    \item[$(1)$] $M$ is an Ohm-Rush $R$-module.
    \item[$(2)$] Every $R$-linear map $v\colon R \to M$ admits a cyclic stabilizer $u \colon R \to F$ where
    $F$ is free of finite rank and such that $v$ dominates $u$.
    \item[$(3)$] Every $R$-linear map $v \colon R \to M$ admits a cyclic stabilizer that $v$ dominates.
    \item[$(4)$] Every $R$-linear map $v \colon R \to M$ admits a cyclic stabilizer $u \colon R \to P$ where
    $P$ is Ohm-Rush.
    \item[$(5)$] Every $R$-linear map $v\colon R \to M$ admits a cyclic stabilizer $u \colon R \to P$ such 
    that $u(1) \in c_P(u(1))P$.
\end{enumerate}
One always has $(2) \implies (3)$, $(4) \implies (5)$ and $(5) \implies (1)$. Moreover, if $M$ is flat, then all five assertions are equivalent.
\end{theorem}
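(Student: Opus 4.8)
The plan is to obtain this theorem as an immediate corollary of \autoref{thm:content-equivalences}, by quantifying each of its statements over all $R$-linear maps $v \colon R \to M$. First I would record the elementary dictionary between elements of $M$ and maps out of $R$: the assignment $v \mapsto v(1)$ is a bijection $\Hom_R(R,M) \xrightarrow{\ \sim\ } M$, with inverse sending $x \in M$ to the map $r \mapsto rx$. Under this dictionary, condition $(1)$ of \autoref{thm:content-equivalences}, namely $v(1) \in c_M(v(1))M$, holds for every $v$ exactly when $x \in c_M(x)M$ for every $x \in M$, which by \autoref{def:Ohm-Rush} is precisely assertion $(1)$ of the present theorem. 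Likewise assertions $(2)$--$(5)$ here are, verbatim, the assertions ``for every $v \colon R \to M$, condition $(i)$ of \autoref{thm:content-equivalences} holds'' for $i = 2,3,4,5$.

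Given this, the three unconditional implications are immediate: since \autoref{thm:content-equivalences} yields $(2)\Rightarrow(3)$, $(4)\Rightarrow(5)$ and $(5)\Rightarrow(1)$ for each fixed $v$ with no flatness hypothesis, applying each of these to every $v \colon R \to M$ produces the corresponding implication between the universally quantified statements; for $(5)\Rightarrow(1)$ one only needs to observe, as above, that ``$v(1) \in c_M(v(1))M$ for all $v$'' is the same as ``$M$ is Ohm-Rush''. When $M$ is flat, \autoref{thm:content-equivalences} asserts that its items $(1)$--$(5)$ are all equivalent for each fixed $v$, and quantifying this equivalence over all $v$ gives the equivalence of $(1)$--$(5)$ of the present theorem.

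I do not expect any genuine obstacle: the entire substantive content --- in particular, when $M$ is flat and $v(1) \in c_M(v(1))M$, the construction of a finite-free cyclic stabilizer $u \colon R \to R^{\oplus n}$ with $u(1) = (r_1,\dots,r_n)$ for a generating set $r_1,\dots,r_n$ of $c_M(v(1))$, together with the identification $\ker(v \otimes_R \id_{R/I}) = \bigl(I : c_M(v(1))\bigr)/I$ --- has already been carried out inside the proof of \autoref{thm:content-equivalences}. So the proof here reduces to the bookkeeping of quantifiers described above, which is why it can reasonably be omitted.
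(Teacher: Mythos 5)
Your proposal is correct and is exactly the argument the paper intends: the paper explicitly omits the proof, stating that the result "follows readily from the definition of an Ohm-Rush module and \autoref{thm:content-equivalences}," which is precisely your quantifier-bookkeeping via the bijection $\Hom_R(R,M)\cong M$. Nothing is missing; all the substantive work indeed lives in \autoref{thm:content-equivalences}.
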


We isolate some important consequences of \autoref{thm:OR-equivalences}.

\begin{corollary}
Let $R$ be a ring and $M$ an $R$-module. Suppose that every $R$-linear map $v: R \to M$ admits a cyclic stabilizer $u: R \to P$, such that $P$ is Ohm-Rush.  Then $M$ is Ohm-Rush.
\end{corollary}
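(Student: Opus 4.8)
The plan is to observe that the hypothesis is precisely condition $(4)$ of \autoref{thm:OR-equivalences}, so that the conclusion is the implication $(4) \implies (1)$ recorded there; since $(4) \implies (5) \implies (1)$ and neither of these steps requires flatness, the corollary follows at once. For completeness I would also write out the short direct argument, which merely unwinds the relevant parts of \autoref{thm:content-equivalences}.

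Concretely, I would fix an $R$-linear map $v \colon R \to M$ and use the hypothesis to choose a cyclic stabilizer $u \colon R \to P$ of $v$ with $P$ Ohm-Rush. By the definition of the Ohm-Rush property applied to $P$ we get $u(1) \in c_P(u(1))P$, i.e.\ $v$ satisfies condition $(5)$ of \autoref{thm:content-equivalences}. The implication $(5) \implies (1)$ of that theorem then finishes the proof: because $u$ and $v$ cyclically dominate each other, \autoref{lem:cyclic-domination-content}(2) applied in both directions gives $c_M(v(1)) = c_P(u(1))$, and then \autoref{lem:cyclic-domination-content}(1) (with $f = u$, $g = v$ and $I = c_P(u(1)) = c_M(v(1))$) upgrades $u(1) \in c_P(u(1))P$ to $v(1) \in c_M(v(1))M$. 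Since $v$, equivalently the element $v(1)\in M$, was arbitrary, $M$ is Ohm-Rush.

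There is no genuine obstacle here: the statement is entirely subsumed by \autoref{thm:content-equivalences} and \autoref{thm:OR-equivalences}, and the only point worth flagging is that the implications invoked lie among those that hold without any flatness assumption on $M$.
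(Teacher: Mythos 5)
Your proposal is correct and follows exactly the route the paper takes: the paper's proof is precisely the citation of the implications $(4)\implies(5)\implies(1)$ of \autoref{thm:content-equivalences}, and your explicit unwinding via \autoref{lem:cyclic-domination-content} reproduces the paper's own proof of $(5)\implies(1)$ verbatim. Nothing further is needed.
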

    
\begin{proof}
This follows from the implications (4) $\implies$ (5) $\implies$ (1) of \autoref{thm:content-equivalences}.
\end{proof}

We also obtain a connection between the ML and Ohm-Rush properties for flat modules.

\begin{corollary}
\label{cor:ML-implies-OR}
Let $R$ be a ring and $M$ be a flat $R$-module. If $M$ is ML then $M$ is Ohm-Rush. 
\end{corollary}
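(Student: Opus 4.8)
The plan is to deduce this immediately from \autoref{thm:OR-equivalences} by observing that a genuine stabilizer is in particular a cyclic stabilizer. First I would fix an arbitrary $R$-linear map $v \colon R \to M$. Since $R$ is finitely presented over itself and $M$ is ML, \autoref{def:Mittag-Leffler} produces a finitely presented $R$-module $Q$ together with an $R$-linear map $f \colon R \to Q$ such that $f$ and $v$ dominate one another.

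Next I would note that domination implies cyclic domination: every cyclic $R$-module is an $R$-module, so the containment $\ker(f \otimes_R \id_N) \subseteq \ker(g \otimes_R \id_N)$ demanded in \autoref{def:domination} for all $R$-modules $N$ in particular holds for all cyclic $N$, which is precisely \autoref{def:cyclic-domination}. Hence $f$ and $v$ cyclically dominate each other, so $f \colon R \to Q$ is a cyclic stabilizer of $v$ in the sense of \autoref{def:cyclic-stabilizer}; and $v$ dominates $f$ because $f$ and $v$ dominate each other. Thus every $R$-linear map $v \colon R \to M$ admits a cyclic stabilizer that $v$ dominates, which is exactly condition $(3)$ of \autoref{thm:OR-equivalences}. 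Since $M$ is flat, \autoref{thm:OR-equivalences} then gives condition $(1)$, namely that $M$ is Ohm-Rush, completing the argument. (Equivalently, one could feed this into \autoref{thm:content-equivalences} one map $v$ at a time to conclude $v(1) \in c_M(v(1))M$ for every $v$.)

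There is essentially no genuine obstacle here; the only point worth isolating is the asymmetry between the two notions: full domination is strictly stronger than cyclic domination, so the stabilizer handed to us by the Mittag-Leffler hypothesis is automatically a cyclic stabilizer \emph{and} automatically dominated by $v$, with no flatness needed for that reduction. Flatness of $M$ enters only at the final step, where it is what allows \autoref{thm:OR-equivalences} to upgrade condition $(3)$ to condition $(1)$ (via $(3) \Rightarrow (4) \Rightarrow (5) \Rightarrow (1)$ in that theorem, the first of which uses flatness through \autoref{cor:cyclic-stabilizer-flat}).
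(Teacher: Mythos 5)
Your proof is correct and is essentially identical to the paper's: both take the stabilizer $u\colon R \to Q$ supplied by the ML hypothesis, observe that mutual domination implies mutual cyclic domination so that $u$ is a cyclic stabilizer dominated by $v$, and then invoke \autoref{thm:OR-equivalences} $(3)\implies(1)$ using flatness. Your remark about exactly where flatness is used is also accurate.
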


\begin{proof}
Let $v \colon R \to M$ be a linear map. Since $M$ is ML, $v$ admits a stabilizer $u \colon R \to P$. That is, $P$ is finitely presented and $v$ and $u$  dominate each other, and so, also  cyclically dominate each other. Thus $v$ admits a cyclic stabilizer that $v$ dominates. This shows $M$ is Ohm-Rush by \autoref{thm:OR-equivalences} (3)$\implies$(1).
\end{proof}

One may naturally wonder if there is any relationship between ML modules and Ohm-Rush modules outside the flat setting. The next result shows that no general relationship is possible because the Ohm-Rush condition often implies flatness.

\begin{proposition}[{\cite[p. 53, last paragraph of Section 1]{OhmRu-content}}]
\label{prop:OR-flat-torsionfree}
Let $R$ be a domain. Suppose $M$ is an Ohm-Rush $R$-module. Then $M$ is flat if and only if $M$ is torsion-free.
\end{proposition}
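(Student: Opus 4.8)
The plan is to derive flatness from the flatness‑via‑content criterion already established, namely \autoref{prop:content-homomorphisms-purity-flatness} (3). One implication requires nothing special: if $M$ is flat and $0 \neq r \in R$, then multiplication by $r$ is injective on $R$ (as $R$ is a domain), so, tensoring with the flat module $M$, it is injective on $M$; hence $M$ is torsion‑free. This uses neither the Ohm‑Rush hypothesis nor anything about the content function.

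For the converse, suppose $M$ is Ohm‑Rush and torsion‑free. By \autoref{prop:content-homomorphisms-purity-flatness} (3) it is enough to show $c_M(rx) = r\, c_M(x)$ for all $r \in R$ and $x \in M$. Since $R$ is a domain, every element is either $0$ or a nonzerodivisor, so the inclusion $c_M(rx) \subseteq r\, c_M(x)$ is given by \autoref{lem:factorcontent} (1). The content of the argument is therefore the reverse inclusion $r\, c_M(x) \subseteq c_M(rx)$, which is trivial for $r = 0$; so fix $r \neq 0$. As $c_M(rx)$ is the intersection of all ideals $I$ of $R$ with $rx \in IM$, I would fix such an $I$ and aim to prove $r\, c_M(x) \subseteq I$.

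The key step is to observe that $rx$ lies in $IM$ and evidently also in $rM$, hence in $IM \cap rM$; since $M$ is Ohm‑Rush, \autoref{lem:OR-weak-intersection-flat} gives $IM \cap rM = (I \cap rR)M$. A routine identity valid in any commutative ring, $I \cap rR = r(I :_R r)$, then shows $rx \in \bigl(r(I :_R r)\bigr)M$. Writing $rx = \sum_\ell r a_\ell m_\ell$ with $a_\ell \in (I :_R r)$ and $m_\ell \in M$, we obtain $r\bigl(x - \sum_\ell a_\ell m_\ell\bigr) = 0$, and torsion‑freeness together with $r \neq 0$ forces $x = \sum_\ell a_\ell m_\ell \in (I :_R r)M$. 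Consequently $c_M(x) \subseteq (I :_R r)$, so $r\, c_M(x) \subseteq r(I :_R r) \subseteq I$; intersecting over all admissible $I$ yields $r\, c_M(x) \subseteq c_M(rx)$, as desired.

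I expect the only non‑formal point to be the realization that one should exploit the obvious membership $rx \in rM$ inside the Ohm‑Rush intersection property — this is exactly what converts the problem into one where torsion‑freeness can cancel the factor $r$. No circularity arises, since \autoref{lem:factorcontent} (1) and \autoref{prop:content-homomorphisms-purity-flatness} (3) are proved without reference to this proposition.
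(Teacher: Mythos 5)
Your argument is correct. Note that the paper itself gives no proof of this proposition --- it is stated purely as a citation to Ohm--Rush --- so there is nothing to compare against line by line; your write-up is a valid self-contained reconstruction from the paper's own toolkit. The forward direction is the standard tensoring of $0 \to R \xrightarrow{\,r\,} R$ with a flat module. For the converse, every step checks out: the identity $I \cap rR = r(I:_R r)$ holds in any commutative ring; the application of \autoref{lem:OR-weak-intersection-flat} to the two-element family $\{I, rR\}$ is legitimate (the lemma covers arbitrary, in particular finite, collections, and the nontrivial inclusion there genuinely needs the Ohm--Rush hypothesis since flatness is not yet available); and cancelling $r$ via torsion-freeness to land in $(I:_R r)M$ is exactly the substitute for the flatness step $ (IM :_M r) = (I:_R r)M$ used in the last part of \autoref{lem:factorcontent}. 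Combined with \autoref{lem:factorcontent}(1) for the other inclusion and the flatness criterion \autoref{prop:content-homomorphisms-purity-flatness}(3), this gives $c_M(rx) = rc_M(x)$ and hence flatness, with no circularity.
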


\begin{example}
\label{eg:ML-not-OR}
We can now construct many examples of non-flat ML-modules that are \emph{not} Ohm-Rush. Namely, if $R$ is a domain, then any finitely presented torsion-free $R$-module will be ML because finitely presented modules are always ML. However, such a module will be Ohm-Rush precisely when it is projective by \autoref{prop:OR-flat-torsionfree}.
\end{example}

\begin{remark}
It is more difficult to give examples of Ohm-Rush modules over a ring that are not ML. For instance, no such examples exist within the class of finitely presented modules (see also the remarks in \autoref{sec:futureworkandacknowledgements}). 
\end{remark}

\begin{corollary}
\label{cor:OR-ring-extensions-flat}
Suppose $\varphi \colon R \to S$ is an extension of domains. If $\varphi$ is Ohm-Rush, then $\varphi$ is flat.
\end{corollary}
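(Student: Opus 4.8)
The plan is to reduce immediately to \autoref{prop:OR-flat-torsionfree}. That result says that over a domain $R$, an Ohm-Rush $R$-module is flat if and only if it is torsion-free. Since by hypothesis $S$ is an Ohm-Rush $R$-module and $R$ is a domain, it therefore suffices to verify that $S$ is torsion-free as an $R$-module.

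For this I would use that $\varphi \colon R \to S$ is an \emph{extension} of domains, so $R$ is (identified with) a subring of $S$ and $S$ is an integral domain. If $r \in R \setminus \{0\}$ and $s \in S$ satisfy $rs = 0$ in $S$, then since $r$ is a nonzero element of the domain $S$ we conclude $s = 0$; hence multiplication by any nonzero element of $R$ is injective on $S$, i.e., $S$ is torsion-free over $R$. Combining this with \autoref{prop:OR-flat-torsionfree} gives that $S$ is a flat $R$-module, which is precisely the assertion that $\varphi$ is flat.

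There is no real obstacle here: the entire content is carried by \autoref{prop:OR-flat-torsionfree}, and the only thing to check by hand—torsion-freeness of a domain extension—is immediate. The one point worth stating explicitly is the interpretation of ``extension of domains'' as meaning that $\varphi$ is injective with $R$ and $S$ both domains, so that the elementary torsion-free argument applies.
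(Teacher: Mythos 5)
Your proof is correct and is essentially identical to the paper's own argument: the paper also notes that $S$ is torsion-free as an $R$-module and then invokes \autoref{prop:OR-flat-torsionfree}. You have merely spelled out the (immediate) verification of torsion-freeness that the paper leaves implicit.
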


\begin{proof}
In our setup, $S$ is a torsion-free $R$-module. Thus, $S$ is flat by \autoref{prop:OR-flat-torsionfree}.
\end{proof}

We next discuss how the content function behaves under filtered systems. One can view the stabilization property proved below as an analog of the stabilization properties proved for ML and SML modules in \autoref{thm:ML-systems}.

\begin{proposition}
\label{prop:colimit-OR}
        Let $R$ be a ring and let $\{(L_i, u_{ij}) \colon i, j \in I, i \leq j\}$ be a
        system of finitely presented $R$-modules indexed by a filtered poset $(I ,\leq)$. Let 
        $
        M \coloneqq \colim_i L_i.
        $
        For all $i \in I$, let $u_i \colon L_i \to M$ be the associated map. 
        Fix
        $m \in M$ and choose an index $i_0 \in I$ along with a lift
        $x_{i_0}$ of $m$ to $L_{i_0}$, that is, 
        $
        u_{i_0}(x_{i_0}) = m.
        $
        For all $j \geq i_0$, let $x_j \coloneqq u_{i_0 j}(x_{i_0})$.
        Then we have the following:
        \begin{enumerate}%[itemsep = 1mm]
            \item[$(1)$] For all $k \geq j \geq i_0$,
            $
            c_{L_k}(x_k) \subseteq c_{L_j}(x_j) \subseteq c_{L_{i_0}}(x_{i_0}).
            $
            
            \item[$(2)$] $c_M(m) = \bigcap_{j \geq i_0} c_{L_j}(x_j)$.
            
            \item[$(3)$] Suppose that $L_i$ is an Ohm-Rush $R$-module for all $i \in I$ (for instance, if each $L_i$ is free). Then $m \in c_M(m)M$ if and only if the inverse system of ideals $\{c_{L_j}(x_j) \colon j \geq i_0\}$ stabilizes under inclusion, that is, there exists $j_0 \geq i_0$ such that for all $k \geq j_0$, $c_{L_{j_0}}(x_{j_0}) = c_{L_k}(x_k)$.

            \item[$(4)$] Suppose that $L_i$ is a free $R$-module for all $i$. Let $v \colon R \to M$ be the unique $R$-linear map that sends $1 \mapsto m$. For all $j \geq i_0$, let $v_j \colon R \to L_j$ be the unique $R$-linear map that sends $1 \mapsto x_j$. Consider the following statements: 
            \begin{enumerate}
            \item[$(a)$] $m \in c_M(m)M$. 
            
            \item[$(b)$] There exists $j_0 \geq i_0$ such that for all  $k \geq j_0$, $v_k$ stabilizes $v$. 

            \item[$(c)$] There exists $j_0 \geq i_0$ such that for all $R$-modules $N$ and for all $k \geq j_0$,
            $
            \im(\Hom_R(v_{j_0}, N)) = \im(\Hom_R(v_k,N)).    
            $
            \end{enumerate}
            Then $(a)\Longleftrightarrow (b)\Longrightarrow(c)$.
        \end{enumerate}
        \end{proposition}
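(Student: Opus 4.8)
The plan is to prove $(1)$ and $(2)$ from the functoriality of the content function, and then feed $(2)$ into the arguments for $(3)$ and $(4)$. For $(1)$, recall from the first assertion of \autoref{prop:content-homomorphisms-purity-flatness} that for any $R$-linear map $\varphi \colon A \to B$ and $a \in A$ one has $c_B(\varphi(a)) \subseteq c_A(a)$ (if $a \in IA$ then $\varphi(a) \in IB$); applying this to the transition maps $u_{jk}$ and $u_{i_0 j}$ gives the two inclusions. The inclusion $c_M(m) \subseteq \bigcap_{j \ge i_0} c_{L_j}(x_j)$ in $(2)$ is the same observation applied to $u_j \colon L_j \to M$. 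For the reverse inclusion I would fix an ideal $I$ with $m \in IM$; since $IM$ is the image of $I \otimes_R M \to M$ and $- \otimes_R M$ commutes with filtered colimits, $m \in IM$ forces some $j \ge i_0$ and $y \in IL_j$ with $u_j(y) = m = u_j(x_j)$, and then, using that $M = \colim_j L_j$ is a \emph{filtered} colimit, some $k \ge j$ with $u_{jk}(y - x_j) = 0$, i.e.\ $x_k = u_{jk}(y) \in IL_k$; hence $\bigcap_{j \ge i_0} c_{L_j}(x_j) \subseteq c_{L_k}(x_k) \subseteq I$, and intersecting over all such $I$ finishes $(2)$.

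For $(3)$, assume each $L_i$ is Ohm-Rush. If the system $\{c_{L_j}(x_j) \colon j \ge i_0\}$ stabilizes at $j_0$, then by $(1)$ and $(2)$ we get $c_M(m) = c_{L_{j_0}}(x_{j_0})$, so Ohm-Rushness of $L_{j_0}$ gives $x_{j_0} \in c_{L_{j_0}}(x_{j_0})L_{j_0}$, and applying $u_{j_0}$ yields $m \in c_M(m)M$. Conversely, if $m \in c_M(m)M$ then the argument used for the reverse inclusion of $(2)$, run with $I = c_M(m)$, produces $k_0 \ge i_0$ with $c_{L_{k_0}}(x_{k_0}) \subseteq c_M(m)$; combined with $(2)$ this is an equality, and then $(1)$ forces $c_{L_k}(x_k) = c_M(m)$ for all $k \ge k_0$, so the system stabilizes.

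For $(4)$, where each $L_i$ is free, the key input is the explicit kernel computation: writing $x_j$ in a finite basis of $L_j$ and using \autoref{eg:free-OR} together with the fact that tensoring a finite free module commutes with direct sums, one gets, for every $R$-module $N$ and every $j \ge i_0$,
\[
\ker(v_j \otimes_R \id_N) = (0 :_N c_{L_j}(x_j)),
\]
and since $M \otimes_R N = \colim_j (L_j \otimes_R N)$ is filtered with the submodules $\ker(v_j \otimes_R \id_N)$ increasing in $j$, also $\ker(v \otimes_R \id_N) = \bigcup_{j \ge i_0}(0 :_N c_{L_j}(x_j))$. Because $v = u_k \circ v_k$, the map $v$ always dominates $v_k$ (\autoref{lem:domination}$(2)$), so ``$v_k$ stabilizes $v$'' is equivalent to ``$v_k$ dominates $v$'', which by the two displayed identities amounts to $(0 :_N c_{L_k}(x_k)) = (0 :_N c_{L_j}(x_j))$ for all $N$ and all $j \ge k$; testing against $N = R/c_{L_j}(x_j)$ and invoking $(1)$ shows this is equivalent to $c_{L_k}(x_k) = c_{L_j}(x_j)$ for all $j \ge k$. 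Thus the existence of a $j_0 \ge i_0$ as in $(b)$ is exactly the stabilization of the ideal system $\{c_{L_j}(x_j)\}$, which by $(3)$ is exactly $(a)$; this gives $(a) \Leftrightarrow (b)$. Finally, for $(b) \Rightarrow (c)$: when $k \ge j_0$ the maps $v_k$ and $v_{j_0}$ both stabilize $v$ and hence dominate each other, so, $\coker(v_k)$ being finitely presented, they factor through each other by \autoref{lem:domination}$(2)$, whence $\im(\Hom_R(v_{j_0}, N)) = \im(\Hom_R(v_k, N))$ for every $N$ (this is also immediate from \autoref{lem:stabilizers}$(3)$).

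The one step requiring a genuine idea rather than filtered-colimit bookkeeping is the displayed kernel computation in $(4)$: it is what lets one avoid trying to build an honest (non-cyclic) stabilizer of $v \colon R \to M$ directly, and instead reduce all of $(4)$ to the purely ideal-theoretic statement $(3)$. The remaining care is in the filtered-colimit manipulations in $(2)$ (passing from ``$m \in IM$'' to a witness living at a single finite stage, and then killing $y - x_j$) and in handling the ``uniform in $N$'' quantifier in $(b)$, which is precisely what the kernel identity takes care of.
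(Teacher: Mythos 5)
Your argument is correct. Parts (1)--(3) follow essentially the paper's route: (1) and the inclusion $c_M(m)\subseteq\bigcap_j c_{L_j}(x_j)$ come from functoriality of content, the reverse inclusion in (2) comes from pushing a witness of $m\in IM$ to a finite stage and then using filteredness to identify it with $x_k$ (the paper instead lifts a factorization $R\to R^{\oplus n}\to M$ to some $L_j$, but the two manipulations are interchangeable), and (3) is the same combination of (1), (2), finite generation of $c_M(m)$, and Ohm-Rushness of $L_{j_0}$.

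Where you genuinely diverge is (4). The paper proves $(a)\Rightarrow(b)$ by using the stabilized ideals to show, via \autoref{lem:cyclic-honest-domination}, that $v_{j_0}$ and $v_k$ factor through each other for $k\geq j_0$, and then passes $\ker(-\otimes_R\id_N)$ through the filtered colimit $v=\colim_k v_k$ to conclude that each $v_k$ stabilizes $v$; the converse $(b)\Rightarrow(a)$ is quoted from \autoref{thm:content-equivalences}. You instead compute, for $L_j$ free, the identity $\ker(v_j\otimes_R\id_N)=(0:_N c_{L_j}(x_j))$, deduce $\ker(v\otimes_R\id_N)=\bigcup_{j}(0:_N c_{L_j}(x_j))$, and then observe that ``$v_k$ stabilizes $v$'' reduces (after noting $v$ automatically dominates $v_k$) to equality of these annihilators for all $N$, which testing at $N=R/c_{L_j}(x_j)$ converts into stabilization of the ideal system --- i.e., exactly condition (3). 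This is a clean and slightly more economical argument: it makes the equivalence $(a)\Leftrightarrow(b)$ literally a restatement of (3) and avoids invoking \autoref{lem:cyclic-honest-domination} and \autoref{thm:content-equivalences}; the price is that it leans on freeness of the $L_j$ more explicitly (which is anyway assumed in (4)). Your treatment of $(b)\Rightarrow(c)$ coincides with the paper's appeal to \autoref{lem:stabilizers}(3).
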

        
        \begin{proof}
        (1) By definition, 
        $
        u_{jk}(x_j) = u_{jk}(u_{i_0j}(x_{i_0})) = u_{i_0k}(x_{i_0}) = x_k.
        $
        Thus, (1) follows by applying \autoref{prop:content-homomorphisms-purity-flatness}
        to the map $u_{i_0j}$ (resp. $u_{jk}$) and the element $x_{i_0} \in L_{i_0}$ (resp. $x_j \in L_j$).

        (2) Since $u_{i_0}$ maps $x_{i_0} \mapsto m$, we see that for
        all $j \geq i_0$,
        $
        u_j(x_j) = u_j(u_{i_0j}(x_{i_0})) = u_{i_0}(x_{i_0}) = m.
        $
        Thus, by \autoref{prop:content-homomorphisms-purity-flatness} applied to the maps
        \begin{align*}
        u_j \colon L_j &\to M\\
        x_j &\mapsto m,
        \end{align*}
        we get
        \begin{equation}
            \label{eq:content-containment-1}
        c_M(m) \subseteq \bigcap_{j \geq i_0} c_{L_j}(x_j).
        \end{equation}

        Now suppose that $I$ is an ideal of $R$ such that $m \in IM$.
        Choose $a_1,\dots,a_n \in I$ and $m_1,\dots,m_n \in M$ such that
        $
        m = a_1m_1 + \dots + a_nm_n.
        $
        Consider the maps
        \begin{align*}
        f \colon R &\to M\\
        1 &\mapsto m,
        \end{align*}
        and
        \begin{align*}
        g \colon R &\longrightarrow R^{\oplus n}\\
        1 &\mapsto (a_1,\dots,a_n).
        \end{align*}
        Then $f$ factors through $g$ via the map
        $
        h \colon R^{\oplus n} \to M
        $
        that sends the standard basis element $e_\ell \mapsto m_\ell$, for
        $1 \leq \ell \leq n$. One can
        choose an index $i \in I$ such that $h$ lifts to a map 
        $
        h_i \colon R^{\oplus n} \to L_i
        $
        along $u_i \colon L_i \to M$, that is,
        $
        u_i \circ h_i = h.
        $
        Then
        $
        u_i(h_i(a_1,\dots,a_n)) = h(a_1,\dots,a_n) = h(g(1)) = f(1) = m.
       $
        Since $x_{i_0}$ and $h_i(a_1,\dots,a_n)$ are both lifts of $m$, by construction of a filtered colimit, there exists
        $j \in I$ such that $j \geq i_0, i$ and 
        $
        u_{ij}(h_i(a_1,\dots,a_n)) = u_{i_0j}(x_{i_0}) = x_j.
        $
        Then by \autoref{prop:content-homomorphisms-purity-flatness} applied to the map $u_{ij} \circ h_i \colon R^{\oplus n} \to L_j$, we have
        $
        c_{L_j}(x_j) \subseteq c_{R^{\oplus n}}(a_1,\dots,a_n) = (a_1,\dots,a_n) \subseteq I.
        $
        Thus, we have shown that for any ideal $I$ of $R$ such that $m \in IM$, there
        exists $j \geq i_0$ and $c_{L_j}(x_j) \subseteq I$.
        Consequently,
        $
        \bigcap_{j \geq i_0} c_{L_j}(x_j) \subseteq I
       $
        for all ideals $I$ of $R$ such that $m \in IM$, and so, $\bigcap_{j \geq i_0} c_{L_j}(x_j) \subseteq c_M(m)$. This, along with \autoref{eq:content-containment-1}, completes the proof of (2).
        
        (3) Suppose $m \in c_M(m)M$. In order to show that $\{c_{L_j}(x_j) \colon j \geq i_0\}$ stabilizes under inclusion, by (1) and (2) it
        suffices to show that there exists $j_0 \geq i_0$ such that
        $
        c_{L_{j_0}}(x_{j_0}) \subseteq c_M(m).
        $
        Let $c_M(m) = (a_1,\dots,a_n)$ (\autoref{rem:content-finitely-generated}). Then by repeating the argument in (2) 
        above, we see that there exists $j_0 \geq i_0$ such that
        $
        c_{L_{j_0}}(x_{j_0}) \subseteq (a_1,\dots,a_n) = c_M(m).
        $

        Conversely, suppose $\{c_{L_j}(x_j) \colon j \geq i_0\}$ stabilizes under inclusion, say with stable ideal $c_{L_{j_0}}(x_{j_0})$. Since
        $I$ is a directed set, by (1) and (2) we have
        $
        c_M(m) = \bigcap_{j\geq i_0} c_{L_j}(x_j) = \bigcap_{k \geq j_0} c_{L_k}(x_k) = c_{L_{j_0}}(x_{j_0}).
        $
        Since $L_{j_0}$ is an Ohm-Rush $R$-module, it follows that
        $x_{j_0} \in c_{L_{j_0}}(x_{j_0})L_{j_0}$. Using the 
        homomorphism 
        $
        u_{j_0} \colon L_{j_0} \to M,
        $
        we get $m = u_{j_0}(x_{j_0}) \in c_{L_{j_0}}(x_{j_0})M = 
     c_M(m)M$, as desired.

     (4) We first show $(b) \implies (a)$. If there exists $k \geq i_0$ such that $v_k$ stabilizes $v$, then since $L_k$ is Ohm-rush, by \autoref{thm:content-equivalences}$(4) \implies (1)$ we have that $m = v(1) \in c_M(v(1))M = c_M(m)M$.

     We next prove $(a) \implies (b)$. So assume that $m \in c_M(m)M$. By (3) and (2), there exists $j_0 \geq i_0$ such that for all $k \geq j_0$, 
     $
        c_{L_k}(x_k) = c_{L_{j_0}}(x_{j_0}) = c_M(m).   
     $
      By assumption, $L_{j_0}$ and $L_k$ are free $R$-modules, and hence Ohm-Rush. Thus, we have
     $
     v_{j_0}(1) = x_{j_0} \in c_{L_{j_0}}(x_{j_0})L_{j_0} = c_{L_k}(x_k)L_{j_0} = c_{L_k}(v_k(1))L_{j_0},   
     $
     and
     $
        v_{k}(1) = x_k \in c_{L_k}(x_k)L_k = c_{L_{j_0}}(x_{j_0})L_{k} = c_{L_{j_0}}(v_{j_0}(1))L_k.   
     $
     By \autoref{lem:cyclic-honest-domination} (1) we then conclude that $v_{j_0} \colon R \to L_{j_0}$ factors through $v_k \colon R \to L_k$ and vice-versa (here we are using that $L_{j_0}$ and $L_k$ are both free and not just Ohm-Rush). Thus, for all $k \geq j_0$, $v_{j_0}$ and $v_k$  dominate each other. Since 
     $
     v = \colim_{k \geq j_0} v_j,   
     $
     it follows that $v$ and $v_{j_0}$  dominate each other because  for all $R$-modules $N$,
     \begin{align*}
     \ker(v \otimes_R \id_N) &= \ker((\colim_{k \geq j_0} v_k)\otimes_R \id_N)
     = \ker(\colim_{k \geq j_0}(v_k \otimes_R \id_N))\\ 
     &= \colim_{k \geq j_0} \ker(v_k \otimes_R \id_N) 
     = \colim_{k \geq j_0} \ker(v_{j_0} \otimes_R \id_N)
     = \ker(v_{j_0} \otimes_R \id_N).  
     \end{align*}
     Here the second equality follows because tensor products commute with filtered colimits, the third equality follows because filtered colimits are exact in $\Mod_R$ \cite[\href{https://stacks.math.columbia.edu/tag/00DB}{Tag 00DB}]{stacks-project} and the fourth equality follows because $v_{j_0}$ and $v_k$  dominate each other. In fact, since $\ker(v_{j_0} \otimes_R \id_N) = \ker(v_k \otimes_R \id_N)$, the above chain of equalities also shows that $\ker(v \otimes_R \id_N) = \ker(v_k \otimes_R \id_N)$ for all $k \geq j_0$. Since $L_k$ is a finitely presented $R$-module by hypothesis, we get that $v_k$ stabilizes $v$ for all $k \geq j_0$. This completes the proof of $(a) \implies (b)$.

     It remains to show that (b) $\implies$ (c). Suppose there exists $j_0 \geq i_0$ such that for all $k \geq j_0$, $v_k$ stabilizes $v$. We have
     $
     v_k(1) = x_k = u_{i_0k}(x_0) = u_{j_0k}(u_{i_0j_0}(x_0)) = u_{j_0k}(x_{j_0}) = u_{j_0k}(v_{j_0}(1)),
     $
     and so, $v_k = u_{j_0k} \circ v_{j_0}$.
     Furthermore, 
     $
     u_k \circ v_k(1) = u_k(x_k) = u_k(u_{i_0k}(x_{i_0})) = u_{i_0}(x_{i_0}) = m = v(1).
     $
     Thus, for all $k \geq j_0$, $u_k \circ v_k = v$. Since $v_k$ stabilizes $v$, one can now apply \autoref{lem:stabilizers} (3) (with $f_k = v_k$, $g = v$ and $i = j_0$ in the notation of the Lemma) to conclude that for all $k \geq j_0$ and for all $R$-modules $N$, $\im(\Hom_R(v_{j_0},N)) = \im(\Hom_R(v_k,N))$.
\end{proof}

\autoref{prop:colimit-OR} allows us to add an additional statement in the list of equivalent statements for flat modules in \autoref{thm:content-equivalences}.

\begin{corollary}
\label{cor:content-equivalences-addition}
Let $R$ be a ring and $M$ be a flat $R$-module. Let $v \colon R \to M$ be an $R$-linear map. Then the following are equivalent:
\begin{enumerate}
\item[$(1)$] $v(1) \in c_M(v(1))M$.
\item[$(2)$] $v$ admits a stabilizer $u \colon R \to L$ where $L$ is free of finite rank.  
\item[$(3)$] $v$ admits a stabilizer.
\end{enumerate}
\end{corollary}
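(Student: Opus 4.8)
The plan is to obtain the corollary as a formal consequence of \autoref{thm:content-equivalences} and \autoref{prop:colimit-OR}~(4); essentially no new computation is required, only a careful identification of which previously established implications apply.

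The implication $(2)\implies(3)$ is immediate. For $(3)\implies(1)$, I would note that a stabilizer $u\colon R\to P$ of $v$ is, in particular, a cyclic stabilizer of $v$ that $v$ dominates: since $u$ and $v$ dominate one another they also cyclically dominate one another, and mutual domination gives in particular that $v$ dominates $u$. Thus condition $(3)$ of \autoref{thm:content-equivalences} holds, and, $M$ being flat, that theorem yields $v(1)\in c_M(v(1))M$, which is condition $(1)$.

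The one implication with content is $(1)\implies(2)$. Here I would invoke Lazard's theorem to present $M=\colim_{i\in I}L_i$ as a filtered colimit of free $R$-modules of finite rank, and then place oneself in exactly the situation of \autoref{prop:colimit-OR}: fix $m\coloneqq v(1)$, pick an index $i_0$ and a lift $x_{i_0}\in L_{i_0}$ with $u_{i_0}(x_{i_0})=m$, set $x_j\coloneqq u_{i_0 j}(x_{i_0})$ for $j\geq i_0$, and let $v_j\colon R\to L_j$ send $1\mapsto x_j$. Since all the $L_i$ are free, part $(4)$ of that proposition is available, and its implication $(a)\implies(b)$ translates the hypothesis $m\in c_M(m)M$ into the existence of an index $j_0\geq i_0$ such that $v_{j_0}\colon R\to L_{j_0}$ stabilizes $v$. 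As $L_{j_0}$ is free of finite rank, $v_{j_0}$ is the stabilizer demanded in $(2)$.

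The only genuine difficulty has already been absorbed into \autoref{prop:colimit-OR}~(4): upgrading the content equality to an honest stabilizer (rather than merely a cyclic one) there uses that the inverse system $\{c_{L_j}(x_j)\}_{j\geq i_0}$ stabilizes, then promotes mutual cyclic domination of the $v_j$ to mutual domination via \autoref{lem:cyclic-honest-domination} (exploiting freeness of the $L_j$), and finally passes to the colimit using exactness of filtered colimits in $\Mod_R$. Once that proposition is in hand, the corollary reduces to the bookkeeping described above, and no step other than the already-completed proof of \autoref{prop:colimit-OR}~(4) presents any obstacle.
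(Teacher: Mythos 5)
Your proposal is correct and matches the paper's own proof essentially step for step: $(2)\implies(3)$ is immediate, $(3)\implies(1)$ follows by observing that a stabilizer is in particular a cyclic stabilizer that $v$ dominates and then citing \autoref{thm:content-equivalences}, and $(1)\implies(2)$ is obtained by writing $M$ as a filtered colimit of finite free modules via Lazard's theorem and invoking \autoref{prop:colimit-OR}~(4). No further comment is needed.
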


\begin{proof}
Note that any stabilizer of $v$ is a cyclic stabilizer of $v$ that $v$ dominates. Thus, $(3) \implies (1)$ follows by \autoref{thm:content-equivalences} (3)$\implies$(1) since $M$ is a flat $R$-module. The implication $(2) \implies (3)$ is clear. 
            
It suffices to show $(1) \implies (2)$. Since $M$ is flat we can express it as a colimit of a filtered system of free $R$-modules of finite rank $\{(L_i, u_{ij})\colon i,j \in I, i \leq j\}$. Then one can choose $i \in I$ and a lift $x_i$ of $v(1)$ to $L_i$ such that the unique map $u \colon R \to L_i$ that sends $1 \mapsto x_i$ stabilizes $v$ by \autoref{prop:colimit-OR} (4).
\end{proof}

We also have the following addition to the list of equivalent assertions for flat modules in Theorem \ref{thm:OR-equivalences}. The proof is clear and is omitted.

\begin{corollary}
\label{cor:OR-equivalences-addition}
Let $R$ be a ring and $M$ be a flat $R$-module. Then the following are equivalent:
\begin{enumerate}%[itemsep = 1mm]
\item[$(1)$] $M$ is Ohm-Rush.
\item[$(2)$] Every linear map $v \colon R \to M$ admits a stabilizer $u \colon R \to L$ where $L$ is free of finite rank.
\item[$(3)$] Every linear map $v \colon R \to M$ admits a stabilizer. 
\end{enumerate} 
\end{corollary}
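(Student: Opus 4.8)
The plan is to reduce the statement to its pointwise version, \autoref{cor:content-equivalences-addition}, by quantifying over all linear maps $R \to M$. The only preliminary observation needed is that evaluation at $1$ sets up a bijection between $\Hom_R(R, M)$ and $M$: an $R$-linear map $v \colon R \to M$ is completely determined by the element $v(1) \in M$, and every $x \in M$ equals $v(1)$ for a unique such $v$. Consequently, the defining condition (\autoref{def:Ohm-Rush}) that $M$ be Ohm-Rush --- namely $x \in c_M(x)M$ for every $x \in M$ --- is literally the assertion that $v(1) \in c_M(v(1))M$ for every $R$-linear map $v \colon R \to M$.

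With this dictionary in hand, the proof is immediate. Since $M$ is flat, \autoref{cor:content-equivalences-addition} tells us that for each individual linear map $v \colon R \to M$ the three conditions ``$v(1) \in c_M(v(1))M$'', ``$v$ admits a stabilizer $u \colon R \to L$ with $L$ free of finite rank'', and ``$v$ admits a stabilizer'' are equivalent. Applying this equivalence to every $v$ simultaneously, and using the translation above for the first condition, we obtain exactly the equivalence of $(1)$, $(2)$, and $(3)$ in the statement.

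There is no genuine obstacle here: the substantive content of the result sits entirely in \autoref{cor:content-equivalences-addition}, which in turn rests on \autoref{prop:colimit-OR} and Lazard's theorem. The one point to state carefully is that each of conditions $(1)$--$(3)$ is a universally quantified statement over the maps $v \colon R \to M$, so the pointwise equivalence of \autoref{cor:content-equivalences-addition} upgrades directly to the global equivalence without any further argument. This is presumably why the authors describe the proof as clear; a one- or two-sentence remark invoking \autoref{cor:content-equivalences-addition} together with \autoref{def:Ohm-Rush} suffices.
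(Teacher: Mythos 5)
Your proof is correct and is exactly the argument the paper has in mind: the paper omits the proof precisely because the corollary is the universal quantification over $v \colon R \to M$ of the pointwise equivalence in \autoref{cor:content-equivalences-addition}, combined with the observation that the Ohm-Rush condition is the statement $v(1) \in c_M(v(1))M$ for all such $v$.
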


The next result is as an analog of \autoref{prop:RG-2.1.9} for cyclic stabilizers.

\begin{proposition}\label{prop:analog-RG219}
Let $(R,\fm,\kappa)$ be a local ring (not necessarily Noetherian). Let $M$ be a flat $R$-module and $f\in M$.  Let $v: R \to M$ be the unique $R$-linear map that sends $1\mapsto f$. Then the following are equivalent:
\begin{enumerate}%[itemsep = 1mm]
\item[$(1)$] There is a finitely generated free submodule $L$ of $M$ that contains $f$, such that the inclusion map of $L$ into $M$ is pure.
\item[$(2)$] There is a finitely generated free submodule $L$ of $M$ that contains $f$, such that the inclusion map of $L$ into $M$ is cyclically pure.
\item[$(3)$] $v$ admits a cyclic stabilizer $u \colon R \to P$ such that $P$ is Ohm-Rush. 
\item[$(4)$] $v$ admits a cyclic stabilizer that $v$ dominates.
\item[$(5)$] $f \in c_M(f)M$.
\end{enumerate}
\end{proposition}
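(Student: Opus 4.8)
The plan is to run the cycle $(1)\Rightarrow(2)\Rightarrow(5)\Rightarrow(4)\Rightarrow(3)\Rightarrow(5)$ together with a separate arrow $(5)\Rightarrow(1)$, leaning almost entirely on results already established: \autoref{thm:content-equivalences}, \autoref{cor:cyclic-stabilizer-flat}, \autoref{cor:content-equivalences-addition}, \autoref{prop:RG-2.1.9}, \autoref{lem:content-cyclic-purity}, and \autoref{eg:free-OR}. Most of the argument is bookkeeping; the genuine content is concentrated in $(5)\Rightarrow(1)$.

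For the bookkeeping arrows: $(1)\Rightarrow(2)$ is immediate since pure maps are cyclically pure. For $(2)\Rightarrow(5)$, if $\iota\colon L\hookrightarrow M$ is a cyclically pure inclusion of a finitely generated free submodule with $f\in L$, then \autoref{lem:content-cyclic-purity} gives $c_M(f)=c_L(f)$, and since $L$ is free it is Ohm-Rush (\autoref{eg:free-OR}), so $f\in c_L(f)L=c_M(f)L\subseteq c_M(f)M$. For $(5)\Rightarrow(4)$, observe that $(5)$ is precisely statement $(1)$ of \autoref{thm:content-equivalences} for the map $v$, and since $M$ is flat that theorem delivers its statement $(2)$, namely a cyclic stabilizer $u\colon R\to F$ with $F$ free of finite rank that $v$ dominates; this in particular is a cyclic stabilizer that $v$ dominates, so $(4)$ holds. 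For $(4)\Rightarrow(3)$, \autoref{cor:cyclic-stabilizer-flat} (applied with $G=R$, using flatness of $M$) upgrades the cyclic stabilizer that $v$ dominates to one of the form $u\colon R\to F$ with $F$ free of finite rank; free modules are Ohm-Rush, so $P\coloneqq F$ witnesses $(3)$. Finally $(3)\Rightarrow(5)$: statement $(3)$ here is exactly statement $(4)$ of \autoref{thm:content-equivalences}, whose chain $(4)\Rightarrow(5)\Rightarrow(1)$ returns us to $(5)$.

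The one implication requiring a different ingredient is $(5)\Rightarrow(1)$, and this is the heart of the proposition. By \autoref{cor:content-equivalences-addition}, over a flat $M$ the condition $f=v(1)\in c_M(v(1))M$ is equivalent to $v$ admitting a genuine stabilizer (not merely a cyclic one). Feeding this into \autoref{prop:RG-2.1.9} with the finitely presented module taken to be $R$ itself and the map taken to be $v$, we obtain a submodule $L\subseteq M$ that is free of finite rank, is pure in $M$, and contains $\im(v)=Rf$; in particular $f\in L$, which is exactly $(1)$. I expect the main obstacle to be precisely this passage from the cyclic data in $(5)$ to a genuine stabilizer — the step where the local hypothesis on $R$ and Lazard's theorem, packaged inside \autoref{cor:content-equivalences-addition} and \autoref{prop:RG-2.1.9}, actually do the work — together with the routine but necessary care of checking that the flatness and locality hypotheses in the cited statements line up with those assumed here, which they do.
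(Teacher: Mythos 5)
Your proposal is correct and takes essentially the same route as the paper: the equivalence of (3), (4), (5) via \autoref{thm:content-equivalences} and its corollaries, and the key implication $(5)\Rightarrow(1)$ obtained by first upgrading to a genuine stabilizer via \autoref{cor:content-equivalences-addition} and then invoking \autoref{prop:RG-2.1.9}. The only (immaterial) difference is that you feed (2) back into the cycle via $(2)\Rightarrow(5)$ using \autoref{lem:content-cyclic-purity}, whereas the paper goes $(2)\Rightarrow(3)$ via \autoref{lem:factor-cyclically-pure}; both are one-line applications of cyclic purity plus the Ohm-Rush property of free modules.
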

            
\begin{proof}
The equivalences of (3), (4) and (5) follow from \autoref{thm:content-equivalences} and hold even when $R$ is not local. The implication $(1) \implies (2)$ is clear while $(2) \implies (3)$ follows from \autoref{lem:factor-cyclically-pure} and the fact that free modules are Ohm-Rush (\autoref{eg:free-OR}).

It remains to show $(5) \implies (1)$. Since $M$ is flat and $f \in c_M(f)M$, $v$ admits a stabilizer by \autoref{cor:content-equivalences-addition}. Then (1) follows by \autoref{prop:RG-2.1.9} applied to $g = v$. 
\end{proof}

One can use \autoref{prop:analog-RG219} to now deduce that flat and non-trivial Ohm-Rush modules over local rings are faithfully flat.

\begin{corollary}
\label{cor:Nakayama-flat-OR}
    Let $(R, \fm, \kappa)$ be a non-trivial local ring (not necessarily Noetherian). Let $M$ be a flat and OR $R$-module. 
    We have the following:
    \begin{enumerate}%[itemsep = 1mm]
        \item[$(1)$] If $M = \fm M$, then $M = 0$. 
        \item[$(2)$] If $M \neq 0$ then $M$ is faithfully flat. 
        \item[$(3)$] If $M \neq 0$ and $I \neq J$ are ideals of $R$, then $IM \neq JM$.
        \item[$(4)$] If $M \neq 0$, then for any ideal $I$ of $R$, $c_M(IM) = I$.
    \end{enumerate}
    
\end{corollary}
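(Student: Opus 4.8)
The plan is to prove (1) first---this is the only part where the Ohm-Rush hypothesis does real work---and then to extract (2), (3), (4) by formal manipulations. For (1) I would argue the contrapositive: assume $M \neq 0$ and deduce $M \neq \fm M$. Choose $f \in M$ with $f \neq 0$. Since $M$ is Ohm-Rush we have $f \in c_M(f)M$, so \autoref{prop:analog-RG219} (the implication $(5) \Rightarrow (1)$ there) supplies a finitely generated free submodule $L$ of $M$ with $f \in L$ and with $L \hookrightarrow M$ pure. As $0 \neq f \in L$, the free module $L$ has positive rank, say $L \cong R^{\oplus n}$ with $n \geq 1$. Tensoring the pure inclusion with $\kappa$ produces an injection $\kappa^{\oplus n} \cong L \otimes_R \kappa \hookrightarrow M \otimes_R \kappa = M/\fm M$, and since $R$ is non-trivial, $\kappa \neq 0$, so $M/\fm M \neq 0$. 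I would stress here that flatness by itself is insufficient (for a DVR $R$ the module $\Frac(R)$ is flat and nonzero yet equals $\fm\Frac(R)$); the point of \autoref{prop:analog-RG219} is precisely to turn the Ohm-Rush property into the free pure submodule $L$.

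Part (2) is then immediate from (1) together with the standard fact that a flat module $N$ over a local ring $(R,\fm)$ is faithfully flat if and only if $N \neq \fm N$.

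For (3) I would suppose $M \neq 0$, so that $M$ is faithfully flat by (2), and prove the contrapositive: if $IM = JM$ then $I = J$. Put $K \coloneqq I + J$, so $KM = IM + JM = IM$. Tensoring the inclusion $K/I \hookrightarrow R/I$ with the flat module $M$ gives a map that is injective with image $KM/IM = 0$, whence $(K/I) \otimes_R M = 0$; faithful flatness then forces $K/I = 0$, i.e. $J \subseteq I$, and by symmetry $I = J$.

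Finally, for (4) with $M \neq 0$: the containment $c_M(IM) \subseteq I$ is trivial since $IM \subseteq IM$, and \autoref{lem:content-subset} (applicable since $M$ is Ohm-Rush) gives $IM \subseteq c_M(IM)M$; combined with $c_M(IM)M \subseteq IM$ this yields $c_M(IM)M = IM$, so (3) gives $c_M(IM) = I$. The genuine obstacle is entirely contained in step (1), and even there the heavy lifting has been done in \autoref{prop:analog-RG219}; the remaining steps are routine once faithful flatness is available.
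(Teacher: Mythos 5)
Your proposal is correct. Parts (1), (2) and (4) follow essentially the same route as the paper: both proofs of (1) hinge on \autoref{prop:analog-RG219} producing a finite free pure submodule $L \ni f$; the paper then derives the contradiction through the content function (a basis element $e$ of $L$ has $c_M(e) = c_L(e) = R$ by purity, yet $e \in \fm M$ forces $c_M(e) \subseteq \fm$), whereas you tensor the pure inclusion with $\kappa$ to see $M/\fm M \neq 0$ directly -- the same idea in two equivalent dialects. The real divergence is in (3). The paper avoids faithful flatness altogether: it reuses the element $e$ with $c_M(e) = R$ and the scaling identity $c_M(ie) = i\,c_M(e)$ from \autoref{prop:content-homomorphisms-purity-flatness}\,(3) to conclude $iR \subseteq J$ for each $i \in I$. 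You instead pass through (2) and run the standard faithfully flat argument on $K = I+J$: the injection $(K/I)\otimes_R M \hookrightarrow M/IM$ has image $KM/IM = 0$, so $K/I = 0$. Your route is more elementary in that it only needs flatness plus the nonvanishing $M \neq \fm M$, and makes no further appeal to the content machinery; the paper's route keeps everything inside the content formalism and illustrates the utility of the multiplicativity of $c_M$. Both are valid, and your (4) coincides with the paper's.
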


\begin{proof}
    (1) Assume for contradiction that $M = \fm M$ and that $M \neq 0$. Let $f \in M$ be a nonzero element. By \autoref{prop:analog-RG219}, there exists a finitely generated free submodule $L$ of $M$ that contains $f$ and such that the inclusion $\iota \colon L \hookrightarrow M$ is pure. In particular, $L \neq 0$. Let $e \in L$ be a basis element. By \autoref{eg:free-OR}, $c_L(e) = R$. Since $c_M \circ \iota = c_L$ by \autoref{prop:content-homomorphisms-purity-flatness} (2), it follows that 
    $
     c_M(e) = c_M(\iota(e)) = c_L(e) = R.       
    $
    But this is a contradiction because $e \in M = \fm M$, and so, $c_M(e) \subseteq \fm$. %removed smallskip

    (2) follows from (1) by taking the contrapositive.%removed smallskip

    (3) As we saw in (1), the hypothesis that $M \neq 0$ implies that there exists $e \in M$ such $c_M(e) = R$. Assume for contradiction that $IM = JM$. Let $i \in I$. Then $ie \in IM = JM$, and so, $iR = ic_M(e) = c_M(ie) \subseteq J$, where we use \autoref{prop:content-homomorphisms-purity-flatness} (3) for the equality $ic_M(e) = c_M(ie)$. Since $i \in I$ was arbitrary, this shows $I \subseteq J$. One similarly obtains $J \subseteq I$, contradicting that $I \neq J$.%removed smallskip

    (4) By definition of the content of a subset of $M$, $c_M(IM) \subseteq I$. Moreover, since $M$ is Ohm-Rush, by \autoref{lem:content-subset}, $IM \subseteq c_M(IM)M$. Thus, $IM \subseteq c_M(IM)M \subseteq IM$, and so, by (3) we must have $I = c_M(IM)$.
\end{proof}

We also obtain the following comparison result between the content function of a flat Ohm-Rush module over a Noetherian local ring and the content function of the completion of the module.

\begin{corollary}
    \label{cor:content-completion}
    Let $(R,\fm,\kappa)$ be a Noetherian local ring. Let $M$ be a flat Ohm-Rush $R$-module. Let $\widehat{M}$ be the $\fm$-adic completion of $M$. 
    Let $c_M$ (resp. $c_{\widehat{M}}$) denote the content of $M$ (resp. of $\widehat{M}$) as an $R$-module (resp. $\widehat{R}$-module). Let 
$
    \varphi \colon M \to \widehat{M}
    $
    denote the canonical map. Then we have the following:
    \begin{enumerate}
        \item[$(1)$] For all $x \in M$, $\varphi(x) \in c_{\widehat{M}}(\varphi(x))\widehat{M}$ and $c_M(x)\widehat{R} = c_{\widehat{M}}(\varphi(x))$.
        \item[$(2)$] If $N \subseteq M$, then $\varphi(N) \subseteq c_{\widehat M}(\varphi(N))\widehat{M}$ and $c_M(N)\widehat{R} = c_{\widehat{M}}(\varphi(N))$. 
        \item[$(3)$] If $N$ is a submodule of $M$ and if $\widehat{R}N$ is the $\widehat{R}$-submodule of $\widehat{M}$ generated by $\varphi(N)$, then $c_M(N)\widehat{R} = c_{\widehat{M}}(\widehat{R}N)$. 
        \item[$(4)$] $\varphi$ is a pure map of $R$-modules. 
        \item[$(5)$] Let $\phi \colon P \to N$ be a map of $\widehat{R}$-modules such that $P$ is a flat Ohm-Rush $\widehat{R}$-module and $N$ is a flat $R$-module. Considering $\phi$ as a map of $R$-modules by restriction of scalars, we have that $\phi$ is $R$-pure if and only if $\phi$ is $\widehat{R}$-pure.
        \item[$(6)$] Let $S$ be a flat Ohm-Rush $R$-algebra. Let $I$ be an ideal of $S$ and let $\widehat{S}$ denote the $\fm$-adic completion of $S$ (i.e. $\widehat{S} = \widehat{S}^{\fm S}$). Then $c_S(I) \widehat{R} = c_{\widehat{S}}(I\widehat{S})$, where by $c_{\widehat{S}}$ we mean the content of $\widehat{S}$ as a $\widehat{R}$-module.
    \end{enumerate}
\end{corollary}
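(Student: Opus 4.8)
The plan is to establish part $(4)$ first and then deduce the content identities $(1)$, $(2)$, $(3)$, $(6)$ from it, with $(5)$ obtained by a variant of the argument for $(4)$.

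\textbf{Part $(4)$.} First note $\widehat{M}$ is $R$-flat: it is $\widehat{R}$-flat by \autoref{lem:completion-purity}$(1)$ and $\widehat{R}$ is $R$-flat, so by \autoref{lem:cyclic-purity-flat} it suffices to show $\varphi$ is cyclically pure, i.e. that for every $x \in M$ and every ideal $I$ of $R$ with $\varphi(x) \in I\widehat{M}$ one has $x \in IM$. Fix such $x, I$. Since $M$ is a flat Ohm-Rush module over the local ring $R$, \autoref{prop:analog-RG219} gives a finitely generated free submodule $L$ of $M$ with $x \in L$ whose inclusion $\iota \colon L \hookrightarrow M$ is pure. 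As $M$ is $R$-flat and $\fm$ is a finitely generated ideal with $R/\fm$ Noetherian, \autoref{lem:completion-purity}$(2)$ shows $\widehat{\iota}^{\fm}\colon \widehat{L}^{\fm} \to \widehat{M}^{\fm}$ is $\widehat{R}$-pure, hence $R$-pure, hence $R$-cyclically pure. Now $\widehat{L}^{\fm} = L \otimes_R \widehat{R}$ since $L$ is finitely generated free, and by naturality of the completion map $\widehat{\iota}^{\fm}(j_L(x)) = \varphi(x) \in I\widehat{M}$, where $j_L\colon L \to L\otimes_R\widehat{R}$ is the base-change map. Cyclic purity of $\widehat{\iota}^{\fm}$ forces $j_L(x) \in I(L\otimes_R\widehat{R})$; since $L$ is finitely generated free and $I\widehat{R}\cap R = I$ by faithful flatness of $R \to \widehat{R}$, this gives $x \in IL \subseteq IM$. (In particular $\varphi$ is injective, which one could also see directly: any $x \in \ker\varphi = \bigcap_n\fm^n M$ has $c_M(x) \subseteq \bigcap_n\fm^n = 0$, so $x \in c_M(x)M = 0$.)

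\textbf{Part $(1)$.} Factor the completion map as $\varphi = \theta\circ j$, with $j\colon M \to M\otimes_R\widehat{R}$ the base-change map and $\theta\colon M\otimes_R\widehat{R} \to \widehat{M}$ the canonical map. For any ideal $\widehat{\mathfrak{J}}$ of $\widehat{R}$, the map $\varphi\otimes_R\id_{\widehat{R}/\widehat{\mathfrak{J}}}$ is injective by $(4)$, and under the identification $M\otimes_R(\widehat{R}/\widehat{\mathfrak{J}}) = (M\otimes_R\widehat{R})\otimes_{\widehat{R}}(\widehat{R}/\widehat{\mathfrak{J}})$ this map is exactly $\theta\otimes_{\widehat{R}}\id_{\widehat{R}/\widehat{\mathfrak{J}}}$; hence $\theta$ is $\widehat{R}$-cyclically pure. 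Applying \autoref{prop:ascent-relativelypure-content} to the ring map $R \to \widehat{R}$, the flat $R$-module $M$, the $\widehat{R}$-module $\widehat{M}$, and $\varphi\colon M \to \widehat{M}$ (whose associated map $\theta$ is $\widehat{R}$-cyclically pure): since $M$ is Ohm-Rush, $x \in c_M(x)M$, so the proposition yields $\varphi(x) \in c_{\widehat{M}}(\varphi(x))\widehat{M}$ and $c_{\widehat{M}}(\varphi(x)) = c_M(x)\widehat{R}$, which is $(1)$.

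\textbf{Parts $(2)$, $(3)$, $(6)$.} For $N \subseteq M$, the Ohm-Rush property of $M$ gives $\varphi(N) \subseteq \bigl(\sum_{x\in N} c_{\widehat{M}}(\varphi(x))\bigr)\widehat{M} = \bigl(\sum_{x\in N} c_M(x)\widehat{R}\bigr)\widehat{M} = (c_M(N)\widehat{R})\widehat{M}$ by \autoref{lem:content-subset} and $(1)$, whence $c_{\widehat{M}}(\varphi(N)) \subseteq c_M(N)\widehat{R}$ and $\varphi(N) \subseteq c_{\widehat{M}}(\varphi(N))\widehat{M}$; conversely $c_M(x)\widehat{R} = c_{\widehat{M}}(\varphi(x)) \subseteq c_{\widehat{M}}(\varphi(N))$ for each $x \in N$ by monotonicity of content, giving $c_M(N)\widehat{R} = \sum_{x\in N} c_M(x)\widehat{R} \subseteq c_{\widehat{M}}(\varphi(N))$, so equality holds — this is $(2)$. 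Part $(3)$ follows since the content of a subset equals that of the submodule it generates, so $c_{\widehat{M}}(\varphi(N)) = c_{\widehat{M}}(\widehat{R}N)$. Part $(6)$ is the case $M = S$, $N = I$ of $(3)$: it gives $c_S(I)\widehat{R} = c_{\widehat{S}}(\widehat{R}I)$, and $c_{\widehat{S}}(\widehat{R}I) = c_{\widehat{S}}(I\widehat{S})$ by \autoref{rem:content-restriction-of-scalars} because the $\widehat{R}$-submodule $\widehat{R}I$ of $\widehat{S}$ generates the ideal $I\widehat{S}$. Finally, for $(5)$, ``$\widehat{R}$-pure $\Rightarrow$ $R$-pure'' is restriction of scalars; conversely if $\phi\colon P \to N$ is $R$-pure, then applying $(4)$ over the complete local ring $\widehat{R}$ to the flat Ohm-Rush $\widehat{R}$-module $P$ shows $P \to \widehat{P}^{\fm}$ is $\widehat{R}$-pure, while \autoref{lem:completion-purity}$(2)$ (over base ring $R$, ideal $\fm$, target $N$ which is $R$-flat) shows $\widehat{\phi}^{\fm}\colon \widehat{P}^{\fm} \to \widehat{N}^{\fm}$ is $\widehat{R}$-pure; the composite $P \to \widehat{P}^{\fm} \to \widehat{N}^{\fm}$, which equals $(N \to \widehat{N}^{\fm})\circ\phi$ by naturality, is therefore $\widehat{R}$-pure, and hence so is $\phi$. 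The main obstacle is part $(4)$: the structural input \autoref{prop:analog-RG219} only places \emph{cyclic} submodules of a flat Ohm-Rush module inside finitely generated free pure submodules, so the argument must be run element-by-element at the level of cyclic purity and then upgraded to purity via flatness of $\widehat{M}$, while carefully tracking how completion interacts with pure maps (\autoref{lem:completion-purity}$(2)$) and with finitely generated free modules.
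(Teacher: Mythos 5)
Your direct proof of part $(4)$ is correct and in fact reverses the paper's order of deduction: the paper first establishes the content formula $(1)$ via a free pure submodule $L\ni x$ and then reads off cyclic purity of $\varphi$, whereas you get cyclic purity of $\varphi$ straight from purity of $L\hookrightarrow M$, purity of $\widehat{L}\to\widehat{M}$ (\autoref{lem:completion-purity}), and $I\widehat{R}\cap R=I$. That part stands on its own, and parts $(2)$, $(3)$, $(5)$, $(6)$ are fine modulo $(1)$.

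The gap is in part $(1)$. You assert that $\varphi\otimes_R\id_{\widehat{R}/\widehat{\mathfrak{J}}}$ "is exactly" $\theta\otimes_{\widehat{R}}\id_{\widehat{R}/\widehat{\mathfrak{J}}}$, but these maps have different targets: the first lands in $\widehat{M}\otimes_R(\widehat{R}/\widehat{\mathfrak{J}})$, the second in $\widehat{M}\otimes_{\widehat{R}}(\widehat{R}/\widehat{\mathfrak{J}})=\widehat{M}/\widehat{\mathfrak{J}}\widehat{M}$; the latter map is the former followed by the canonical surjection between these targets, so injectivity does not transfer. The intermediate claim is not merely unjustified but false in general: $\theta$ is the $\fm\widehat{R}$-adic completion map of the flat $\widehat{R}$-module $M\otimes_R\widehat{R}$, so by \autoref{thm:ML-SML-ORT-intersection-flat} ($(5)\Leftrightarrow(6)$) its $\widehat{R}$-cyclic purity is equivalent to $M\otimes_R\widehat{R}$ being Ohm-Rush over $\widehat{R}$, which by \autoref{cor:IF-universally-OR} is equivalent to $M$ being intersection flat over $R$ --- strictly stronger than the hypothesis of the corollary. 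What you actually need is cyclic purity of $\theta$ only at the element $x\otimes 1$, and that again requires the free pure submodule: with $L\ni x$ as in your part $(4)$, both $L\otimes_R\widehat{R}\to M\otimes_R\widehat{R}$ and $\widehat{L}\to\widehat{M}$ are pure, so by \autoref{lem:content-cyclic-purity} one gets $c_{M\otimes_R\widehat{R}}(x\otimes 1)=c_{\widehat{L}}(x\otimes 1)=c_{\widehat{M}}(\varphi(x))$, after which the second half of the proof of \autoref{prop:ascent-relativelypure-content} (which uses only flatness of $M$, not cyclic purity) yields $c_{\widehat{M}}(\varphi(x))=c_M(x)\widehat{R}$ and $\varphi(x)\in c_{\widehat{M}}(\varphi(x))\widehat{M}$. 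Equivalently, compute both contents on $L$ and $\widehat{L}$ by an explicit basis argument, which is the paper's route.
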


\begin{proof}
 By \autoref{lem:completion-purity}, $\widehat{M}$ is a flat $\widehat{R}$-module. Also note that by definition 
(see \autoref{subsection:contentfunctionandORmods}), the content of a subset of a module coincides with the content of the submodule generated by the subset. With these preliminary observations, we begin proving the assertions. %removed smallskip

    (1) By \autoref{prop:analog-RG219}, there exists a finitely generated free submodule $L$ of $M$ that contains $x$ such that the inclusion $\iota \colon L \hookrightarrow M$ is a pure map of $R$-modules. Then by \autoref{lem:completion-purity} and flatness of $M$, the induced map on $\fm$-adic completions 
    $
     \widehat{\iota} \colon \widehat{L} \to \widehat{M}   
   $
    is a pure map of $\widehat{R}$-modules. Note that $\widehat{L}$ is a free $\widehat{R}$-module and the canonical map $\varphi' \colon L \to \widehat{L}$ maps an $R$-basis of $L$ to a $\widehat{R}$-basis of $\widehat{L}$. We also have a commutative diagram
    \[
    \xymatrix{L \ar[r]^{\varphi'} \ar[d]_\iota &  \widehat{L} \ar[d]^{\widehat{\iota}} \\  M \ar[r]^{\varphi} &  \widehat{M}.}
    \]
    Thus, 
    $
    \varphi(x) = \varphi(\iota(x)) = \widehat{\iota}(\varphi'(x)) \in \im(\widehat{\iota}).
    $
    Furthermore, $\im(\widehat{\iota})$ is a free submodule of $\widehat{M}$ of finite rank such that $\im(\widehat{\iota}) \hookrightarrow \widehat{M}$ is $\widehat{R}$-pure. Since $\widehat{M}$ is a flat module over the local ring $\widehat{R}$, one can apply \autoref{prop:analog-RG219} (1)$\implies$(5) to conclude that
    $
    \varphi(x) \in c_{\widehat{M}}(\varphi(x))\widehat{M}.    
    $

    It remains to show that $c_M(x)\widehat{R} = c_{\widehat{M}}(\varphi(x))$.  By the $R$-purity of $\iota$, the $\widehat{R}$-purity of $\widehat{\iota}$ and \autoref{lem:content-cyclic-purity}, we get
   $
    \textrm{$c_M \circ \iota = c_L$ and $c_{\widehat{M}} \circ \widehat{\iota} = c_{\widehat{L}}$.}    
    $
    Thus,
    $
    c_M(x) = c_M(\iota(x)) = c_L(x)
    $
    and
    $%begin{align*} 
    c_{\widehat{M}}(\varphi(x)) = c_{\widehat{M}}(\varphi \circ \iota (x)) 
    = c_{\widehat{M}}(\widehat{\iota} \circ \varphi'(x)) 
    = c_{\widehat L} (\varphi'(x)).
    $ %end{align*}
    Therefore, to prove $c_M(x)\widehat{R} = c_{\widehat{M}}(\varphi(x))$, it suffices to show that $c_L(x)\widehat{R} = c_{\widehat{L}}(\varphi'(x))$. If $\{e_1,\dots,e_n\}$ is a basis of $L$ and 
    $
    x = r_1e_1 + \dots + r_ne_n,    
    $
    for $r_i \in R$, then we have seen in \autoref{eg:free-OR} that
    $
    c_L(x) = (r_1,\dots,r_n).    
    $
    
    Moreover, if $\widehat{r_i}$ denotes the image of $r_i$ in $\widehat{R}$, then in terms of the basis $\{\varphi'(e_1),\dots,\varphi'(e_n)\}$ of $\widehat{L}$ over $\widehat{R}$, we have
    $
    \varphi'(x) = \widehat{r_1}\varphi'(e_1) + \dots + \widehat{r_n}\varphi'(e_n).    
    $
    Thus, by \autoref{eg:free-OR} again,
    $
    c_{\widehat{L}}(\varphi'(x)) = (\widehat{r_1},\dots,\widehat{r_n}) = (r_1,\dots,r_n)\widehat{R} = c_L(x)\widehat{R},    
    $
    as desired. 

    (2) We have 
    $
    c_M(N)\widehat{R} = \left(\sum_{x \in N}c_M(x)\right)\widehat{R} = \sum_{x \in N} c_M(x)\widehat{R} = \sum_{x \in N} c_{\widehat{M}}(\varphi(x)).
    $
    
    The first equality follows by \autoref{lem:content-subset} because $M$ is Ohm-Rush, the second equality follows because expansions of ideals commute with taking arbitrary sums of ideals and the third equality follows by (1).
    For all $x \in N$, we have again by (1) that
    $
    \varphi(x) \in c_{\widehat M}(\varphi(x))\widehat{M}.    
    $
    Thus,
    \begin{equation}
        \label{eq:conten-subset-completion}
    \varphi(N) \subseteq \left(\sum_{x \in N} c_{\widehat{M}}(\varphi(x))\right)\widehat{M},
    \end{equation}
    and hence, $c_{\widehat M}(\varphi(N)) \subseteq \sum_{x \in N} c_{\widehat{M}}(\varphi(x))$, since by definition, $c_{\widehat M}(\varphi(N))$ is the intersection of all ideals $J$ of $\widehat{R}$ such that $\varphi(N) \subseteq J\widehat{M}$. For any such ideal $J$ and for all $x \in N$,
    $
    \varphi(x) \in \varphi(N) \subseteq J\widehat{M},
    $
    which implies that for all $x \in M$, $c_{\widehat M}(\varphi(x)) \subseteq J$, that is, $\sum_{x \in N} c_{\widehat{M}}(\varphi(x)) \subseteq J$. Since this inclusion of sets holds for all such $J$, we then get $\sum_{x \in N} c_{\widehat{M}}(\varphi(x)) \subseteq c_{\widehat M}(\varphi(N))$, and so,
    $
        c_{\widehat M}(\varphi(N)) =  c_{\widehat{M}}(\varphi(x)).
   $
    Then by \autoref{eq:conten-subset-completion} we get $\varphi(N) \subseteq c_{\widehat{M}}(\varphi(N))\widehat{M}$. This completes the proof of (2).

    %removed smallskip

    (3) By (2) we have $c_M(N)\widehat{R} = c_{\widehat{M}}(\varphi(N))$ and we have $c_{\widehat{M}}(\varphi(N)) = c_{\widehat{M}}(\widehat{R}N)$ because the content of a subset of a module equals the content of the submodule generated by the subset pretty much by definition.%removed smallskip

    (4) Since $\widehat{M}$ is a flat $\widehat{R}$-module and hence is also a flat $R$-module ($R$ is Noetherian so $R \to \widehat{R}$ is flat), it suffices to show by \autoref{lem:cyclic-purity-flat} that $\varphi$ is a cyclically pure map of $R$-modules. That is, we have to show that if $I$ be an ideal of $R$, then 
    \begin{align*}
    \varphi \otimes_R \id_{R/I} \colon &M/IM \to \widehat{M}/(I\widehat{R})\widehat{M}\\
    &x + IM \mapsto \varphi(x) + (I\widehat{R})\widehat{M}    
    \end{align*}
    is injective. So suppose $\varphi(x) \in (I\widehat{R})\widehat{M}$. Then by (1), $c_M(x)\widehat{R} = c_{\widehat{M}}(\varphi(x)) \subseteq I\widehat{R}$. Contracting to $R$ and using purity of $R \to \widehat{R}$ gives us $c_M(x) \subseteq I$. Since $M$ is an Ohm-Rush $R$-module, we have $x \in c_M(x)M \subseteq IM$, which is precisely the assertion that $\varphi \otimes_R \id_{R/I}$ is injective.%removed smallskip

    (5) If $\phi$ is a pure map of $\widehat R$-modules then $\phi$ is also a pure map of $R$-modules because purity is preserved by restriction of scalars. We now show the converse. So suppose $\phi \colon P \to N$ is pure as a map of $R$-modules. Since $N$ is a flat $R$-module, by \autoref{lem:completion-purity}, the induced map on $\fm$-adic completion
    $
    \widehat{\phi} \colon \widehat{P} \to \widehat{N}    
    $
    is a pure map of $\widehat{R}$-modules. Consider the commutative diagram
    \[
        \xymatrix{P \ar[r]^{\phi} \ar[d] &  N \ar[d] \\  \widehat{P} \ar[r]^{\widehat{\phi}} &  \widehat{N}.}  
    \]
    where the vertical maps are the canonical ones. Note that since $P$ and $N$ are $\widehat{R}$-modules, all the maps in this diagram are $\widehat{R}$-linear. Since $P$ is a flat Ohm-Rush $\widehat R$-module, by (4) applied to the ring $\widehat R$ and the module $P$, we see that the left-vertical map $P \to \widehat{P}$ is a pure map of $\widehat{R}$-modules. Thus, by the commutativity of the above diagram, $\phi \colon P \to N$ must be a pure map of $\widehat{R}$-modules.%removed smallskip

    (6) In the notation of part (3), $\widehat{R}I$ is the $\widehat{R}$-submodule of $\widehat{S}$ that is generated by the image of $I$ under the canonical map $S \to \widehat{S}$. Said differently, $\widehat{R}I$ is the image of the ideal $\widehat{R} \otimes_R I$ of $\widehat{R} \otimes_R S$ under the canonical ring map $\widehat{R} \otimes_R S \to \widehat{S}$. So by (3) we get 
    $
    c_S(I)\widehat{R} = c_{\widehat{S}}(\widehat{R}I).    
    $
    But $\widehat{S}$ is a $\widehat{R}$-algebra, and so, by \autoref{rem:content-restriction-of-scalars}, $c_{\widehat{S}}(\widehat{R}I)$ coincides with the content of the ideal of $\widehat{S}$ that is generated by $\widehat{R}I$. This ideal is clearly $I\widehat{S}$. Thus, $c_S(I)\widehat{R} = c_{\widehat{S}}(\widehat{R}I) = c_{\widehat{S}}(I\widehat{S})$.
\end{proof}

\begin{remark}
    \label{rem:is-completion-OR}
    \autoref{cor:content-completion} shows that if $M$ is a flat Ohm-Rush module over a Noetherian local ring $(R,\fm)$, then for all $x \in \widehat{M}$ that are in the image of the canonical map $M \to \widehat{M}$, we have $x \in c_{\widehat{M}}(x)\widehat{M}$. This raises the obvious question of whether $\widehat{M}$ is an Ohm-Rush $\widehat{R}$-module, that is, if $x \in c_{\widehat{M}}(x)\widehat{M}$ for the elements $x \in \widehat{M}$ that are not in $\im(M \to \widehat{M})$. It will turn out that the answer to this question is yes, and does not have anything to do with $M$ being Ohm-Rush. In other words, we will see in \autoref{cor:complete-over-complete-great} that if $M$ is a flat module over a Noetherian local ring $(R,\fm)$, then $\widehat{M}$ is always a flat and OR $\widehat{R}$-module because $\widehat{M}$ is $\fm$-adically complete by \cite[\href{https://stacks.math.columbia.edu/tag/05GG}{Tag 05GG}]{stacks-project}. Note the flatness of $\widehat{M}$ follows by \autoref{lem:completion-purity}.
\end{remark}

We can summarize the key difference between the ML property and the Ohm-Rush property, at least for flat modules over local rings.

\begin{corollary}
    \label{cor:ML-OR-local-difference}
    Let $(R,\fm)$ be a local ring (not necessarily Noetherian) and $M$ be a flat $R$-module. Then we have the following:
    \begin{enumerate}%[itemsep = 1mm]
        \item[$(1)$] $M$ is ML if and only if for any \emph{finitely generated} submodule $N$ of $M$, there exists a submodule $L$ of $M$ such that $N \subseteq L$, $L$ is free of finite rank and $L \hookrightarrow M$ is pure. 
        
        \item[$(2)$] $M$ is Ohm-Rush if and only if for any \emph{cyclic} submodule $N$ of $M$, there exists a submodule $L$ of $M$ such that $N \subseteq L$, $L$ is free of finite rank and $L \hookrightarrow M$ is pure.  
    \end{enumerate}
\end{corollary}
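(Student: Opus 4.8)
The plan is to deduce both statements from the structural results already in hand, namely \autoref{prop:RG-2.1.9}, \autoref{rem:Mittag-Leffler}(e), and \autoref{prop:analog-RG219}; indeed (1) is essentially a restatement of the argument in \autoref{prop:ML-SML-modules-colimit-pure-split-free}(1) and (2) is a repackaging of \autoref{prop:analog-RG219}.

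For (1), suppose first that $M$ is ML and let $N$ be a finitely generated submodule of $M$. Choose a free module $F$ of finite rank and a surjection $F \twoheadrightarrow N$; composing with the inclusion $N \hookrightarrow M$ gives an $R$-linear map $g \colon F \to M$ with $\im(g) = N$. Since $M$ is ML, $g$ admits a stabilizer, and $M$ is flat over the local ring $R$, so \autoref{prop:RG-2.1.9} produces a submodule $L \subseteq M$ that is free of finite rank, is pure in $M$, and contains $\im(g) = N$. Conversely, assume the displayed condition. Every finitely generated submodule of $M$ is then contained in a finite free pure submodule, and given two such submodules $L_1, L_2$, applying the hypothesis to the finitely generated submodule $L_1 + L_2$ yields a third finite free pure submodule containing both; hence the collection of finite free pure submodules of $M$ is filtered and its union is all of $M$. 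If $L \subseteq L'$ are two members of this collection, then $L \hookrightarrow L'$ is pure because $L \hookrightarrow M$ factors through it (for any $R$-module $P$, injectivity of $L \otimes_R P \to M \otimes_R P$ forces injectivity of $L \otimes_R P \to L' \otimes_R P$). Since finite free modules are finitely presented, hence ML, \autoref{rem:Mittag-Leffler}(e) shows $M$ is ML.

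For (2), we invoke \autoref{prop:analog-RG219}, whose hypotheses (flat module over a local ring) are exactly the present ones. If $M$ is Ohm-Rush, then for each $f \in M$ we have $f \in c_M(f)M$, which is condition (5) there; its implication $(5) \Rightarrow (1)$ produces a finite free submodule $L \subseteq M$ containing $f$ and pure in $M$, and this $L$ contains the cyclic submodule $Rf$. Conversely, if the condition of (2) holds, then applying it to the cyclic submodule $Rf$ for an arbitrary $f \in M$ gives a finite free pure submodule $L \ni f$; by $(1) \Rightarrow (5)$ of \autoref{prop:analog-RG219} we get $f \in c_M(f)M$, and since $f$ was arbitrary, $M$ is Ohm-Rush by \autoref{def:Ohm-Rush}.

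There is no serious obstacle here: both parts are bookkeeping on top of \autoref{prop:RG-2.1.9}/\autoref{prop:analog-RG219} and \autoref{rem:Mittag-Leffler}(e). The only point requiring a moment's care is checking, in the converse of (1), that the family of finite free pure submodules is filtered (apply the hypothesis to finite sums) and that purity passes to the transition inclusions within this family.
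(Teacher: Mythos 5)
Your proof is correct and follows essentially the same route as the paper: part (1) is the content of \autoref{prop:ML-SML-modules-colimit-pure-split-free}(1) (which the paper simply cites, and which you re-derive from \autoref{prop:RG-2.1.9} and \autoref{rem:Mittag-Leffler}(e), including the worthwhile check that the family of finite free pure submodules is filtered), and part (2) is the equivalence $(1)\Leftrightarrow(5)$ of \autoref{prop:analog-RG219}, which the paper unpacks via \autoref{lem:factor-cyclically-pure} and \autoref{thm:content-equivalences} where you cite it directly. No gaps.
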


\begin{proof}
    (1) follows from \autoref{prop:ML-SML-modules-colimit-pure-split-free} (1). 
    
    For (2), suppose $M$ is Ohm-Rush and $N$ is a cyclic submodule of $M$. Let $f$ be a generator of $N$ and consider the unique $R$-linear map $v \colon R \to M$ with image $N$ that sends $1 \mapsto f$. Since $f \in c_M(f)M$, by \autoref{prop:analog-RG219}$(5)\implies(1)$ we have that $f$ (hence $N$) is contained in a submodule $L$ of $M$ such that $L$ is free of finite rank and $L \hookrightarrow M$ is pure. 
    
    Conversely, suppose $f \in M$ and let $N = Rf$. Choose a submodule $L$ of $M$ containing $N$ such that $L$ is free of finite rank and $L\hookrightarrow M$ is pure. Let $v\colon R \to M$ be the unique $R$-linear map that sends $1 \mapsto f$ and let $u \colon R \to L$ be the unique $R$-linear map that sends $1 \mapsto f$. If $i \colon L \hookrightarrow M$ is the inclusion, then $v = i \circ u$. Since $i$ is pure and hence cyclically pure, $u$ is a cyclic stabilizer for $v$ by \autoref{lem:factor-cyclically-pure}. Thus, $f \in c_M(f)M$ by \autoref{thm:content-equivalences}$(4) \implies (1)$.
\end{proof}

\begin{remark}
Let $M$ be a flat module over a local ring $(R,\fm)$. If $\Sigma$ is the collection of $R$-submodules of $M$ that are pure in $M$ and are free of finite rank, then $\Sigma$ is filtered by inclusion when $M$ is ML. In this case $M$ is a filtered union of elements of $\Sigma$. On the other hand, when $M$ is  Ohm-Rush (and not necessarily ML) then $\Sigma$ may not be filtered by inclusion and $M$ is just a union (not a filtered union) of elements of $\Sigma$.
\end{remark}

\autoref{cor:ML-OR-local-difference} shows that for a flat Ohm-Rush module $M$ over a local ring $(R,\fm)$ and any $f \in M$, one can always find a free submodule $L$ of $M$ of finite rank containing $f$ such that $L$ is pure in $M$. However, the proof of the existence of $L$ is not very explicit, since it relies on the stabilization results of \autoref{prop:analog-RG219} and the equally less explicit \autoref{prop:RG-2.1.9}. The next result provides criteria using the content function that can be used to construct $L$ more explicitly; see \autoref{rem:constructing-free-submodule-pure-OR} for the construction.

\begin{proposition}\label{prop:localpuren}
    Let $(R,\fram,\kappa)$ be a local ring (not necessarily Noetherian) and $M$ a flat %Ohm-Rush
    $R$-module.  
    Let $L$ be a finitely generated
    submodule and $x_1, \ldots, x_n$ a minimal generating set. Let $\iota: L \hookrightarrow M$ be the inclusion map.  The following are equivalent:  
    \begin{enumerate}%[itemsep = 1mm]
        \item[$(1)$]\label{it:cpmingensn} For any $y\in L \setminus \fram L$, $c_M(y)=R$.
    
        \item[$(2)$]\label{it:cpmaxn} $L \cap \fram M = \fram L$, i.e.,  $\iota \otimes_R \kappa$ is injective.
        
        \item[$(3)$]\label{it:cpidealgensn} For any ideal $I$ of $R$, any minimal generating set $y_1, \ldots, y_n$ of $L$, and any $n$-tuple $a_1, \ldots, a_n \in R$, if $\sum_{i=1}^n a_i y_i \in IM$, then $(a_1, \ldots, a_n) \subseteq I$.
        
        \item[$(4)$]\label{it:cppn} $L$ is free and $\iota$ is pure.
        
        \item[$(5)$]\label{it:cpcpn} $L$ is free and $\iota$ is cyclically pure.
    
        \item[$(6)$]\label{it:cpcontentsumsn} For any $n$-tuple $a_1, \ldots, a_n \in R$, we have $c_M(\sum_{i=1}^n a_i x_i) = (a_1, \ldots, a_n)$.
    \end{enumerate}
    Consider also the following statement: 
    \begin{enumerate}
        \item[$(7)$]\label{it:cpsinglecontentsumn} There is some $a_1, \ldots, a_n \in R$ that form a minimal generating set for the ideal they generate, such that $c_M(\sum_{i=1}^n a_i x_i) = (a_1, \ldots, a_n)$.
    \end{enumerate}
    If $M$ is Ohm-Rush, then (7) implies the equivalent assertions (1)-(6).
    \end{proposition}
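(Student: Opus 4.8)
The plan is to use the equivalence of (1)--(6) just established: it suffices to assume (7) together with $M$ Ohm-Rush and deduce assertion (2), that is, $L \cap \fram M = \fram L$, the inclusion $\fram L \subseteq L \cap \fram M$ being automatic. Set $y := \sum_{i=1}^n a_i x_i$ and $J := (a_1, \dots, a_n)$, so by (7) we have $c_M(y) = J$ and $a_1, \dots, a_n$ is a minimal generating set of $J$.

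First I would take an arbitrary $z \in L \cap \fram M$ and write $z = \sum_{i=1}^n b_i x_i$ with $b_i \in R$. Since $x_1, \dots, x_n$ is a minimal generating set of $L$ over the local ring $R$, the images $\overline{x}_1, \dots, \overline{x}_n$ form a $\kappa$-basis of $L/\fram L$, so $z \in \fram L$ exactly when every $b_i$ lies in $\fram$. Suppose not; after reindexing we may assume $b_1$ is a unit. Solving $z = b_1 x_1 + \sum_{i \geq 2} b_i x_i$ for $x_1$ and substituting into $y$ yields
\[
y = a_1 b_1^{-1} z + \sum_{i=2}^n a_i'' x_i, \qquad a_i'' := a_i - a_1 b_1^{-1} b_i,
\]
and each $a_i'' \in J$ since $a_i \in J$ and $a_1 b_1^{-1} b_i \in (a_1) \subseteq J$.

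Next I would estimate $c_M(y)$. Because $M$ is Ohm-Rush the content function is subadditive (\autoref{rem:content-additivity}), so
\[
J = c_M(y) \subseteq c_M(a_1 b_1^{-1} z) + c_M\Bigl(\sum_{i=2}^n a_i'' x_i\Bigr) \subseteq c_M(a_1 b_1^{-1} z) + (a_2'', \dots, a_n''),
\]
the last inclusion holding because $\sum_{i \geq 2} a_i'' x_i \in (a_2'', \dots, a_n'') M$. Since $z \in \fram M$ and $b_1$ is a unit, $a_1 b_1^{-1} z$ lies in $(a_1 \fram) M \subseteq (\fram J) M$ (using $a_1 \in J$), hence $c_M(a_1 b_1^{-1} z) \subseteq \fram J$. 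Therefore $J = (a_2'', \dots, a_n'') + \fram J$, and since $(a_2'', \dots, a_n'') \subseteq J$ with $J$ finitely generated, Nakayama's lemma gives $J = (a_2'', \dots, a_n'')$. This exhibits a generating set of $J$ with $n - 1$ elements, contradicting that $a_1, \dots, a_n$ is a minimal generating set (minimal generating sets of a finitely generated module over a local ring all have the same cardinality). Hence every $b_i \in \fram$, i.e. $z \in \fram L$, proving (2).

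The argument is formal once the substitution is set up; the only point needing care is the content bookkeeping — one must apply the Ohm-Rush hypothesis precisely where subadditivity of $c_M$ is invoked, and one must land $c_M(a_1 b_1^{-1} z)$ inside $\fram J$ rather than merely inside $\fram$, so that the concluding Nakayama step operates on the finitely generated ideal $J$. I do not anticipate any genuine obstacle beyond this.
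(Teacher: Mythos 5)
Your proof is correct. The only substantive difference from the paper's argument is the target: the paper deduces condition (1) (for $y \in L \setminus \fram L$ one has $c_M(y) = R$) by extending $y$ to a minimal generating set of $L$ and rewriting $\sum_j a_j x_j = \sum_i b_i y_i$ via an invertible change of basis, whereas you deduce condition (2) by a single elimination of $x_1$ from $z = \sum_i b_i x_i$ and the extra observation that $z \in \fram M$ forces $c_M(a_1 b_1^{-1} z) \subseteq \fram J$. The engine is the same in both cases: subadditivity of the content function for Ohm--Rush modules feeding a Nakayama argument that drops a generator of $J = (a_1, \ldots, a_n)$ and contradicts the minimality of that generating set. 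Your bookkeeping is sound, including the point that the Ohm--Rush hypothesis enters only through subadditivity and that the Nakayama step must be run on the finitely generated ideal $J$ rather than on $\fram$ alone; the degenerate case $n=1$ also goes through, since then the conclusion $J = (a_2'', \ldots, a_n'') = 0$ already contradicts minimality of $\{a_1\}$.
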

    
    \begin{proof}
    We will make frequent use of \autoref{lem:factorcontent}, which guarantees because $M$ is flat that $rc_M(f) \subseteq c_M(rf)$ for any $r\in R$, $f \in M$, with equality whenever $c_M(f)=R$.
    
    (1) $\Longleftrightarrow$ (2): Both statements amount to the assertion that $L \setminus \fram L = L \setminus \fram M$.

    (1) $\implies$ (3): Suppose $I$ is an ideal that violates the statement. Let $k$ be minimal with $1\leq k \leq n$ such that there exist some minimal generating set $y_1, \ldots, y_n$ of $L$ and elements $a_1, \ldots, a_k \in R$ such that $\sum_{i=1}^k a_i y_i \in IM$, but $a_k \notin I$. Note that $k > 1$ because if $a_1y_1 \in IM$, then $a_1$ has to be in $I$ because $c_M(y_1) = R$ by (1) and we have $a_1 \in a_1R = a_1c_M(y_1) \subseteq c_M(a_1y_1) \subseteq I$. 
    
    For our choice of $k$, we have 
    $
    a_k y_k \in (a_1, \ldots, a_{k-1})L + IM \subseteq ((a_1, \ldots, a_{k-1}) + I)M,
    $
    so that since $c_M(y_k)=R$ (again using (1)), we have 
    $a_k \in a_k R = a_kc_M(y_k) \subseteq c_M(a_k y_k) \subseteq (a_1, \ldots, a_{k-1}) + I.$  
    Say $a_k = u + \sum_{i=1}^{k-1} d_i a_i$, with $d_i \in R$ and $u \in I$.  Then
    \[
    \sum_{i=1}^{k-1} a_i (y_i + d_i y_k) = \left(\sum_{i=1}^{k-1} a_i y_i\right) + a_k y_k - uy_k = \left(\sum_{i=1}^{k} a_i y_i\right) - uy_k \in IM.
    \]
    But 
    $
    \{y_1 + d_1 y_k, y_2 + d_2y_k, \ldots, y_{k-1} + d_{k-1} y_k, y_k, \ldots, y_n\}
    $ 
    is also a minimal generating set for $L$.  Hence by minimality of $k$, we have $a_1, \ldots, a_{k-1} \in I$.  Thus, $a_k \in (a_1, \ldots, a_{k-1}) + I = I$, which contradicts the fact that $a_k \notin I$.  Hence, there is no ideal $I$ that violates the assertion of (3).

    (3) $\implies$ (5): For freeness, let $I=0$ in the statement of (3).  For cyclic purity, let $I$ be an ideal and $f\in IM \cap L$.  Then there exist $a_1, \ldots, a_n \in R$ with $f = \sum_{i=1}^n a_i x_i \in IM$, so that by (3), $(a_1, \ldots, a_n) \subseteq I$, whence $f = \sum_{i=1}^n a_i x_i \in (a_1, \ldots, a_n)L \subseteq IL$.

    (4) $\Longleftrightarrow$ (5): This follows from \autoref{lem:cyclic-purity-flat} because $M$ is flat.

    (5) $\implies$ (6): Note $\{x_1,\dots,x_n\}$ is a free basis of $L$. Since $\sum_{i=1}^n a_i x_i \in (a_1, \ldots, a_n)M$, we have $c_M(\sum_{i=1}^n a_i x_i) \subseteq (a_1, \ldots, a_n)$.
    For the reverse containment, let $I$ be an ideal such that $\sum_{i=1}^n a_i x_i \in IM$.  Then $\sum_{i=1}^n a_i x_i \in IM \cap L = IL$ by cyclic purity, so by linear independence of the $x_i$ we have $a_i \in I$ for all $i$.  Hence $(a_1, \ldots, a_n) \subseteq I$, and since $I$ was arbitrary with $\sum_i a_i x_i \in IM$, we have $(a_1, \ldots, a_n) \subseteq c_M(\sum_i a_i x_i)$.

    (6) $\implies$ (5): To see that $L$ is free, let $a_1, \ldots, a_n \in R$ with $\sum_i a_i x_i =0$.  Then 
    $(0)=c_M(0) = c_M\left(\sum_i a_i x_i\right) = (a_1, \ldots, a_n),$ 
    so that $a_1 = \cdots = a_n = 0$.
    
    To see that $\iota$ is cyclically pure, let $f\in IM \cap L$.  Since $f\in L$, there exist $a_1, \ldots, a_n \in R$ with $f=\sum_{i=1}^n a_i x_i$.  Then since $f\in IM$, we have $(a_1, \ldots, a_n) = c_M(\sum_i a_i x_i) = c_M(f) \subseteq I$, whence $a_i \in I$ for all $i$.  Thus, $f = \sum_{i=1}^n a_i x_i \in IL$. This shows that for all ideals $I$, $IM \cap L = IL$, or equivalently, that $\iota \otimes_R \id_{R/I}$ is injective. %since each $a_i \in I$ and each $x_i \in L$.

    (5) $\implies$ (2): This follows because by cyclic purity because $\iota \otimes_R \kappa$ is injective.

    This completes the proof of the equivalence of conditions (1) through (6).

    Now assume that $M$ is Ohm-Rush. It suffices to show that (7) $\implies$ (1) under these hypotheses. By hypothesis, $\{a_1,\dots,a_n\}$ is a minimal generating set of the ideal $(a_1,\dots,a_n)$.  %Let $I$ and $a_1, \ldots, a_n$ be as in (1).  
    Let $y \in L \setminus \fram L$ and expand $y$ to form a minimal generating set $y=y_1, y_2, \ldots, y_n$ for $L$.  Then there is some invertible matrix $(\lambda_{ij})$ with entries in  $R$ such that for all $1\leq j \leq n$, we have $x_j = \sum_{i=1}^n \lambda_{ij} y_i$.  Set $b_i := \sum_{j=1}^n \lambda_{ij} a_j$.  Then $\{b_1,\dots,b_n\}$ is also a minimal generating set for the ideal $(b_1,\dots,b_n) = (a_1, \ldots, a_n)$.  Moreover, $\sum_i b_i y_i=\sum_i \left(\sum_j \lambda_{ij} a_j\right) y_i = \sum_j a_j \left(\sum_i \lambda_{ij} y_i\right) = \sum_j a_j x_j.$
    
    Now, suppose for contradiction $c_M(y) = c_M(y_1) \subseteq \fram$.  Then we have 
    \begin{align*}
    (b_1, \ldots, b_n) &= (a_1, \ldots, a_n) 
    = c_M(\sum_j a_j x_j) 
    = c_M(\sum_i b_i y_i)\\ 
    &\subseteq \sum_i c_M(b_iy_i)
    \subseteq b_1 c(y_1) + (b_2, \ldots, b_n) 
    \subseteq \fram b_1 + (b_2, \ldots, b_n).
    \end{align*}  
    Here for the inclusion $c_M(\sum_i b_i y_i) \subseteq \sum_i c_M(b_iy_i)$ we are crucially using that $M$ is Ohm-Rush. By Nakayama's lemma, we then have $b_1 \in (b_2, \ldots, b_n)$, contradicting the fact that the $b_i$ form a minimal generating set of $(b_1,\dots,b_n)$.  Thus, $c_M(y) = R$.
    \end{proof}

    \begin{remark}
        \label{rem:constructing-free-submodule-pure-OR}
        Let $M$ be a flat Ohm-Rush module over a local ring $(R,\fm)$ and let $f \in M$. \autoref{prop:localpuren} provides the following, more explicit, method of constructing a submodule $L$ of $M$ containing $f$ such that $L$ is free of finite rank and $L \hookrightarrow M$ is pure. Namely, since 
        $f \in c_M(f)M,$
        $c_M(f)$ is finitely generated. Let $a_1,\dots,a_n$ be a minimal set of generators of the ideal $c_M(f)$. Then there exists $x_1,\dots,x_n \in M$ such that
        $
        f = a_1x_1 + \dots + a_nx_n.    
        $
        Let $L$ be the submodule of $M$ generated by $x_1,\dots,x_n$. By construction, $f \in L$ and
        $
        c_M(\sum_{i=1}^na_ix_i) = c_M(f) = (a_1,\dots,a_n).    
        $
        Thus, by \autoref{prop:localpuren}$(7)\implies(4)$, $L$ is free and $L \hookrightarrow M$ is pure provided that we can show that $x_1,\dots,x_n$ is a minimal generating set for $L$. If not, then without loss of generality $x_n \in \sum_{i=1}^{n-1} Rx_i$.  Say $x_n = \sum_{i<n} b_i x_i$.  Then 
        $f = \sum_{i=1}^n a_i x_i = \sum_{i=1}^{n-1} (a_i + a_n b_i)x_i.$  Let $J$ be the ideal generated by the $n-1$ elements $a_i + a_n b_i$, $1 \leq i < n$. Clearly $J \subseteq (a_1,\ldots, a_n) = c_M(f)$.  On the other hand, $f \in JM$, so $c_M(f) \subseteq J$.  Thus $J=c_M(f)$, so $c_M(f)$ can be generated by fewer than $n$ elements, a contradiction.
    \end{remark}
    
Perhaps a surprising aspect of \autoref{prop:localpuren} is that if $M$ is a flat module over a local ring $(R,\fm,\kappa)$ and $L$ is a finitely generated submodule of $M$, then injectivity of the induced map $L/\fm L \to M/\fm M$ is enough to ensure that $L$ is pure in $M$. Note that the freeness of $L$ is really a consequence of the purity of $L \hookrightarrow M$. Indeed, since $M$ is a flat $R$-module, it follows from the purity of $L \hookrightarrow M$ that $L$ is also a flat $R$-module by \autoref{lem:pure-submodule-flat}. Then $L$ is free because a finitely generated flat module over a local ring (not necessarily Noetherian) is free using the equational criterion of flatness \cite[Thm.\ 7.10]{MatsumuraCommutativeRingTheory}.

    The assumption that $L$ is a finitely generated \emph{submodule} of a flat module $M$ can be relaxed to deduce purity of a map $L \to M$ from injectivity of the induced map $L/\fm L \to M/\fm M$ if we assume $L$ is finitely presented. 

    \begin{proposition}
        \label{prop:local-purity-from-residue-field-map}
        Let $(R,\fm,\kappa)$ be a local ring (not necessarily Noetherian). Let 
        $
        \varphi \colon L \to M    
        $
        be a linear map where $L$ is a finitely presented $R$-module and $M$ is a flat $R$-module. Suppose that the induced map  $\varphi \otimes_R \id_{\kappa}$ is injective. Then $\varphi$ is a pure map.
    \end{proposition}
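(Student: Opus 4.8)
The plan is to deduce this from \autoref{prop:localpuren} by factoring $\varphi$ through its image. Write $\varphi = \iota \circ \pi$, where $N := \im(\varphi)$ is a finitely generated submodule of the flat module $M$, $\pi \colon L \twoheadrightarrow N$ is the corestriction of $\varphi$, and $\iota \colon N \hookrightarrow M$ is the inclusion. The idea is to apply \autoref{prop:localpuren} to $N$ to learn that $N$ is free and $\iota$ is pure, and then to show that $\pi$ is forced to be an isomorphism, so that purity of $\iota$ passes to $\varphi$.

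First I would check that $\iota$ satisfies condition $(2)$ of \autoref{prop:localpuren}, i.e.\ that $\iota \otimes_R \id_\kappa \colon N/\fm N \to M/\fm M$ is injective. Tensoring the factorization with $\kappa$ gives $\varphi \otimes_R \id_\kappa = (\iota \otimes_R \id_\kappa) \circ (\pi \otimes_R \id_\kappa)$; the right-hand factor $\pi \otimes_R \id_\kappa$ is surjective by right exactness of $- \otimes_R \kappa$, and the composite is injective by hypothesis, so $\pi \otimes_R \id_\kappa$ is injective, hence bijective, whence $\iota \otimes_R \id_\kappa$ is injective. By $(2) \Rightarrow (4)$ of \autoref{prop:localpuren} (valid since $N$ is a finitely generated submodule of the flat module $M$), it follows that $N$ is free of finite rank and $\iota$ is a pure map of $R$-modules.

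It then remains to prove $\pi$ is an isomorphism. Put $n := \rank_R N = \dim_\kappa(N/\fm N)$; since $\pi \otimes_R \id_\kappa$ is an isomorphism, also $n = \dim_\kappa(L/\fm L)$, so $L$ being finitely generated, Nakayama's lemma gives a surjection $p \colon R^{\oplus n} \twoheadrightarrow L$. Then $\pi \circ p \colon R^{\oplus n} \to N \cong R^{\oplus n}$ is a surjective endomorphism of a finitely generated module, hence bijective by \cite[Thm.~2.4]{MatsumuraCommutativeRingTheory}; in particular it is injective, which forces $p$ to be injective and therefore an isomorphism. Hence $\pi = (\pi \circ p) \circ p^{-1}$ is an isomorphism, and $\varphi = \iota \circ \pi$ is pure as the composite of an isomorphism with a pure map.

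The main obstacle here is modest, since the substantive work is already packaged in \autoref{prop:localpuren}: the point requiring care is the translation from the abstract (finitely presented) module $L$ to the concrete submodule $N = \im(\varphi)$ of $M$ — propagating the residue-field injectivity from $\varphi$ to $\iota$, and then observing that $\pi$, a priori only a surjection, must in fact be an isomorphism. (In fact the argument uses only that $L$ is finitely generated.)
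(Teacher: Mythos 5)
Your proof is correct, and it takes a genuinely different route from the paper's. The paper proves this proposition from scratch: it invokes Lazard's theorem to write $M$ as a filtered colimit of finite free modules, uses the finite presentation of $L$ to lift $\varphi$ to some $F_{i_0}$, reduces (via the fact that a filtered colimit of pure maps is pure) to the case where the target is finite free, and concludes there because an injective map with flat cokernel is pure. You instead factor $\varphi = \iota \circ \pi$ through its image $N$, feed the inclusion $\iota$ into the already-proved \autoref{prop:localpuren} (your propagation of residue-field injectivity from $\varphi$ to $\iota$ via surjectivity of $\pi \otimes_R \id_\kappa$ is right), and then show $\pi$ is an isomorphism by the standard fact that a surjective endomorphism of a finitely generated module is injective. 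Each approach has something to recommend it: the paper's argument is independent of \autoref{prop:localpuren} and generalizes more readily to non-local settings, whereas yours is shorter given \autoref{prop:localpuren}, uses only that $L$ is finitely generated rather than finitely presented, and yields the extra conclusion that $L$ is in fact free of finite rank. (That strengthening is only formal, since purity of $\varphi$ into a flat module already forces $L$ to be flat, hence free when finitely generated over a local ring, as the paper notes after \autoref{prop:localpuren}; still, your observation that finite presentation is not needed is a genuine, if mild, improvement on the stated hypothesis.)
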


    \begin{proof}
        By Lazard's theorem \cite{Lazard}, we can express $M$ as a colimit of a filtered system $\{(F_i, u_{ij}) \colon i, j \in (I, \leq), i \leq j\}$ of finite free modules $F_i$. Since $L$ is finitely presented, there exists $i_0 \in I$ and a lift $\varphi_{i_0} \colon L \to F_{i_0}$ of $\varphi$ along $F_{i_0} \to M$. Then 
        $
        \varphi = \colim_{j \geq i_0} u_{i_0j} \circ \varphi_{i_0}.
        $
        Since a filtered colimit of universally injective maps of modules is universally injective, it suffices to show that for all $j \geq i_0$, $u_{i_0 j} \circ \varphi_{i_0}$ is a pure map. Moreover, since $u_{i_0 j} \circ \varphi_{i_0}$ factors $\varphi$, it follows by the injectivity of $\varphi \otimes_R \id_{\kappa}$ that for all $j \geq i_0$,
        $
        (u_{i_0 j}\circ \varphi_{i_0}) \otimes_R \id_{\kappa} \colon L/\fm L \to F_j/\fm F_j  
        $
        is also injective. The upshot is that replacing $M$ by $F_j$ we may assume that $M$ is finitely presented. Then $\varphi$ is injective and $\coker(\varphi)$ is a flat $R$-module by \cite[\href{https://stacks.math.columbia.edu/tag/046Y}{Tag 046Y}]{stacks-project}. But any injective map with flat cokernel is pure \cite[\href{https://stacks.math.columbia.edu/tag/058M}{Tag 058M}]{stacks-project}.
    \end{proof}

\autoref{prop:localpuren} and \autoref{prop:local-purity-from-residue-field-map} have the following global consequences.

    \begin{corollary}
        \label{cor:cyclic-purity-radical-ideals}
        Let $R$ be a ring and $v \colon P \to M$ be a linear map where $M$ is flat. Suppose that either of the following conditions hold:
        \begin{enumerate}%[itemsep = 1mm]
            \item[$(a)$] $P$ is finitely generated and $v$ is injective.
            \item[$(b)$] $P$ is finitely presented.
        \end{enumerate}
        Then we have the following:
        \begin{enumerate}%[itemsep = 1mm]
        \item[$(1)$] \textrm{$\{\p \in \Spec(R)  \colon v \otimes_R \id_{R/\p}$ is injective$\} \subseteq \{\p \in \Spec(R) \colon v_\p$ is pure in $\Mod_{R_\p}\}$}. Moreover, if $P$ is $R$-flat, then the two sets are equal.
        
        \item[$(2)$] Suppose that for all maximal ideals $\fm$ of $R$, the induced map $v \otimes_R \id_{R/\fm} \colon P/\fm P \to M/\fm M$ is injective. Then $v$ is a pure map.
        \end{enumerate}
    \end{corollary}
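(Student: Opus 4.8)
The plan is to reduce everything to the local statements \autoref{prop:localpuren} and \autoref{prop:local-purity-from-residue-field-map} by localizing at a prime, and then, for (2), to use that purity of a module map can be detected at maximal ideals. The inclusion in (1) is the heart of the matter; part (2) and the equality clause are short consequences of it.

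First I would prove the inclusion asserted in (1). Fix $\p\in\Spec(R)$ with $v\otimes_R\id_{R/\p}$ injective and localize at $\p$. Since localization is exact, $M_\p$ is flat over $R_\p$, in case $(a)$ the map $v_\p$ is still injective, and $(v\otimes_R\id_{R/\p})_\p$ is injective; using $(R/\p)_\p=\kappa(\p)$, the residue field of $R_\p$, this last map is $v_\p\otimes_{R_\p}\id_{\kappa(\p)}$. In case $(b)$, $P_\p$ is finitely presented over $R_\p$, so \autoref{prop:local-purity-from-residue-field-map} applied over $R_\p$ gives that $v_\p$ is pure. In case $(a)$, I would put $L:=\im(v_\p)\subseteq M_\p$, a finitely generated submodule of the flat module $M_\p$; injectivity of $v_\p$ gives an isomorphism $P_\p\cong L$ under which $v_\p\otimes_{R_\p}\id_{\kappa(\p)}$ corresponds to $\iota\otimes_{R_\p}\id_{\kappa(\p)}$, where $\iota\colon L\hookrightarrow M_\p$ is the inclusion; hence $\iota\otimes_{R_\p}\id_{\kappa(\p)}$ is injective and \autoref{prop:localpuren} (implication $(2)\Longrightarrow(4)$) shows $\iota$, and therefore $v_\p$, is pure. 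In both cases $v_\p$ is $R_\p$-pure, which is the desired inclusion.

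Statement (2) then follows formally. By hypothesis $v\otimes_R\id_{R/\fm}$ is injective for every maximal ideal $\fm$, so by the inclusion just proved $v_\fm$ is $R_\fm$-pure for every maximal $\fm$; for any $R$-module $N$ and any maximal ideal $\fm$ the map $(v\otimes_R\id_N)_\fm=v_\fm\otimes_{R_\fm}\id_{N_\fm}$ is then injective, and since a module map is injective as soon as all its localizations at maximal ideals are, $v\otimes_R\id_N$ is injective. Thus $v$ is pure.

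For the ``moreover'' clause of (1) I would establish the reverse inclusion under the extra hypothesis that $P$ is $R$-flat. If $v_\p$ is $R_\p$-pure then $v_\p\otimes_{R_\p}\id_{\kappa(\p)}$ is injective. On the other hand $P/\p P=P\otimes_R R/\p$ is flat, hence torsion-free, over the domain $R/\p$, so the canonical map $P/\p P\to (P/\p P)\otimes_{R/\p}\kappa(\p)=P_\p\otimes_{R_\p}\kappa(\p)$ is injective; chasing the obvious commutative square relating $v\otimes_R\id_{R/\p}$ to $v_\p\otimes_{R_\p}\id_{\kappa(\p)}$ through these injections forces $v\otimes_R\id_{R/\p}\colon P/\p P\to M/\p M$ to be injective. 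The substantive content is entirely in the two cited local propositions; the main things to get right are the bookkeeping identifications — that $(v\otimes_R\id_{R/\p})_\p$ really is $v_\p\otimes_{R_\p}\id_{\kappa(\p)}$, that in case $(a)$ the module $P_\p$ may be viewed as a submodule of $M_\p$ so that \autoref{prop:localpuren} applies verbatim, and that flatness of $P$ is precisely what powers the reverse implication in the last paragraph — none of which should present a serious obstacle.
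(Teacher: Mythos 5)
Your proposal is correct and follows essentially the same route as the paper: both arguments reduce to \autoref{prop:localpuren} (for case $(a)$, after identifying $P$ with $\im(v)$) and \autoref{prop:local-purity-from-residue-field-map} (for case $(b)$), deduce (2) from the fact that purity is local on maximal ideals, and prove the reverse inclusion of (1) via the same commutative square, using flatness of $P$ to make $P/\p P \to P_\p/\p P_\p$ injective. The only difference is cosmetic bookkeeping (you localize before identifying $P$ with its image; the paper does it globally first).
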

     
    \begin{proof}
        Assume $(a)$. Identifying $P$ with its isomorphic image $\im(v)$, we will further assume that $P$ is a finitely generated submodule of $M$ and $v$ is the inclusion map $\iota$ of $P$ into $M$.

        (1) Suppose $\iota \otimes_R \id_{R/\p} \colon P/\p P \to M/ \p M$ is injective. Localizing at $\p$ we then get 
        $
        \iota_\p \otimes_{R_\p} \kappa(\p) \colon P_\p/\p P_\p \to M_\p/\p M_\p
        $
        is injective. Since $\iota_\p$ is injective, \autoref{prop:localpuren} (2)$\implies$(4) shows that $\iota_\p$ is pure in $\Mod_{R_\p}$. Thus,
        $
            \textrm{$\{\p \in \Spec(R) \colon \iota \otimes_R \id_{R/\p}$ is injective$\} \subseteq \{\p \in \Spec(R) \colon \iota_\p$ is pure in $\Mod_{R_\p}\}$}.    
        $

        Suppose $P$ is a flat $R$-module. We want to show then that the other inclusion also holds. Let $\p \in \Spec(R)$ such that $\iota_\p$ is pure. Consider the commutative diagram
        \[
        \begin{tikzcd}
            P/\p P \arrow[r, "\iota \otimes_R \id_{R/\p}"] \arrow[d]
              & M/\p M \arrow[d] \\
            P_\p/\p P_\p \arrow[r, "\iota_\p \otimes_{R_\p} \kappa(\p)"]
          & M_\p/\p M_\p. \end{tikzcd}
          \]
          The bottom horizontal map is injective by purity of $\iota_\p$. Since $P$ is $R$-flat, the left vertical map is also injective because it can be identified with the map obtained by applying $\otimes_R \id_{P}$ to the injection $R/\p \hookrightarrow R_\p/\p R_\p = (R/\p)_\p$. By commutativity, it follows that the top horizontal map is injective, that is,
          \[
            \textrm{$\{\p \in \Spec(R) \colon \iota_\p$ is pure in $\Mod_{R_\p}\} \subseteq \{\p \in \Spec(R) \colon \iota \otimes_R \id_{R/\p}$ is injective$\}$}.  
          \]

        (2) Since purity of a map of $R$-modules can be checked locally at maximal ideals, it follows that if $\iota_\fm$ is $R_\fm$-pure for all maximal ideals $\fm$ of $R$, then $\iota$ is $R$-pure. By hypothesis, the induced map
        $
        \iota \otimes_R \id_{R/\fm} \colon P/\fm P \to M/\fm M    
        $
        is injective. Thus by (1), $\iota_\fm$ is $R_\fm$-pure.

        The proof of $(1)$ and $(2)$ assuming $(b)$ is similar once we have \autoref{prop:local-purity-from-residue-field-map}. We omit the details.
    \end{proof}

\subsection{Descent of (cyclic) stabilizers} We want to show that the Ohm-rush property satisfies pure descent for flat modules (\autoref{thm:descentOhm-Rush}). But first, we will prove some preliminary results about descending (cyclic) purity and (cyclic) stabilizers. We begin with the following well-known result.

\begin{lemma}
\label{lem:descent-cyclic-purity}
Let $R \to S$ and $M \to N$ be an 
$R$-linear map. 
\begin{enumerate}%[itemsep = 1mm]
    \item[$(1)$] If $R \to S$ is pure and $S \otimes_R M \to S \otimes_R N$ is pure as a map of $S$-modules, then $M \to N$ is pure as a map of $R$-modules.
    \item[$(2)$] If $R \to S$ is cyclically pure, $M$ is flat and $S \otimes_R M \to S \otimes_R N$ is cyclically pure as a map of $S$-modules, then $M \to N$ is cyclically pure as a map of $R$-modules. 
\end{enumerate}
\end{lemma}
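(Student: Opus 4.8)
The plan is to prove both parts by the same base-change-and-diagram-chase argument. Write $\varphi\colon M\to N$ for the given $R$-linear map and fix a test $R$-module $P$ — an arbitrary module in part $(1)$, a cyclic module $P=R/I$ in part $(2)$. I would form the commutative square
\[
\xymatrix{ M \otimes_R P \ar[r]^-{\varphi \otimes_R \id_P} \ar[d]_{\alpha} & N \otimes_R P \ar[d]^{\beta} \\ (S \otimes_R M) \otimes_R P \ar[r] & (S \otimes_R N) \otimes_R P ,}
\]
where $\alpha$ (resp.\ $\beta$) is $(R\to S)\otimes_R\id_{M\otimes_R P}$ (resp.\ the analogous map for $N$), and the bottom arrow is $\varphi\otimes_R\id_S\colon S\otimes_R M\to S\otimes_R N$ base-changed, under the associativity isomorphism $(S\otimes_R M)\otimes_R P\cong (S\otimes_R M)\otimes_S(S\otimes_R P)$. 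A one-line diagram chase shows that if $\alpha$ is injective and the bottom arrow is injective, then $\varphi\otimes_R\id_P$ is injective (if $x\in\ker(\varphi\otimes_R\id_P)$ then the bottom arrow kills $\alpha(x)$, hence $\alpha(x)=0$, hence $x=0$). So the whole proof reduces to checking those two injectivities in each case.

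For part $(1)$: the bottom arrow is identified with $(\varphi\otimes_R\id_S)\otimes_S\id_{S\otimes_R P}$, hence injective because $\varphi\otimes_R\id_S$ is a pure map of $S$-modules; and $\alpha$ is injective because $R\to S$ is pure. This settles $(1)$ — it is essentially the standard fact that purity of a module map descends along a pure ring map.

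For part $(2)$: here $P=R/I$, so $\alpha\colon M/IM\to (S\otimes_R M)/I(S\otimes_R M)$, and using $(S\otimes_R M)/I(S\otimes_R M)\cong (S/IS)\otimes_{R/I}(M/IM)$ it becomes $(R/I\to S/IS)\otimes_{R/I}\id_{M/IM}$. The map $R/I\to S/IS$ is injective precisely because $R\to S$ is cyclically pure, and $M/IM=M\otimes_R R/I$ is a flat $R/I$-module since $M$ is $R$-flat (flat base change); tensoring the injection $R/I\hookrightarrow S/IS$ over $R/I$ with the flat module $M/IM$ preserves injectivity, so $\alpha$ is injective. For the bottom arrow, it is identified with $(\varphi\otimes_R\id_S)\otimes_S\id_{S/IS}$, and $S/IS$ is a cyclic $S$-module, so it is injective because $\varphi\otimes_R\id_S$ is cyclically pure over $S$. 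The chase then gives injectivity of $\varphi\otimes_R\id_{R/I}$, and since $I$ is arbitrary, $\varphi$ is cyclically pure over $R$.

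The one subtle point is in part $(2)$: cyclic purity of $R\to S$ by itself is too weak to make $\alpha$ injective, and the flatness hypothesis on $M$ is exactly what rescues it, by forcing $M/IM$ to be $R/I$-flat so that the injection $R/I\hookrightarrow S/IS$ survives the tensor. Identifying the base-changed objects correctly — recognizing $S/IS$ as a cyclic $S$-module and $M/IM$ as an $R/I$-flat module — is the crux; the rest is bookkeeping with associativity of tensor products, so I do not expect any genuine obstacle.
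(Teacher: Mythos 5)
Your proof is correct and follows essentially the same route as the paper's: the same base-change square, the same chase (left vertical plus bottom arrow injective forces the top arrow injective), and the same use of the flatness of $M$ in part $(2)$ to see that $M/IM \to (S/IS)\otimes_{R/I}(M/IM)$ is injective. The paper only writes out part $(1)$ in full and dismisses part $(2)$ as ``similar,'' so your explicit identification of where the flatness hypothesis enters is an elaboration of, not a departure from, the paper's argument.
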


\begin{proof}
Consider the commutative diagram
\[
\xymatrix{&P \otimes_R M \ar[d] \ar[r] &P \otimes_R N \ar[d]\\
&S \otimes_R (P \otimes_R M) \ar[r] &S \otimes_R (P \otimes_R N).}
\]

(1) If $R \to S$ is pure, the left (resp. right) vertical map is obtained by tensoring
the pure ring map $R \to S$ with $P \otimes_R M$ (resp. $P \otimes_R N$). Thus, both the left and right vertical maps are injective.

The bottom map $S \otimes_R (P \otimes_R M) \to S \otimes_R (P \otimes_R N)$ is injective because it can be identified with 
$
(S \otimes_R P) \otimes_S (S \otimes_R M) \to (S \otimes_R P) \otimes_S (S \otimes_R N),
$
which is injective because by the assumption $S \otimes_R M \to S \otimes_R N$ is pure as a map of $S$-modules. Then by the commutativity of the above diagram, $P \otimes_R M \to P \otimes_R N$ must be injective as well.%removed smallskip

Since the base change of a cyclic $R$-module along $S$ is a cyclic $S$-module, the proof of (2) follows using an argument similar to (1).
\end{proof}

Our next result is about descending (cyclic) stabilizers along pure maps.

\begin{proposition}
\label{prop:descent-cyclic-stabilizers}
Let $R \to S$ be a ring homomorphism. Let $\{(L_{i}, u_{ij}) \colon i, j \in I, i \leq j\}$ be a system of
finitely presented $R$-modules indexed by a filtered poset $(I, \leq)$. Set 
$
M \coloneqq \colim_i L_i.
$
For all $i \in I$, let $u_i \colon L_i \to M$ be the associated map. 
Let $F$ be a finitely
presented $R$-module and $v \colon F \to M$ be an $R$-linear map.
\begin{enumerate}%[itemsep = 1mm]
    \item[$(1)$] If $R \to S$ is pure and $\id_S \otimes_R v$ admits a stabilizer in $\Mod_S$, then there exists an index 
    $i \in I$ and a map $v_i \colon F \to L_i$ such that 
    $
    v = u_i \circ v_i
    $
    and $v_i$ is a stabilizer of $v$. 
    
    \item[$(2)$] If $R \to S$ is pure and $\id_S \otimes_R v$ admits a cyclic stabilizer in $\Mod_S$ that
    $\id_S \otimes_R v$ dominates, then there exists an index 
    $i \in I$ and a map $v_i \colon F \to L_i$ such that 
    $
    v = u_i \circ v_i
    $
    and $v_i$ is a cyclic stabilizer of $v$ that $v$ dominates.

    \item[$(3)$] If $R \to S$ is cyclically pure, $L_i$ is flat (equivalently, projective) for all $i$ and $\id_S \otimes_R v$ admits a cyclic stabilizer in $\Mod_S$ that $id_S \otimes_R v$ dominates, then there exists an index 
    $i \in I$ and a map $v_i \colon F \to L_i$ such that 
    $
    v = u_i \circ v_i
    $
    and $v_i$ is a cyclic stabilizer of $v$ that $v$ dominates.
\end{enumerate}
\end{proposition}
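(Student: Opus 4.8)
The plan is to treat all three parts by the same two‑step template: first realize the (cyclic) stabilizer given over $S$ inside the base‑changed system $\{S\otimes_R L_i\}$ and identify it with the base change of an honest lift of $v$ along $M=\colim_i L_i$, and then descend the resulting (cyclic) domination along the (cyclically) pure map $R\to S$.

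\emph{Step 1 (producing the lift).} Since $F$ is finitely presented and $M=\colim_i L_i$ is filtered, $v$ lifts to some $v_{i_0}\colon F\to L_{i_0}$ with $v=u_{i_0}\circ v_{i_0}$. Base change turns $\{(S\otimes_R L_i,\ \id_S\otimes u_{ij})\}$ into a filtered system of finitely presented $S$-modules with colimit $S\otimes_R M$, and $\id_S\otimes_R v$ into an $S$-linear map out of the finitely presented $S$-module $S\otimes_R F$. In case $(1)$ apply \autoref{lem:stabilizers}(2) over $S$; in cases $(2)$ and $(3)$ apply \autoref{prop:cyclic-stabilizers} over $S$ (its hypothesis is exactly the one assumed). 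Either way one obtains an index $i_1$ and maps $g_j\colon S\otimes_R F\to S\otimes_R L_j$ for $j\ge i_1$ with $(\id_S\otimes u_j)\circ g_j=\id_S\otimes v$ and $g_j=(\id_S\otimes u_{i_1j})\circ g_{i_1}$, each $g_j$ a (cyclic) stabilizer of $\id_S\otimes v$ (in the cyclic cases, one that $\id_S\otimes v$ dominates). For $j$ above both $i_0$ and $i_1$, both $g_j$ and $\id_S\otimes v_j$, where $v_j:=u_{i_0j}\circ v_{i_0}$, are lifts of $\id_S\otimes v$ to $S\otimes_R L_j$; as $S\otimes_R F$ is finitely presented, $\Hom_S(S\otimes_R F,S\otimes_R M)=\colim_j\Hom_S(S\otimes_R F,S\otimes_R L_j)$, so the two lifts become equal after a further transition, i.e.\ $g_{j'}=\id_S\otimes v_{j'}$ for some $j'$. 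Renaming $i:=j'$, $v_i:=v_{j'}$ we get $v=u_i\circ v_i$ with $\id_S\otimes v_i$ a (cyclic) stabilizer of $\id_S\otimes v$ over $S$.

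\emph{Step 2 (descent).} Because $v=u_i\circ v_i$ factors through $v_i$, \autoref{lem:domination}(2) already gives that $v$ dominates $v_i$; this supplies one direction of mutual (cyclic) domination and also the extra clause ``$v$ dominates $v_i$'' in $(2)$ and $(3)$. So the only remaining point is that $v_i$ (cyclically) dominates $v$, i.e.\ $\ker(v\otimes_R\id_N)\subseteq\ker(v_i\otimes_R\id_N)$ for all finitely presented $R$-modules $N$ in case $(1)$, resp.\ all cyclic $R$-modules $N$ in cases $(2)$ and $(3)$. The key observation is that for such $N$ the canonical map $L_i\otimes_R N\to (L_i\otimes_R N)\otimes_R S\cong (S\otimes_R L_i)\otimes_S(S\otimes_R N)$ is injective: if $R\to S$ is pure this holds for every $N$, while if $R\to S$ is merely cyclically pure and $L_i$ is flat it holds for $N=R/I$ because the map is obtained by tensoring the injection $R/I\hookrightarrow S/IS$ with the flat module $L_i$ — this is exactly where the flatness of the $L_i$ enters in $(3)$. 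Granting this, one chases: for $x\in\ker(v\otimes_R\id_N)$, naturality of base change puts the image of $x$ in $(S\otimes_R F)\otimes_S(S\otimes_R N)$ into $\ker((\id_S\otimes v)\otimes\id_{S\otimes_R N})$, hence into $\ker((\id_S\otimes v_i)\otimes\id_{S\otimes_R N})$ since $\id_S\otimes v_i$ is a (cyclic) stabilizer of $\id_S\otimes v$ and $S\otimes_R N$ is a (cyclic) $S$-module; applying naturality once more, the image of $(v_i\otimes_R\id_N)(x)$ under the injective map above is $0$, so $(v_i\otimes_R\id_N)(x)=0$. Equivalently, one may run the descent through pushouts: form the pushout $D$ of $v$ and $v_i$, note base change commutes with pushouts, conclude by \autoref{lem:domination}(3) (resp.\ \autoref{lem:cyclic-stab-pushout}) that $S\otimes_R L_i\to S\otimes_R D$ is (cyclically) pure over $S$, and descend this by \autoref{lem:descent-cyclic-purity}, whose flatness hypothesis on the source module is met in $(3)$ and is unnecessary when $R\to S$ is pure (cases $(1)$ and $(2)$).

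The step I expect to require the most care is the reconciliation in Step 1: the hypothesis only provides \emph{some} (cyclic) stabilizer of $\id_S\otimes v$ over $S$, a priori attached to no $L_j$, so one must route it through \autoref{lem:stabilizers}(2) / \autoref{prop:cyclic-stabilizers} to place it inside the system $\{S\otimes_R L_j\}$ and then identify it with the base change of a genuine $R$-lift of $v$. Once this is in place, the descent of (cyclic) domination is a routine diagram chase, the only moving part being the precise injectivity statement that each hypothesis on $R\to S$ — and, in $(3)$, the flatness of the $L_i$ — is designed to guarantee.
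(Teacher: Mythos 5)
Your proof is correct and follows the same two-step strategy as the paper's: first realize the given $S$-stabilizer as $\id_S\otimes_R v_j$ for an honest lift $v_j$ of $v$ into the system $\{L_j\}$, then descend mutual (cyclic) domination along $R\to S$. The execution differs in two places. For the realization, the paper works by hand: it lifts the factorization $\varphi\colon P\to S\otimes_R M$ of $\id_S\otimes_R v$ through the given stabilizer $f$ to some $\varphi_i\colon P\to S\otimes_R L_i$ and compares $\varphi_i\circ f$ with $\id_S\otimes_R v_i$ on finitely many generators of $S\otimes_R F$, then invokes \autoref{lem:domination}~(5) (resp.\ \autoref{lem:cyclic-stab-diagram}); you instead invoke \autoref{lem:stabilizers}~(2) (resp.\ \autoref{prop:cyclic-stabilizers}) over $S$ for the base-changed system and reconcile the resulting lift with $\id_S\otimes_R v_j$ using that $\Hom_S(S\otimes_R F,-)$ commutes with the filtered colimit. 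Both are valid, and your reconciliation does preserve the stabilizer property because the maps those lemmas produce form a compatible family, all of which are (cyclic) stabilizers. For the descent, your primary argument is a direct kernel chase through the injection $L_i\otimes_R N\to S\otimes_R(L_i\otimes_R N)$, which is marginally more economical than the paper's pushout-plus-\autoref{lem:descent-cyclic-purity} argument (in case~(3) it only uses flatness of $L_i$, not of $M$, though the latter is automatic anyway); the pushout route you sketch as an alternative is exactly the paper's.
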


\begin{proof}
We will prove (1) and (2) simultaneously. Note that $S \otimes_R M = \colim_i S \otimes_R L_{i}$ since 
tensor products commute with filtered colimits. Since $F$ is a finitely presented 
$R$-module, there exists $i \in I$ and a map $v_i \colon F \to L_i$ such that the 
following diagram commutes
\[
\xymatrix@C=4em{&L_i \ar[d]^{u_i}\\
F \ar[ur]_{v_i} \ar[r]_v & M.}
\]

Then 
$
v = u_i \circ v_i = (\colim_{j \geq i} u_{ij}) \circ v_i = \colim_{j \geq i} (u_{ij} \circ v_i).
$
For all $j \geq i$, define 
$v_j \coloneqq u_{ij} \circ v_i.$ 
We have
$
S \otimes_R M = \colim_{j \geq i} S \otimes_R L_j,
$
and
$
\id_S \otimes_R v = \colim_{j \geq i} \id_S \otimes_R v_j.
$
Let 
$f \colon S \otimes_R F \to P$ 
be a stabilizer of $\id_S \otimes_R v$ (resp. a 
cyclic stabilizer of $\id_S \otimes_R v$ that $v$ dominates).
By \autoref{lem:domination} (2), $\id_S \otimes_R v$ factors through $f$, that is,
there exists $\varphi \colon P \to S \otimes_R M$ such that 
\[
\xymatrix@C=4em{&S \otimes_R F \ar[dl]_{f} \ar[d]^{\id_S \otimes_R v}\\
P \ar[r]_{\varphi} & S \otimes_R M}
\]
commutes. Since $P$ is a finitely presented
$R$-module, replacing $i$ by a larger index, we may assume that there
exists a lift 
$
\varphi_i \colon P \to S \otimes_R L_i
$
of $\varphi$ along $\id_S \otimes_R u_i$, that is, 

\[
\xymatrix@C=4em{&S \otimes_R L_i \ar[d]^{\id_S \otimes_R u_i}\\
P \ar[ur]_{\varphi_i} \ar[r]_\varphi & S \otimes_R M}
\]
commutes. Then we have two maps
\[
\begin{tikzcd}
   S \otimes_R F \ar[r,shift left=.75ex,"\id_S \otimes_R v_i"]
  \ar[r,shift right=.75ex,swap,"\varphi_i \circ f"]
&
S \otimes_R L_i
\end{tikzcd}
\]
such that
$
(\id_S \otimes_R u_i) \circ (\id_S \otimes_R v_i) = \id_S \otimes_R (u_i \circ v_i)
= \id_S \otimes_R v = \varphi \circ f = (\id_S \otimes_R u_i) \circ (\varphi_i \circ f).
$
Let $f_1, \dots, f_n$ be generators of the $S$-module $S \otimes_R F$. Since 
$M = \colim_{j \geq i} S \otimes_R L_j$, there exists $j \geq i$ such that for all
$\ell = 1, \dots, n$, $f_\ell$ has the same image in $S \otimes_R L_j$
under the two maps
\[
\begin{tikzcd}
   S \otimes_R F \ar[r,shift left=.75ex,"\id_S \otimes_R v_i"]
  \ar[r,shift right=.75ex,swap,"\varphi_i \circ f"]
&
S \otimes_R L_i
\ar[r, "\id_S \otimes_R u_{ij}"]
&
S \otimes_R L_j.
\end{tikzcd}
\]
Since an $S$-linear map whose domain is $S \otimes_R F$ is completely determined
by the images of the generators $f_1,\dots,f_n$, this implies 
$
\id_S \otimes_R v_j = (\id_S \otimes_R u_{ij}) \circ (\id_S \otimes_R v_i) = 
(\id_S \otimes_R u_{ij}) \circ (\varphi_i \circ f). 
$
Let $\varphi_j \coloneqq (\id_S \otimes_R u_{ij}) \circ \varphi_i$. We then see
that the following diagram commutes
 \[
    \xymatrix@C=6em@R=4em{&S \otimes_R F \ar[dl]_f \ar[d]_{\id_S \otimes_R v_j} \ar[dr]^{\id_S \otimes_R v}  \\
    P \ar[r]_{\varphi_j} &S\otimes_R L_j \ar[r]_{\id_S \otimes_R u_j} &S \otimes_R M.}
    \]
Since $f$ is a stabilizer (resp. a cyclic stabilizer) of $\id_S \otimes_R v$, 
by \autoref{lem:domination} (5) (resp. by \autoref{lem:cyclic-stab-diagram}), we
see that $\id_S \otimes_R v_j$ is a stabilizer (resp. a cyclic stabilizer) of
$\id_S \otimes_R v$.
Now consider the pushout
\[
    \xymatrix{F \ar[r]_{v_j} \ar[d]_ v &  L_j \ar[d]^{v'} \\  M \ar[r]_{v_j'} &  T.}
    \]
Since pushouts are preserved by base change, 
\begin{equation}
    \label{fig:pushout}
    \begin{gathered}
    \xymatrix@C=3em@R=3em{S\otimes_R F \ar[r]_{\id_S \otimes_R v_j} \ar[d]_{\id_S \otimes_R v} &  S \otimes_R L_j \ar[d]^{\id_S \otimes_R v'} \\  S \otimes_R M \ar[r]_{\id_S \otimes_R v_j'} & S \otimes_R T.}
    \end{gathered}
\end{equation}
    is also a pushout diagram. The fact that $\id_S \otimes_R v_j$ is a stabilizer
    (resp. a cyclic stabilizer) for $\id_S \otimes_R v$ is equivalent to the assertion
    that the maps $\id_S \otimes_R v'$ and $\id_S \otimes_R v_j'$ are both pure
    (resp. both cyclically pure) in $\Mod_S$ by \autoref{lem:domination} (3) (resp. by \autoref{lem:cyclic-stab-pushout}). Since $R \to S$ is a pure ring map,
    \autoref{lem:descent-cyclic-purity} (1) then implies that
    $v'$ and $v_j'$ are pure (resp. cyclically pure) in $\Mod_R$. Thus 
    $v_j$ is a stabilizer (resp. a cyclic stabilizer) for $v$. Note that $v$ dominates $v_j$ since $v  = u_j \circ v_j$ factors through $v_j$. %removed smallskip

    (3) If the $L_i$ are all flat, then $M = \colim_i L_i$ is also flat. 
    The proof of descent of cyclic stabilizers is now identical to (1) and (2), except in the step where we analyze the pushout \autoref{fig:pushout}. Since $M$ and $L_j$ are both flat, the cyclic purity of $\id_S \otimes_R v'_j \colon S \otimes_R M \to S \otimes_R T$ and $\id_S \otimes_R v' \colon S \otimes_R L_j \to S \otimes_R T$ as maps of $S$-modules implies that $v'_j$ and $v'$ are cyclically pure maps of $R$-modules by \autoref{lem:descent-cyclic-purity} (2) since we are assuming that $R \to S$ is cyclically pure.
\end{proof}

Since any module can be expressed as a filtered colimit of finitely presented modules and since flat modules can be expressed as a filtered colimit of finite free modules, we then obtain the following:

\begin{corollary}
    \label{cor:descent-cyclic-stabilizers}
    Let $R \to S$ be a ring map. Let $v \colon F \to M$ be an $R$-linear map where $F$ is a finitely presented $R$-module. Then we have the following:
    \begin{enumerate}%[itemsep = 1mm]
        \item[$(1)$] If $R \to S$ is pure and $\id_S \otimes_R v$ admits a stabilizer in $\Mod_S$, then $v$ admits a stabilizer in $\Mod_R$.
    
        \item[$(2)$] If $R \to S$ is pure and $\id_S \otimes_R v$ admits a cyclic stabilizer that $\id_S \otimes_R v$ dominates in $\Mod_S$, then $v$ admits a cyclic stabilizer in $\Mod_R$ that $v$ dominates.
    
        \item[$(3)$] If $R \to S$ is cyclically pure, $M$ is flat and $\id_S \otimes_R v$ admits a cyclic stabilizer that $\id_S \otimes_R v$ dominates in $\Mod_S$, then $v$ admits a cyclic stabilizer in $\Mod_R$ that $v$ dominates.
    \end{enumerate}
\end{corollary}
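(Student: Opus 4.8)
The plan is to deduce all three statements directly from \autoref{prop:descent-cyclic-stabilizers} by choosing an appropriate presentation of $M$ as a filtered colimit of finitely presented modules, so that the hypotheses line up verbatim.

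For $(1)$ and $(2)$, I would first invoke the standard fact that every $R$-module $M$ can be written as a filtered colimit $M = \colim_i L_i$ of finitely presented $R$-modules indexed by a filtered poset, with transition maps $u_{ij} \colon L_i \to L_j$ and canonical maps $u_i \colon L_i \to M$. With this choice fixed, the hypothesis that $R \to S$ is pure together with the assumption that $\id_S \otimes_R v$ admits a stabilizer (resp. a cyclic stabilizer that $\id_S \otimes_R v$ dominates) in $\Mod_S$ is precisely the input needed to apply \autoref{prop:descent-cyclic-stabilizers}$(1)$ (resp. $(2)$). That proposition then furnishes an index $i \in I$ and a map $v_i \colon F \to L_i$ satisfying $v = u_i \circ v_i$ which is a stabilizer (resp. a cyclic stabilizer that $v$ dominates) of $v$. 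Since $L_i$ is finitely presented, $v_i$ is by \autoref{def:stabilizer} (resp. \autoref{def:cyclic-stabilizer}) a stabilizer (resp. cyclic stabilizer) of $v$ in $\Mod_R$, which is the desired conclusion.

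For $(3)$, since $M$ is now assumed flat, I would instead apply Lazard's theorem \cite[Thm.\ 1.2]{Lazard} to write $M = \colim_i L_i$ as a filtered colimit of free $R$-modules $L_i$ of finite rank; these are in particular finitely presented and flat. The hypotheses of $(3)$ — namely $R \to S$ cyclically pure, $M$ flat, and $\id_S \otimes_R v$ admitting a cyclic stabilizer that $\id_S \otimes_R v$ dominates in $\Mod_S$ — then match those of \autoref{prop:descent-cyclic-stabilizers}$(3)$, which produces an index $i$ and a map $v_i \colon F \to L_i$ with $v = u_i \circ v_i$ that is a cyclic stabilizer of $v$ that $v$ dominates; once more $v_i$ is a cyclic stabilizer in $\Mod_R$ because $L_i$ is finitely presented. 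I do not expect any genuine obstacle here: the entire substance of the descent lies in \autoref{prop:descent-cyclic-stabilizers}, and the corollary is essentially bookkeeping. The only points to record carefully are the existence of the two flavors of filtered-colimit presentation of $M$ (an arbitrary module for the finitely presented pieces; a flat module for the finite free pieces, via Lazard), and the fact — which is why the conclusions of $(2)$ and $(3)$ retain the clause ``that $v$ dominates'' — that this domination hypothesis must be propagated through \autoref{prop:descent-cyclic-stabilizers} and cannot simply be dropped.
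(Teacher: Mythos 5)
Your proposal is correct and is exactly the argument the paper intends: the corollary is deduced from \autoref{prop:descent-cyclic-stabilizers} by presenting $M$ as a filtered colimit of finitely presented modules for parts $(1)$ and $(2)$, and as a filtered colimit of finite free modules via Lazard's theorem for part $(3)$ (so that the flatness/projectivity hypothesis on the $L_i$ in part $(3)$ of the proposition is met). The paper treats this as immediate bookkeeping after the proposition, which is precisely what you have written out.
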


Descent of cyclic stabilizers gives descent of the Ohm-Rush property. We note that our contribution is part (2) of the next result because descent of the ML property along pure maps was shown in \cite[Part II, Prop.\ 2.5.1]{rg71}. We believe the proof of descent in \cite{rg71} is correct even though there is some confusion about its veracity. In any case, we reprove their result here since the proof follows easily by the theory already developed.

\begin{theorem}
\label{thm:descentOhm-Rush}
Let $R \to S$ be a ring map and $M$ be an $R$-module.
\begin{enumerate}%[itemsep = 1mm]
    \item[$(1)$] If $R \to S$ is pure and $S \otimes_R M$ is a ML S-module, then
    $M$ is a ML $R$-module.
    
    \item[$(2)$] If $R \to S$ is cyclically pure, $M$ is flat and $S \otimes_R M$ is an Ohm-Rush
    $S$-module, then $M$ is an Ohm-Rush $R$-module.
\end{enumerate}
\end{theorem}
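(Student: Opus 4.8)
The plan is to deduce both parts directly from the descent of (cyclic) stabilizers packaged in \autoref{cor:descent-cyclic-stabilizers}, combined with the definitional characterizations of ML modules (\autoref{def:Mittag-Leffler}) and of flat Ohm-Rush modules (\autoref{thm:OR-equivalences}). In each case the scheme is identical: take a test map out of a finitely presented module into $M$, base change it along $R \to S$, use that the base-changed target is ML (resp.\ flat Ohm-Rush) over $S$ to manufacture a stabilizer (resp.\ a cyclic stabilizer that the map dominates) in $\Mod_S$, and then push this structure back down to $\Mod_R$ using purity (resp.\ cyclic purity) of $R \to S$.

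For (1): let $P$ be a finitely presented $R$-module and $g \colon P \to M$ an $R$-linear map. Then $S \otimes_R P$ is a finitely presented $S$-module and $S \otimes_R M$ is a ML $S$-module by hypothesis, so $\id_S \otimes_R g$ admits a stabilizer in $\Mod_S$ by \autoref{def:Mittag-Leffler}. Since $R \to S$ is pure, \autoref{cor:descent-cyclic-stabilizers} (1) shows that $g$ admits a stabilizer in $\Mod_R$. As $g$ was arbitrary, $M$ is a ML $R$-module. (By \autoref{rem:Mittag-Leffler} (d) one could equally well restrict to $P$ free of finite rank, but this is not needed.)

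For (2): since $M$ is $R$-flat, $S \otimes_R M$ is a flat $S$-module; by hypothesis it is also an Ohm-Rush $S$-module. Let $v \colon R \to M$ be $R$-linear, so that $\id_S \otimes_R v \colon S \to S \otimes_R M$ is $S$-linear. Because $S \otimes_R M$ is a flat Ohm-Rush $S$-module, \autoref{thm:OR-equivalences} ($(1)\implies(3)$, using flatness) produces a cyclic stabilizer of $\id_S \otimes_R v$ that $\id_S \otimes_R v$ dominates, in $\Mod_S$. Since $R \to S$ is cyclically pure and $M$ is flat over $R$, \autoref{cor:descent-cyclic-stabilizers} (3) yields a cyclic stabilizer of $v$ in $\Mod_R$ that $v$ dominates. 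As $v$ was arbitrary and $M$ is flat, \autoref{thm:OR-equivalences} ($(3)\implies(1)$) gives that $M$ is an Ohm-Rush $R$-module.

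Since all the real work is already absorbed into \autoref{prop:descent-cyclic-stabilizers}/\autoref{cor:descent-cyclic-stabilizers} and \autoref{thm:OR-equivalences}, the only thing to be careful about is the bookkeeping of flatness in part (2): one needs the flatness of $S \otimes_R M$ over $S$ (to invoke the flat case of \autoref{thm:OR-equivalences}), which comes from base change of flatness, and the flatness of $M$ over $R$ (to invoke \autoref{cor:descent-cyclic-stabilizers} (3), i.e.\ the descent statement that requires only cyclic — not full — purity of $R \to S$), which is hypothesized. I expect this interplay between the flatness hypotheses and the strength of the (cyclic) purity assumption to be essentially the only subtlety; the descent engine handles the rest.
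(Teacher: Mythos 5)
Your proposal is correct and follows essentially the same route as the paper: both parts are reduced to the descent of (cyclic) stabilizers in \autoref{cor:descent-cyclic-stabilizers}, with part (2) routed through the stabilizer characterization of flat Ohm-Rush modules (you cite \autoref{thm:OR-equivalences}, the paper cites the per-element version \autoref{thm:content-equivalences}, which is the same content). Your closing remark about where each flatness hypothesis is used matches the paper's bookkeeping exactly.
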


\begin{proof}
$(1)$ If $F$ is a finitely presented $R$-module and $v \colon F \to M$
is an $R$-linear map, we have to show that $v$ admits a stabilizer. 
Upon expressing $M$ as a directed colimit of finitely presented $R$-modules, this follows by \autoref{prop:descent-cyclic-stabilizers} (1) 
because $\id_S \otimes_R v$
admits a stabilizer as a map of $S$-modules.%removed smallskip

$(2)$ By \autoref{thm:content-equivalences}, we have to show that every map $v \colon R \to M$ admits a cyclic stabilizer that $v$ dominates. Since $S \otimes_R M$ is a flat Ohm-Rush $S$-module, $\id_S \otimes_R v$ admits a cyclic stabilizer in $\Mod_S$ that $\id_S \otimes_R v$ dominates by \autoref{thm:content-equivalences}. Since $M$ is flat, $v$ admits a cyclic stabilizer in $\Mod_R$ that $v$ dominates by \autoref{cor:descent-cyclic-stabilizers} (3).
\end{proof}

\begin{remark}
    \label{rem:cyclic-pure-descent-OR}
    \mbox{}
    \begin{enumerate}%[itemsep = 1mm]
        \item We do not know if one can drop the flatness assumption on $M$ in \autoref{thm:descentOhm-Rush} (2).
        
        \item Cyclically pure descent of the Ohm-Rush property for flat modules follows from the more general assertion about descent of cyclic stabilizers. However, one can also give a more direct argument of descent of the Ohm-Rush property that we include for the reader's convenience. Let $\{I_\alpha \colon \alpha \in A\}$ be a collection of ideals of $R$. Assuming that $R \to S$ is cyclically pure and $S \otimes_R M$ is an Ohm-Rush $S$-module for a flat $R$-module $M$, we need to show that 
        $
        \bigcap_{\alpha \in A} I_\alpha M = \left(\bigcap_{\alpha \in A}I_\alpha\right) M.    
        $
        Since taking inverse images commutes with arbitrary intersection, cyclic purity of $R \to S$ implies that the induced map
        $
        \frac{R}{\bigcap_{\alpha \in A} I_\alpha} \to \frac{S}{\bigcap_{\alpha \in A} I_\alpha S}   
        $
        is injective. Since $M$ is a flat $R$-module, this implies that 
        \begin{equation}
            \label{eq:descent-1}
            \frac{M}{\left(\bigcap_{\alpha \in A}I_\alpha\right) M} \to \frac{S \otimes_R M}{\left(\bigcap_{\alpha \in A}I_\alpha S\right)(S\otimes_R M)}
        \end{equation}
        is injective as well. Since $S \otimes_R M$ is an Ohm-Rush $S$-module, we then get
        \begin{equation}
            \label{eq:descent-2}
            \left(\bigcap_{\alpha \in A}I_\alpha S\right)(S\otimes_R M) = \bigcap_{\alpha \in A}I_\alpha S (S \otimes_R M).
        \end{equation}
        Then \autoref{eq:descent-1} and \autoref{eq:descent-2} imply that the composition
        \begin{equation}
            \label{eq:descent-3}
            \frac{M}{\left(\bigcap_{\alpha \in A}I_\alpha\right) M} \to \frac{S \otimes_R M}{\left(\bigcap_{\alpha \in A}I_\alpha S\right)(S\otimes_R M)} \to \prod_{\alpha \in A} \frac{S \otimes_R M}{(I_\alpha S)(S \otimes_R M)}  
        \end{equation}
        is injective. But \autoref{eq:descent-3} has a factorization
\[\begin{tikzcd}
	{\frac{M}{\left(\bigcap_{\alpha \in A}I_\alpha\right) M}} && {\frac{S \otimes_R M}{\left(\bigcap_{\alpha \in A}I_\alpha S\right)(S\otimes_R M)}} && {\prod_{\alpha \in A} \frac{S \otimes_R M}{(I_\alpha S)(S \otimes_R M)} } \\
	\\
	&& {\prod_{\alpha \in A} \frac{M}{I_\alpha M}}
	\arrow[from=1-1, to=1-3]
	\arrow[from=1-3, to=1-5]
	\arrow[from=3-3, to=1-5]
	\arrow[from=1-1, to=3-3]
\end{tikzcd}\]
So the first map 
$
    {\frac{M}{\left(\bigcap_{\alpha \in A}I_\alpha\right) M}} \to    {\prod_{\alpha \in A} \frac{M}{I_\alpha M}}
$
is injective as well. But this precisely means that
 $\bigcap_{\alpha \in A} I_\alpha M = \left(\bigcap_{\alpha \in A}I_\alpha\right) M.$
    \end{enumerate}
\end{remark}

\begin{remark}
    We will see in future work that the SML property does not satisfy pure/faithfully flat descent, that is, if $R \to S$ is a faithfully flat ring map and $M$ is an $R$-module (even a flat one) such that $S \otimes_R M$ is a SML $S$-module, then it is not true in general that $M$ is a SML $R$-module. Our example will rely on the Frobenius map of an excellent local ring of prime characteristic and the connection of the SML property for flat modules with the Ohm-Rush trace property (\autoref{thm:SML-ORT}).
\end{remark}

\subsection{Openness of pure loci}

Let $v \colon M \to N$ be an $R$-linear map such that both $M$ and $N$ are finitely presented $R$-modules. Suppose $\p \in \Spec(R)$ such that the induced map $v_\p \colon M_\p \to N_\p$ splits. Since $\Hom_R(N,M)_\p = \Hom_{R_\p}(N_\p,M_\p)$, a left-inverse of $v_\p$ spreads to a distinguished open neighborhood of $\p$, that is, there exists $f \in R \setminus \p$ such that $v_f \colon M_f \to N_f$ also splits. Thus,
$
\textrm{$\{\p \in \Spec(R) \colon v_\p$ splits in $\Mod_{R_\p}\}$}   
$
is open in $\Spec(R)$. Note that in this setup, $\coker(v)$ is a finitely presented $R$-module as well by \cite[\href{https://stacks.math.columbia.edu/tag/0519}{Tag 0519} (4)]{stacks-project}. Thus,
$
\textrm{$\{\p \in \Spec(R) \colon v_\p$ is pure in $\Mod_{R_\p}\} = \{\p \in \Spec(R) \colon v_\p$ splits in $\Mod_{R_\p}\}$}   
$
by \autoref{lem:pure-iff-split}. 

\begin{notation}
    \label{not:pure-locus}
    For a map of $R$-modules $v \colon M \to N$, let $\Pure(v)$ denote the locus of primes $\p \in \Spec(R)$ where $v_\p$ is pure and we let $\CPure(v)$ denote the locus of primes $\p \in \Spec(R)$ where $v_\p$ is cyclically pure.
\end{notation}

We have the following:

\begin{lemma}
    \label{lem:stabilizers-pure-loci}
    Let $R$ be a ring and $v \colon M \to N$ be a map of $R$-modules where $M$ is finitely presented. If $v$ admits a stabilizer, then $\Pure(v)$ is open in $\Spec(R)$.
\end{lemma}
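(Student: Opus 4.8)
The plan is to reduce the assertion to the case, already handled in the paragraph preceding \autoref{not:pure-locus}, of a map between two finitely presented modules. Let $f \colon M \to Q$ be a stabilizer of $v$. By \autoref{def:stabilizer}, $Q$ is finitely presented and $f$ and $v$ dominate one another.

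The key step is to show $\Pure(v) = \Pure(f)$ as subsets of $\Spec(R)$. Since $v$ dominates $f$, \autoref{cor:mutual-domination-pure-loci} gives $\{\p : v_\p \text{ is } R_\p\text{-pure}\} \subseteq \{\p : f_\p \text{ is } R_\p\text{-pure}\}$, that is, $\Pure(v) \subseteq \Pure(f)$. Symmetrically, since $f$ dominates $v$, the same corollary yields $\Pure(f) \subseteq \Pure(v)$. Hence $\Pure(v) = \Pure(f)$. Now $f \colon M \to Q$ is a map of finitely presented $R$-modules: indeed $\im(f)$ is a finitely generated submodule of the finitely presented module $Q$, so $\coker(f)$ is finitely presented. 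By the discussion preceding \autoref{not:pure-locus} (equivalently: $f_\p$ is $R_\p$-pure if and only if $f_\p$ splits by \autoref{lem:pure-iff-split}, and the splitting locus is open because a left inverse of $f_\p$ spreads to a distinguished open neighbourhood, using $\Hom_R(Q,M)_\p = \Hom_{R_\p}(Q_\p,M_\p)$ as $Q$ is finitely presented), the set $\Pure(f)$ is open in $\Spec(R)$. Therefore $\Pure(v)$ is open.

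There is no serious obstacle here; the argument is essentially bookkeeping. The only point requiring care is the equality $\Pure(v) = \Pure(f)$, which hinges on having \emph{mutual} domination (so that \autoref{cor:mutual-domination-pure-loci} can be applied in both directions) together with the fact that $\coker(f)$ — though not necessarily $\coker(v)$, since $N$ need not even be finitely generated — is finitely presented, which is exactly what makes the pure locus of $f$ amenable to the spreading-out argument.
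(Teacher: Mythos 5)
Your proof is correct and follows exactly the paper's argument: pass to the stabilizer, use \autoref{cor:mutual-domination-pure-loci} in both directions to get $\Pure(v)=\Pure(f)$, and invoke the openness of the pure (= split) locus for a map of finitely presented modules established just before \autoref{not:pure-locus}. The extra remark on $\coker(f)$ being finitely presented is the same justification the paper gives in that preceding paragraph.
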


\begin{proof}
By hypothesis, there exists a finitely presented $R$-module $P$ and a map $u \colon M \to P$ such that $u$ and $v$  dominate each other. Then 
$\Pure(v) = \Pure(u)$
by \autoref{cor:mutual-domination-pure-loci}. Since $\Pure(u)$ is open by the argument above, we have the desired result.
\end{proof}

\begin{corollary}
\label{cor:ML-open-pure-loci}
Let $R$ be a ring and $M$ be a ML $R$-module. Let $P$ be a finitely presented $R$-module and $v \colon P \to M$ be a linear map. Then $\Pure(v)$ is open.
\end{corollary}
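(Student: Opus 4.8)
The plan is to deduce this immediately from the preceding two ingredients: the definition of a Mittag--Leffler module and \autoref{lem:stabilizers-pure-loci}. First I would invoke \autoref{def:Mittag-Leffler}: since $M$ is ML and $P$ is a finitely presented $R$-module, the given linear map $v \colon P \to M$ admits a stabilizer, i.e.\ there is a finitely presented $R$-module $Q$ and an $R$-linear map $f \colon P \to Q$ such that $f$ and $v$ dominate one another. Note that $P$ being finitely presented is exactly the hypothesis needed to even speak of a stabilizer (\autoref{def:stabilizer}), so no extra work is required here.

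Next I would feed this into \autoref{lem:stabilizers-pure-loci}, whose hypotheses are precisely ``$M$ finitely presented'' (satisfied by $P$ here) and ``$v$ admits a stabilizer'' (just established). That lemma then yields that $\Pure(v)$ is open in $\Spec(R)$, which is the assertion. Concretely the lemma's proof runs through \autoref{cor:mutual-domination-pure-loci} to get $\Pure(v) = \Pure(f)$, and then through the openness of the splitting locus of a map of finitely presented modules (spreading of left-inverses, combined with \autoref{lem:pure-iff-split} to identify pure loci with splitting loci for finitely presented cokernel). I would not re-prove any of that, just cite it.

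There is essentially no obstacle: the entire content has already been packaged into \autoref{def:Mittag-Leffler} and \autoref{lem:stabilizers-pure-loci}, so the corollary is a one-line consequence. The only thing to be careful about is making sure the finiteness hypotheses line up --- the module that needs to be finitely presented in \autoref{lem:stabilizers-pure-loci} is the \emph{source} $P$, not the target $M$ --- but that matches the hypothesis of the corollary. So the write-up will be just: apply \autoref{def:Mittag-Leffler} to get a stabilizer of $v$, then apply \autoref{lem:stabilizers-pure-loci}.

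\begin{proof}
    Since $M$ is a ML $R$-module and $P$ is a finitely presented $R$-module, the linear map $v \colon P \to M$ admits a stabilizer by \autoref{def:Mittag-Leffler}. Then $\Pure(v)$ is open in $\Spec(R)$ by \autoref{lem:stabilizers-pure-loci}.
\end{proof}
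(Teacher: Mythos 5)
Your proposal is correct and is exactly the paper's argument: the ML hypothesis gives a stabilizer for $v$ via \autoref{def:Mittag-Leffler}, and \autoref{lem:stabilizers-pure-loci} then yields openness of $\Pure(v)$. Nothing further is needed.
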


\begin{proof}
    Since $M$ is ML, $v$ admits a stabilizer. So we are done by \autoref{lem:stabilizers-pure-loci}.
\end{proof}

If $M$ is a flat Ohm-Rush $R$-module, then any linear map $v \colon R \to M$ admits a stabilizer by \autoref{cor:OR-equivalences-addition}. Thus $\Pure(v)$ is open for such maps $v$. However, we can say more about the pure locus in terms of the content function.

\begin{lemma}
\label{lem:purity-open-locus}
    Let $R$ be a ring and $M$ be a flat $R$-module. Let $v \colon R \to M$ be a linear map. Then we have the following:
    \begin{enumerate}%[itemsep = 1mm]
        \item[$(1)$] $v$ is pure if and only if $c_M(v(1)) = R$.
        \item[$(2)$] If $M$ is Ohm-Rush, then $\Pure(v) = \CPure(v) = \Spec(R) \setminus \mathbf{V}(c_M(v(1)))$.  
\end{enumerate}
\end{lemma}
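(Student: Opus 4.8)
The plan is to reduce both parts to the interaction between the content function, flatness, and (cyclic) purity already recorded in \autoref{lem:factorcontent}, \autoref{lem:cyclic-purity-flat}, and \autoref{prop:content-homomorphisms-purity-flatness}. The key organizing observation is that since $M$ is flat, a map $v\colon R\to M$ is pure if and only if it is cyclically pure, i.e.\ if and only if for every ideal $I$ of $R$ the induced map $R/I\to M/IM$, $r+I\mapsto rv(1)+IM$, is injective; its kernel is $\{\,r+I : rv(1)\in IM\,\}$.

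For part (1), I would argue both implications directly in terms of this kernel. For the forward direction, suppose $v$ is pure, hence cyclically pure, and suppose toward a contradiction that $c_M(v(1))\neq R$. Since $R$ always lies in the collection of ideals $I$ with $v(1)\in IM$, a proper intersection forces at least one proper member $I\subsetneq R$ with $v(1)\in IM$; then $1+I$ is a nonzero element of $\ker(R/I\to M/IM)$, contradicting cyclic purity. For the converse, assume $c_M(v(1))=R$ and that $rv(1)\in IM$ for some ideal $I$; then $c_M(rv(1))\subseteq I$, and flatness of $M$ together with \autoref{lem:factorcontent} gives $rR=r\,c_M(v(1))\subseteq c_M(rv(1))\subseteq I$, so $r\in I$. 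Thus $R/I\to M/IM$ is injective for every $I$, i.e.\ $v$ is cyclically pure, hence pure by \autoref{lem:cyclic-purity-flat}.

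For part (2), the equality $\Pure(v)=\CPure(v)$ is immediate: for each $\p$, $M_\p$ is $R_\p$-flat, so $v_\p$ is $R_\p$-pure iff $R_\p$-cyclically pure by \autoref{lem:cyclic-purity-flat}. To identify this locus, fix $\p\in\Spec(R)$ and apply part (1) to the $R_\p$-linear map $v_\p\colon R_\p\to M_\p$ (a map into a flat $R_\p$-module): $v_\p$ is pure if and only if $c_{M_\p}(v_\p(1))=R_\p$. Since $M$ is flat and Ohm-Rush, \autoref{prop:content-homomorphisms-purity-flatness}(4) applied with the multiplicative set $R\setminus\p$ identifies $c_{M_\p}(v_\p(1))=c_{M_\p}(v(1)/1)$ with $c_M(v(1))R_\p$. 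Hence $v_\p$ is pure exactly when $c_M(v(1))R_\p=R_\p$, that is, when $c_M(v(1))\not\subseteq\p$, i.e.\ when $\p\notin\mathbf{V}(c_M(v(1)))$, which is the claimed description.

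The only subtle point is the bookkeeping step in part (1): confirming that $c_M(v(1))\neq R$ actually produces a \emph{proper} ideal $I$ with $v(1)\in IM$ rather than merely a family of ideals whose intersection is proper. This is harmless because $R$ itself is always a member of that family, so if every member equalled $R$ the intersection would be $R$. No deeper obstacle is anticipated; after this observation everything is a routine application of the flatness-plus-content machinery developed earlier.
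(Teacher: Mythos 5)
Your proposal is correct and follows essentially the same route as the paper: part (1) via the kernel description of cyclic purity plus \autoref{lem:factorcontent} and \autoref{lem:cyclic-purity-flat}, and part (2) by localizing and invoking \autoref{prop:content-homomorphisms-purity-flatness}(4) to identify $c_{M_\p}(v(1)/1)$ with $c_M(v(1))R_\p$. The only cosmetic difference is that you phrase the forward direction of (1) contrapositively, whereas the paper argues it directly; the substance is identical.
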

    
\begin{proof}
(1) Suppose $v$ is pure. Then for all ideals $I$ of $R$, $v \otimes_R R/I$ is injective. Since the image of $1$ in $R/I$ is a nonzero element if $I \subsetneq R$, this mean that for all proper ideals $I$ of $R$, $v(1) + I \neq 0$ in $M/IM$. But that is equivalent to saying that for all proper ideals $I$ of $R$, $v(1) \notin IM$. Then $c_M(v(1)) = R$ by definition of content. Note this implication does not need $M$ to be flat.
    
Conversely, suppose $c_M(v(1)) = R$. In order to show that $v$ is pure, it suffices to show by \autoref{lem:cyclic-purity-flat} that $v$ is cyclically pure because $M$ is a flat $R$-module. Let $I$ be an ideal of $R$ and suppose $r \in R$ such that $r + I \in \ker(v \otimes_R R/I)$. Then $v(r) \in IM$, and so, $rv(1) \in IM$. This means that
    $
    c_M(rv(1)) \subseteq I.
    $
    
Since $M$ is $R$-flat, by \autoref{lem:factorcontent},
    $
    rc_M(v(1)) \subseteq c_M(rv(1)) \subseteq I.
    $
Hence $c_M(v(1)) \subseteq (I \colon r)$. As $c_M(v(1)) = R$, we get $r \in I$, which shows that $v \otimes_R R/I$ is injective.

(2) Since $M$ is flat we have $\Pure(v) = \CPure(v)$ by \autoref{lem:cyclic-purity-flat}. Since $M$ is also Ohm-Rush, by \autoref{prop:content-homomorphisms-purity-flatness} (4) we have that for all $x \in M$ and for all $\p \in \Spec(R)$,
$
c_M(x)R_\p = c_{M_\p}(x/1),
$
where $c_{M_\p}$ is the content of $M_\p$ as an $R_\p$-module. By (1), $v_\p$ is $R_\p$-pure if and only if 
$
c_M(v(1))R_\p = c_{M_\bp}(v_\p(1/1)) = R_\p.
$
This precisely means that
$
\Pure(v) = \{\p \in \Spec(R) \colon c_M(v(1)) \nsubseteq \p\},$
as desired.
\end{proof}

\subsection{Local-to-global criteria}
We have seen that a lot more can be said about the ML and Ohm-Rush properties when working over a local ring; see for instance \autoref{prop:RG-2.1.9}, \autoref{prop:ML-SML}, \autoref{prop:analog-RG219}, \autoref{prop:localpuren}, \autoref{cor:ML-OR-local-difference} and \autoref{rem:constructing-free-submodule-pure-OR}. This makes it desirable to have some local-to-global results that allow us to deduce that a module is ML or Ohm-Rush if all its localizations at primes are. This is the content of the main result of this subsection.

\begin{theorem}
\label{thm:local-to-global-ML-OR}
Let $R$ be a ring and let $M$ be a flat $R$-module. Consider the conditions:
\begin{itemize}%[itemsep = 1mm] 
        \item[($\dagger'$)] For all finite free $R$-modules $F$ and \emph{injective} linear maps $\varphi \colon F \hookrightarrow M$, the set $\{\p \in \Spec(R) \colon \varphi \otimes_R \id_{R/\p} \hspace{1mm} \text{is injective}\}$ is open in $\Spec(R)$.
        \item[($\dagger$)] For all finite free $R$-modules $F$ and linear maps $\varphi \colon F \to M$, the set $\{\p \in \Spec(R) \colon \varphi \otimes_R \id_{R/\p} \hspace{1mm} \text{is injective}\}$ is open in $\Spec(R)$.
        \item[($\dagger\dagger$)] For all finite free $R$-modules $F$ and linear maps $\varphi \colon F \to M$, the cyclically pure locus of $\varphi$ is open in $\Spec(R)$.
        \item[($\dagger\dagger\dagger$)] For all finitely presented $R$-modules $P$ and linear maps $\varphi \colon P \to M$, the pure locus of $\varphi$ is open in $\Spec(R)$.
\end{itemize} 
We have $(\dagger\dagger\dagger) \implies (\dagger\dagger) \Longleftrightarrow (\dagger) \implies (\dagger')$. In addition, the following assertions hold:
\begin{enumerate}
        \item[$(1)$] Assume that for all $\p \in \Spec(R)$, $M_\p$ is a ML $R_\p$-module. Then the following are equivalent: 
        \begin{enumerate}
            \item[$(1a)$] $M$ is ML.
            \item[$(1b)$] $(\dagger\dagger)$ holds.
            \item[$(1c)$] $(\dagger\dagger\dagger)$ holds.
            \item[$(1d)$] $(\dagger)$ holds. 
        \end{enumerate}
        Moreover, if $R$ is a domain then $(1a)-(1d)$ is equivalent to:
        \begin{enumerate}
            \item[$(1d')$] $(\dagger')$ holds. 
        \end{enumerate}
        \item[$(2)$] Assume that for all $\p \in \Spec(R)$, $M_\p$ is an Ohm-Rush $R_\p$-module. 
        \begin{enumerate}
            \item[$2(i)$] If $(\dagger\dagger)$ (equivalently $(\dagger)$) holds, then $M$ is Ohm-Rush.
            \item[$2(ii)$] If $R$ is a domain and $(\dagger')$ holds, then $M$ is Ohm-Rush.
        \end{enumerate}
\end{enumerate}
\end{theorem}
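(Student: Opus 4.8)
The plan is first to dispatch the chain $(\dagger\dagger\dagger)\Rightarrow(\dagger\dagger)\Leftrightarrow(\dagger)\Rightarrow(\dagger')$, and then to reduce both (1) and (2) to a single statement: \emph{if $M$ is flat, if $M_\p$ admits ``enough stabilizers'' for every $\p$, and if the relevant loci are open, then every linear map $R^{\oplus n}\to M$ admits a stabilizer.} For the chain, $(\dagger)\Rightarrow(\dagger')$ is trivial (injective maps form a subclass), while $(\dagger\dagger)\Leftrightarrow(\dagger)$ and $(\dagger\dagger\dagger)\Rightarrow(\dagger\dagger)$ follow from the identification, valid because $M$ and each $M_\q$ are flat, of $\Pure(\varphi)$ with $\CPure(\varphi)$ (\autoref{lem:cyclic-purity-flat}) and with $\{\q : \varphi\otimes_R\id_{R/\q}\text{ injective}\}$ (\autoref{cor:cyclic-purity-radical-ideals}(1), using that a finite free $F$ is $R$-flat). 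For (1): $(1a)\Rightarrow(1c)$ is exactly \autoref{cor:ML-open-pure-loci}, and $(1c)\Rightarrow(1b)\Leftrightarrow(1d)$ are instances of the chain, so (1) will be finished once $(1b)\Rightarrow(1a)$ (and $(1d')\Rightarrow(1a)$ over a domain) is shown, since $(1a)\Rightarrow(1d)\Rightarrow(1d')$ is trivial.

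The crux is a spreading-out lemma, which is the only place the openness hypothesis enters: given $g\colon R^{\oplus n}\to M$ and $\p\in\Spec(R)$ with $g_\p$ admitting a stabilizer over $R_\p$, there is $s\in R\setminus\p$ such that $\im(g_s)$ is contained in a finite free $R_s$-submodule $L\subseteq M_s$ which is pure in $M_s$; by \autoref{lem:domination}(4) this means $g_s$ admits a stabilizer over $R_s$ with finite free codomain. To prove it, apply \autoref{prop:RG-2.1.9} over $R_\p$ to get a finite free pure submodule $L_{(\p)}\subseteq M_\p$ containing $\im(g_\p)$; lift a basis of $L_{(\p)}$ and the coordinates of the $g(\delta_\ell)$ to $M$ and to $R$ by clearing finitely many denominators, and let $\psi\colon R^{\oplus r}\to M$ send the standard basis to the lifted basis elements. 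Then $\psi_\p$ is pure (multiplication by a unit, followed by an isomorphism onto $L_{(\p)}$, followed by a pure inclusion), so $\p\in\Pure(\psi)=\{\q : \psi\otimes_R\id_{R/\q}\text{ injective}\}$; by $(\dagger)$ this set is open, and shrinking the neighbourhood once more we may assume $\psi_s$ is pure, hence injective, over $R_s$, and that $\im(g_s)\subseteq L:=\im(\psi_s)$. When $R$ is a domain, $\psi$ is automatically injective (its kernel is torsion, being generically zero since $\psi_\p$ is pure), so only $(\dagger')$ is needed; this is what makes $(1d')$ and $2(ii)$ work.

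Granting the spreading-out lemma, $(1b)\Rightarrow(1a)$ runs as follows. By \autoref{rem:Mittag-Leffler}(d) it suffices that every $g\colon R^{\oplus n}\to M$ admits a stabilizer. Since $M_\p$ is ML, $g_\p$ admits a stabilizer for each $\p$, so the lemma gives $s_\p\in R\setminus\p$ over which $g_{s_\p}$ admits a stabilizer with finite free codomain; by quasicompactness pick $s_1,\dots,s_m$ with $\Spec(R)=\bigcup_i D(s_i)$. Write $M=\colim_{k\in K}F_k$ as a filtered colimit of finite free modules (Lazard), factor $g=u_{k_0}\circ g_{k_0}$, and put $g_k:=u_{k_0k}\circ g_{k_0}$ for $k\ge k_0$. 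For each $i$, since $g_{s_i}$ admits a stabilizer over $R_{s_i}$ and $M_{s_i}=\colim_k (F_k)_{s_i}$, \autoref{lem:stabilizers}(2) — after enlarging the index so that the maps it produces coincide with the $(g_k)_{s_i}$, which is possible since $R_{s_i}^{\oplus n}$ is finitely generated — yields $k_i\ge k_0$ with $(g_k)_{s_i}$ stabilizing $g_{s_i}$ for all $k\ge k_i$. As $K$ is filtered and there are finitely many $k_i$, choose $k^{\ast}\ge k_i$ for all $i$; then $(g_{k^{\ast}})_{s_i}$ stabilizes $g_{s_i}$ for every $i$. Domination is checkable after localizing at primes (localization is exact and commutes with tensor products, and finitely presented modules over $R_\q$ lift to finitely presented $R$-modules), and $g$ always dominates $g_{k^{\ast}}$ (it factors through it via $u_{k^{\ast}}$); hence $g_{k^{\ast}}$ stabilizes $g$ over $R$, and since $F_{k^{\ast}}$ is finitely presented, $g$ admits a stabilizer. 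The implication $(1d')\Rightarrow(1a)$ is identical, using the domain version of the spreading-out lemma.

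Part (2) is the same argument with $n=1$. For $2(i)$, by \autoref{cor:OR-equivalences-addition} it suffices that every $v\colon R\to M$ admit a stabilizer; for each $\p$, $M_\p$ Ohm--Rush supplies a stabilizer of $v_\p$ over $R_\p$ (\autoref{cor:OR-equivalences-addition} again), and the spreading-out lemma together with quasicompactness and the Lazard/filtered-colimit step produce a stabilizer of $v$ over $R$, so $M$ is Ohm--Rush. Part $2(ii)$ is identical using the domain version of the spreading-out lemma and $(\dagger')$. I expect the main technical friction to be the denominator bookkeeping in the spreading-out lemma and the index-enlargement step needed to replace the maps produced by \autoref{lem:stabilizers}(2) with the canonical restrictions $(g_k)_{s_i}$ of the fixed Lazard system; once those are carried out, the rest is a routine assembly of results already in hand.
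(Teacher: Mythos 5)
Your proposal is correct, and for the local analysis it follows the same route as the paper: the chain of implications via \autoref{lem:cyclic-purity-flat} and \autoref{cor:cyclic-purity-radical-ideals}, the implication $(1a)\Rightarrow(1c)$ via \autoref{cor:ML-open-pure-loci}, and the spreading-out step --- lifting a basis of the free pure submodule $L_{(\p)}\subseteq M_\p$ supplied by \autoref{prop:RG-2.1.9} to a map $\psi\colon R^{\oplus r}\to M$ whose pure locus is open by $(\dagger)$ --- is exactly the paper's construction of the map $\varphi(\p)$, including the observation that over a domain $\psi$ is automatically injective so that $(\dagger')$ suffices. Where you genuinely diverge is the final gluing. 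The paper, having covered $\Spec(R)$ by $D(f_1),\dots,D(f_k)$ on which $v_{f_i}$ admits a stabilizer, passes to the faithfully flat map $R\to\prod_i R_{f_i}$ and invokes descent of stabilizers (\autoref{cor:descent-cyclic-stabilizers}). You instead fix a Lazard presentation $M=\colim_k F_k$, align the stabilizers over each $R_{s_i}$ with the canonical maps $(g_k)_{s_i}$ of this fixed system via \autoref{lem:stabilizers}(2), pick a common index $k^{\ast}$ by finiteness, and conclude by noting that mutual domination is a Zariski-local condition (localization being exact and commuting with tensor, an element of $\ker(g\otimes N)$ is detected on the cover). Both gluings are valid: the paper's buys brevity by reusing its descent machinery, while yours is more self-contained at this step at the cost of two auxiliary verifications you correctly flag --- the index-enlargement needed to make the outputs of \autoref{lem:stabilizers}(2) coincide with the restrictions of a pre-chosen factorization (which works because two factorizations of $g$ through a finite stage become equal at a later stage of a filtered colimit), and the local-to-global statement for domination, which holds for arbitrary test modules $N$ without any lifting of finitely presented $R_\q$-modules, so your parenthetical worry there is unnecessary.
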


Part (1) of \autoref{thm:local-to-global-ML-OR} strengthens \cite[Part II, Lem.\ 2.5.6]{rg71}. Furthermore, \autoref{thm:local-to-global-ML-OR} (2) is new.

\begin{proof}
Since $M$ is flat, $\CPure(\varphi) = \Pure(\varphi)$ by \autoref{lem:cyclic-purity-flat}. Moreover, if $F$ is finite free (hence finitely presented and flat), then $\{\p \in \Spec(R) \colon \varphi \otimes_R \id_{R/\p} \hspace{1mm} \text{is injective}\} = \Pure(\varphi)$ by \autoref{cor:cyclic-purity-radical-ideals}.
    Hence $(\dagger) \Longleftrightarrow (\dagger\dagger)$. Moreover, $(\dagger\dagger\dagger) \implies (\dagger\dagger)$ and $(\dagger) \implies (\dagger')$ are clear.

    $(1a) \implies (1c)$ follows by \autoref{cor:ML-open-pure-loci}, and we already saw that $(1c) \implies (1b) \Longleftrightarrow (1d)$ regardless of whether $M$ is locally ML.

    We will finish the proof by demonstrating $(1b) \implies (1a)$ and $2(i)$ simultaneously because the proofs are similar.

    So suppose for all $\p \in \Spec(R)$, $M_\p$ is ML (resp. Ohm-Rush) in $\Mod_{R_\p}$. Let $P$ be a finitely presented $R$-module  (resp. let $P = R$) and let $v \colon P \to M$ be a linear map. It suffices to show that $v$ admits a stabilizer ($M$ will be Ohm-Rush in $(2)$ when $P = R$ by \autoref{cor:OR-equivalences-addition}). 
    
    By \autoref{cor:ML-OR-local-difference}, in either case there exists a submodule $N(\p)$ of $M_\p$ such that 
    $
    \im(v)_\p = \im(v_\p) \subseteq N(\p) \subseteq M_\p,    
    $
    $N(\p)$ is free of finite rank and $N(\p) \hookrightarrow M_\p$ is pure in $\Mod_{R_\p}$. Clearing denominators, if necessary, there exist $m_1,\dots,m_n \in M$ such that $m_1/1,\dots,m_n/1$ form a free basis of $N(\p)$ in $\Mod_{R_\p}$. Consider the $R$-linear map
    \begin{equation}
        \label{eq:spreading-purity}
    \varphi(\p) \colon R^{\oplus n} \to M    
    \end{equation}
    that sends the standard basis vector $e_i$ of $R^{\oplus n}$ to $m_i$ for all $i = 1, \dots, n$. By construction, $\varphi(\p)_\p$ is an isomorphism onto its image $N(\p)$, and hence, $\varphi(\p)_\p$ is $R_\p$-pure. Note that by $(\dagger\dagger)$,
    $
    \Pure(\varphi(\p)) = \CPure(\varphi(\p))   
    $
    is open. Since $\p \in \Pure(\varphi(\p))$, one can choose $f \in R \setminus \p$ such that $\varphi(\p)_f$ is pure in $\Mod_{R_f}$. Furthermore, since $\im(v)$ is finitely generated (it is the image of the finitely presented module $P$) and 
    $
    \im(v)_\p = \im(v_\p) \subseteq N(\p) = \im(\varphi(\p))_\p.    
    $
    Replacing $f$ by a multiple in $R \setminus \p$, one may further assume that
    $
    \im(v_f) = \im(v)_f \subseteq \im(\varphi(\p))_f  = \im(\varphi(\p)_f).   
    $
    By purity, $\varphi(\p)_f \colon R_f^{\oplus n} \to M_f$ is an isomorphism onto its image. Let
    $\phi \colon \im(\varphi(\p)_f) \to R^{\oplus n}_f$ be the inverse map. Then we have
    \[
    v_f = P_f \xrightarrowdbl{v_f} \im(v_f) \hookrightarrow \im(\varphi(\p)_f) \xrightarrow{\phi} R^{\oplus n}_f \xrightarrow{\varphi(\p)_f} M_f.    
    \]
   Taking $u_f \coloneqq P_f \xrightarrowdbl{v_f} \im(v_f) \hookrightarrow \im(\varphi(\p)_f) \xrightarrow{\phi} R^{\oplus n}_f$, the purity of $\varphi(\p)_f$ implies that $u_f$ is a stabilizer of $v_f$ in  $\Mod_{R_f}$ by \autoref{lem:domination} (4). 
    
    Thus, for all $\p \in \Spec(R)$, there exists $f \in R \setminus \p$ such that $v_f$ admits a stabilizer in $\Mod_{R_f}$. By quasicompactness, choose $f_1,\dots,f_k$ such that $(f_1,\dots,f_k) = R$ and $v_{f_i}$ admits a stabilizer for all $i$. Then the canonical ring map
    $
    \pi \colon R \to \prod_{i=1}^k R_{f_i} 
    $
    is faithfully flat, and by construction,
    $
    \id_{\pi} \otimes_R v = \prod_{i=1}^k v_{f_i}
    $
    admits a stabilizer as a map of $\prod_{i=1}^k R_{f_i}$-modules. Then $v$ admits a stabilizer by descent (\autoref{cor:descent-cyclic-stabilizers}). This finishes the proof of $(1b) \implies (1a)$ and $2(i)$. Since we have already seen that the implications $(1a) \implies (1c) \implies (1b)$ hold, we then obtain the equivalence of $(1a)-(1d)$ when $M$ is locally ML.

  It remains to show that if $R$ is a domain, then $(1d')$ is equivalent to $(1a)-(1d)$ and $2(ii)$ holds. If $R$ is a domain and $M$ is locally ML (resp. is locally Ohm-Rush), then for $v$ as above and a prime ideal $\p$, the map $\varphi(\p) \colon R^n \to M$ in \autoref{eq:spreading-purity} is injective. This follows by the commutativity of the diagram
    \[
        \begin{tikzcd}
            R^{\oplus n} \arrow[r, "\varphi(\p)"] \arrow[d]
              & M \arrow[d] \\
            R_\p^{\oplus n} \arrow[r, "\varphi(\p)_\p"]
          & M_\p \end{tikzcd}
          \]
            because the bottom horizontal map $\varphi(\p)_\p$ is injective by purity and the left horizontal map $R^{\oplus n} \to (R^{\oplus n})_\p = R_\p^{\oplus n}$ is injective because it is a direct sum of the canonical injection $R \hookrightarrow R_\p$. Again, note that the  condition $(\dagger')$ for the injective map $\varphi(\p)$ is equivalent to $\Pure(\varphi(\p))$ being open in $\Spec(R)$ by \autoref{cor:cyclic-purity-radical-ideals} (1). Then one just repeats the argument above to see $v$ admits a stabilizer, thereby establishing $(1d') \implies (1a)$ and $2(ii)$. 
\end{proof}

Recall that a domain $R$ is called a \emph{Pr\"ufer domain} if for all maximal ideals $\fm$ of $R$, $R_\fm$ is a valuation ring. For example, Dedekind domains are precisely the Noetherian Pr\"ufer domains. For Pr\"ufer domains, one has the following necessary and sufficient local-to-global statement for the Ohm-Rush property.

\begin{proposition}
\label{prop:local-to-global-Prufer}
Let $R$ be a Pr\"ufer domain and $M$ be a torsion-free $R$-module. Suppose that for all prime ideals $\p$ of $R$, $M_\p$ is an Ohm-Rush $R_\p$-module. Then $M$ is Ohm-Rush if and only if every injective $R$-linear map $\varphi \colon R \to M$ has open cyclically pure locus.
\end{proposition}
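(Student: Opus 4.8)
The plan is to reduce the converse implication to \autoref{thm:local-to-global-ML-OR}(2), using one crucial feature of Prüfer domains: over a valuation ring every finitely generated ideal is principal. This will force the auxiliary free submodules that appear in the local-to-global argument to have rank one, which is precisely the class of maps $R \to M$ for which openness of the cyclically pure locus is assumed in the hypothesis. I first record two standard preliminaries. Since $R$ is a Prüfer domain, $R_\mathfrak{p}$ is a valuation domain for every $\mathfrak{p} \in \Spec(R)$, and a torsion-free module over a Prüfer domain is flat (this may be checked after localizing at maximal ideals, where it reduces to the fact that a torsion-free module over a valuation ring is flat). Hence $M$ is flat, and all the results of this section apply to it.

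The forward implication is immediate: if $M$ is Ohm-Rush, then, since $M$ is flat, \autoref{lem:purity-open-locus}(2) gives, for any injective (indeed arbitrary) linear map $\varphi \colon R \to M$, the equality $\CPure(\varphi) = \Pure(\varphi) = \Spec(R) \setminus \mathbf{V}(c_M(\varphi(1)))$, which is open.

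For the converse, assume that every injective linear map $\varphi \colon R \to M$ has open cyclically pure locus. By \autoref{cor:OR-equivalences-addition} it suffices to show that every linear map $v \colon R \to M$ admits a stabilizer; we may assume $v \neq 0$, so that $v(1)/1 \neq 0$ in $M_\mathfrak{p}$ for each $\mathfrak{p}$ by torsion-freeness. Following the proof of \autoref{thm:local-to-global-ML-OR}(2), it is enough, by quasicompactness of $\Spec(R)$ and descent of stabilizers (\autoref{cor:descent-cyclic-stabilizers}(1)), to produce for each $\mathfrak{p}$ an element $f \in R \setminus \mathfrak{p}$ such that $v_f$ admits a stabilizer in $\Mod_{R_f}$. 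Fix $\mathfrak{p}$ and write $x = v(1)$. Since $M_\mathfrak{p}$ is a flat Ohm-Rush module over the \emph{valuation} ring $R_\mathfrak{p}$, the ideal $c_{M_\mathfrak{p}}(x/1)$ is finitely generated (\autoref{rem:content-finitely-generated}), hence principal; combining this with $x/1 \in c_{M_\mathfrak{p}}(x/1) M_\mathfrak{p}$, we may write $x/1 = a_1 \cdot (m/1)$ with $m \in M$, $a_1 \in R_\mathfrak{p} \setminus \{0\}$, and $c_{M_\mathfrak{p}}(x/1) = (a_1)R_\mathfrak{p}$. Now \autoref{prop:localpuren}, applied over $R_\mathfrak{p}$ with $n = 1$ and $x_1 = m/1$ (condition $(7)$ holds because $a_1 (m/1) = x/1$ and $c_{M_\mathfrak{p}}(x/1) = (a_1)R_\mathfrak{p}$, and $M_\mathfrak{p}$ is Ohm-Rush), shows that $L_\mathfrak{p} := R_\mathfrak{p}(m/1)$ is free of rank one, that the inclusion $L_\mathfrak{p} \hookrightarrow M_\mathfrak{p}$ is pure, and that $\im(v_\mathfrak{p}) \subseteq L_\mathfrak{p}$.

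The rank-one map $\varphi(\mathfrak{p}) \colon R \to M$, $1 \mapsto m$, is then injective, since $R$ is a domain and $\varphi(\mathfrak{p})_\mathfrak{p}$ is an isomorphism onto $L_\mathfrak{p}$; so by hypothesis $\CPure(\varphi(\mathfrak{p}))$ is open, it equals $\Pure(\varphi(\mathfrak{p}))$ by \autoref{lem:cyclic-purity-flat}, and it contains $\mathfrak{p}$ because $\varphi(\mathfrak{p})_\mathfrak{p}$ is the composite of an isomorphism with the pure inclusion $L_\mathfrak{p} \hookrightarrow M_\mathfrak{p}$. Choosing $f \in R \setminus \mathfrak{p}$ with $\varphi(\mathfrak{p})_f$ pure in $\Mod_{R_f}$, and shrinking $f$ so that $\im(v_f) \subseteq \im(\varphi(\mathfrak{p})_f)$ — possible since $\im(v) = Rv(1)$ is cyclic and $\im(v)_\mathfrak{p} \subseteq L_\mathfrak{p} = \im(\varphi(\mathfrak{p}))_\mathfrak{p}$ — we find that $\varphi(\mathfrak{p})_f$ is an isomorphism onto a pure submodule of $M_f$; factoring $v_f$ through it and invoking \autoref{lem:domination}(4) produces a stabilizer of $v_f$ in $\Mod_{R_f}$, exactly as in the proof of \autoref{thm:local-to-global-ML-OR}(2). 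This supplies the descent input and completes the argument. The main obstacle, and the reason the Prüfer hypothesis is needed, is precisely the passage from condition $(\dagger')$ of \autoref{thm:local-to-global-ML-OR} to openness for injective maps $R \to M$ alone: in the general local-to-global statement one must build a free submodule whose rank is the minimal number of generators of a content ideal, but over a valuation ring that ideal is principal, so rank one always suffices; everything else is a routine adaptation.
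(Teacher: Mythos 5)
Your proposal is correct and follows essentially the same route as the paper's proof: reduce to producing a stabilizer for each $v\colon R\to M$ via \autoref{cor:OR-equivalences-addition}, use the fact that $c_{M_\p}(v(1))$ is principal over the valuation ring $R_\p$ to get a rank-one pure free submodule via \autoref{prop:localpuren}/\autoref{rem:constructing-free-submodule-pure-OR}, apply the openness hypothesis to the resulting injective map $R\to M$, and conclude by quasicompactness and descent of stabilizers. You have in fact filled in several details that the paper delegates to the proof of \autoref{thm:local-to-global-ML-OR}, and your identification of the Pr\"ufer hypothesis as exactly what collapses the rank to one matches the paper's reasoning.
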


\begin{proof}
    Over a Pr\"ufer domain, being torsion-free is equivalent to being flat. This follows after reducing to the local case by \cite[\href{https://stacks.math.columbia.edu/tag/0539}{Tag 0539}]{stacks-project} since torsion-freeness is equivalent to flatness for modules over valuation rings.  Thus, $M$ is a flat $R$-module. The `only if' implication then follows by \autoref{lem:purity-open-locus}. 
    
    Now assume that for any injective $R$-linear map $\varphi \colon R \to M$, $\CPure(\varphi)$ is open. Again, by flatness of $M$, $\CPure(\varphi) = \Pure(\varphi)$, so the pure locus of $\varphi$ is open as well. 
    
    Let $v \colon R \to M$ be a linear map. 
    It suffices to show by \autoref{cor:OR-equivalences-addition} that $v$ admits a stabilizer. Since $M$ is torsion-free over the domain $R$, $v$ is either the zero map or $v$ is injective. If $v$ is the zero map, then $R \to 0$ is a stabilizer of $v$.  
    
    So suppose $v$ is injective. Let $f \coloneqq v(1)$. Then $f \neq 0$ because $v$ is an injection. Thus for all prime ideals $\p$, $f \neq 0$ in $M_\p$ because $M$ is torsion-free. Hence, $c_{M_\p}(f) \neq 0$ because $f \in c_{M_\p}(f)M_\p$. Furthermore, since $c_{M_\p}(f)$ is a finitely generated ideal of the valuation ring $R_\p$ by \autoref{rem:content-finitely-generated}, the minimal number of generators of $c_{M_\p}(f)$ equals $1$. Then $f$, and hence $\im(v_\p)$, is contained in a rank $1$ free $R_\p$-submodule $N(\p)$ of $M_\p$ such that $N(\p) \hookrightarrow M_\p$ is pure by \autoref{rem:constructing-free-submodule-pure-OR}. We can then construct an an $R$-linear map $\varphi(\p) \colon R \to M$ 
    as in the proof of \autoref{thm:local-to-global-ML-OR} part 2$(i)$ such that the image of the localization $\varphi(\p)_\p$ equals $N(\p)$, that is, $\varphi(\p)_\p$ is $R_\p$-pure. Moreover, $\varphi(\p)$ will be injective following the reasoning in the proof of \autoref{thm:local-to-global-ML-OR} part 2$(ii)$. Using the openness of the pure locus of $\varphi(\p)$, one then obtains a stabilizer for $v_f$ for some $f \notin \p$. By quasicompactness and descent, we again get a stabilizer for $v$. The interested reader can fill in the details following the proof of \autoref{thm:local-to-global-ML-OR} part 2$(i)$.
\end{proof}

The next result shows that if $M$ is a flat and locally Ohm-Rush $R$-module, then $M$ globally satisfies the defining condition of the Ohm-Rush property for \emph{radical ideals} assuming only the openness of pure loci of maps $R \to M$. In other words, the characterization of \autoref{prop:local-to-global-Prufer} almost holds for flat modules over an arbitrary ring.

\begin{proposition}
    \label{prop:close-to-Ohm-Rush}
    Let $R$ be a ring and $M$ be a flat $R$-module. 
    Consider the following statements:
    \begin{enumerate}
        \item[$(1)$] For all $R$-linear maps
        $v \colon R \to M$, $\Pure(v)$ is open in $\Spec(R)$.

        \item[$(2)$] For all $x \in M$, the collection of ideals 
        $
        \textrm{$\{I \colon I = \sqrt{I}$ and $x \in IM\}$}
        $
        has a smallest element under inclusion. Equivalently, for any collection of radical 
        ideals $\{I_\alpha\}_\alpha$ of $R$,
        $
        \bigcap_\alpha I_\alpha M = \left(\bigcap_\alpha I_\alpha\right)M.
        $
    \end{enumerate}
Then $(2) \implies (1)$. Moreover, if $M_\p$ is an Ohm-Rush $R_\p$-module for all $\p \in \Spec(R)$, then $(1) \implies (2)$.
    \end{proposition}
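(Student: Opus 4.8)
The plan is to begin with a description of the pure locus that uses only flatness: for a flat $R$-module $M$ and an $R$-linear map $v\colon R\to M$ with $x\coloneqq v(1)$, I claim
\[
\Spec(R)\setminus\Pure(v)=\{\p\in\Spec(R)\colon x\in\p M\}.
\]
Indeed, applying \autoref{lem:purity-open-locus} (1) to the flat $R_\p$-module $M_\p$ shows $v_\p$ is pure iff $c_{M_\p}(x/1)=R_\p$; since $\p R_\p$ contains every proper ideal of $R_\p$, this holds iff $x/1\notin\p M_\p$. Finally $x/1\in\p M_\p=(\p M)_\p$ iff $sx\in\p M$ for some $s\notin\p$, and flatness of $M$ gives $(\p M :_M s)=(\p :_R s)M=\p M$ (as $s\notin\p$), so $x\in\p M$; the converse is trivial. (Alternatively one may quote \autoref{cor:cyclic-purity-radical-ideals} (1) with $P=R$.)

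For $(2)\Rightarrow(1)$: fix $x\in M$ and the associated $v\colon R\to M$. Condition (2) provides a smallest radical ideal $I_0$ with $x\in I_0M$ — it is the intersection of all radical ideals $I$ with $x\in IM$, and the minimality formulation is equivalent to the displayed intersection identity in (2) just as in \autoref{lem:OR-weak-intersection-flat}. As every prime is radical, $x\in\p M\Leftrightarrow I_0\subseteq\p$, so $\Spec(R)\setminus\Pure(v)=\{\p\colon x\in\p M\}=\mathbf V(I_0)$ is closed and $\Pure(v)$ is open.

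For $(1)\Rightarrow(2)$, now assuming $M_\p$ is Ohm-Rush for all $\p$: fix $x\in M$ and put $I_x\coloneqq\bigcap\{\p\colon x\in\p M\}$, a radical ideal. By (1) the set $\{\p\colon x\in\p M\}$ is closed, hence equals $\mathbf V(I_x)$, so for primes $\p$ one has $x\in\p M\Leftrightarrow I_x\subseteq\p$. The main point is to show $x\in I_xM$, which I verify after localizing at an arbitrary maximal ideal $\fm$ (membership in the submodule $I_xM$ is local). If $I_x\not\subseteq\fm$, then $I_xM_\fm=M_\fm\ni x/1$. If $I_x\subseteq\fm$, then the flat colon computation from the first paragraph gives, for each prime $\p\subseteq\fm$, that $x/1\in\p M_\fm\Leftrightarrow x\in\p M\Leftrightarrow I_xR_\fm\subseteq\p R_\fm$; hence $\{\mathfrak{q}\in\Spec(R_\fm)\colon x/1\in\mathfrak{q}M_\fm\}=\mathbf V(I_xR_\fm)$, and intersecting over these primes yields $c_{M_\fm}(x/1)\subseteq\sqrt{I_xR_\fm}=I_xR_\fm$, the last equality because $I_x$ is radical and radicals commute with localization. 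Since $M_\fm$ is Ohm-Rush, $x/1\in c_{M_\fm}(x/1)M_\fm\subseteq I_xM_\fm$. Thus $x\in I_xM$. Finally, if $I=\sqrt{I}$ and $x\in IM$, then $\mathbf V(I)\subseteq\{\p\colon x\in\p M\}=\mathbf V(I_x)$, so $I_x\subseteq I$; hence $I_x$ is the smallest radical ideal with $x\in I_xM$, which is (2).

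The step I expect to be the real obstacle is proving $x\in I_xM$ in $(1)\Rightarrow(2)$: openness of the pure locus only yields $I_x$ as an intersection of primes, so one has to promote the pointwise data ``$x\in\p M$ for $\p\in\mathbf V(I_x)$'' to an honest membership $x\in I_xM$. Local Ohm-Rushness is precisely what makes this possible, by trapping the localized content $c_{M_\fm}(x/1)$ between $I_xR_\fm$ and its radical — which coincide because an intersection of primes stays radical after localization.
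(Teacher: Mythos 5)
Your proof is correct and follows essentially the same route as the paper's: both identify $\Spec(R)\setminus\Pure(v)$ with $\{\p : v(1)\in\p M\}=\mathbf V(I_x)$ for a radical ideal $I_x$, and both use local Ohm-Rushness to trap $c_{M_\fm}(x/1)$ inside $I_xR_\fm$ and conclude $x\in I_xM$ by checking membership locally. The only (harmless) difference is that you get away with the containment $c_{M_\fm}(x/1)\subseteq I_xR_\fm$, whereas the paper proves the equality $\sqrt{c_{M_\p}(x/1)}=\mathfrak I_xR_\p$.
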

    
\begin{proof}
    $(2) \implies (1)$: Let $v \colon R \to M$ be a linear map. Let $x \coloneqq v(1)$, and let $\scr{I}$ be the intersection of all radical ideals $I$ of $R$ such that $x \in IM$. By the hypothesis of (2), $x \in \scr{I}M$. We will show that 
$
\Pure(\varphi) = \Spec(R) \setminus \mathbf{V}(\scr{I}).    $

Suppose $\p \in \Pure(\varphi)$. Since $v_\p$ is pure, the map $v \otimes_R \id_{R/\p} \colon R/\p \to M/\p M$ is injective by the assertion of equalityof sets in \autoref{cor:cyclic-purity-radical-ideals} (1) because the domain of $v$, namely $R$, is flat and finitely presented. This shows that $x \notin \p M$.
Since $x \in \scr{I}M$, it follows that $\p \in \Spec(R) \setminus \mathbf{V}(\scr{I})$. Thus, $\Pure(\varphi) \subseteq \Spec(R) \setminus \mathbf{V}(\scr{I})$.

Suppose $\p \in \Spec(R) \setminus \mathbf{V}(\scr{I})$. Then $x \notin \p M$ by definition of $\scr{I}$. Since $M/\p M$ is a flat module over the domain $R/\p$ by base change, we see that $M/\p M$ is a torsion-free $R/\p$-module. Thus, the induced $R/\p$-linear map $v \otimes_R \id_{R/\p} \colon R/\p \to M/\p M$ is injective because it maps $1 + \p$ to the nonzero (and hence torsion-free) element $x + \p M$. Then $v_\p$ is $R_\p$-pure by the inclusion of sets in \autoref{cor:cyclic-purity-radical-ideals} (1). So, $\Spec(R) \setminus \mathbf{V}(\scr{I}) \subseteq \Pure(\varphi)$, establishing the other inclusion.

    $(1) \implies (2)$: We will now assume $M$ is locally Ohm-Rush, that is, for all $\p \in \Spec(R)$, $M_\p$ is an Ohm-Rush $R_\p$-module. Consider the map $v \colon R \to M$ that sends $1 \mapsto x$. Let $\mathfrak{I}_x$ denote the unique radical ideal of $R$ such that $\Spec(R) \setminus \mathbf{V}(\mathfrak{I}_x)$ defines the pure locus of $v$. We claim that $\mathfrak{I}_x$ is the smallest radical ideal $I$ of $R$ with the property that $x \in IM$.
    
    Let $\bp \in \Spec(R)$. We claim that
    $
    \sqrt{c_{M_\bp}(x/1)} = \mathfrak{I}_x R_\bp.
    $
    Since $\mathfrak{I}_x R_\bp$ is also a radical ideal, in order to establish the above equality, it suffices to show that
    \begin{equation}
    \label{eq:radical}
    \textrm{$\{Q \in \Spec(R_\bp) \colon (v_\bp)_Q$ is $(R_\bp)_Q$ pure$\}$} = \Spec(R_\bp) \setminus \mathbf{V}(\mathfrak{I}_x R_\bp).
    \end{equation}
    This is because  $M_\bp$ is a flat Ohm-Rush $R_\bp$-module and \autoref{lem:purity-open-locus} shows that
    \[
    \textrm{$\{Q \in \Spec(R_\bp) \colon (v_\bp)_Q$ is $(R_\bp)_Q$ pure$\}$} = 
    \Spec(R_\bp) \setminus \mathbf{V}\left(\sqrt{c_{M_\bp}(x/1)}\right).
    \]
    Any $Q \in \Spec(R_\bp)$ is the expansion of a unique $\bq \in \Spec(R)$ such that
    $\bq\subseteq \bp$. Moreover, $(R_\bp)_Q$ can then be identified with $R_\bq$ and $(v_\bp)_Q$ can be identified with the $R_\bq$-linear map $v_\bq$. Thus,
    \[
    \textrm{$(v_\bp)_Q$ is $(R_\bp)_Q$-pure $\Longleftrightarrow v_\bq$ is $R_\bq$-pure 
    $\Longleftrightarrow \mathfrak{I}_x \nsubseteq \bq \Longleftrightarrow \mathfrak{I}_x R_\bp \nsubseteq \bq R_\bp = Q$,} 
    \]
    which establishes \autoref{eq:radical}. Here the second equivalence follows by the definition of $\mathfrak{I}_x$. In the third equivalence, the non-trivial implication is $\mathfrak{I}_x \nsubseteq \bq \implies \mathfrak{I}_xR_\p \nsubseteq \bq R_\p = Q$. But this follows because otherwise, for all $i \in \mathfrak{I}_x$, there would exist $s \in R \setminus \p \subseteq R \setminus \bq$ such that $si \in \bq$. Since $\bq$ is prime, we would get $i \in \bq$, whence, $\mathfrak{I}_x \subseteq \bq$, which is a contradiction.
    
    Thus for all prime ideals $\bp$ of $R$ because $M_\p$ is an Ohm-Rush $R_\p$-module, we have
    $
    x/1 \in c_{M_\bp}(x/1)M_\bp \subseteq \sqrt{c_{M_\bp}(x/1)}M_\bp = (\mathfrak{I}_x R_\bp) M_\bp = (\mathfrak{I}_x M)_\bp,
    $
    and so, $x \in \mathfrak{I}_xM$. Moreover, if $I$ is a radical ideal of $R$ such that $x \in IM$, then for all
    prime ideals $\bp$ of $R$, $c_{M_\bp}(x/1) \subseteq IR_\bp$, and so, $\mathfrak{I}_x R_\bp = \sqrt{c_{M_\bp}(x/1)}$ is contained in the radical ideal $IR_\p$. Then $\mathfrak{I}_x \subseteq I$ because the inclusion $\mathfrak{I}_x \subset I$ can be checked locally. Hence $\mathfrak{I}_x$ is indeed the smallest radical ideal $I$ of $R$
    with the property that $x \in IM$.\qedhere
    \end{proof}

\subsection{Radical content}    
Let $R$ be a ring, $M$ be an $R$-module and $x \in M$. Let us call $c_{\rad,M}(x)$ the intersection of all \emph{radical ideals} $I$ of $R$ such that $x \in IM$. More generally, for $N \subseteq M$, let $c_{\rad,M}(N)$ be the intersection of all radical ideals $I$ of $R$ such that $N \subseteq IM$. Clearly, if $\langle N \rangle$ is the $R$-submodule of $M$ generated by $N$, then $c_{\rad,M}(N) = c_{\rad, M}(\langle N \rangle)$. We call the function
$
\textrm{$c_{\rad,M} \colon M \to \{$radical ideals of $R\}$}
$
the \emph{radical content of $M$.} Clearly, $\sqrt{c_M(N)} \subseteq c_{\rad,M}(N)$.

Suppose $M$ is a flat $R$-module such that for any collection $\{I_\alpha \colon \alpha \in A\}$ of radical ideals of $R$, 
    \begin{equation}
        \label{eq:radical-content}
    (\bigcap_{\alpha} I_\alpha)M = \bigcap_{\alpha \in A} I_\alpha M.
    \end{equation} 
    This is equivalent to the assertion that for any $N \subseteq M$, $N \subseteq c_{\rad,M}(N)M$. Equivalently, the collection of radical ideals $I$ of $R$ such that $N \subseteq IM$ has a smallest element under inclusion. The following shows that $c_{\rad, M}$ behaves like the content function $c_M$ when \autoref{eq:radical-content} is satisfied.

    \begin{proposition}
        \label{prop:radical-content-properties}
        Let $R$ be a ring and $M$ be a flat $R$-module that satisfies condition \autoref{eq:radical-content}. Let $N \subseteq M$ be a submodule.
        \begin{enumerate}
            \item[$(1)$]  Let $P$ be any $R$-module and $Q$ be a submodule such that $Q \subseteq c_P(Q)P$. Then $c_{\rad,P}(Q) = \sqrt{c_P(Q)}$.
            
            \item[$(2)$] If $S \subset R$ is multiplicative, then for all $x \in M, s \in S$, $x/s \in c_{\rad, S^{-1}M}(x/s)(S^{-1}M)$ and $c_{\rad,M}(x)(S^{-1}R) = c_{\rad, S^{-1}M}(x/s)$.
            
            \item[$(3)$] If $S \subset R$ is multiplicative, then $c_{\rad,M}(N)(S^{-1}R) = c_{\rad, S^{-1}M}(S^{-1}N)$. 
            
            \item[$(4)$] If $M$ is locally Ohm-Rush, then for all $\p \in \Spec(R)$, $c_{\rad, M}(N)R_\p = \sqrt{c_{M_\p}(N_\p)}$.
            
            \item[$(5)$] Suppose $R$ is Noetherian and satisfies the property that for all $\p \in \Spec(R)$ the fibers of $R_\p \to \widehat{R_\p}$ are reduced. If $M$ is locally Ohm-Rush, $\widehat{M_\p}$ is the $\p R_\p$-adic completion of $M$ and $\widehat{R_\p}N$ denotes the $\widehat{R_\p}$-submodule of $\widehat{M_\p}$ generated by the image of $N$ under the canonical $R$-linear map $M \to M_\p \to \widehat{M_\p}$, then $\widehat{R_\p}N \subseteq c_{\rad, \widehat{M_\p}}(\widehat{R_\p}N)\widehat{M_\p}$ and  $c_{\rad, M}(N)\widehat{R_\p} = c_{\rad, \widehat{M_\p}}(\widehat{R_\p}N)$.
        \end{enumerate}
    \end{proposition}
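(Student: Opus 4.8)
The plan is to prove (1) by a direct definition-chase and then bootstrap (2)--(5) from it, using \eqref{eq:radical-content}, \autoref{lem:factorcontent}, \autoref{lem:content-subset}, \autoref{cor:expansion-radical-ideals} and \autoref{cor:content-completion}. For (1), observe that no flatness or appeal to \eqref{eq:radical-content} is needed: since $\sqrt{c_P(Q)}$ is a radical ideal and $Q\subseteq c_P(Q)P\subseteq\sqrt{c_P(Q)}P$, it lies in the family of radical ideals whose intersection defines $c_{\rad,P}(Q)$, giving $c_{\rad,P}(Q)\subseteq\sqrt{c_P(Q)}$; conversely any radical ideal $I$ with $Q\subseteq IP$ contains $c_P(Q)$, hence contains $\sqrt{c_P(Q)}$, and intersecting over all such $I$ gives the reverse inclusion. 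Before localizing I would also record, for any submodule $N$ of a flat module $M$ satisfying \eqref{eq:radical-content}, the identity $c_{\rad,M}(N)=\sqrt{\sum_{x\in N}c_{\rad,M}(x)}$: the inclusion $\supseteq$ is formal, since every radical ideal containing all the $c_{\rad,M}(x)$ contains their sum and its radical, while $\subseteq$ follows because \eqref{eq:radical-content} gives $x\in c_{\rad,M}(x)M\subseteq\sqrt{\sum_{y\in N}c_{\rad,M}(y)}\,M$ for each $x$, so $N$ lies in the expansion of the radical ideal $\sqrt{\sum_{x\in N}c_{\rad,M}(x)}$ to $M$.

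The heart of the argument is (2). The inclusion $c_{\rad,S^{-1}M}(x/s)\subseteq c_{\rad,M}(x)S^{-1}R$ is easy: $c_{\rad,M}(x)S^{-1}R$ is a radical ideal (localization preserves reducedness), and by \eqref{eq:radical-content} we have $x\in c_{\rad,M}(x)M$, so $x/s\in c_{\rad,M}(x)(S^{-1}M)$. For the reverse inclusion, let $J$ be a radical ideal of $S^{-1}R$ with $x/s\in J(S^{-1}M)$, contract it to a radical ideal $\mathfrak q$ of $R$, and clear denominators to find $t\in S$ with $tx\in\mathfrak q M$. Flatness of $M$ (the fact used in \autoref{lem:factorcontent}) gives $x\in(\mathfrak q M:_M t)=(\mathfrak q:_R t)M$, and the colon ideal $(\mathfrak q:_R t)$ is again radical: writing $\mathfrak q$ as an intersection $\bigcap_i\mathfrak p_i$ of primes, $(\mathfrak q:t)=\bigcap_{t\notin\mathfrak p_i}\mathfrak p_i$. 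Hence $c_{\rad,M}(x)\subseteq(\mathfrak q:t)$; localizing, where $t$ becomes a unit and $(\mathfrak q:t)S^{-1}R=J$, gives $c_{\rad,M}(x)S^{-1}R\subseteq J$, and intersecting over all such $J$ yields the claimed equality. The membership $x/s\in c_{\rad,S^{-1}M}(x/s)(S^{-1}M)$ is then immediate from the equality together with $x\in c_{\rad,M}(x)M$.

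For (3) I would verify the two inclusions between $c_{\rad,M}(N)S^{-1}R$ and $c_{\rad,S^{-1}M}(S^{-1}N)$ directly: one direction from \eqref{eq:radical-content} (which puts $S^{-1}N$ inside the expansion of the radical ideal $c_{\rad,M}(N)S^{-1}R$), the other from (2) applied to each generator $x/1$ of $S^{-1}N$ together with the facts that localization commutes with arbitrary sums of ideals and with the radical operation; both sides reduce to $\sqrt{\sum_{x\in N}c_{\rad,M}(x)}\,S^{-1}R$ using the identity from the first paragraph. Part (4) is (3) with $S=R\setminus\p$, so $c_{\rad,M}(N)R_\p=c_{\rad,M_\p}(N_\p)$; since $M_\p$ is Ohm-Rush, \autoref{lem:content-subset} gives $N_\p\subseteq c_{M_\p}(N_\p)M_\p$, whence (1) applied with $P=M_\p$, $Q=N_\p$ rewrites the right-hand side as $\sqrt{c_{M_\p}(N_\p)}$.

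Finally, for (5) I would localize at $\p$ and chain the previous items. By (4), $c_{\rad,M}(N)\widehat{R_\p}=\sqrt{c_{M_\p}(N_\p)}\,\widehat{R_\p}$, and this equals $\sqrt{c_{M_\p}(N_\p)\widehat{R_\p}}$ by \autoref{cor:expansion-radical-ideals} applied to the Noetherian local ring $R_\p$ (whose completion has reduced fibers, by the hypothesis on $R$). Since $M_\p$ is flat and Ohm-Rush over $R_\p$, \autoref{cor:content-completion} parts (2) and (3) identify $c_{M_\p}(N_\p)\widehat{R_\p}$ with $c_{\widehat{M_\p}}(\widehat{R_\p}N)$ and also yield $\widehat{R_\p}N\subseteq c_{\widehat{M_\p}}(\widehat{R_\p}N)\widehat{M_\p}$; the latter lets me invoke (1) a second time, with $P=\widehat{M_\p}$ and $Q=\widehat{R_\p}N$, to get $\sqrt{c_{\widehat{M_\p}}(\widehat{R_\p}N)}=c_{\rad,\widehat{M_\p}}(\widehat{R_\p}N)$, which combines with the displayed chain to give $c_{\rad,M}(N)\widehat{R_\p}=c_{\rad,\widehat{M_\p}}(\widehat{R_\p}N)$; the inclusion $\widehat{R_\p}N\subseteq c_{\rad,\widehat{M_\p}}(\widehat{R_\p}N)\widehat{M_\p}$ is then automatic because radical content contains content. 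I expect the reverse inclusion in (2) to be the only substantive point---it is where flatness of $M$ actually enters, through the colon computation and the observation that colons of radical ideals remain radical---while everything else is bookkeeping or a direct appeal to \autoref{cor:expansion-radical-ideals} and \autoref{cor:content-completion}.
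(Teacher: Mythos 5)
Your proof is correct and follows the paper's structure: (1) is the same definition-chase the paper leaves as ``clear,'' (3)--(5) are the same chains through the identity $c_{\rad,M}(N)=\sqrt{\sum_{x\in N}c_{\rad,M}(x)}$, part (2), \autoref{cor:expansion-radical-ideals} and \autoref{cor:content-completion}. The only real divergence is the reverse inclusion in (2), which you correctly identify as the substantive step. The paper first proves that $S^{-1}M$ inherits condition \autoref{eq:radical-content} over $S^{-1}R$ (by contracting each radical ideal $J$ to $I=\pi^{-1}(J)$ and using flatness of $M$ to get $p^{-1}(J(S^{-1}M))=IM$), and then applies this with $J=c_{\rad,S^{-1}M}(x/s)$ itself to conclude $x\in \mathfrak{a}M$ for $\mathfrak{a}=\pi^{-1}(J)$. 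You instead argue ideal-by-ideal: clear denominators to get $tx\in\mathfrak{q}M$, use flatness via $(\mathfrak{q}M:_Mt)=(\mathfrak{q}:_Rt)M$, and observe that $(\mathfrak{q}:t)$ is again radical with $(\mathfrak{q}:t)S^{-1}R=J$. Both routes invoke flatness at exactly this point and both are valid; the paper's version has the side benefit of recording explicitly that $S^{-1}M$ again satisfies \autoref{eq:radical-content}, which is convenient when the proposition is re-applied to localizations, whereas your colon-ideal computation is slightly more self-contained for the single equality at hand.
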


    \begin{proof} 
    (1) is clear. For (2) let $\pi \colon R \to S^{-1}R$ be the canonical map. Recall that if $J$ is a radical ideal of $S^{-1}R$, then $\pi^{-1}(J)$ is a radical ideal of $R$ such that $\pi^{-1}(J)S^{-1}R = J$. Let $\{J_\alpha \colon \alpha \in A\}$ be a collection of radical ideals of $S^{-1}R$. We want to show that $\bigcap_{\alpha \in A} J_\alpha(S^{-1}M) = (\bigcap_{\alpha \in A} J_\alpha)S^{-1}M$. It is enough to show that if $p \colon M \to S^{-1}M$ is the canonical map, then $p^{-1}(\bigcap_\alpha J_\alpha(S^{-1}M)) = p^{-1}((\bigcap_{\alpha} J_\alpha)S^{-1}M)$. Let $I_\alpha \coloneqq \pi^{-1}(J_\alpha)$. By the flatness of $M$, we get an injective map $M/I_\alpha M \hookrightarrow S^{-1}M/J_\alpha (S^{-1}M)$, that is, for all $\alpha \in A$, $p^{-1}(J_\alpha (S^{-1}M)) = I_\alpha M$. Similarly, $p^{-1}((\bigcap_{\alpha} J_\alpha)S^{-1}M) = (\bigcap_{\alpha}I_\alpha)M$. Thus,
        \[
        p^{-1}\left(\bigcap_{\alpha \in A} J_\alpha (S^{-1}M)\right) = \bigcap_{\alpha \in A} I_\alpha M = (\bigcap_{\alpha \in A}I_\alpha)M = p^{-1}((\bigcap_{\alpha \in A}J_\alpha)S^{-1}M),
        \]
        that is, the $S^{-1}R$-module $S^{-1}M$ satisfies the property that for any collection of radical ideals $\{J_\alpha \colon \alpha \in A\}$ of $S^{-1}R$, 
        $
            \bigcap_{\alpha \in A} J_\alpha(S^{-1}M) = (\bigcap_{\alpha \in A}J_\alpha)S^{-1}M.    
        $
        Let $x \in M, s \in S$.
        % $x/s \in c_{\rad,S^{-1}M}(x/s)(S^{-1}M) = c_{\rad, S^{-1}M}(x/1)(S^{-1}M)$. Note $c_{\rad,S^{-1}M}(x/s) = c_{\rad, S^{-1}M}(x/1)$ because $x/1$ and $x/s$ differ by a unit in $S^{-1}R$. 
        Since $x \in c_{\rad,M}(x)M$ and since radical ideals of $R$ expand to radical ideals of $S^{-1}R$, we get $c_{\rad, S^{-1}M}(x/s) \subseteq c_{\rad, M}(x)S^{-1}R$. Also, since $\ba \coloneqq \pi^{-1}(c_{\rad,S^{-1}M}(x/s))$ is a radical ideal of $R$ and since $x/s \in c_{\rad, S^{-1}M}(x/s)(S^{-1}M)$, we get  $x \in \ba M$ by flatness of $M$. So, $c_{\rad,M}(x) \subseteq \ba$, and consequently, $c_{\rad,M}(x)(S^{-1}R) \subseteq \ba(S^{-1}R) = c_{\rad,S^{-1}M}(x/s)$. This shows that for all $x \in M$, $s \in S$,
        $
        c_{\rad,M}(x)(S^{-1}R) = c_{\rad, S^{-1}M}(x/s).    
        $
        In other words, $c_{\rad,M}$ commutes with localization.

        (3) One can check that for a submodule $N$ of $M$, $c_{\rad, M}(N) =  \sqrt{\sum_{x \in N} c_{\rad, M}(x)}$.
        Moreover, since for any ideal $I$ of $R$, $\sqrt{I}(S^{-1}R) = \sqrt{I(S^{-1}R)}$, we get
        \begin{align*}
        c_{\rad, M}(N)(S^{-1}R) &= \left(\sqrt{\sum_{x \in N} c_{\rad,M}(x)}\right) (S^{-1}R)  
        = \sqrt{\sum_{x \in N} c_{\rad, M}(x)(S^{-1}R)}\\
        &\stackrel{(2)}{=}  \sqrt{\sum_{x \in N,s\in S}c_{\rad, S^{-1}M}(x/s)}
        =c_{\rad, S^{-1}M}(S^{-1}N). 
        \end{align*}
        The second equality follows because localization commutes with arbitrary sums of ideals.
        
       (4) If $M$ is locally Ohm-Rush, then for all submodules $N$ of $M$ and prime ideals $\p$ of $R$,
        \[
        c_{\rad, M}(N)R_\p \stackrel{(3)}{=} c_{\rad, M_\p}(N_\p) \stackrel{(1)}{=} \sqrt{c_{M_\p}(N_\p)}.  
        \]

        (5) Note that $\widehat{R_\p}N$ is also the $\widehat{R_\p}$-submodule of $\widehat{M_\p}$ that is generated by the image of $N_\p$ under the canonical map $M_\p \to \widehat{M_\p}$. Hence, since $M_\p$ is an Ohm-Rush $R_\p$-module, \autoref{cor:content-completion} shows that
        $c_{M_\p}(N_\p) \widehat{R_\p} = c_{\widehat{M_\p}}(\widehat{R_\p}N)$ and $\widehat{R_\p}N \subseteq c_{\widehat{M_\p}}(\widehat{R_\p}N)\widehat{M_\p}$.
        
        Since $c_{\widehat{M_\p}}(\widehat{R_\p}N) \subseteq c_{\rad, \widehat{M_\p}}(\widehat{R_\p}N)$, we get $\widehat{R_\p}N \subseteq c_{\rad, \widehat{M_\p}}(\widehat{R_\p}N)\widehat{M_\p}$. 
        Moreover, since the formal fibers of $R_\p \to \widehat{R_\p}$ are reduced, by \autoref{cor:expansion-radical-ideals},$\sqrt{c_{M_\p}(N_\p)}\widehat{R_\p} = \sqrt{c_{M_\p}(N_\p)\widehat{R_\p}} = \sqrt{c_{\widehat{M_\p}}(\widehat{R_\p}N)}$.    
        Also, by (4) we get $c_{\rad, M}(N)R_\p = \sqrt{c_{M_\p}(N_\p)}$. Thus,
        $
            c_{\rad, M}(N)\widehat{R_\p} =   (c_{\rad, M}(N)R_\p)\widehat{R_\p} = \sqrt{c_{M_\p}(N_\p)}\widehat{R_\p} = \sqrt{c_{\widehat{M_\p}}(\widehat{R_\p}N)}.
        $
        Finally, $\sqrt{c_{\widehat{M_\p}}(\widehat{R_\p}N)} = c_{\rad, \widehat{M_\p}}(\widehat{R_\p}N)$ by (1) because $\widehat{R_\p}N \subseteq c_{\widehat{M_\p}}(\widehat{R_\p}N)\widehat{M_\p}$.
    \end{proof}

\section{Ohm-Rush trace, intersection flatness and Mittag-Leffler modules}
\label{sec:ORT-IF-ML}

In this section we introduce the notions of Ohm-Rush trace modules and intersection flatness. We will then build connections between these new notions, Mittag-Leffler modules, strictly Mittag-Leffler modules and Ohm-Rush modules.

\subsection{Ohm-Rush trace modules} Suppose that $R$ is a 
   ring. For an $R$-module $M$ and $x \in M$,
    let
    $
        \Tr_M(x) \coloneqq \{f(x) \colon f \in \Hom_R(M,R)\},
    $ %\end{equation*}
    that is, $\Tr_M(x)$ is the image of the evaluation at $x$ map
    $\Hom_R(M,R) \to R$. Thus, $\Tr_M(x)$ is an ideal of
    $R$, often called the \emph{trace of $x$}. 
    Similarly, if $N \subseteq M$, we define $\Tr_M(N) \coloneqq \sum_{x \in N} \Tr_M(x)$. If $N$ is an $R$-submodule of $M$, then it follows that 
    \begin{equation}
        \label{eq:trace-submodule}
    \Tr_M(N) = \sum_{f \in \Hom_R(M,R)} f(N).
\end{equation}

    The following notion codifies the idea that the $R$-dual of a module $M$ has `sufficiently many' maps.

\begin{definition}
\label{def:Ohm-Rush-trace}
    For a ring $R$ and an $R$-module $M$,
    we say $M$ is an
    \emph{Ohm-Rush trace} (abbrv. ORT) $R$-module if for all $x \in M$,
$x \in \Tr_M(x)M$. A ring homomorphism $R \to S$ is \emph{Ohm-Rush trace} 
    if $S$ an ORT $R$-module.
\end{definition}

What we call an Ohm-Rush trace module is called a \emph{trace module} in 
\cite[Section 7, Pg.\ 66]{OhmRu-content}. Again, our alternate terminology honors the contributions of Ohm and Rush.

\begin{remark}
\label{rem:ORT-modules}
Suppose $M$ is an ORT $R$-module.
\begin{enumerate}
    \item Let $x \in M$ and $I$ be an ideal of $R$ such that $x \in IM$. By $R$-linearity,
    for any $f \in \Hom_R(M,R)$, $f(x) \in f(IM) \subseteq If(M) \subseteq I$. Thus $\Tr_M(x)$ is the smallest ideal
     $I$ of $R$ such that $x \in IM$, that is, 
     $\Tr_M(x) = c_M(x).$ 
    This shows that an
    Ohm-Rush trace (ORT) $R$-module is an Ohm-Rush $R$-module. We will continue using the notation $\Tr_M(x)$ even though this ideal coincides with $c_M(x)$. This is to emphasize the origin of $c_M(x)$ via maps $M \to R$ when $M$ is ORT. Note also that since $M$ is an Ohm-Rush module, for any $N \subseteq M$, 
    $
    \Tr_M(N) \coloneqq \sum_{x \in N} \Tr_M(x) = \sum_{x \in N} c_M(x) c_M(N),    
    $
    where $c_M(N)$ is the intersection of all ideals $I$ of $R$ such that $N \subseteq IM$. Here the last equality follows by \autoref{lem:content-subset}. By the same lemma, $N \subseteq c_M(N)M = \Tr_M(N)M$, that is, $\Tr_M(N)$ is the smallest ideal $I$ of $R$ such that $N \subseteq IM$. Moreover, if $\langle N \rangle$ is the submodule of $M$ generated by $N$, then $\Tr_M(\langle N \rangle) = c_M(\langle N \rangle) = c_M(N) = \Tr_M(N)$,
    where the second equality is clear by the definition of content of a subset of $M$. Thus, for any subset $N$ of $M$, $\Tr_M(N) = \sum_{f \in \Hom_R(M,R)} f(\langle N \rangle)$ by \autoref{eq:trace-submodule}.

    \item For any $x \in M$, $\Tr_M(x)$ is always
    a finitely generated ideal of $R$. This follows by (a) because $c_M(x)$ is always a finitely generated ideal of $R$ by \autoref{rem:content-finitely-generated}.

    \item If $x \in M$ is a nonzero element, then
    $\Tr_M(x) \neq 0$ because $x \in \Tr_M(x)M$. Said differently, there exists $f \in \Hom_R(M,R)$ such 
    that $f(x) \neq 0$, that is, the canonical map
    $
    M \to \Hom_R(\Hom_R(M,R), R)
    $
    is injective (i.e., $M$ is \emph{torsionless}). In \autoref{lem:ORT-cyclic-purity} we will have more to say about this map.

    \item $M$ is torsion-free. Indeed, if 
    $x \in M -\{0\}$, then upon choosing $f \in \Hom_R(M,R)$ such that $f(x) \neq 0$, one sees that if $r \in R$ is a nonzerodivisor such that $rx = 0$, then $rf(x) = f(rx) = 0$, which contradicts $r$ being a nonzerodivisor on $R$.

    \item If $R$ is a domain and $R \to S$ is an ORT ring map, then $R \to S$ is injective. Otherwise, $1 \in S$ is an $R$-torsion element, contradicting (d).

    \item An ORT ring map $R \to S$ splits provided that for every maximal ideal $\fm$ of $R$, 
    $\fm S \neq S$ (for instance, if $\Spec(S) \to \Spec(R)$ is surjective). Indeed, since $1 \in \Tr_S(1)S$, this means that $\Tr_S(1)$ cannot be contained in any maximal ideal of $R$. Consequently $\Tr_S(1) = R$, or equivalently, that there is a splitting of $R \to S$.

    \item \cite[Sec.\ 7, Pg.\ 66]{OhmRu-content} Let $x \in M$ and $r \in R$. Since for all $f \in \Hom_R(M,R)$ we have $f(rx) = rf(x)$ by linearity, it follows by (a) that $c_M(rx) = \Tr_M(rx) = r\Tr_M(x) = rc_M(x)$. Thus, $M$ is a flat $R$-module by \autoref{prop:content-homomorphisms-purity-flatness} (3).

    \item Let $f \colon M \to N$ be an $R$-linear map. Then for $x \in M$, the composition 
    \[
    \Hom_R(N,R) \xrightarrow{\Hom_R(f,R)} \Hom_R(M,R) \xrightarrow{\ev @ x} R    
    \]
    equals $\ev @ f(x) \colon \Hom_R(N,R) \to R$. Thus, for all $x \in M$, we have $\Tr_N(f(x)) \subseteq \Tr_M(x)$.

    \item Let $R \to S$ be a ring homomorphism. Then $\Tr_R(1_S) = \sum_{x \in S} \Tr_R(x)$. Clearly, we have $\Tr_R(1_S) \subseteq \sum_{x \in S} \Tr_R(x)$. For the other inclusion, for any $x \in S$, if $\ell_x \colon S \to S$ denotes left-multiplication by $x$, then 
    \[
    \Hom_R(S,R)  \xrightarrow{\ev @ x} R = \Hom_R(S,R) \xrightarrow{\Hom_R(\ell_x,R)} \Hom_R(S,R) \xrightarrow{\ev @ 1_S} R,   
    \]
    and so, $\Tr_R(x) = \im(\ev @ x) \subseteq \im(\ev @ 1_S) = \Tr_R(1_s)$.
\end{enumerate}
\end{remark}

\begin{lemma}
    \label{lem:ORT-cyclic-purity}
    Let $R$ be a ring and $M$ be an ORT $R$-module. Set $M^{**} := \Hom_R(\Hom_R(M,R),R)$. Then the canonical
    map 
    $
    M \to M^{**}$
    is cyclically pure as a map of $R$-modules. Moreover, if $(R, \fm)$ is
    a Noetherian local ring that is $\fm$-adically complete, then the 
    canonical map is a pure map of $R$-modules.
\end{lemma}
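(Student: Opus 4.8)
For the first assertion I would argue directly with evaluation maps. Write $\iota \colon M \to M^{**}$ for the canonical map, so that $\iota(x)$ is evaluation at $x$ on $\Hom_R(M,R)$. Since every cyclic $R$-module is of the form $R/I$, cyclic purity of $\iota$ is exactly the injectivity of $M/IM \to M^{**}/IM^{**}$ for every ideal $I$ of $R$. The plan: given $x \in M$ with $\iota(x) \in IM^{**}$, apply for each $f \in \Hom_R(M,R)$ the $R$-linear ``evaluation at $f$'' map $M^{**} = \Hom_R(\Hom_R(M,R),R) \to R$, $\phi \mapsto \phi(f)$; it sends $IM^{**}$ into $I$, so $f(x) = \iota(x)(f) \in I$. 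Hence $\Tr_M(x) \subseteq I$, and the ORT hypothesis gives $x \in \Tr_M(x)M \subseteq IM$, which is precisely the injectivity wanted. (Taking $I = 0$ re-proves that $\iota$ is injective, as in \autoref{rem:ORT-modules}(c).) This part needs nothing beyond the definition of an ORT module.

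For the ``moreover'' part, the idea is to reduce to the already-known fact that finitely generated free summands get dualized to split maps. Since $M$ is ORT it is flat (\autoref{rem:ORT-modules}(g)) and, by \cite[Part II, Prop.\ 2.3.4]{rg71}, strictly Mittag-Leffler, so \autoref{prop:ML-SML-modules-colimit-pure-split-free}(2) lets me write $M = \bigcup_{i \in \Lambda} L_i$ as a filtered union of finitely generated free submodules $L_i$ that are direct summands of $M$. For each $i$, with $j_i \colon L_i \hookrightarrow M$ the inclusion and $r_i$ a retraction, the restriction map $\Hom_R(j_i,R) \colon \Hom_R(M,R) \to \Hom_R(L_i,R)$ is split surjective (section $\Hom_R(r_i,R)$, since $r_i \circ j_i = \id_{L_i}$), so applying $\Hom_R(-,R)$ once more makes $j_i^{**} \coloneqq \Hom_R(\Hom_R(j_i,R),R) \colon L_i^{**} \to M^{**}$ split injective. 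By naturality of the double-dual transformation, $\iota \circ j_i$ equals the composite of the canonical isomorphism $L_i \cong L_i^{**}$ (an isomorphism because $L_i$ is finitely generated free) with $j_i^{**}$; hence $\iota \circ j_i$ is split injective, in particular pure. Finally, $\iota$ is the map $\colim_i L_i \to M^{**}$ induced by the compatible family $(\iota \circ j_i)_{i \in \Lambda}$, so for every $R$-module $P$ the map $\iota \otimes_R \id_P$ is a filtered colimit of the injections $(\iota \circ j_i) \otimes_R \id_P$ and is therefore injective; thus $\iota$ is pure.

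The step I expect to be the real obstacle is the structural input in the second paragraph: producing the filtered union of finitely generated free \emph{summands}. Over a general local ring this seems to require passing through the Raynaud--Gruson identification of ORT modules with flat strictly Mittag-Leffler modules. If one prefers to keep the argument within the tools developed so far, the alternative plan is to use \autoref{cor:ML-OR-local-difference}(2) to put each cyclic submodule of $M$ inside a finitely generated free pure submodule, then invoke \autoref{lem:Auslander-Warfield-lemma} --- which is precisely where $\fm$-adic completeness genuinely enters --- to upgrade ``pure'' to ``direct summand'', at the cost of the extra bookkeeping needed to check that these summands form a directed family (equivalently, that $M$ is their \emph{filtered} union). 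Everything after that structural step is formal.
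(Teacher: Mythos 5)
Your proof of the first assertion is the same as the paper's: both arguments show that $\iota(x)\in IM^{**}$ forces $f(x)\in I$ for every $f\in\Hom_R(M,R)$, hence $\Tr_M(x)\subseteq I$, and then invoke $x\in\Tr_M(x)M$ to conclude $x\in IM$.

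For the ``moreover'' part you take a genuinely different, and correct, route. The paper stays close to the first half: it notes that by \autoref{lem:cyclic-purity-flat} purity would follow from cyclic purity once $M^{**}$ is flat, and it obtains flatness of $M^{*}$ (hence of $M^{**}$) from Raynaud--Gruson [Part II, (2.4.3)], which rests on Jensen's $\Ext$-vanishing theorem for flat modules over a complete Noetherian local ring --- that is the only point where completeness enters. You instead run the ORT hypothesis through \autoref{thm:SML-ORT} and \autoref{prop:ML-SML-modules-colimit-pure-split-free}(2) to write $M$ as a filtered union of finite free direct summands $L_i$, check via functoriality of the double dual that each $\iota\circ j_i$ is split injective, and conclude that $\iota$ is a filtered colimit of split injections, hence pure; all of these steps are sound. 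Both routes rely on a substantial external input (Jensen's theorem versus the hard direction, ORT $\Rightarrow$ SML, of RG's Prop.~2.3.4), so neither is more elementary, but yours proves something strictly stronger: it needs only that $R$ is local, with no Noetherian or completeness hypothesis. What the paper's route buys in exchange is the flatness of $\Hom_R(M,R)$ for flat $M$ over a complete local ring, a fact it reuses later (e.g.\ in the equivalence $(7)\Leftrightarrow(8)$ of \autoref{thm:ML-SML-ORT-intersection-flat}). Your fallback plan via \autoref{cor:ML-OR-local-difference}(2) and \autoref{lem:Auslander-Warfield-lemma} would also work, and the directedness worry you raise there is exactly what \autoref{prop:ML-SML-modules-colimit-pure-split-free} disposes of.
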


\begin{proof}
    Suppose $M$ is an ORT $R$-module and $I$ is an ideal
    of $R$. 
    We need to show that for all ideals $I$ of $R$, the induced map
    $M/IM \to M^{**}/IM^{**}$ is injective. Let $x \in M$. Under the canonical map $M \to M^{**}$, the image of $x$ is $\ev @ x \colon \Hom_R(M,R) \to R$. Suppose $\ev @ x \in IM^{**}$. Then there exist
    $\varphi_1, \dots, \varphi_n \in M^{**}$ and
    $i_1, \dots, i_n \in I$ such that
    $
    \ev @ x = i_1\varphi_1 + \dots + i_n\varphi_n.
    $
    This means that for all $f \in \Hom_R(M,R)$, $f(x) = i_1\varphi_1(f) + \dots + i_n\varphi_n(f) \in I$.
    Thus, $\Tr_M(x) \subseteq I$, and since $x \in \Tr_M(x)M$, it follows that $x \in IM$. This is precisely the injectivity of $M/IM \to M^{**}/IM^{**}$.

    For the second assertion, purity of $M \to M^{**}$ is guaranteed by
    cyclic purity and \autoref{lem:cyclic-purity-flat} if $M^{**}$ is a flat $R$-module. Note that an  ORT $R$-module is always $R$-flat by \autoref{rem:ORT-modules} (g). It is shown in \cite[Part II, (2.4.3)]{rg71} that if $(R, \fm)$ is a Noetherian local ring that is $\fm$-adically complete, then the $R$-dual $N^*$ is always a flat $R$-module for \emph{any} flat $R$-module $N$ using an $\Ext$-rigidity theorem due to Jensen \cite[Thm.\ 1]{Jensenlimvanishing}. Thus, over a Noetherian complete local ring $R$, if $M$ is a ORT $R$-module, then $M^*$ and hence also $M^{**}$ are $R$-flat, completing the proof of purity of $M \to M^{**}$ using its cyclic purity.
\end{proof}

\begin{remark}
\autoref{lem:ORT-cyclic-purity} will be important for \autoref{thm:ML-SML-ORT-intersection-flat}. In fact, \autoref{thm:ML-SML-ORT-intersection-flat} shows that a partial converse of \autoref{lem:ORT-cyclic-purity} holds. Namely, if $(R, \fm)$ is a Noetherian local ring that is $\fm$-adically complete and if $M$ is a flat $R$-module, then the cyclic purity of the canonical map $M \to \Hom_R(\Hom_R(M, R), R)$ implies that $M$ is an ORT $R$-module.
\end{remark}

\begin{lemma}
\label{lem:projective-ORT}
    Let $R$ be a ring. Let $M$ an ORT $R$-module and $N$ be a submodule of $M$.
    \begin{enumerate}
        \item[$(1)$] If $N$ is cyclically pure in $M$, then for all $x \in N$, $\Tr_N(x) = \Tr_M(x)$. Moreover, then $N$ is ORT.

        \item[$(2)$] Any projective $R$-module is ORT.

        \item[$(3)$] Suppose that any linear map $v \colon R \to M$ admits a stabilizer $u \colon R \to P$ such that $u$ factors through $v$ and $P$ is ORT (equivalently, $P$ is projective). Then $M$ is ORT.
    \end{enumerate}
\end{lemma}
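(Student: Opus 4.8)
The plan is to handle the three parts in sequence, with (1) carrying the main content and (2) and (3) reducing to it or to the definition of ORT. Throughout I will use freely that $\Tr_L(y)\subseteq c_L(y)$ for \emph{any} $R$-module $L$ and $y\in L$ (if $y\in IL$ then $f(y)\in If(L)\subseteq I$ for every $f\in\Hom_R(L,R)$, so the ideal $\Tr_L(y)$ lies in every such $I$), together with \autoref{rem:ORT-modules}(a), (g), (h).

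\emph{Part (1).} Let $\iota\colon N\hookrightarrow M$ be the cyclically pure inclusion and fix $x\in N$. Since $M$ is ORT it is Ohm-Rush (\autoref{rem:ORT-modules}(a)), so by \autoref{prop:content-homomorphisms-purity-flatness}(2) the cyclic purity of $\iota$ forces $N$ to be Ohm-Rush with $c_M\circ\iota=c_N$; in particular $c_N(x)=c_M(x)$. Now combine two inclusions. On one hand, $\Tr_N(x)\subseteq c_N(x)=c_M(x)=\Tr_M(x)$, the last equality because $M$ is ORT. On the other hand, \autoref{rem:ORT-modules}(h) applied to the map $\iota$ gives $\Tr_M(\iota(x))\subseteq\Tr_N(x)$, that is, $\Tr_M(x)\subseteq\Tr_N(x)$. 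Hence $\Tr_N(x)=\Tr_M(x)$. Finally, since $N$ is Ohm-Rush, $x\in c_N(x)N=\Tr_N(x)N$, so $N$ is ORT.

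\emph{Part (2).} I would use the dual basis lemma: if $M$ is projective, choose $\{(x_\alpha,f_\alpha)\}$ with $f_\alpha\in\Hom_R(M,R)$ such that for every $x\in M$ only finitely many $f_\alpha(x)$ are nonzero and $x=\sum_\alpha f_\alpha(x)x_\alpha$; then $x\in\bigl(\sum_\alpha f_\alpha(x)R\bigr)M\subseteq\Tr_M(x)M$, so $M$ is ORT. (Alternatively, a free module $F$ is ORT since for $x=r_1e_{i_1}+\dots+r_ne_{i_n}$ the coordinate functionals give $(r_1,\dots,r_n)\subseteq\Tr_F(x)\subseteq c_F(x)=(r_1,\dots,r_n)$ by \autoref{eg:free-OR}, whence $x\in\Tr_F(x)F$; a projective module is a split, hence cyclically pure, submodule of a free module, so (1) applies.)

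\emph{Part (3).} Fix $x\in M$ and let $v\colon R\to M$ be the map $1\mapsto x$. By hypothesis $v$ has a stabilizer $u\colon R\to P$ with $P$ ORT and with $u$ factoring through $v$, say $u=\psi\circ v$ for some $\psi\colon M\to P$ (note that, as a stabilizer has finitely presented codomain and ORT implies flat by \autoref{rem:ORT-modules}(g), "$P$ is ORT" and "$P$ is projective" coincide here, since a finitely presented flat module is projective). Because $u$ stabilizes $v$, $v$ factors through $u$ by \autoref{lem:stabilizers}(1), say $v=\varphi\circ u$ for some $\varphi\colon P\to M$. Since $P$ is ORT, $u(1)\in\Tr_P(u(1))P$; and by \autoref{rem:ORT-modules}(h) applied to $\psi$, $\Tr_P(u(1))=\Tr_P(\psi(x))\subseteq\Tr_M(x)$. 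Therefore
\[
x=v(1)=\varphi(u(1))\in\varphi\bigl(\Tr_P(u(1))P\bigr)\subseteq\Tr_P(u(1))\,\varphi(P)\subseteq\Tr_P(u(1))M\subseteq\Tr_M(x)M,
\]
so $M$ is ORT.

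These arguments are essentially bookkeeping with traces and factorizations; the only places requiring care are matching the directions of the inclusions from \autoref{rem:ORT-modules}(h) (which run with the direction of the map) against those from the content function and cyclic purity in (1), and correctly invoking \autoref{lem:stabilizers}(1) for which of $u,v$ factors through the other in (3). I expect no genuine obstacle beyond keeping these directions straight.
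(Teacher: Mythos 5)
Your proposal is correct and takes essentially the same route as the paper's proof: part (1) rests on cyclic purity of $\iota$ (you channel it through \autoref{prop:content-homomorphisms-purity-flatness}(2) to get $c_N = c_M\circ\iota$, whereas the paper applies cyclic purity directly to the ideal $\Tr_M(x)$ to conclude $x\in\Tr_M(x)N$ — the same mechanism), and part (3) is the identical argument via the two factorizations and \autoref{rem:ORT-modules}(h). The only difference is cosmetic: in (2) you use the dual basis lemma to handle projectives directly, while the paper proves the free case explicitly and then invokes (1); you sketch that route as well.
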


\begin{proof}
(1) Let $\iota \colon N \hookrightarrow M$ denote the inclusion. Then the composition
\[
\Hom_R(M,R) \xrightarrow{\Hom_{R}(\iota,R)} \Hom_R(N,R) \xrightarrow {\ev @ x} R    
\]
is also $\ev @ x$. Thus, $\Tr_M(x) \subseteq \Tr_N(x)$. Since $N$ is cyclically pure in $M$, the map $N/\Tr_M(x)N \to M/\Tr_M(x)M$ is injective. But $x \in \Tr_M(x)M$ because $M$ is an ORT $R$-module. By injectivity, it follows that $x \in \Tr_M(x)N$. Then by \autoref{rem:ORT-modules} (a), we get $\Tr_N(x) = c_N(x) \subseteq \Tr_M(x)$, and so, $\Tr_M(x) = \Tr_N(x)$. Moreover, we then also have $x \in \Tr_N(x)N$. Thus, $N$ is ORT.

(2) Since a projective module is a direct summand of a free module, by (1) it suffices to show that a free $R$-module $F$ is ORT. Let 
    $\{e_\alpha\}_{\alpha \in \sA}$ be a basis of $F$. For all
    $\alpha \in \sA$, let $\phi_\alpha \colon F \to R$ be the projection onto the $\alpha$-th factor.
    Take 
    $x \in F$ and write $x = \sum_\alpha r_\alpha \cdot e_\alpha$ uniquely in terms of the basis. Then
    $    x = \sum_\alpha r_\alpha \cdot e_\alpha = \sum_\alpha \phi_\alpha(x) \cdot e_\alpha \in \Tr_F(x)F.
    $
    That is, $F$ is ORT. Note also that $\Tr_F(x) = c_F(x) = (r_\alpha \colon \alpha \in \sA)$ by \autoref{eg:free-OR}.

    (3) Since $u \colon R \to P$ is a stabilizer for $v$, by definition $P$ is a finitely presented $R$-module. Since ORT modules are flat (\autoref{rem:ORT-modules} (g)), it follows that $P$ is ORT if and only if $P$ is projective.
    
    Since $u$ stabilizes $v$, we know $v$ factors through $u$ (\autoref{lem:stabilizers} (1)), say $v = \varphi \circ u$ for some $\varphi \colon P \to M$. By assumption, $u$ factors through $v$ that is there exists $\phi \colon M \to P$ such that  $u = \phi \circ v$. Let $x \coloneqq v(1)$ and $y \coloneqq u(1)$. Then $\textrm{$\varphi(y) = x$ and $\phi(x) = y$.}$ Thus, by \autoref{rem:ORT-modules} (h), 
    $\Tr_M(x) = \Tr_M(\varphi(y)) \subseteq \Tr_P(y)$ and $\Tr_P(y) = \Tr_P(\phi(x)) \subseteq \Tr_M(x)$.    
    That is, $\Tr_P(x) = \Tr_P(y)$. Since $P$ is ORT, $y \in \Tr_P(y)P$, and so, $x = \varphi(y) \in \Tr_P(y)M = \Tr_M(x)M$. Since every $x \in M$ is the image of $1 \in R$ under some linear $v \colon R \to M$, it follows that $M$ is ORT.
\end{proof}

\begin{remark}
    The proof of \autoref{lem:projective-ORT} shows that if $M$ is a direct summand of a free $R$-module $F$ with basis $\{e_\alpha\}_\alpha$, then for any $x \in M$, if we express $x = \sum_\alpha r_\alpha \cdot e_\alpha$, for $r_\alpha \in R$, then $\Tr_M(x)$ is the ideal generated by the $r_\alpha$.
\end{remark}

The next result discusses the behavior of the ORT property under restriction of scalars.

\begin{lemma}
\label{lem:composing-ORT}
Let $R \to S$ be a homomorphism of rings and $N$ be a $S$-module. If $S$ is an ORT $R$-module and $N$ is an ORT $S$-module, then $N$ is an ORT $R$-module.
\end{lemma}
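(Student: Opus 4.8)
The plan is to chase trace ideals up the tower $R \to S$, being careful about which ring each trace ideal is an ideal of. Throughout, write $\Tr^R_{(-)}$ for the trace ideal of an $R$-module and $\Tr^S_{(-)}$ for the trace ideal of an $S$-module, so that for instance $\Tr^S_N(y) = \{g(y) : g \in \Hom_S(N,S)\}$ is an ideal of $S$ while $\Tr^R_N(y) = \{h(y) : h \in \Hom_R(N,R)\}$ is an ideal of $R$. Fix $y \in N$; we must show $y \in \Tr^R_N(y)\,N$.

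First I would record the key compatibility. If $g \in \Hom_S(N,S)$ and $f \in \Hom_R(S,R)$, then $f \circ g \colon N \to R$ is $R$-linear, since $g$ is in particular $R$-linear by restriction of scalars; hence $(f\circ g)(y) \in \Tr^R_N(y)$. Consequently, for any element $s \in S$ of the form $s = g(y)$ with $g \in \Hom_S(N,S)$, one has
\[
\Tr^R_S(s) = \{f(s) : f \in \Hom_R(S,R)\} = \{(f\circ g)(y) : f \in \Hom_R(S,R)\} \subseteq \Tr^R_N(y).
\]
Now use the two hypotheses in turn. Since $N$ is ORT over $S$, we have $y \in \Tr^S_N(y)\,N$, so there are finitely many $y_1,\dots,y_n \in N$ and $s_1,\dots,s_n \in \Tr^S_N(y)$ with $y = \sum_{i=1}^n s_i y_i$. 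Each $s_i \in \Tr^S_N(y)$ is of the form $s_i = g_i(y)$ for some $g_i \in \Hom_S(N,S)$, so the displayed inclusion gives $\Tr^R_S(s_i) \subseteq \Tr^R_N(y)$ for every $i$. Since $S$ is ORT over $R$, we have $s_i \in \Tr^R_S(s_i)\,S \subseteq \Tr^R_N(y)\,S$, so we may write $s_i = \sum_j t_{ij} s_{ij}$ as a finite sum with $t_{ij} \in \Tr^R_N(y) \subseteq R$ and $s_{ij} \in S$. Substituting,
\[
y = \sum_i s_i y_i = \sum_{i,j} t_{ij} s_{ij} y_i = \sum_{i,j} t_{ij}(s_{ij} y_i),
\]
and since $N$ is an $S$-module each $s_{ij} y_i \in N$, while each $t_{ij} \in \Tr^R_N(y)$; hence $y \in \Tr^R_N(y)\,N$, as desired.

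There is no real obstacle here: the argument is a short trace-ideal manipulation, and the only thing to get right is the bookkeeping of which ring each ideal lives over, together with the observation that composing an $S$-linear functional $N \to S$ with an $R$-linear functional $S \to R$ yields a genuine $R$-linear functional $N \to R$, so that $\Tr^R_S(g(y)) \subseteq \Tr^R_N(y)$. (As a byproduct of the argument one also recovers $\Tr^R_N(y) = c_N(y)$, consistent with \autoref{rem:ORT-modules}(a).)
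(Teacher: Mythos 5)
Your proof is correct and follows essentially the same route as the paper's: expand $y$ using the ORT property of $N$ over $S$, then expand each coefficient $s_i = g_i(y)$ using the ORT property of $S$ over $R$, and observe that the resulting coefficients are values of composites $f \circ g_i \in \Hom_R(N,R)$. The only difference is presentational — you package the composition step as the inclusion $\Tr^R_S(g(y)) \subseteq \Tr^R_N(y)$, whereas the paper writes out the composite functionals $g_{ij}\circ f_i$ explicitly — but the argument is the same.
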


\begin{proof}
    Let $x \in N$. Since $N$ is an ORT $S$-module, one can choose $f_1, \dots, f_n \in \Hom_S(N,S)$ and $y_1, \dots, y_n \in N$ such that $x = f_1(x)\cdot y_1 + \dots + f_n(x) \cdot y_n$. Moreover, since $S$ is an ORT $R$-module, for each $i = 1, \dots, n$, one can choose $g_{i1}, \dots, g_{im_i} \in \Hom_R(S,R)$ and $s_{i1}, \cdots, s_{im_i} \in S$ such that
$    f_i(x) = g_{i1}(f_i(x))s_{i1} + \dots + g_{im_i}(f_i(x))s_{im_i}.
$
Then $x = \sum_{i = 1}^n \sum_{j = 1}^{m_i} (g_{ij} \circ f_i) (x) \cdot (s_{ij} \cdot y_i)$. Since $g_{ij} \circ f_i \in \Hom_R(N, R)$, $N$ is an ORT $R$-module.
\end{proof}

The following is now immediate.

\begin{corollary}
\label{cor:composition-ORT-ringmaps}
If $R \xrightarrow{\phi} S \xrightarrow{\varphi} T$ are ring homomorphisms such that $\phi,\varphi$ are ORT, then $\varphi\circ\phi$ is ORT.
\end{corollary}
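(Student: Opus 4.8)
The plan is to deduce this immediately from \autoref{lem:composing-ORT} by taking the $S$-module to be $T$ itself. First I would recall the unwinding of the hypotheses via \autoref{def:Ohm-Rush-trace}: saying $\phi$ is ORT means precisely that $S$ is an ORT $R$-module (with the $R$-module structure induced by $\phi$), and saying $\varphi$ is ORT means precisely that $T$ is an ORT $S$-module (with the $S$-module structure induced by $\varphi$). Likewise, the conclusion to be proved, that $\varphi \circ \phi$ is ORT, means precisely that $T$ is an ORT $R$-module for the $R$-module structure induced by the composite $\varphi \circ \phi$.

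Next I would record the only bookkeeping point, namely that the $R$-module structure on $T$ obtained by first viewing $T$ as an $S$-module via $\varphi$ and then restricting scalars along $\phi$ is literally the same as the $R$-module structure on $T$ obtained by restricting scalars along $\varphi \circ \phi$. This is just associativity of scalar multiplication: for $r \in R$ and $t \in T$, the element $\phi(r) \in S$ acts on $t$ by multiplication by $\varphi(\phi(r)) = (\varphi \circ \phi)(r)$. With this identification in hand, applying \autoref{lem:composing-ORT} to the ring map $\phi \colon R \to S$ and the $S$-module $N = T$ — whose hypotheses ($S$ an ORT $R$-module, $T$ an ORT $S$-module) are exactly what $\phi$ ORT and $\varphi$ ORT give us — yields that $T$ is an ORT $R$-module, i.e.\ that $\varphi \circ \phi$ is ORT.

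There is no genuine obstacle here; the corollary is a direct consequence of \autoref{lem:composing-ORT}, and the proof amounts to checking that the two a priori distinct $R$-module structures on $T$ coincide, which is automatic. I would therefore present the proof in a single short sentence invoking \autoref{lem:composing-ORT}.
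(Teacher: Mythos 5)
Your proposal is correct and matches the paper exactly: the paper states the corollary is immediate from \autoref{lem:composing-ORT} applied with $N = T$, which is precisely your argument. The bookkeeping point about the two $R$-module structures on $T$ coinciding is the only content, and you have it right.
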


We next show that the ORT condition satisfies pleasing permanence properties.

\begin{proposition}
    \label{prop:ORT-indeterminate}
Let $R \to S$, $R \to T$ be ring maps and $P$ be an $R$-module. Assume that $S$, $P$ are ORT $R$-modules.
Let $x$ be an indeterminate. Then we have the following:
\begin{enumerate}
    \item[$(1)$] $P \otimes_R T$ is an ORT $T$-module. In particular, $S \otimes_R T$ is an ORT $T$-module.%%removed smallskip
    
    \item[$(2)$] $P[x] \coloneqq P \otimes_R R[x]$ is an ORT $R$-module. 
    In particular, $S[x]$ is an ORT $R[x]$-module.%%removed smallskip
    
    \item[$(3)$] If $R, S$ are Noetherian, then $S[[x]]$ is an ORT $R[[x]]$-module.%%removed smallskip
    
    \item[$(4)$] If $R, S$ are Noetherian and $I$ is an ideal of $R$, 
    then the map of $I$-adic completions $\widehat{R}^I \to \widehat{S}^{IS}$ is ORT.
\end{enumerate}
\end{proposition}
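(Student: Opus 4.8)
The plan is to prove (1) and (4) directly, deduce (2) from (1), and deduce (3) from (4). Throughout I use the following reformulation of the ORT condition: an $R$-module $M$ is ORT if and only if for every $x\in M$ there are a finite free $R$-module $F$ and maps $\phi\colon M\to F$, $\psi\colon F\to M$ with $\psi\circ\phi(x)=x$. Indeed, writing $\phi=(g_1,\dots,g_n)$ with $g_i\in\Hom_R(M,R)$ gives $x=\sum_i g_i(x)\,\psi(e_i)\in\Tr_M(x)M$; conversely, if $x\in\Tr_M(x)M$ then $\Tr_M(x)=c_M(x)$ is finitely generated (\autoref{rem:ORT-modules}, \autoref{rem:content-finitely-generated}), so choosing generators $g_i(x)$ and an expression $x=\sum_i g_i(x)m_i$ produces $\phi$ and $\psi$. \textbf{For (1)}, note that $P$ is a flat strictly Mittag--Leffler $R$-module by \cite[Part II, Prop.\ 2.3.4]{rg71}. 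Given $y\in P\otimes_R T$, write $y=\sum_{i=1}^k p_i\otimes t_i$, let $v\colon R^k\to P$ send $e_i\mapsto p_i$, and note $y=(v\otimes_R\id_T)(z)$ with $z=\sum_i e_i\otimes t_i$. By Lazard's theorem $P=\colim_j L_j$ with each $L_j$ finite free; since $P$ is strictly Mittag--Leffler, $v$ admits a stabilizer that factors through $v$, so \autoref{lem:stabilizers}(2) yields an index $j$ and a map $v_j\colon R^k\to L_j$ with $v=u_j\circ v_j$, with $v_j$ a stabilizer of $v$, and with $v_j=\rho\circ v$ for some $\rho\colon P\to L_j$. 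Then $\rho\circ u_j$ is the identity on $\im(v_j)$, hence $(\rho\otimes\id_T)\circ(u_j\otimes\id_T)$ is the identity on $\im(v_j\otimes_R\id_T)$; since $L_j\otimes_R T$ is finite free over $T$ and $y=(u_j\otimes\id_T)\bigl((v_j\otimes\id_T)(z)\bigr)$, the maps $\Phi:=\rho\otimes\id_T$ and $\Psi:=u_j\otimes\id_T$ satisfy $\Psi\circ\Phi(y)=y$. By the reformulation, $P\otimes_R T$ is ORT over $T$.

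\textbf{For (2) and (3).} The ring map $R\to R[x]$ is free, hence projective, as an $R$-module, so it is ORT by \autoref{lem:projective-ORT}. By (1) with $T=R[x]$, $P\otimes_R R[x]$ and $S\otimes_R R[x]$ are ORT over $R[x]$; composing with $R\to R[x]$ and using \autoref{lem:composing-ORT} shows $P[x]$ is ORT over $R$, while $S[x]$ is ORT over $R[x]$. For (3), the Hilbert basis theorem makes $R[x]$ and $S[x]$ Noetherian, $S[x]$ is ORT over $R[x]$ by (2), and $R[[x]]=\widehat{R[x]}^{(x)}$, $S[[x]]=\widehat{S[x]}^{(x)S[x]}$, so (3) is exactly (4) applied to $R[x]\to S[x]$ with the ideal $(x)$. \textbf{For (4)}, write $\widehat R=\widehat R^{\,I}$ and $\widehat S=\widehat S^{\,IS}$; then $\widehat R$ is Noetherian and $\widehat S$ is flat over $\widehat R$ by \autoref{lem:completion-purity}(1) applied to the flat $R$-module $S$, so it remains to establish the ORT property.

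My strategy for the ORT property in (4) is to reduce to the case of a complete Noetherian local base ring, where by \autoref{thm:ML-SML-ORT-intersection-flat} the ORT property of a flat module coincides with the Mittag--Leffler property; the latter, unlike ORT, is Zariski-local, is preserved under arbitrary base change (\autoref{rem:Mittag-Leffler}) and restriction of scalars along ML maps, and agrees with strict Mittag--Leffler (\autoref{prop:ML-SML}). The steps are: (i) show that $\widehat S$ is a Mittag--Leffler $\widehat R$-module by checking this locally on $\Spec\widehat R$ and, at each point, after passing to the completion, identifying the base ring with the completion of a suitable localization of $R$ and the relevant module with a flat module obtained from the analogous completion of $S$, and invoking that $S$ is Mittag--Leffler over $R$ (since $S$ is ORT) together with the stability properties just recalled; (ii) upgrade the globally established Mittag--Leffler property of $\widehat S$ to the ORT property, using that Mittag--Leffler modules have open pure loci (\autoref{cor:ML-open-pure-loci}), that $\widehat S$ is locally ORT (again via the complete-local coincidence), and the local-to-global and descent results \autoref{thm:local-to-global-ML-OR} and \autoref{cor:descent-cyclic-stabilizers}.

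\textbf{Expected main obstacle.} Step (ii) is the delicate one. Over a general Noetherian ring the implication ``Mittag--Leffler $\Rightarrow$ ORT'' is genuinely more than a local statement: the ORT property — equivalently, for flat modules, the strict Mittag--Leffler property — is \emph{not} Zariski-local, and strict Mittag--Leffler does not even satisfy faithfully flat descent, so one cannot simply localize. What must be exploited is that $\widehat S$ is flat, \emph{globally} Mittag--Leffler, and locally ORT, which is precisely the configuration that the local-to-global/descent machinery of the preceding section is built to handle. A secondary technical point is the identification in step (i): since $S$ need not be module-finite over $R$, $\widehat S$ is a true completion rather than the base change $S\otimes_R\widehat R$, so one must verify with care that completing $\widehat R$ at a maximal ideal recovers the completion of a localization of $R$ (and similarly for $S$) compatibly enough to transport the Mittag--Leffler property of $S$ to $\widehat S$.
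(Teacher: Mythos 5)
Parts (1) and (2) are correct, and (1) is proved by a genuinely different route. The paper proves (1) by a bare-hands induction on the number of elementary tensors in an element of $P\otimes_R T$, repeatedly applying the ORT property of $P$ coefficient by coefficient; you instead invoke $\text{ORT}=\text{flat SML}$ (\autoref{thm:SML-ORT}), factor the map $v\colon R^k\to P$ through a finite free Lazard approximant via a stabilizer that also factors through $v$ (\autoref{lem:stabilizers}), and base-change the resulting retraction. Your argument is cleaner and in effect proves directly that flat SML is preserved under arbitrary base change; the paper's computation is more elementary but messier. Your reformulation of ORT via a factorization $x\mapsto\psi(\phi(x))=x$ through a finite free module is correct, and your treatment of (2) (ORT over $R[x]$ by (1), then ORT over $R$ by \autoref{lem:composing-ORT}) is if anything more complete than the paper's one-line proof.

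Parts (3) and (4) contain a genuine gap. You invert the paper's logical order: the paper proves (3) \emph{directly} and then deduces (4) from (3) and (1), whereas you reduce (3) to (4) and then attempt (4) by a local-to-global argument that does not close. The paper's proof of (3) shows that $\Tr_{S[[x]]}(f)S[[x]]$ is $x$-adically closed (Krull intersection, since $S[[x]]$ is Noetherian and $x$ lies in its Jacobson radical) and then proves $f\in\Tr_{S[[x]]}(f)S[[x]]+x^nS[[x]]$ by induction on $n$, killing one power-series coefficient at a time using functionals $\phi[[x]]$ lifted from $\Hom_R(S,R)$. The paper's proof of (4) then rests on the identification $\widehat{R}^I\cong R[[x_1,\dots,x_n]]/(x_1-i_1,\dots,x_n-i_n)$ and $\widehat{S}^{IS}\cong S[[x_1,\dots,x_n]]\otimes_{R[[x_1,\dots,x_n]]}\widehat{R}^I$, which exhibits $\widehat{R}^I\to\widehat{S}^{IS}$ as a base change of the power-series map handled by (3); this is the key idea your proposal is missing. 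Your independent route to (4) fails at both steps you flag. In step (i), localizing $\widehat{R}^I$ at a non-maximal prime does not produce the completion of a localization of $R$, and $\widehat{S}^{IS}$ is not a base change of $S$, so there is no mechanism in the paper for transporting the ML property of $S$ to the local rings of $\widehat{S}^{IS}$; moreover, even if local ML were established at every prime, ML does not globalize for free -- \autoref{thm:local-to-global-ML-OR} additionally requires openness of pure loci, which is essentially what you are trying to prove. In step (ii), no result in the paper upgrades ``flat, globally ML, locally ORT'' to ORT: \autoref{thm:local-to-global-ML-OR} and \autoref{cor:descent-cyclic-stabilizers} concern ML and Ohm-Rush, not SML/ORT, and the authors explicitly note that SML fails faithfully flat descent, so there is no descent or local-to-global machinery available for the ORT conclusion. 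To repair the proof you should prove (3) directly (the coefficient-wise induction plus the closedness of the trace ideal) and then obtain (4) as a base change of (3).
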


\begin{proof}
(1) Every element $f \in P \otimes_R T$ can be expressed as a finite sum of elementary tensors $f = \sum_{i = 1}^n p_i \otimes t_i$, where $p_i \in P$ and $t_i \in T$. We will use induction on $n$, the number of elementary tensors. If $n = 1$, then $f = p \otimes t$. Since $P$ is ORT, there exist $\phi_1, \dots, \phi_m \in \Hom_R(P,R)$ and $a_1,\dots,a_m \in P$ such that
$
p = \phi_1(p)\cdot a_1 + \dots + \phi_m(p)\cdot a_m.
$
Then $f = p \otimes t = \phi_1 \otimes_R \id_T(p \otimes t)\cdot(a_1 \otimes 1) + \dots + \phi_m \otimes_R \id_T(p \otimes t)\cdot(a_m \otimes 1)$. Technically, $\phi_i \otimes_R \id_T$ is a $T$-linear map from $P \otimes_R T \to R \otimes_R T$. But we can identify $R \otimes_R T$ canonically with $T$ and hence consider $\phi_i \otimes \id_T$ as a $T$-linear map $P \otimes_R T \to T$. With this identification, the base case $n=1$ follows. 

Now suppose $f =  \sum_{i = 1}^n p_i \otimes t_i$.
Choose $\phi_1, \dots, \phi_m \in \Hom_R(P,R)$ and $a_1, \dots, a_m \in P$ such that
$
p_n = \phi_1(p_n)\cdot a_1 + \dots + \phi_m(p_n)\cdot a_m.
$
Then
\begin{align*}
f - \sum_{i = 1}^m \phi_i \otimes_R \id_T(f) \cdot (a_i \otimes 1) &=
f - \sum_{i = 1}^m\bigg{[}\phi_i \otimes_R \id_T\bigg{(}\sum_{j=1}^n p_j \otimes t_j\bigg{)} \cdot (a_i \otimes 1)\bigg{]}\\
&= f - \sum_{i=1}^m\bigg{(}\sum_{j=1}^n \phi_i(p_j) \otimes t_j\bigg{)}\cdot (a_i \otimes 1)\\
%&= f - \sum_{i=1}^m \sum_{j=1}^n \phi_i(p_j)\cdot a_i \otimes t_j\\
&= \bigg{(}\sum_{j=1}^n p_j \otimes t_j\bigg{)}  - \sum_{j=1}^n\bigg{(}\sum_{i=1}^m \phi_i(p_j)\cdot a_i\bigg{)} \otimes t_j\\
%&= \bigg{(}\sum_{j=1}^n p_j \otimes t_j\bigg{)} - \sum_{j=1}^n\bigg{(}\sum_{i=1}^m \phi_i(p_j)\cdot a_i\bigg{)} \otimes t_j\\
&= \sum_{j=1}^{n-1} \bigg{(}p_j - \sum_{i=1}^m \phi_i(p_j) \cdot a_i\bigg{)} \otimes t_j
\end{align*}
is an element of $P \otimes_R T$ that can be expressed as a sum of at most $n-1$ elementary tensors. By induction, there exist $\psi_1, \dots, \psi_k \in \Hom_T(P \otimes_R T, T)$ and $g_1, \dots, g_k \in P \otimes_R T$ such that $f' \coloneqq f - \sum_{i = 1}^m \phi_i \otimes \id_T(f) \cdot (a_i \otimes 1)$ satisfies
$
f' = \sum_{j = 1}^k \psi_j(f') \cdot g_j.
$
Substituting the expression for $f'$ we then get
\begin{align*}
f - \sum_{i = 1}^m \phi_i \otimes \id_T(f) \cdot (a_i \otimes 1) &=
\sum_{j=1}^k \psi_j\bigg{(}f - \sum_{i = 1}^m \phi_i \otimes \id_T(f) \cdot (a_i \otimes 1)\bigg{)}\cdot g_j\\
&=\sum_{j=1}^k\psi_j(f) \cdot g_j - \sum_{j=1}^k\sum_{i=1}^m\psi_j\big{(}\phi_i \otimes \id_T(f) \cdot (a_i\otimes 1)\big{)} \cdot g_j.
\end{align*}
Thus,
\begin{equation}
\label{eq:horrible-equation}
f = \sum_{i = 1}^m \phi_i \otimes_R \id_T(f)\cdot (a_i \otimes 1) + \sum_{j=1}^k\psi_j(f) \cdot g_j - \sum_{j=1}^k\sum_{i=1}^m\psi_j\big{(}\phi_i \otimes \id_T(f) \cdot (a_i\otimes 1)\big{)} \cdot g_j.
\end{equation}
The point now is that for all $i = 1, \dots, m$ and all
$j = 1, \dots, k$, the element 
$\psi_j\big{(}\phi_i \otimes \id_T(f) \cdot (a_i\otimes 1)\big{)} = \psi_j\big{(}a_i \otimes (\phi_i \otimes \id_T(f))\big{)}$ 
(here we are identifying the codomain, $R \otimes_R T$, of $\phi_i \otimes \id_T$
with $T$) is evaluation at $f$ of the 
$T$-linear map
\[
P \otimes_R T \xrightarrow{\phi_i \otimes \id_T} T \longrightarrow P \otimes_R T \xrightarrow{\psi_j} T,
\]
where the middle map is given by
\begin{align*}
    T &\to P \otimes_R T\\
    t &\mapsto a_i \otimes t.
\end{align*}
This shows that $f \in \Tr_{P \otimes_R T}(f)(P \otimes_R T)$ by \autoref{eq:horrible-equation}. Consequently, $P \otimes_R T$ is an ORT $T$-module by induction.

(2) This statement follows from (1) by base change along $R \to R[x]$, that is, take $T = R[x]$ in part (1).

(3) Given $\phi \in \Hom_R(S,R)$, we will let 
$\phi[[x]] \colon S[[x]] \to R[[x]]$ be the $R[[x]]$-linear map given by
$
\phi[[x]](\sum_i a_ix^i) = \sum_i \phi(a_i)x^i.
$
One can also think of $\phi[[x]]$ as the $xR[x]$-adic completion of the $R[x]$-linear map $\phi[x] \colon S[x] \to R[x]$.

Let $f \in S[[x]]$. We first claim that
$
\Tr_{S[[x]]}(f)S[[x]] = \bigcap_{n \geq 0} (\Tr_{S[[x]]}(f)S[[x]] + x^nS[[x]]).
$
Here $\Tr_{S[[x]]}$ is computed via the $R[[x]]$-module structure on $S[[x]]$. Since $xS[x]$ is a finitely generated ideal and $S[[x]]$ is the completion of $S[x]$ with respect to $xS[x]$, it follows by \cite[\href{https://stacks.math.columbia.edu/tag/05GG}{Tag 05GG}]{stacks-project} that $S[[x]]$ is $xS[[x]]$-adically complete. Consequently, $xS[[x]]$ is contained in the Jacobson radical of $S[[x]]$ \cite[Thm.\ 8.2]{MatsumuraCommutativeRingTheory}, and now, because $S[[x]]$ is Noetherian, $\Tr_{S[[x]]}(f)S[[x]]$ is closed in $S[[x]]$ in the $xS[[x]]$-adic topology by Krull's intersection theorem \cite[Thm.\ 8.10(i)]{MatsumuraCommutativeRingTheory}. But the closure of $\Tr_{S[[x]]}(f)S[[x]]$ in the $xS[[x]]$-adic topology is precisely $\bigcap_{n \geq 0} (\Tr_{S[[x]]}(f)S[[x]] + x^nS[[x]])$, proving our claim. 

Thus, in order to show that
$f \in \Tr_{S[[x]]}(f)S[[x]]$, it suffices to show that for all integers $n \geq 0$, 
$
f \in \Tr_{S[[x]]}(f)S[[x]] + x^nS[[x]].
$

We proceed by induction on $n$ with the case $n = 0$ being clear because
$\Tr_{S[[x]]}(f)S[[x]] + x^0S[[x]] = S[[x]]$. Suppose the statement holds for $n \geq 0$. Then
choose maps $\psi_1, \dots, \psi_m \in \Hom_{R[[x]]}(S[[x]], R[[x]])$ and 
$g_1,\dots,g_m \in S[[x]]$ such that $
f - \sum_{i=1}^m g_i\psi_i(f) \in x^nS[[x]].
$
Let $a$ be the coefficient of $x^n$ in $f - \sum_{i=1}^m g_i\psi_i(f)$. Since $S$ is an ORT $R$-module, there exist $\phi_1, \dots, \phi_k \in \Hom_R(S,R)$ and $s_1,\dots, s_k \in S$ such that $a = s_1 \phi_1(a) + \dots + s_k \phi_k(a)$.
Then
\[
\big{(}f - \sum_{i=1}^m g_i\psi_i(f)\big{)} - \sum_{j=1}^ks_j\phi_j[[x]](f - \sum_{i=1}^m g_i\psi_i(f)) \in x^{n+1}S[[x]]
\]
because now we have killed the $ax^n$ term.
Now 
\begin{align*}
    &\big{(}f - \sum_{i=1}^m g_i\psi_i(f)\big{)} - \sum_{j=1}^ks_j\phi_j[[x]](f - \sum_{i=1}^m g_i\psi_i(f)) =\\
    & f - \bigg{(}\sum_{i=1}^m g_i\psi_i(f) + \sum_{j=1}^ks_j\phi_j[[x]](f) -
    \sum_{j=1}^k\sum_{i=1}^ms_j\phi_j[[x]](g_i\psi_i(f))\bigg{)}.
\end{align*}
 As in (1), $\phi_j[[x]](g_i\psi_i(f))$ is the $R[[x]]$-linear map $S[[x]] \xrightarrow{\psi_i} R[[x]] \to S[[x]] \xrightarrow{g_i\cdot} S[[x]]
 \xrightarrow{\phi_j[[x]]} R[[x]]$
 evaluated at $f$. Thus $f \in \Tr_{S[[x]]}(f)S[[x]] + x^{n+1}S[[x]]$,
 finishing the proof of the inductive step.

 (4) Suppose $I = (i_1,\dots,i_n)$. Then $IS$ is generated by the
 images of $i_j$ in $S$. By \cite[Thm.\ 8.12]{MatsumuraCommutativeRingTheory}, the ring map
 $
 \varphi : R[[x_1,\dots,x_n]] \to \widehat{R}^I
$
that sends $x_j \mapsto i_j$ is surjective with kernel $(x_1 - i_1, \dots, x_n - i_n)$. Then
$
S[[x_1,\dots,x_n]] \otimes_{R[[x_1,\dots,x_n]]}\widehat{R}^I \cong
\frac{S[[x_1,\dots,x_n]]}{(x_1-i_1,\dots,x_n-i_n)S[[x_1,\dots,x_n]]}
\cong \widehat{S}^{IS},
$
where the second isomorphism again follows by \cite[Thm.\ 8.12]{MatsumuraCommutativeRingTheory}.
The upshot is that $\widehat{R}^I \to \widehat{S}^{IS}$ can be identified with the base change of the canonical map $R[[x_1,\dots,x_n]] \to S[[x_1,\dots,x_n]]$ (that sends $x_j \mapsto x_j$) along $\varphi \colon R[[x_1,\dots,x_n]] \twoheadrightarrow \widehat{R}^I$. By a repeated application of (3), $R[[x_1,\dots,x_n]] \to S[[x_1,\dots,x_n]]$ is ORT, and by (1), the ORT property is preserved under base change. Thus, $\widehat{R}^I \to \widehat{S}^{IS}$ is ORT as well.
\end{proof}

\begin{corollary}
\label{cor:completions-ORT}
    Let $\varphi \colon (R, \fm) \to (S, \mathfrak n)$ be a local homomorphism of Noetherian local rings such that $\sqrt{\fm S} = \mathfrak n$. If $\varphi$ is ORT, then $\widehat{R}^{\fm} \to \widehat{S}^{\mathfrak n}$ is also ORT.
\end{corollary}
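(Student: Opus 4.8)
The plan is to reduce this immediately to \autoref{prop:ORT-indeterminate}(4), the only extra input being that the $\fm S$-adic and $\mathfrak n$-adic topologies on $S$ coincide. First I would apply \autoref{prop:ORT-indeterminate}(4) with $I = \fm$: since $R$ and $S$ are Noetherian and, by hypothesis, $S$ is an ORT $R$-module, the conclusion of that proposition gives that the induced map on $\fm$-adic completions $\widehat{R}^{\fm} \to \widehat{S}^{\fm S}$ is ORT. So everything comes down to identifying $\widehat{S}^{\fm S}$ with $\widehat{S}^{\mathfrak n}$ compatibly with the map from $\widehat{R}^{\fm}$.

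For that identification I would argue as follows. Because $\varphi$ is a local homomorphism we have $\fm S \subseteq \mathfrak n$, and because $S$ is Noetherian the radical $\mathfrak n = \sqrt{\fm S}$ is finitely generated, so $\mathfrak n^{k} \subseteq \fm S$ for some integer $k > 0$. Thus the two descending chains of ideals $\{(\fm S)^{m}\}_{m \geq 0}$ and $\{\mathfrak n^{m}\}_{m \geq 0}$ are mutually cofinal, i.e.\ they define the same linear topology on $S$. Consequently the canonical map $\widehat{S}^{\fm S} \to \widehat{S}^{\mathfrak n}$ between the associated completions is an isomorphism, and it is manifestly an isomorphism of $\widehat{R}^{\fm}$-algebras since both completions receive compatible maps from $\widehat{R}^{\fm}$. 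Composing this isomorphism with the ORT map $\widehat{R}^{\fm} \to \widehat{S}^{\fm S}$ from the previous paragraph exhibits $\widehat{R}^{\fm} \to \widehat{S}^{\mathfrak n}$ as an ORT ring map, which is the claim.

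I do not anticipate a genuine obstacle: the argument is essentially a bookkeeping matter of matching \autoref{prop:ORT-indeterminate}(4)'s output, which is phrased in terms of $IS$-adic completion, to the statement's $\mathfrak n$-adic completion. The only point that deserves a sentence of care is the cofinality of the filtrations $\{(\fm S)^m\}$ and $\{\mathfrak n^m\}$, which uses precisely the Noetherian hypothesis on $S$ together with $\sqrt{\fm S} = \mathfrak n$; without Noetherianity one could not pass from the radical equality to a power containment and the two completions might differ.
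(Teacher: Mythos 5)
Your proposal is correct and is essentially the paper's own argument: apply \autoref{prop:ORT-indeterminate}(4) with $I=\fm$ and then identify $\widehat{S}^{\fm S}$ with $\widehat{S}^{\mathfrak n}$ via $\sqrt{\fm S}=\mathfrak n$. The cofinality justification you supply (Noetherianity giving $\mathfrak n^k\subseteq \fm S$) is exactly the detail the paper leaves implicit.
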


\begin{proof}
    By \autoref{prop:ORT-indeterminate} (4),
    $\widehat{R}^{\fm} \to \widehat{S}^{\fm S}$ is ORT. But
    $\widehat{S}^{\fm S} \cong \widehat{S}^{\mathfrak n}$ because
    $\sqrt{\fm S} = \mathfrak n$.
\end{proof}

We also obtain that the ORT property is preserved under localization. Note that this was already observed in \cite[7.1(b)]{OhmRu-content}.

\begin{corollary}
    \label{cor:ORT-localization}
    Let $R$ be a ring and $M$ be an ORT $R$-module. Then for a multiplicative set $S$ of $R$, $S^{-1}M$ is an ORT $S^{-1}R$-module. Moreover, for all $x \in M$ and $s \in S$, 
   $
    \Tr_{S^{-1}M}(x/s) = \Tr_{M}(x)(S^{-1}R).    
    $
\end{corollary}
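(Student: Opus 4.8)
The plan is to deduce this from the flatness of $M$ together with \autoref{prop:content-homomorphisms-purity-flatness}(4), after checking the ORT condition for $S^{-1}M$ directly. First I would record the two standard consequences of the ORT hypothesis: $M$ is flat by \autoref{rem:ORT-modules}(g), and $M$ is Ohm-Rush with $\Tr_M(x) = c_M(x)$ for every $x \in M$ by \autoref{rem:ORT-modules}(a). Flatness of $M$ is inherited by $S^{-1}M$ over $S^{-1}R$, and \autoref{prop:content-homomorphisms-purity-flatness}(4) already yields $c_M(x)(S^{-1}R) = c_{S^{-1}M}(x/s)$ for all $x \in M$ and $s \in S$; so the only thing left is to show $S^{-1}M$ is ORT, since then both sides of the claimed identity can be rewritten in terms of content via \autoref{rem:ORT-modules}(a).

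To see that $S^{-1}M$ is an ORT $S^{-1}R$-module I would argue straight from the definition. Take $y \in S^{-1}M$ and write $y = x/s$ with $x \in M$, $s \in S$. Using the ORT property of $M$, choose $\varphi_1,\dots,\varphi_n \in \Hom_R(M,R)$ and $m_1,\dots,m_n \in M$ with $x = \sum_{i=1}^n \varphi_i(x)\,m_i$. Each $\varphi_i$ localizes to an $S^{-1}R$-linear functional $S^{-1}\varphi_i \colon S^{-1}M \to S^{-1}R$ satisfying $(S^{-1}\varphi_i)(x/s) = \varphi_i(x)/s \in \Tr_{S^{-1}M}(x/s)$. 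Localizing the displayed identity and dividing by $s$ gives
\[
y \;=\; \frac{x}{s} \;=\; \sum_{i=1}^n \frac{\varphi_i(x)}{s}\cdot\frac{m_i}{1} \;=\; \sum_{i=1}^n (S^{-1}\varphi_i)(x/s)\cdot\frac{m_i}{1} \;\in\; \Tr_{S^{-1}M}(x/s)\cdot S^{-1}M,
\]
and since $y$ was arbitrary this is exactly the assertion that $S^{-1}M$ is ORT.

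Finally I would assemble the pieces: for $x \in M$ and $s \in S$, applying \autoref{rem:ORT-modules}(a) to the Ohm-Rush modules $S^{-1}M$ and $M$ gives $\Tr_{S^{-1}M}(x/s) = c_{S^{-1}M}(x/s)$ and $\Tr_M(x) = c_M(x)$, and then \autoref{prop:content-homomorphisms-purity-flatness}(4) identifies $c_{S^{-1}M}(x/s)$ with $c_M(x)(S^{-1}R) = \Tr_M(x)(S^{-1}R)$. I do not anticipate a serious obstacle: the one non-formal ingredient is the commutation of content with localization, which relies on flatness of $M$ and is already available from \autoref{prop:content-homomorphisms-purity-flatness}(4), while the ORT-ness of $S^{-1}M$ is a one-line manipulation of functionals. (One could avoid invoking content for the inclusion $\Tr_M(x)(S^{-1}R)\subseteq\Tr_{S^{-1}M}(x/s)$, which is immediate from the $S^{-1}\varphi_i$, but the reverse inclusion genuinely needs flatness, so routing through \autoref{prop:content-homomorphisms-purity-flatness}(4) is the efficient path.)
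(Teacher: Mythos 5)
Your proof is correct. The second half — identifying $\Tr$ with $c$ on both sides via \autoref{rem:ORT-modules}(a) and then invoking \autoref{prop:content-homomorphisms-purity-flatness}(4) to commute content with localization — is exactly how the paper concludes. Where you diverge is in establishing that $S^{-1}M$ is ORT: the paper simply cites the general base-change stability of the ORT property, \autoref{prop:ORT-indeterminate}(1) with $T = S^{-1}R$, whereas you prove it by hand. Your direct argument is legitimate and in fact cleaner for this special case: since every element of $S^{-1}M$ is a single fraction $x/s$ rather than a general sum of elementary tensors of $M \otimes_R T$, you avoid entirely the induction on the number of elementary tensors that makes the proof of \autoref{prop:ORT-indeterminate}(1) somewhat laborious, and the localized functionals $S^{-1}\varphi_i$ land in $\Hom_{S^{-1}R}(S^{-1}M, S^{-1}R)$ as needed, so $(S^{-1}\varphi_i)(x/s) \in \Tr_{S^{-1}M}(x/s)$ and your displayed identity does the job. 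The trade-off is only one of economy versus self-containedness: the paper's route reuses a proposition it needs anyway for genuine base changes, while yours is shorter if one only cares about localization.
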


\begin{proof}
    Base change along $R \to S^{-1}R$ gives us that $S^{-1}M$ is an ORT $R$-module by \autoref{prop:ORT-indeterminate} (1). Since ORT modules are flat (\autoref{rem:ORT-modules} (g)) and Ohm-Rush (\autoref{rem:ORT-modules} (a)), it follows by \autoref{prop:content-homomorphisms-purity-flatness} (4) and \autoref{rem:ORT-modules} (a) that for all $x \in M$ and $s \in S$,
    \[
    \Tr_M(x)(S^{-1}R) = c_M(x)(S^{-1}R) = c_{S^{-1}M}(x/s) = \Tr_{S^{-1}M}(x/s). \qedhere
    \]
\end{proof}

\subsection{Intersection flatness}
The notion of intersection flatness seems to have been first studied in the special case of the Frobenius map in \cite{HochsterHunekeFRegularityTestElementsBaseChange}. The general theory of intersection flatness was developed recently in \cite{HochsterJeffriesintflatness}.

\begin{notation}
\label{not:for-IF}
If $L,M$ are $R$-modules and $U$ is a submodule of $L$, the notation $UM$ refers to the image of $U \otimes_R M \to L \otimes_R M$ induced by $U \hookrightarrow L$.
\end{notation}

\begin{definition}\cite[Sec.\ 5]{HochsterJeffriesintflatness}
\label{def:IF}
    Let $R$ be a ring and $M$ an $R$-module. We say  that $M$ is \emph{intersection flat} if for any finitely generated $R$-module $L$ and any collection $\{U_i\}_{i \in \Lambda}$ of $R$-submodules of $L$, 
    \begin{equation}
    \label{eq:IF}
    \left(\bigcap_{i \in \Lambda} U_i\right)M = \bigcap_{i \in \Lambda} (U_i M).
    \end{equation}
    A ring map $R \to S$ is \emph{intersection flat} if $S$ is an intersection flat $R$-module.
\end{definition}

\begin{remark}
\label{rem:IF-OR}
Suppose $M$ is an intersection flat $R$-module. 
\begin{enumerate}%[itemsep = 1mm]
    \item Taking $L = R$ in 
    \autoref{def:IF}, we see that if $\{I_j\}_{j \in \Lambda}$ is a 
    collection of ideals of $R$, then $\left(\bigcap_{j \in \Lambda} I_j\right)M = \bigcap_{j \in \Lambda} I_jM$. Consequently, an intersection flat module is Ohm-Rush (\autoref{lem:OR-weak-intersection-flat}).

    \item $M$ is a flat $R$-module by \cite[Prop.\ 5.5]{HochsterJeffriesintflatness}. In fact, by loc. cit. flatness follows provided \autoref{eq:IF} holds whenever $\Lambda$ (the index set) is finite.

    \item In \autoref{def:IF}, it suffices to assume that $L$ is a free module of finite rank by \cite[Prop.\ 5.6]{HochsterJeffriesintflatness}.
    
    \item If $R \to S$ is an intersection flat ring map and $M$ is an intersection flat $S$-module, then $M$ is an intersection flat $R$-module. This follows by \cite[Prop.\ 5.7(a)]{HochsterJeffriesintflatness}.
\end{enumerate}
\end{remark}

\subsection{Connections between various notions}
We have introduced five different notions for modules, namely Mittag-Leffler (ML), strictly Mittag-Leffler (SML), Ohm-Rush, Ohm-Rush trace (ORT) and intersection flat. We have observed some relationships between these notions in the previous subsections (for example, see \autoref{rem:Mittag-Leffler} (a), \autoref{prop:ML-SML}, \autoref{cor:ML-implies-OR}, \autoref{rem:ORT-modules} (a)). Our goal in this subsection is to develop further connections between these notions. Many of these connections can only be made under an underlying flatness assumption.

Recall that if (P) is a property of modules, we say that an $R$-module $M$ is \emph{universally} (P) if for any $R$-algebra $R \to S$, we have that $S \otimes_R M$ has (P) as an $S$-module.

The first connection is between the notions of ML and intersection flat modules.

\begin{theorem}\label{thm:MittagORIF}
Let $R$ be a ring and $M$ a flat $R$-module.  The following are equivalent: 
\begin{enumerate}%[itemsep = 1mm]
    \item[$(1)$] $M$ is ML.
    \item[$(2)$] $M$ is universally intersection flat.
    \item[$(3)$] $M$ is intersection flat.
\end{enumerate}
If $R$ is local (not necessarily Noetherian) then $(1)-(3)$ is equivalent to
the following condition:
\begin{enumerate}
    \item[$(4)$] For any finitely generated submodule  $P$ of 
    $M$, there exists a finitely generated submodule $Q$ of $M$ containing $P$
    such that $Q$ is free and $Q \subseteq M$ is a pure extension of
    $R$-modules.
\end{enumerate}
\end{theorem}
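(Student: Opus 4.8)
The plan is to prove the cycle of implications $(1) \Rightarrow (2) \Rightarrow (3) \Rightarrow (1)$ and then, under the local hypothesis, $(1) \Leftrightarrow (4)$.

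\textbf{Proof of $(1) \Rightarrow (2)$.} Suppose $M$ is a flat ML $R$-module and let $R \to S$ be any ring map. By \autoref{rem:Mittag-Leffler} (b), $S \otimes_R M$ is a ML $S$-module, and it is clearly $S$-flat. So it suffices to prove the single implication ``flat $+$ ML $\implies$ intersection flat''. Here I would argue directly from the definition. Let $L$ be a finitely generated $R$-module (by \autoref{rem:IF-OR} (c) one may even take $L$ finite free) and let $\{U_i\}_{i \in \Lambda}$ be a family of submodules. The inclusion $(\bigcap_i U_i)M \subseteq \bigcap_i (U_iM)$ is automatic, so fix $x \in \bigcap_i (U_i M)$ and write it (using flatness, which identifies $U_iM$ with $U_i \otimes_R M \subseteq L \otimes_R M$) as an element of $L \otimes_R M$ lying in each $U_i \otimes_R M$. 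The key is to use the alternate characterization of ML modules from \autoref{rem:Mittag-Leffler} (b): the canonical map $\left(\prod_{j} Q_j\right) \otimes_R M \to \prod_j (Q_j \otimes_R M)$ is injective for every family $\{Q_j\}$. Apply this with the family consisting of the quotients $Q_i \coloneqq L/U_i$: flatness gives a short exact sequence $0 \to U_i \otimes_R M \to L \otimes_R M \to Q_i \otimes_R M \to 0$, and tensoring the exact sequence $0 \to \bigcap_i U_i \to L \to \prod_i (L/U_i)$ with $M$, combined with the injectivity of $\left(\prod_i Q_i\right) \otimes_R M \to \prod_i(Q_i \otimes_R M)$, forces $\ker\big(L \otimes_R M \to \prod_i(Q_i \otimes_R M)\big) = (\bigcap_i U_i) \otimes_R M$. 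Since $x$ maps to $0$ in each $Q_i \otimes_R M$, it lies in $(\bigcap_i U_i)M$, which is the required equality.

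\textbf{Proof of $(2) \Rightarrow (3)$ and $(3) \Rightarrow (1)$.} The first is trivial (take $S = R$). For $(3) \Rightarrow (1)$: assume $M$ is flat and intersection flat. By \autoref{rem:Mittag-Leffler} (d) it suffices to show every $R$-linear map $g \colon F \to M$ from a finite free module $F = R^{\oplus n}$ admits a stabilizer, and by \autoref{rem:Mittag-Leffler} (b)'s characterization it is equivalent to show the canonical map $\theta \colon \left(\prod_j Q_j\right) \otimes_R M \to \prod_j(Q_j \otimes_R M)$ is injective for every family of $R$-modules $\{Q_j\}$. One reduces first to $Q_j$ finitely presented (filtered colimits and a standard argument), then writing each finitely presented $Q_j$ as $\coker(R^{\oplus b_j} \xrightarrow{\phi_j} R^{\oplus a_j})$ one expresses $\ker \theta$ as an intersection, over the family, of expansions $U_j M$ of submodules $U_j$ of a fixed finite free module; intersection flatness then gives $\ker\theta = 0$. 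This is essentially the content of \cite[Prop.\ 5.x]{HochsterJeffriesintflatness} translated into the ML language, and I expect the bookkeeping here — carefully identifying the relevant submodules of a common finitely generated module and reducing to finitely many factors — to be the main technical obstacle in the proof.

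\textbf{Proof of $(1) \Leftrightarrow (4)$ in the local case.} This is immediate from \autoref{prop:ML-SML-modules-colimit-pure-split-free} (1) (equivalently \autoref{cor:ML-OR-local-difference} (1)), which says that over a local ring $(R,\fm)$ a flat module $M$ is ML if and only if for every finitely generated submodule $P \subseteq M$ there is a finitely generated free submodule $Q$ with $P \subseteq Q \subseteq M$ and $Q \hookrightarrow M$ pure — which is exactly condition $(4)$. So no new work is needed beyond invoking that proposition.

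One remark on strategy: the cleanest route overall is probably to make ``flat $+$ ML $\iff$ flat $+$ intersection flat'' the technical heart, using the product-injectivity characterization of ML on one side and the definition of intersection flatness (with $L$ finite free, via \autoref{rem:IF-OR} (c)) on the other, since both are really statements about when $\left(\bigcap U_i\right) \otimes_R M \to L \otimes_R M$ has the expected kernel after tensoring. Then $(2)$ is obtained for free from $(1)$ by base change, and $(4)$ is obtained for free from \autoref{prop:ML-SML-modules-colimit-pure-split-free}.
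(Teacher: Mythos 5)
Your treatment of $(1)\Rightarrow(2)\Rightarrow(3)$ and of $(1)\Leftrightarrow(4)$ matches the paper: universality of the ML property plus the product-injectivity characterization gives $(1)\Rightarrow(2)$ via the exact sequence $0 \to \bigcap_i U_i \to L \to \prod_i(L/U_i)$, and $(4)$ is indeed just a citation of \autoref{cor:ML-OR-local-difference}~(1). The problem is $(3)\Rightarrow(1)$, which is the crux of the theorem and which you leave as an unexecuted sketch. The paper does not verify the product-injectivity criterion at all: it observes that intersection flatness immediately yields, for every finite free $G$ and every $x\in G\otimes_R M$, a \emph{smallest} submodule $C\subseteq G$ with $x\in CM$ (intersect all such submodules and apply \autoref{eq:IF}), and then invokes \cite[Part II, Prop.\ 2.1.8]{rg71} (alternate reference \cite[\href{https://stacks.math.columbia.edu/tag/059S}{Tag 059S}]{stacks-project}), which says precisely that a flat module with this relative-content property is ML. That disposes of the implication in two lines, with all the real work outsourced to Raynaud--Gruson.

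Your alternative route---proving injectivity of $\theta\colon\bigl(\prod_j Q_j\bigr)\otimes_R M\to\prod_j(Q_j\otimes_R M)$ directly from intersection flatness---can be made to work, but not as described. The proposed first step, ``reduce to $Q_j$ finitely presented by filtered colimits,'' is not a standard argument and is suspect: infinite products do not commute with filtered colimits, so presenting each $Q_j$ as a colimit of finitely presented modules does not present $\prod_j Q_j$ as a filtered colimit of products of finitely presented modules. Fortunately that reduction is unnecessary. Given $x=\sum_{k=1}^n q_k\otimes m_k\in\ker\theta$, let $u\colon R^{\oplus n}\to\prod_jQ_j$ send $e_k\mapsto q_k$, let $u_j$ be its composite with the $j$-th projection, and let $y=\sum_k e_k\otimes m_k\in R^{\oplus n}\otimes_RM$. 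Flatness of $M$ identifies $\ker(u_j\otimes_R\id_M)$ with $\ker(u_j)M$ inside $R^{\oplus n}\otimes_R M$, so the hypothesis $x\in\ker\theta$ says $y\in\bigcap_j\ker(u_j)M=\bigl(\bigcap_j\ker(u_j)\bigr)M=\ker(u)M$ by intersection flatness applied to $L=R^{\oplus n}$; flatness again shows $y$ maps to zero in $\im(u)\otimes_RM$ and hence $x=(u\otimes_R\id_M)(y)=0$. You should either carry out an argument of this form or simply cite the Raynaud--Gruson characterization as the paper does; as written, the implication $(3)\Rightarrow(1)$ is not proved.
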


\begin{proof}
We first prove the equivalence of $(1)-(3)$. Clearly $(2) \implies (3)$.

To prove that $(1) \implies (2)$, suppose $M$ is a ML module.  Since the ML property is ``universal'' (see \autoref{rem:Mittag-Leffler} (b)), it will suffice to show that $M$ is intersection flat.  So let $L$ be a finitely generated $R$-module and $\{U_i\}_{i \in \Lambda}$ a collection of $R$-submodules of $L$.  Consider the following %short
exact sequence: \[
0 \to \bigcap_{i \in \Lambda} U_i \to L \to \prod_{i \in \Lambda} (L / U_i),
\]
where the first map is inclusion and the second map is $x \mapsto (x + U_i)_{i \in \Lambda}$.  Now tensor with $M$ and use the fact (from \autoref{rem:Mittag-Leffler} (b)) that the natural map $(\prod_{i \in \Lambda} (L / U_i)) \otimes_R M \to \prod_{i \in \Lambda} ((L / U_i) \otimes_R M)$ is injective, along with the flatness of $M$ over $R$, to get the following exact sequence 
\[
0 \to \left(\bigcap_{i \in \Lambda} U_i\right) \otimes_R M \overset{\alpha}{\to} L \otimes_R M \overset{\beta}{\to} \prod_{i \in \Lambda} ((L / U_i) \otimes_R M),
\]
where $\beta$ is given by $x \otimes m \mapsto ((x + U_i) \otimes m)_{i \in \Lambda}$.  Thus, the image $(\bigcap_i U_i) M$ of $\alpha$ is the kernel of $\beta$.  On the other hand, let $z = \sum_{j=1}^n x_j \otimes y_j \in L \otimes_R M$. Then $z\in \ker \beta$ if and only if for all $i \in \Lambda$, $\sum_{j=1}^n (x_j + U_i) \otimes y_j = 0$ in $(L / U_i) \otimes_R M$.  But the exactness of the sequences $0 \to U_i \otimes_R M \to L \otimes_R M \to (L/U_i) \otimes_R M$ show that this vanishing is equivalent to $z \in $im$(U_i \otimes_R M \to L \otimes_R M) = U_i M$ for all $i \in \Lambda$.  We have thus shown that $(\bigcap_i U_i)M = \ker(\beta) =  \bigcap_i (U_i M)$.  Since $L$ and the $U_i$ were arbitrary, it follows that $M$ is intersection flat.

The last thing to prove for the equivalence of $(1)-(3)$
is $(3) \implies (1)$.  Accordingly, let $M$ be an intersection flat $R$-module.  Now, let $G$ be a finitely generated free $R$-module and $x \in G \otimes_R M$.  Let 
$S \coloneqq \{U \colon U$ is a submodule of $G$ and $x \in UM\}$.
Then $\bigcap_{U \in S} U \in S$ as well because $x \in \bigcap_{U \in S} UM = (\bigcap_{U \in S} U)M$.  That is, for a finite
free $R$-module $G$ and $x \in G \otimes_R M$, there exists a 
\emph{unique smallest} $R$ submodule $C$ of $G$ such that $x \in C \otimes_R M$ 
(we can identify $C \otimes_R M$ with $CM$ by flatness of $M$).
%$C$ is the \emph{unique smallest} such submodule of $G$.
By \cite[Part II, Proposition 2.1.8]{rg71} (alternate reference \cite[\href{https://stacks.math.columbia.edu/tag/059S}{Tag 059S}]{stacks-project}), 
since $M$ is flat, it follows that $M$ is ML.
If $R$ is a local ring then (1) is equivalent to (4) by \autoref{cor:ML-OR-local-difference} (1).
\end{proof}

\autoref{thm:MittagORIF} implies the following behavior of intersection flatness with respect to pure maps.

\begin{corollary}
    \label{cor:pure-descent-IF}
    Let $R \to S$ be a ring map and $M$ be an $R$-module. We have the following: 
    \begin{enumerate}
        \item[$(1)$] If $R \to S$ is pure and $M \otimes_R S$
    is an intersection flat $S$-module, then $M$ is an intersection flat $R$-module.

        \item[$(2)$] If $N$ is a pure submodule of $M$ and $M$ is an intersection flat $R$-module, then $N$ is an intersection flat $R$-module.

        \item[$(3)$] Let $\{(M_i,\varphi_{ij}) \colon i, j \in I, i \leq j\}$ be a direct system of $R$-modules indexed by a filtered poset $(I,\leq)$. If each $M_i$ is intersection flat and the transition maps $\varphi_{ij}$ are $R$-pure, then $\colim_{i \in I} M_i$ is an intersection flat $R$-module.
    \end{enumerate} 
\end{corollary}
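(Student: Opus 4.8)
The plan is to deduce all three parts from \autoref{thm:MittagORIF}, which identifies intersection flatness with the Mittag-Leffler property for flat modules, together with the corresponding facts already recorded for ML modules.

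First I would prove (1). By \autoref{thm:MittagORIF}, the hypothesis that $M \otimes_R S$ is an intersection flat $S$-module means it is a flat ML $S$-module. In particular $M \otimes_R S$ is $S$-flat, so $M$ is $R$-flat by faithfully flat descent of flatness along the pure (hence faithfully flat after passing to a suitable completion — but in fact purity suffices here) ring map $R \to S$; more directly, a pure ring map is universally injective, and flatness of $M$ follows because for any injection $P \hookrightarrow Q$ the map $P \otimes_R M \to Q \otimes_R M$ becomes injective after applying the faithful functor $- \otimes_R S$, using flatness of $M \otimes_R S$ over $S$. Then \autoref{thm:descentOhm-Rush} (1) gives that $M$ is a ML $R$-module, and since $M$ is flat, \autoref{thm:MittagORIF} $(1) \Rightarrow (3)$ shows $M$ is intersection flat over $R$. (Alternatively, one can bypass \autoref{thm:MittagORIF} entirely here and simply cite \autoref{rem:IF-OR} (d) applied to a faithfully flat — hence intersection flat — base change, but the route through ML is cleaner given what precedes.)

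For (2): if $M$ is intersection flat then $M$ is flat and ML by \autoref{thm:MittagORIF}. A pure submodule of a flat module is flat by \autoref{lem:pure-submodule-flat}, and a pure submodule of a ML module is ML by \autoref{rem:Mittag-Leffler} (c). Hence $N$ is a flat ML $R$-module, so $N$ is intersection flat by \autoref{thm:MittagORIF} again. For (3): a filtered colimit of flat modules is flat, and each $M_i$ is ML by \autoref{thm:MittagORIF}; since the transition maps $\varphi_{ij}$ are pure, \autoref{rem:Mittag-Leffler} (e) shows $\colim_{i \in I} M_i$ is ML. Being flat and ML, it is intersection flat by \autoref{thm:MittagORIF}.

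The only step requiring any care is the descent of flatness in part (1) — establishing that $M$ itself is flat so that \autoref{thm:descentOhm-Rush} (1) (which has no flatness hypothesis, but whose conclusion we then feed back into \autoref{thm:MittagORIF}, which does) applies cleanly; but this is standard: for a pure ring map $R \to S$ and any $R$-module $M$ with $M \otimes_R S$ flat over $S$, $M$ is flat over $R$ because $\operatorname{Tor}^R_1(M, P) \otimes_R S \hookrightarrow \operatorname{Tor}^S_1(M \otimes_R S, P \otimes_R S) = 0$ for finitely presented $P$, and $- \otimes_R S$ is injective on this Tor module by purity. Everything else is a direct citation, so the proof will be short.
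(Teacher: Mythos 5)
Your proof follows the paper's route exactly: in all three parts you convert intersection flatness to ``flat and ML'' via \autoref{thm:MittagORIF}, invoke the corresponding permanence/descent facts for ML modules (\autoref{thm:descentOhm-Rush}~(1), \autoref{rem:Mittag-Leffler}~(c) and~(e), \autoref{lem:pure-submodule-flat}), and convert back. Parts (2) and (3) are fine as written.

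Two sub-points in part (1) deserve a warning, though neither sinks the argument. First, your from-scratch justification of pure descent of flatness does not work as written: the claimed injection $\Tor^R_1(M,P)\otimes_R S \hookrightarrow \Tor^S_1(M\otimes_R S, P\otimes_R S)$ is a flat base-change statement and requires $S$ to be flat over $R$, which a pure ring map need not be; likewise ``pure, hence faithfully flat after passing to a suitable completion'' is not correct in this generality. The fact you need --- if $R\to S$ is universally injective and $M\otimes_R S$ is $S$-flat then $M$ is $R$-flat --- is true and standard, and the paper simply cites \cite[\href{https://stacks.math.columbia.edu/tag/08XD}{Tag 08XD}]{stacks-project} for it; you should do the same rather than improvise. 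Second, your parenthetical ``alternative'' via \autoref{rem:IF-OR}~(d) is not an alternative: that item concerns restriction of scalars along an intersection flat ring map (ascent in the wrong direction for descent purposes), and a pure ring map is in general neither flat nor intersection flat. With those two asides deleted and the descent of flatness replaced by the standard citation, your proof coincides with the paper's.
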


\begin{proof}
    (1) Since $M \otimes_R S$ is $S$-intersection flat, $M \otimes_R S$ is a flat $S$-module by \autoref{rem:IF-OR}. By pure descent, $M$ is a flat $R$-module \cite[\href{https://stacks.math.columbia.edu/tag/08XD}{Tag 08XD}]{stacks-project}. In light of \autoref{thm:MittagORIF}, it suffices to show that $M$ is a ML $R$-module. But the ML property satisfies descent along a pure ring map by \autoref{thm:descentOhm-Rush} (1) and $M \otimes_R S$ is a ML $S$-module by \autoref{thm:MittagORIF} since it is intersection flat.

    (2) $N$ is flat by \autoref{lem:pure-submodule-flat} and ML by \autoref{rem:Mittag-Leffler} (c). Thus $N$ is intersection flat by \autoref{thm:MittagORIF}.

    (3) Since each $M_i$ is flat and ML, so is $\colim_{i \in I} M_i$ by \autoref{rem:Mittag-Leffler} (e).
\end{proof}

\begin{remark}
    One can give a direct proof of pure descent of the intersection flatness property along the lines of the proof of cyclically pure descent of the flat Ohm-Rush property given in \autoref{rem:cyclic-pure-descent-OR}. We omit the details.
\end{remark}

One can also use known properties about intersection flat modules to deduce consequences for ML modules. For instance, we can characterize the ML and Ohm-Rush properties in terms of ideal adic separatedness. 

\begin{corollary}
    \label{cor:ML-separation}
Let $(R, \fm)$ be a Noetherian local ring and $M$ be
a flat $R$-module. 
\begin{enumerate}%[itemsep = 1mm]
    \item[$(1)$] If $M$ is a ML $R$-module, then for all finitely generated $R$-modules $L$, $L \otimes_R M$ is $\fm$-adically separated. The converse holds if $R$ is $\fm$-adically complete.
    
    \item[$(2)$] If $M$ is an Ohm-Rush $R$-module, then for all cyclic $R$-modules $L$, $L \otimes_R M$ is $\fm$-adically separated. The converse holds if $R$ is $\fm$-adically complete.
\end{enumerate}
\end{corollary}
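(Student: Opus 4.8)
The plan is to deduce both parts from the characterization of ML and Ohm-Rush modules via intersection flatness (\autoref{thm:MittagORIF}) and via the content function, combined with a Chevalley/Krull-type criterion for $\fm$-adic separatedness of a finitely generated module over a Noetherian local ring. Recall that for a finitely generated module $N$ over a Noetherian local ring $(R,\fm)$, one has $\bigcap_n \fm^n N = 0$ by Krull's intersection theorem; the point is that $L\otimes_R M$ is \emph{not} finitely generated, so we must argue differently. The key observation is that $\fm$-adic separatedness of $L\otimes_R M$ amounts to the equality $\bigcap_n \fm^n(L\otimes_R M) = 0$, i.e. $\bigcap_n \fm^n(L\otimes_R M) = \left(\bigcap_n \fm^n\right)(L\otimes_R M)$ since $\bigcap_n \fm^n = 0$ in the Noetherian ring $R$. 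Writing $L\otimes_R M$ as $LM$ inside a suitable ambient module (using flatness of $M$ to identify $U\otimes_R M$ with $UM$), this is precisely an instance of the intersection-flatness identity \autoref{eq:IF} for the decreasing family $\{\fm^n\}_{n\geq 0}$ of ideals (for part (1), applied after tensoring with $L$, reducing to $L = R$), respectively the Ohm-Rush identity \autoref{lem:OR-weak-intersection-flat} for the family $\{\fm^n\}$ with $L$ cyclic (for part (2)).

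For the forward implications: in (1), if $M$ is ML then $M$ is intersection flat by \autoref{thm:MittagORIF}, hence for any finitely generated $L$ and the family of submodules $\{\fm^n L\}_{n\geq 0}$ of $L$ we get $\left(\bigcap_n \fm^n L\right)M = \bigcap_n (\fm^n L)M$. By the Artin-Rees lemma $\bigcap_n \fm^n L = 0$, and $(\fm^n L)M = \fm^n(LM) = \fm^n(L\otimes_R M)$ by flatness, so $\bigcap_n \fm^n(L\otimes_R M) = 0$, which is separatedness. In (2), if $M$ is Ohm-Rush, apply \autoref{lem:OR-weak-intersection-flat} to the family $\{\fm^n\}$: $\left(\bigcap_n \fm^n\right)M = \bigcap_n \fm^n M$, so $\bigcap_n \fm^n M = 0$, i.e. $M$ is $\fm$-adically separated. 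For a general cyclic module $L = R/I$, one has $L\otimes_R M = M/IM$, and one needs $\bigcap_n \fm^n(M/IM) = 0$; this should follow by applying the Ohm-Rush identity to the family $\{\fm^n + I\}_n$ of ideals (note $\bigcap_n(\fm^n + I) = I$ in the Noetherian ring $R$ by Krull applied to $R/I$), giving $IM = \left(\bigcap_n(\fm^n+I)\right)M = \bigcap_n(\fm^n+I)M = \bigcap_n(\fm^n M + IM)$, which rearranges to $\bigcap_n \fm^n(M/IM) = 0$.

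For the converse implications, assume $R$ is $\fm$-adically complete. The strategy is to invoke \autoref{thm:ML-SML-ORT-intersection-flat}: over a complete Noetherian local ring, conditions $(9)$ (separatedness of $L\otimes_R M$ for all finitely generated $L$) and $(10)$ (separatedness of $L\otimes_R M$ for all cyclic $L$) are each equivalent to $M$ being intersection flat, equivalently ML (resp. Ohm-Rush). So the converse of (1) is exactly "$(9)\Rightarrow(2)$" and the converse of (2) is exactly "$(10)\Rightarrow(5)$" in that theorem. If one prefers a self-contained argument not citing the big equivalence theorem, one could instead: for (1), use that separatedness of $L\otimes_R M$ for all finitely generated $L$ forces the relevant intersection identities for the families arising from closed submodules, and then complete the module and use the structure of flat modules over a complete local ring (Matlis duality, \autoref{lem:Auslander-Warfield-lemma}) to build splittings — but this essentially reconstructs the proof of \autoref{thm:ML-SML-ORT-intersection-flat}, so citing it is cleaner.

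The main obstacle I anticipate is the converse direction: deducing the ML (resp. Ohm-Rush) property from a separatedness hypothesis is genuinely the hard half, and it is precisely where completeness is used, via the flatness of duals over a complete local ring (Jensen's $\Ext$-vanishing, as used in \autoref{lem:ORT-cyclic-purity}) and Matlis duality. Since \autoref{thm:ML-SML-ORT-intersection-flat} is stated earlier in the paper and already packages exactly these equivalences, the cleanest route is to reduce the converse to a citation of that theorem; the only real work in the present corollary is the forward direction, which is the elementary Artin-Rees/Krull argument sketched above. A minor technical point to handle carefully: in part (1) one must pass from the intersection-flatness identity for the family $\{\fm^n L\}$ inside $L$ to a statement about $L\otimes_R M$, which uses flatness to identify submodule-expansions with tensor images (\autoref{not:for-IF}), and one should note Artin-Rees guarantees $\bigcap_n \fm^n L = 0$ even though $\{\fm^n L\}$ need not literally be $\{\fm^n\}L$-stable in the naive sense.
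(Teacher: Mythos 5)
Your forward implications are correct and essentially identical to the paper's argument: part (1) is the intersection-flatness identity (via \autoref{thm:MittagORIF}) applied to the family $\{\fm^n L\}$ together with Krull's intersection theorem for the finitely generated module $L$, and part (2) is the Ohm-Rush identity applied to $\{\fm^n + I\}$ with $\bigcap_n(\fm^n+I)=I$. No issues there.

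The converse direction has a genuine problem: your proposed citation of \autoref{thm:ML-SML-ORT-intersection-flat} is circular. In the paper, the equivalences $(2)\Leftrightarrow(9)$ and $(5)\Leftrightarrow(10)$ of that theorem are themselves proved \emph{by citing} \autoref{cor:ML-separation} (the corollary also appears earlier in the text than the theorem). So you cannot use those equivalences to prove the corollary. Your fallback sketch (``complete the module and use Matlis duality to build splittings'') is not the right mechanism and is too vague to close the gap. The tool actually needed is Chevalley's lemma over a complete local ring: given a downward-directed family $\{U_i\}$ of submodules of a finitely generated $L$ with intersection $U$, Chevalley's lemma applied to $L/U$ shows the filtration $\{U_i/U\}$ is cofinal with the $\fm$-adic one, so $U_iM \subseteq \fm^n(LM) + UM$ for suitable indices, and then the hypothesized $\fm$-adic separatedness of $(L/U)\otimes_R M$ forces $\bigcap_i U_iM = UM$. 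This is exactly the argument of \cite[Prop.\ 5.7(e)]{HochsterJeffriesintflatness}, which is what the paper cites (for part (2), the same argument with cyclic $L$ yields the Ohm-Rush identity for ideals). You should replace the circular citation with this Chevalley-based argument or with the external reference.
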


\begin{proof}
    $(1)$ Since $L$ is a finitely generated $R$-module, by Krull's intersection theorem we have that $L$ is $\fm$-adically separated. Since flat ML modules are intersection flat (\autoref{thm:MittagORIF}), one then has
    \[
    \bigcap_{n \in \mathbf{Z}_{> 0}} \fm^n(L \otimes_R M) = \bigcap_{n \in \mathbf{Z}_{> 0}} (\fm^nL)M = \left(\bigcap_{n \in \mathbf{Z}_{> 0}}\fm^n L\right)M = 0,
    \]
    where the second equality follows by intersection flatness. This shows that $L \otimes_R M$ is $\fm$-adically separated, that is, we have the forward implication of $(1)$.
    
    If $(R, \fm)$ is $\fm$-adically complete, then an application of Chevalley's 
    Lemma shows that $M$ must be an intersection
    flat $R$-module by the proof of \cite[Prop.\ 5.7(e)]{HochsterJeffriesintflatness} (loc. cit.
    is stated for flat $R$-algebras but the proof readily applies to
    modules). Again, by \autoref{thm:MittagORIF} $M$ is a ML $R$-module.
    This proves $(1)$.

    The proof of $(2)$ is similar. When $R$ is $\fm$-adically
    complete, the backward implication again follows by the proof of \cite[Prop.\ 5.7(e)]{HochsterJeffriesintflatness} (as before, Hochster-Jeffries state the result for flat algebras, but their proof works for modules). A minor point is that a module that is  \emph{intersection flat for ideals} in \cite{HochsterJeffriesintflatness} is in our terminology a flat Ohm-Rush module (the equivalence of the two notions
    follows by \autoref{lem:OR-weak-intersection-flat}). 
    
    Now suppose $(R, \fm)$ is an arbitary Noetherian local ring,
    $M$ is an Ohm-Rush $R$-module and $L$ is a cyclic $R$-module. We may
    assume without loss of generality that $L = R/I$, where $I$ is a
    proper ideal of $R$. Then
    \begin{align*}
    \bigcap_{n \in \mathbf{Z}_{> 0}} \fm^n(R/I \otimes_R M) 
    &\cong \bigcap_{n \in \mathbf{Z}_{> 0}} \fm^n(M/IM) 
    %&= \bigcap_{n \in \mathbf{Z}_{>0}} \frac{\fm^n M + IM}{IM}\\
    %&= \bigcap_{n \in \mathbf{Z}_{>0}} \frac{(\fm^n + I)M}{IM}\\
    = \frac{\bigcap_{n \in \mathbf{Z}_{> 0}} (\fm^n + I)M}{IM}
    = \frac{\left(\bigcap_{n \in \mathbf{Z}_{>0}} \fm^n + I\right)M}{IM},
    \end{align*}
    where the last equality follows because $M$ is an Ohm-Rush 
    $R$-module (\autoref{lem:OR-weak-intersection-flat}).
    By Krull's intersection theorem, $\bigcap_{n \in \mathbf{Z}_{>0}} \fm^n + I = I$, and so, $R /I \otimes_R M$ is $\fm$-adically
    separated. 
\end{proof}

\begin{remark}
    \label{rem:improving-corollary}
    In \autoref{thm:ML-SML-ORT-intersection-flat} we will significantly improve \autoref{cor:ML-separation} and show that over a complete Noetherian local ring, the notions of being Ohm-Rush and ML are equivalent for flat modules.
\end{remark}

Gruson and Raynaud showed that a flat SML module $R$-module is equivalent to an ORT $R$-module.

\begin{citedthm}\cite[Part II, Prop.\ 2.3.4, equivalence of (i) and (iii)]{rg71}
\label{thm:SML-ORT}
Let $R$ be a ring and let $M$ be an $R$-module. The following are equivalent:
\begin{enumerate}
    \item[$(1)$] $M$ is a flat SML $R$-module.
    \item[$(2)$] $M$ is an ORT $R$-module.
\end{enumerate}
\end{citedthm}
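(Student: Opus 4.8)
The plan is to prove the two implications separately, using Lazard's theorem (\cite[Thm.\ 1.2]{Lazard}, whose key input is recorded above as \autoref{lem:domination} (7)) together with the machinery of (cyclic) stabilizers and the trace ideal developed in this section.

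For $(1) \Rightarrow (2)$, assume $M$ is flat and SML and fix $x \in M$; let $v \colon R \to M$ be the map with $v(1) = x$. Applying the SML hypothesis to $P = R$ produces a finitely presented $Q$ and a map $f \colon R \to Q$ such that $v$ factors through $f$ and $f$ factors through $v$; by \autoref{lem:domination} (2) these two factorizations make $f$ and $v$ dominate one another, so $f$ is a stabilizer of $v$ which moreover factors through $v$. Writing $M = \colim_i L_i$ as a filtered colimit of finitely generated free modules and feeding this stabilizer into \autoref{lem:stabilizers} (2), I descend $f$ to a map $f_i \colon R \to L_i$ that still stabilizes $v$ and still factors through $v$. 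Since $L_i$ is free it is projective, hence ORT by \autoref{lem:projective-ORT} (2); as $x$ (equivalently $v$) was arbitrary, \autoref{lem:projective-ORT} (3) applies and yields that $M$ is ORT. (Alternatively one can bypass \autoref{lem:stabilizers} (2): the factorizations give $\varphi\phi(x) = x$ with $\phi \colon M \to Q$, $\varphi \colon Q \to M$ and $Q$ finitely presented; using \autoref{lem:domination} (7) and (2) to factor $\varphi$ through a finitely generated free $P$, and then ORT of $P$ together with the functoriality of $\Tr$ from \autoref{rem:ORT-modules} (h), one obtains $x \in \Tr_M(x)M$ directly.)

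For $(2) \Rightarrow (1)$, assume $M$ is ORT; then $M$ is flat by \autoref{rem:ORT-modules} (g), so it remains to show $M$ is SML. The reduction I will use is this: given a finitely presented $P$ and a map $g \colon P \to M$ with image $N := \im(g)$, it suffices to produce a finitely generated free module $Q$ and maps $M \xrightarrow{\phi} Q \xrightarrow{\varphi} M$ whose composite $\varphi\phi$ restricts to the identity on $N$, because then $f := \phi \circ g$ factors through $g$ while $\varphi \circ f = \varphi\phi\circ g = g$ shows $g$ factors through $f$, and $Q$ is finitely presented. Since $N$ is finitely generated I build such a datum by induction on the number $n$ of generators $x_1, \dots, x_n$ of $N$. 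For $n = 1$, the ORT property gives $x_1 = \sum_k g_k(x_1) m_k$ with $g_k \in M^{*}$ and $m_k \in M$, and the endomorphism $q_1 \colon y \mapsto \sum_k g_k(y) m_k$ of $M$ restricts to the identity on $Rx_1$ and factors as $M \to R^{r} \to M$. For the inductive step, take $q'$ restricting to the identity on $\langle x_1, \dots, x_{n-1}\rangle$ and factoring through a finitely generated free module, set $x_n' := x_n - q'(x_n)$, take (by the case $n = 1$ applied to $\langle x_n'\rangle$) a map $q''$ restricting to the identity on $Rx_n'$ and factoring through a finitely generated free module, and check that $q := q' + q'' - q''q'$ restricts to the identity on all of $\langle x_1, \dots, x_n\rangle$ and still factors through a finitely generated free module; concretely, if $q' = \beta'\alpha'$ and $q'' = \delta\gamma$ with $\alpha', \gamma$ landing in finitely generated free modules, then $q = \Psi\Phi$ for $\Phi = (\alpha',\, \gamma - \gamma\beta'\alpha')$ and $\Psi = (\beta', \delta)$.

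The routine parts are the passage between the factorization clauses in \autoref{def:strict-Mittag-Leffler} and the mutual-domination language needed to apply \autoref{lem:stabilizers} and \autoref{lem:projective-ORT}, and the verification that $q(x_i) = x_i$ for every $i$ at the inductive step. I expect the only genuinely non-mechanical point to be the construction in $(2) \Rightarrow (1)$: the naive attempt to build $\phi$ by simply stacking together all the functionals appearing in the ORT expressions of $x_1, \dots, x_n$ fails, because the cross terms (the value on $x_j$ of the functional attached to $x_i$, for $i \neq j$) need not vanish, and the iterated correction $q' + q'' - q''q'$ is precisely the device that removes them. Everything else is a formal consequence of the results of this section and of \autoref{rem:ORT-modules}.
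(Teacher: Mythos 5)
Your proposal is correct, but it does more than the paper does: the paper states this result as a \emph{cited} theorem from Raynaud--Gruson and only supplies an in-text argument for $(1)\Rightarrow(2)$ (in \autoref{rem:justifying-one-implication}), leaving $(2)\Rightarrow(1)$ entirely to the reference. Your first paragraph reproduces essentially the same proof of $(1)\Rightarrow(2)$ as that remark: pass from the SML factorizations to mutual domination via \autoref{lem:domination}~(2), descend the stabilizer to a finite free $L_i$ in a Lazard presentation via \autoref{lem:stabilizers}~(2), and conclude with the functoriality of $\Tr$ and the ORT property of free modules. Where you genuinely diverge is in giving a self-contained proof of $(2)\Rightarrow(1)$: your reduction to producing an endomorphism $q$ of $M$ that factors through a finite free module and restricts to the identity on $\im(g)$ is the right one, the base case is exactly the ORT identity $x=\sum_k g_k(x)m_k$, and the inductive correction $q = q' + q'' - q''q'$ together with the factorization $\Psi\Phi$ with $\Phi=(\alpha',\,\gamma-\gamma\beta'\alpha')$ checks out (one verifies $q(x_i)=x_i$ for $i<n$ since $q''$ and $q''q'$ agree there, and $q(x_n)=q'(x_n)+q''(x_n')=x_n$). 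This is essentially the classical successive-approximation argument behind Raynaud--Gruson's Proposition 2.3.4, and you correctly identify the cross-term cancellation as the one non-mechanical point; what your version buys is that the reader need not consult \cite{rg71} for the harder implication, at the cost of a page of computation the paper deliberately outsources.
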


\begin{remark}
    \label{rem:justifying-one-implication}
    In \autoref{thm:SML-ORT}, the implication $(1) \implies (2)$ follows readily from the theory we have already developed. Suppose $M$ is a flat and SML $R$-module. Let $x \in M$. We wish to show that $x \in \Tr_M(x)M$. Let $g \colon R \to M$ be the unique $R$-linear map that sends $1 \mapsto x$. Since $M$ is a SML $R$-module, $g$ admits a stabilizer that factors through $g$. Expressing $M$ as a filtered colimit of free modules of finite rank, \autoref{lem:stabilizers} (2) implies that $g$ admits a stabilizer $f \colon R \to L$ where $L$ is a free module of finite rank and such that $f$ factors through $g$ (note that $g$ factors through $f$ by \autoref{lem:stabilizers} (1)). Let $y \coloneqq f(1)$. Since $f$ factors through $g$, there exists a linear map $\varphi \colon M \to L$ such that $f = \varphi \circ g$. Thus, $y = f(1) = \varphi \circ g (1) = \varphi(x)$. Hence, $\Tr_L(y) = \Tr_L(\varphi(x)) \subseteq \Tr_M(x)$ by  \autoref{rem:ORT-modules} (h). Similarly, since $g = \phi \circ f$ for some linear $\phi \colon P \to M$, we get $\Tr_M(x) \subseteq \Tr_L(y)$. Thus, $\Tr_M(x) = \Tr_L(y)$, and since $y \in \Tr_L(y)L$ as free modules are ORT (\autoref{lem:projective-ORT}), we get $x = g(1) = \phi \circ f(1) = \phi(y) \in \Tr_L(y)M = \Tr_M(x)M$.
\end{remark}

As a consequence, we then have

\begin{proposition}
\label{prop:ORT-intersection-flat}
Let $R$ be a ring and $M$ be an ORT $R$-module.  Then $M$ is intersection flat as an $R$-module.
\end{proposition}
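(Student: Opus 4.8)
The plan is to chain together two results already established in the excerpt. First I would invoke \autoref{thm:SML-ORT}: since $M$ is an ORT $R$-module, it is in particular a flat strictly Mittag-Leffler $R$-module. Next, recall from \autoref{rem:Mittag-Leffler}~(a) that every SML module is ML, so $M$ is a flat Mittag-Leffler $R$-module. (Alternatively, one could cite \autoref{cor:ML-implies-OR}-style reasoning, but the SML-to-ML step is the cleanest.)

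With flatness and the Mittag-Leffler property in hand, I would then apply \autoref{thm:MittagORIF}, whose equivalence $(1)\Leftrightarrow(3)$ says precisely that a flat ML module is intersection flat. This gives the conclusion immediately. It is worth noting in passing that \autoref{thm:MittagORIF} in fact gives the stronger statement that $M$ is \emph{universally} intersection flat, i.e.\ $M\otimes_R S$ is intersection flat over $S$ for every $R$-algebra $S$; this is consistent with \autoref{prop:ORT-indeterminate}~(1), which shows ORT is preserved under base change.

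There is no real obstacle here: the statement is a formal consequence of the two cited theorems, and the only thing to check is that the flatness hypothesis needed by \autoref{thm:MittagORIF} is supplied — which it is, either directly from \autoref{rem:ORT-modules}~(g) or as part of the conclusion of \autoref{thm:SML-ORT}~$(1)$. So the proof is a two-line citation, and the "hard work" has all been front-loaded into \autoref{thm:SML-ORT} and \autoref{thm:MittagORIF}.
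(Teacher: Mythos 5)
Your proof is correct and follows exactly the same route as the paper: apply \autoref{thm:SML-ORT} to see $M$ is flat and SML, pass to ML via \autoref{rem:Mittag-Leffler}~(a), and conclude by the equivalence of ML and intersection flatness for flat modules in \autoref{thm:MittagORIF}. Nothing is missing.
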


\begin{proof}
By \autoref{thm:SML-ORT}, $M$ is a flat SML $R$-module. Thus, by \autoref{rem:Mittag-Leffler} (a), $M$ is a flat ML $R$-module, and hence is intersection flat
by \autoref{thm:MittagORIF}.
\end{proof}

Additionally, pure maps to ORT-modules are often split.

\begin{corollary}
    \label{cor:pure-maps-to-ORT-split}
    Let $R$ be a ring and $M$ be an ORT $R$-module. Then for any finitely presented $R$-module $P$, if $\varphi \colon P \to M$ is a pure $R$-linear map, then $\varphi$ splits.
\end{corollary}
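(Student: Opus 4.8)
The plan is to reduce immediately to the structure theory of strictly Mittag-Leffler modules. First I would invoke \autoref{thm:SML-ORT}: since $M$ is an ORT $R$-module, it is a flat SML $R$-module. The hypothesis that $\varphi \colon P \to M$ is pure with $P$ finitely presented then puts us exactly in the setting of \autoref{rem:Mittag-Leffler} (g), which asserts that a pure map from a finitely presented module into an SML module automatically splits. Composing these two facts closes the argument, so there is essentially no obstacle here beyond citing the right results in the right order.

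For completeness I would also recall \emph{why} \autoref{rem:Mittag-Leffler} (g) holds, since it is the crux: choosing a finitely presented $Q$ and $\phi \colon P \to Q$ through which $\varphi$ and $\phi$ factor each other, purity of $\varphi$ forces $\phi$ to be pure; as $\coker(\phi)$ is finitely presented, \autoref{lem:pure-iff-split} gives a left-inverse $u$ of $\phi$, and then $u \circ f$ (where $f \colon M \to Q$ realizes $\phi = f \circ \varphi$) is a left-inverse of $\varphi$. Thus the splitting of $\varphi$ is produced explicitly.

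Alternatively, one could give a self-contained trace-ideal proof avoiding the SML language: a pure map $\varphi \colon P \to M$ induces, for the finitely presented module $\coker(\varphi)$, an exact sequence showing $\coker(\varphi)$ is a flat (hence, over the relevant localizations, projective) module, and then pull back a splitting; but routing through \autoref{thm:SML-ORT} and \autoref{rem:Mittag-Leffler} (g) is cleaner and uses only machinery already in place. I do not anticipate any genuine difficulty — the statement is a corollary in the strict sense, and the only care needed is to make sure the finite presentation of $P$ is used (it is what makes \autoref{lem:pure-iff-split} applicable via the finitely presented cokernel).
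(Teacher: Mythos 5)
Your proposal is correct and matches the paper's own proof exactly: the paper also passes through \autoref{thm:SML-ORT} to see that $M$ is SML and then cites \autoref{rem:Mittag-Leffler} (g). The extra unpacking of why (g) holds is accurate but not needed.
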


\begin{proof}
    Since $M$ is SML, the result follows by \autoref{rem:Mittag-Leffler} (g).
\end{proof}

\begin{remark}
    \label{rem:ML-SML-not-equivalent}
    Suppose $(R, \fm)$ is a Noetherian local ring that is \emph{not} $\fm$-adically complete. Then the notions of ML and SML modules do not coincide for the class of modules that are faithfully flat $R$-algebras. For an example that is relevant to the theory of $F$-singularities, consider an excellent, Henselian DVR $(R, \fm)$ of prime characteristic $p > 0$ such that $\Hom_R(F_*R, R) = 0$ (such examples exist by \cite{DattaMurayamaTate}). Consider the $R$-algebra $F_*R$. Then $F_*R$ is a faithfully flat $R$-algebra by \cite{KunzCharacterizationsOfRegularLocalRings}. Moreover, since $R$ is excellent, we will see in forthcoming work that $F_*R$ is a ML $R$-module \cite{DESTtate}. However, $F_*R$ is not a SML $R$-module by \autoref{thm:SML-ORT}. Otherwise $F_*R$ would be an ORT $R$-module, which would imply that the canonical map $R \to \Hom_R(\Hom_R(F_*R,R),R)$ is injective by \autoref{lem:ORT-cyclic-purity}. This is impossible because $\Hom_R(F_*R, R) = 0$ by the choice of $R$.
  \end{remark}

We have seen so far that the following properties are preserved under arbitrary base change: ML (\autoref{rem:Mittag-Leffler} (b)), ORT (\autoref{prop:ORT-indeterminate} (1)), intersection flat (\autoref{thm:MittagORIF}), flat and SML (\autoref{thm:SML-ORT} and \autoref{prop:ORT-indeterminate} (1)). Thus, it is natural to ask if the property of being Ohm-Rush or, more restrictively, of being flat Ohm-Rush is preserved under arbitrary base change. We will now show that this is false.

\begin{example}
    \label{eg:OR-not-preserved-base-change-polynomial}
Let $R = k[x]_{(x)}$, $S = k[\![x]\!]$, where $x$ is an indeterminate and $k$ is an arbitrary field, and let $R \to S$ be the $(x)$-adic completion map.  By \cite[Proposition 2.1]{OhmRu-content}, $S$ is an Ohm-Rush $R$-algebra.  However, if $y$ is another indeterminate, then $S[y]$ is \emph{not} an Ohm-Rush $R[y]$-module.  To see this, we imitate the method of \cite[Example 5,3]{HochsterJeffriesintflatness}.  Accordingly, let $f\in S$ be a power series that is transcendental over $R$.  For each $n \in \N$, let $f_n$ be the unique polynomial of degree $\leq n$ that agrees with $f$ modulo $x^{n+1}S$.  Set $I_n := (y-f_n, x^{n+1})R[y] \subseteq R[y]$.  Then $I_n S = (y-f,x^{n+1})S[y] = (y-f)S[y] + (x)^{n+1}S[y]$.  Since $S[y] / (y-f) \cong S$ is an integral domain, it follows from the Krull intersection theorem for Noetherian domains that $
\bigcap_{n \in \N} (I_n S[y]) = (y-f)S[y].
$
Set $R' := k[x,y]_{(x,y)}$ and $S' := k[\![x,y]\!]$.  Note that $(y-f)S' \cap R' = 0$ by the transcendence assumption, as in the argument from \cite[Example 5.3]{HochsterJeffriesintflatness}.  But then \begin{align*}
\bigcap_{n \in \N} I_n &= \left( \left(\bigcap_{n \in \N} I_n\right) S[y]\right)S' \cap R' \cap R[y] \subseteq \left( \left(\bigcap_{n \in \N} I_n\right) S[y]\right)S' \cap R' \\
&\subseteq \left( \bigcap_{n \in \N} (I_n S[y])\right)S' \cap R' = (y-f)S' \cap R' = 0.
\end{align*}
Since $\bigcap_n I_n = 0$ but $\bigcap_n (I_n S[y]) \neq 0$, it follows that $S[y]$ is not an Ohm-Rush $R[y]$-algebra.
\end{example}

The ML property coincides with the SML property for modules over a Noetherian complete local ring by \autoref{prop:ML-SML}. If we assume flatness, then all the notions introduced so far are equivalent.

\begin{theorem}
\label{thm:ML-SML-ORT-intersection-flat}
Let $(R, \fm)$ be a Noetherian local ring that is complete with respect to the
$\fm$-adic topology. Let $M$ be a flat $R$-module. Then the $R$-dual $\Hom_R(M,R)$
is an ORT $R$-module (equivalently, a flat and SML $R$-module).
Furthermore, the following are equivalent:
\begin{enumerate}
    \item[$(1)$] $M$ is intersection flat.
    
    \item[$(2)$] $M$ is ML.
    
    \item[$(3)$] $M$ is SML.
    
    \item[$(4)$] $M$ is an ORT $R$-module.

    \item[$(5)$] $M$ is an Ohm-Rush $R$-module.
    
    \item[$(6)$] If $\widehat{M}$ denotes the $\fm$-adic completion of $M$, then the canonical map $M \to \widehat{M}$ is a (cyclically) pure map of $R$-modules.
    
    \item[$(7)$] The canonical map $M \to \Hom_R(\Hom_R(M,R),R)$ is a pure map of $R$-modules.
    
    \item[$(8)$] The canonical map $M \to \Hom_R(\Hom_R(M,R),R)$ is cyclically pure as a map of $R$-modules.
    
    \item[$(9)$] For all finitely generated $R$-modules $L$,
    $L \otimes_R M$ is $\fm$-adically separated.

    \item[$(10)$] For all cyclic $R$-modules $L$, $L \otimes_R M$ is $\fm$-adically separated.
    
    \item[$(11)$] For any finitely generated submodule $P$ of $M$, there exists a finitely generated submodule $L$ of $M$ containing $P$ such that $L$ is free and is a direct summand of $M$.

    \item[$(12)$] For any cyclic submodule $P$ of $M$, there exists a finitely generated submodule $L$ of $M$ containing $P$ such that $L$ is free and is a direct summand of $M$.
    
    \item[$(13)$] $M$ is a filtered union of its finitely generated submodules that are free and direct summands of $M$.
\end{enumerate}
\end{theorem}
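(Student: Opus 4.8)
The plan is to first establish the preliminary assertion that $\Hom_R(M,R)$ is ORT --- this is the genuinely deep input --- and then to prove the equivalence of $(1)$--$(13)$ by sorting them into a ``Mittag--Leffler cluster'' $\{(1),(2),(3),(4),(7),(8),(9),(11),(13)\}$ and an ``Ohm--Rush cluster'' $\{(5),(6),(10),(12)\}$, checking the internal equivalences of each cluster from results already in the excerpt, and then linking the two clusters by the single new implication $(5)\Rightarrow(4)$ (together with the trivialities $(4)\Rightarrow(5)$ and $(1)\Rightarrow(5)$).

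For the preliminary claim, recall from the proof of \autoref{lem:ORT-cyclic-purity} that $\Hom_R(M,R)$ is a flat $R$-module: this is \cite[Part II, (2.4.3)]{rg71} together with Jensen's $\Ext$-rigidity theorem. The further input needed is that over a complete Noetherian local ring the $R$-dual of a flat module is moreover Mittag--Leffler (the structure theory of flat modules over complete local rings in \cite[Part II]{rg71}); being flat it is then SML by \autoref{prop:ML-SML} and hence ORT by \autoref{thm:SML-ORT}. I expect \emph{this to be the main obstacle}, since it rests on the deepest part of \cite{rg71}; everything afterwards is assembled from machinery already developed. Applying this statement to $\Hom_R(M,R)$ in place of $M$ also shows that $M^{**}:=\Hom_R(\Hom_R(M,R),R)$ is flat and ORT, in particular flat and SML.

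Within the Mittag--Leffler cluster: $(1)\Leftrightarrow(2)$ is \autoref{thm:MittagORIF}, $(2)\Leftrightarrow(3)$ is \autoref{prop:ML-SML}, $(3)\Leftrightarrow(4)$ is \autoref{thm:SML-ORT}, $(2)\Leftrightarrow(9)$ is \autoref{cor:ML-separation}$(1)$, and $(4)\Rightarrow(7)$ is the last clause of \autoref{lem:ORT-cyclic-purity}; $(7)\Rightarrow(8)$ is immediate and $(8)\Rightarrow(7)$ follows from \autoref{lem:cyclic-purity-flat} since $M^{**}$ is flat. For $(7)\Rightarrow(4)$: the canonical map exhibits $M$ as a pure submodule of the SML module $M^{**}$, hence $M$ is SML by \autoref{rem:Mittag-Leffler}(c) and therefore ORT by \autoref{thm:SML-ORT}. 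For $(2)\Leftrightarrow(11)$ and $(3)\Leftrightarrow(13)$ I would use that over a complete local ring a finitely generated submodule $L\subseteq M$ is pure if and only if the inclusion splits (\autoref{lem:Auslander-Warfield-lemma}), which upgrades the ``pure finite free submodule'' criteria of \autoref{thm:MittagORIF}$(4)$ (equivalently \autoref{cor:ML-OR-local-difference}$(1)$) and \autoref{prop:ML-SML-modules-colimit-pure-split-free}$(2)$ to the ``finite free direct summand'' statements $(11)$ and $(13)$.

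Within the Ohm--Rush cluster: $(5)\Leftrightarrow(10)$ is \autoref{cor:ML-separation}$(2)$, and $(5)\Leftrightarrow(12)$ is \autoref{cor:ML-OR-local-difference}$(2)$ combined with the same splitting observation. For $(5)\Rightarrow(6)$ apply \autoref{cor:content-completion}$(4)$; for $(6)\Rightarrow(5)$, if $M\to\widehat M$ is cyclically pure then for each ideal $I$ the induced map $M/IM\to\widehat M/I\widehat M$ is injective, and since $\widehat M/I\widehat M$ is the $\fm$-adic completion of $M/IM$ (standard properties of adic completion, using that $\fm$ is finitely generated and $M$ is flat) this forces $M/IM$ to be $\fm$-adically separated, i.e.\ $(10)$. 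The bridge $(5)\Rightarrow(4)$ goes as follows: given $x\in M$, Ohm--Rushness and flatness give $x\in c_M(x)M$, so by \autoref{prop:analog-RG219} there is a finitely generated free submodule $L\subseteq M$ containing $x$ with $L\hookrightarrow M$ pure, hence split by \autoref{lem:Auslander-Warfield-lemma}; writing $x=\sum_{i=1}^n r_ie_i$ in a free basis of $L$ and composing the coordinate functionals of $L$ with a retraction $M\twoheadrightarrow L$ produces $f_1,\dots,f_n\in\Hom_R(M,R)$ with $f_i(x)=r_i$, so $\Tr_M(x)\supseteq(r_1,\dots,r_n)=c_L(x)=c_M(x)$ by \autoref{lem:content-cyclic-purity}, and since $\Tr_M(x)\subseteq c_M(x)$ always, we get $x\in c_M(x)M=\Tr_M(x)M$, i.e.\ $M$ is ORT. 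Finally $(1)\Rightarrow(5)$ is \autoref{rem:IF-OR}(a) and $(4)\Rightarrow(5)$ is \autoref{rem:ORT-modules}(a), which closes all of $(1)$--$(13)$ into a single equivalence class.
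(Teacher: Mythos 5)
Your architecture is sound and all of the cluster-internal steps check out, but the one step you flagged as the main obstacle is indeed a genuine gap: you never prove that $\Hom_R(M,R)$ is ORT, and the route you sketch for it (``the $R$-dual of a flat module over a complete local ring is Mittag--Leffler by the structure theory in \cite[Part II]{rg71}, hence SML by \autoref{prop:ML-SML}, hence ORT'') appeals to a statement that is nowhere in the material you have and is essentially as hard as the claim itself. The paper closes this differently and more directly: by Jensen's vanishing theorem \cite[Thm.\ 1]{Jensenlimvanishing}, $\Ext^i_R(M,N)=0$ for all finitely generated $N$ and $i>0$, so $\Hom_R(M,\cdot)$ is exact on finitely generated modules and the natural transformation $\Hom_R(M,R)\otimes_R(\cdot)\Rightarrow\Hom_R(M,\cdot)$ is an isomorphism there (in particular $\Hom_R(M,R)$ is flat). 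Then, for $\varphi\in\Hom_R(M,R)$, applying this to $0\to\im(\varphi)\to R\to\coker(\varphi)\to 0$ and chasing the resulting diagram shows $\varphi\in\im(\varphi)\cdot\Hom_R(M,R)\subseteq\Tr_{\Hom_R(M,R)}(\varphi)\cdot\Hom_R(M,R)$, i.e.\ $\Hom_R(M,R)$ is ORT \emph{by direct verification of the trace condition}, with no detour through ML. You should replace your appeal to ``structure theory'' with this argument (or an equivalent one); as written, the preliminary claim --- on which your $(7)\Rightarrow(4)$, $(8)\Rightarrow(7)$, and the flatness of $M^{**}$ all depend --- is unsupported.

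Everything downstream of that claim is correct, and in a few places you take a genuinely different and arguably cleaner route than the paper. The paper cites \cite[Part II, Prop.\ (2.4.3.1)]{rg71} wholesale for the equivalence of $(3)$, $(6)$, $(7)$; you instead derive $(7)\Rightarrow(4)$ from purity of $M\to M^{**}$ plus \autoref{rem:Mittag-Leffler}(c) and \autoref{thm:SML-ORT}, and $(6)\Rightarrow(5)$ by identifying $\widehat{M}/I\widehat{M}$ with the completion of $M/IM$ so that cyclic purity forces $\fm$-adic separatedness of $M/IM$, which is $(10)$. Your direct bridge $(5)\Rightarrow(4)$ via \autoref{prop:analog-RG219}, \autoref{lem:Auslander-Warfield-lemma}, and coordinate functionals of a split free submodule is also correct and replaces the paper's chain $(5)\Rightarrow(6)\Rightarrow(3)\Rightarrow(4)$. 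These substitutions make the proof more self-contained relative to \cite{rg71}, at the cost of the still-missing preliminary claim.
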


\begin{proof}
An ORT $R$-module is equivalently a flat and SML $R$-module by \autoref{thm:SML-ORT}. The fact that $\Hom_R(M,R)$ is an ORT $R$-module is shown
in \cite[Part II]{rg71}.  
More explicitly, in \cite[Part II, (2.4.1)]{rg71}, Raynaud and Gruson show that if $M$ is a module over \emph{any} Noetherian ring $R$ (not necessarily local) such
that the functor 
$
\Hom_R(M,\cdot) \colon \textrm{Mod}^{fg}_R \to \Ab
$
is exact on the category of finitely generated
$R$-modules, then $\Hom_R(M,R)$ must be an ORT $R$-module. For this, they use the natural
transformation of functors
$
\Hom_R(M, R) \otimes_R \cdot \Longrightarrow \Hom_R(M,\cdot)
$
and observe that this natural transformation is a natural isomorphism on $\textrm{Mod}^{fg}_R$.
Now, if $M$ is a flat module over a Noetherian
local ring $(R, \fm)$ that is $\fm$-adically complete, then
$\Hom_R(M, \cdot)$ is exact on $\textrm{Mod}^{fg}_R$ using a surprising vanishing theorem of Jensen's \cite[Thm.\ 1]{Jensenlimvanishing}: for a flat module $M$ over a Noetherian complete local ring $R$, $\Ext^i_R(M,N) = 0$ for all finitely generated $R$-modules $N$ and all
$i > 0$. This, in particular, implies $\Hom_R(M,R)$ is $R$-flat. Now let $\varphi \in \Hom_R(M,R)$ and consider the short exact sequence of finitely generated $R$-modules
$
0 \to \im(\varphi) \to R \to \coker(\varphi) \to 0.    
$
A diagram chase using
\[
\begin{tikzcd}
  0 \arrow[r] & \Hom_R(M,R) \otimes_R \im(\varphi) \arrow[d, "\cong"] \arrow[r] & \Hom_R(M,R) \arrow[d, "\id"] \arrow[r] & \Hom_R(M,R) \otimes_R \coker(\varphi) \arrow[d, "\cong"] \arrow[r] & 0 \\
  0 \arrow[r] & \Hom_R(M,\im(\varphi)) \arrow[r] & \Hom_R(M,R) \arrow[r] & \Hom_R(M,\coker(\varphi)) \ar[r] & 0
\end{tikzcd}
\]
reveals that $\varphi \in \im(\varphi) \cdot \Hom_R(M,R) = \im(\Hom_R(M,R) \otimes_R \im(\varphi) \to \Hom_R(M,R))$. If we look at the map
$
\ev @ \varphi \colon \Hom_R(\Hom_R(M,R),R) \to R,    
$
then clearly for all $x \in M$, 
$
\ev @ \varphi (\ev @ x) = \varphi(x),    
$
that is, $\im(\varphi) \subseteq \im(\ev @ \varphi) = \Tr_{\Hom_R(M,R)}(\varphi)$. Thus, 
$
\varphi \in \im(\varphi) \cdot \Hom_R(M,R) \subseteq \Tr_{\Hom_R(M,R)}(\varphi) \cdot \Hom_R(M,R),
$
that is, $\Hom_R(M,R)$ is ORT.

We now tackle the equivalence of the assertions. 

The equivalence of $(1)$ and $(2)$ follows by \autoref{thm:MittagORIF}. The equivalence of $(2)$ and $(3)$ follows by
\autoref{prop:ML-SML}. The equivalence of $(3)$ and $(4)$ follows by
\autoref{thm:SML-ORT}. 

Note that in the statement of $(6)$ the $R$-cyclic purity of $M \to \widehat{M}$ is equivalent to its $R$-purity by \autoref{lem:cyclic-purity-flat}. This is because the flatness of $M$ implies that $\widehat{M}$ is a flat $R$-module by \autoref{lem:completion-purity}.

The equivalence of $(3)$, $(6)$ and $(7)$ follows by
\cite[Part II, Prop.\ (2.4.3.1)]{rg71}. 

For the equivalence of 
$(7)$ and $(8)$, note that $(7) \implies (8)$ is clear because pure maps of modules
are cyclically pure. The implication $(8) \implies (7)$ follows by 
\autoref{lem:cyclic-purity-flat}
because $\Hom_R(\Hom_R(M,R),R)$ is a flat $R$-module since $\Hom_R(M,R)$ is $R$-flat and so taking its $R$-dual preserves flatness by the first part of this Theorem. 

For the equivalence of $(5)$ and $(6)$, note that $(5) \implies (6)$ follows by \autoref{cor:content-completion} (4) because $R = \widehat{R}$ by assumption, while we have $(6) \implies (3) \implies (4)$ and ORT modules are Ohm-Rush by \autoref{rem:ORT-modules} (a), that is, $(6) \implies (5)$.

By \autoref{cor:ML-separation}, we have the equivalence of $(2)$ and $(9)$ as well as the equivalence of $(5)$ and $(10)$. 

Thus, assertions $(1)$--$(10)$ are all equivalent.

The equivalence of $(11)$ and $(13)$ is straightforward and we omit its proof.

Since $M$ is flat, the equivalence of $(2)$ and $(11)$ follows by \autoref{cor:ML-OR-local-difference} (1) and the fact that purity of a map $L \to M$, when $L$ is finitely generated, is equivalent to its splitting since we are working over a Noetherian complete local ring (see \autoref{lem:Auslander-Warfield-lemma}).

To finish the proof, it remains to show the equivalence of $(5)$ and $(12)$. The proof is similar to that of $(2)\Longleftrightarrow(11)$. The desired equivalence follows by \autoref{cor:ML-OR-local-difference} (2) and again the fact that purity of $L \to M$ is equivalent to its splitting when $L$ is finitely generated.
\end{proof}

\begin{remark}
\label{rem:Justifying-Raynaud-Gruson}
{\*}
\begin{enumerate}[itemsep=1mm]
\item 
In \cite[Part II, Prop.\ (2.4.3.1)]{rg71}, Raynaud and Gruson prove
the equivalence of statements $(3), (6)$ and $(7)$ of \autoref{thm:ML-SML-ORT-intersection-flat} by showing $(3) \implies (7) \implies (6) \implies (3)$. Their proof of $(3) \implies (7)$ is terse, so we include a justification here for the reader's convenience. If $M$ is an ORT $R$-module, then $M \to \Hom_R(\Hom_R(M,R),R)$ is $R$-pure because $M \to \Hom_R(\Hom_R(M,R),R)$ is $R$-cyclically pure by \autoref{lem:ORT-cyclic-purity} and one has that the cyclic purity of this canonical map is equivalent to its purity by the equivalence of $(7)$ and $(8)$ in \autoref{thm:ML-SML-ORT-intersection-flat} (whose justification we have provided in the proof above). 

\item Let $(R,\fm)$ be a Noetherian local ring (not necessarily $\fm$-adically complete) and let $M$ be a flat $R$-module. In \autoref{cor:content-completion}, we saw that if $M$ is Ohm-Rush, then the canonical map $M \to \widehat{M}$ is pure as a map of $R$-modules. In \autoref{thm:ML-SML-ORT-intersection-flat} we saw that the converse holds if $R$ is complete. However, we caution the reader than purity of $M \to \widehat{M}$ in the non-complete local case does not imply that $M$ is Ohm-Rush. Indeed, if $M = \widehat{R}$, then $M \to \widehat{M}$ is an isomorphism. However, $\widehat{R}$ is rarely an Ohm-Rush $R$-module; see \cite{EpsteinShapiroOhmRushIII}.
\end{enumerate}
\end{remark}

We now have the following consequences of the previous Theorem. The first one is that flat and complete modules over a complete Noetherian local ring satisfy all the notions that have been introduced so far.

\begin{corollary}
    \label{cor:complete-over-complete-great}
    Let $(R,\fm)$ be a Noetherian local ring that is complete with respect to the $\fm$-adic topology. Suppose $M$ is a flat $\fm$-adically complete $R$-module. Then $M$ is ML, SML, Ohm-Rush, ORT and intersection flat. 
\end{corollary}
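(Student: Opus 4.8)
The plan is to simply invoke the long list of equivalences in \autoref{thm:ML-SML-ORT-intersection-flat}. The only thing one needs to check is that the hypotheses put us in the situation of that theorem and that one of the equivalent conditions there is satisfied for free. Since $(R,\fm)$ is Noetherian and $\fm$-adically complete and $M$ is a flat $R$-module, \autoref{thm:ML-SML-ORT-intersection-flat} applies verbatim; thus it suffices to verify any one of its conditions $(1)$--$(13)$ for $M$.

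First I would observe that the hypothesis that $M$ is $\fm$-adically complete means precisely that the canonical map $M \to \widehat{M}$ to the $\fm$-adic completion is an isomorphism (completeness in our sense includes separatedness). In particular this map is a (cyclically) pure map of $R$-modules, since an isomorphism is split injective and hence pure. This is exactly condition $(6)$ of \autoref{thm:ML-SML-ORT-intersection-flat}. Therefore $M$ satisfies all of conditions $(1)$--$(13)$; in particular $M$ is intersection flat, Mittag-Leffler, strictly Mittag-Leffler, Ohm-Rush, and Ohm-Rush trace, as claimed.

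There is essentially no obstacle here: the content of the corollary is entirely carried by \autoref{thm:ML-SML-ORT-intersection-flat}, and the present statement is the observation that the $\fm$-adic completeness of $M$ makes condition $(6)$ trivially true. (One could alternatively note that $M\cong \widehat M$, and by \autoref{lem:completion-purity} the completion of a flat module over a Noetherian local ring is flat over that ring, so $M$ is automatically a flat and $\fm$-adically complete module; but flatness of $M$ is already part of the hypothesis, so this remark is not needed.) The only minor point to be careful about is the convention, recalled in the Preliminaries, that for us a ``complete'' local ring, and likewise a ``complete'' module, is separated; this ensures $M \to \widehat M$ is genuinely an isomorphism rather than merely surjective.

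\begin{proof}
Since $M$ is $\fm$-adically complete, the canonical map $M \to \widehat{M}$ is an isomorphism, hence in particular a pure map of $R$-modules. As $(R,\fm)$ is Noetherian and $\fm$-adically complete and $M$ is flat, \autoref{thm:ML-SML-ORT-intersection-flat} applies, and the purity of $M \to \widehat{M}$ is condition $(6)$ of that theorem. Therefore $M$ satisfies all of the equivalent conditions $(1)$--$(13)$ of \autoref{thm:ML-SML-ORT-intersection-flat}; in particular $M$ is ML, SML, Ohm-Rush, ORT, and intersection flat.
\end{proof}
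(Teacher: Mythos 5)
Your proof is correct and is essentially identical to the paper's: both observe that $\fm$-adic completeness of $M$ makes the canonical map $M \to \widehat{M}$ an isomorphism, hence pure, which is condition $(6)$ of \autoref{thm:ML-SML-ORT-intersection-flat}, and then invoke the equivalences there. Nothing to add.
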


\begin{proof}
    By assumption, the canonical map $M \to \widehat{M}$ is an isomorphism of $R$-modules, and hence, it is pure. Then we get all the desired properties of $M$ by the equivalent statements $(1)-(6)$ in \autoref{thm:ML-SML-ORT-intersection-flat}.
\end{proof}

\begin{corollary}
    \label{cor:ORT-complete-local}
    Let $(R, \fm)$ be a Noetherian local ring that is complete with respect to the $\fm$-adic topology. Suppose $R \to S$ is a flat ring homomorphism and let $\widehat{S}$ denote the $\fm$-adic completion of $S$. Then we have the following:
    \begin{enumerate}
        \item[$(1)$] $R \to \widehat{S}$ is ORT. %removed smallskip
        \item[$(2)$] If $S$ is Noetherian and $\fm S$ is contained in the Jacobson radical of $S$, then $R \to S$ is ORT.
    \end{enumerate}
\end{corollary}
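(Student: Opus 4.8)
The plan is to deduce part (1) from \autoref{cor:complete-over-complete-great} and then obtain part (2) from part (1) together with \autoref{lem:projective-ORT} (1). For part (1), first I would record the elementary identity $\fm^n S = (\fm S)^n$ for all $n$, which shows that the $\fm$-adic completion of $S$ viewed as an $R$-module is the same object as the $\fm S$-adic completion $\widehat S = \widehat S^{\fm S}$ of $S$ viewed as a ring. Since $R$ is $\fm$-adically complete we have $\widehat R^{\fm} = R$, and $R/\fm$ is a field (hence Noetherian), so \autoref{lem:completion-purity} (1) applies with the finitely generated ideal $\fm$ and the flat module $S$, giving that $\widehat S$ is a flat $R$-module. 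Moreover $\widehat S$ is $\fm$-adically complete as an $R$-module: its $\fm$-adic filtration is $(\fm \widehat S)^n = (\fm S\widehat S)^n$, which is cofinal with (indeed equal to) the $\fm S\widehat S$-adic filtration defining the completion topology, and $\widehat S^{\fm S}$ is complete in that topology by \cite[\href{https://stacks.math.columbia.edu/tag/05GG}{Tag 05GG}]{stacks-project} (cf.\ \autoref{rem:is-completion-OR}). Thus $\widehat S$ is a flat, $\fm$-adically complete $R$-module, and \autoref{cor:complete-over-complete-great} shows $\widehat S$ is in particular an ORT $R$-module, i.e.\ $R \to \widehat S$ is ORT.

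For part (2), since $S$ is Noetherian and $\fm S \subseteq \mathrm{Jac}(S)$, the completion map $S \to \widehat S = \widehat S^{\fm S}$ is faithfully flat, hence pure as a map of $S$-modules, hence pure (and in particular cyclically pure) as a map of $R$-modules, because purity is preserved under restriction of scalars. Identifying $S$ with its image in $\widehat S$ (the map is injective by the Krull intersection theorem, or simply because it is pure), we see that $S$ is a cyclically pure $R$-submodule of $\widehat S$. By part (1), $\widehat S$ is an ORT $R$-module, so \autoref{lem:projective-ORT} (1) applies and yields that the cyclically pure $R$-submodule $S$ is itself an ORT $R$-module; that is, $R \to S$ is ORT. (As a byproduct one also gets $\Tr_S(x) = \Tr_{\widehat S}(\iota(x))$ for every $x \in S$, where $\iota\colon S \hookrightarrow \widehat S$ is the completion map.)

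Both parts are short because the real content sits in \autoref{thm:ML-SML-ORT-intersection-flat}/\autoref{cor:complete-over-complete-great} and in the submodule-inheritance statement \autoref{lem:projective-ORT} (1). I do not expect a genuine obstacle; the only points deserving a sentence of care are (i) the verification that $\widehat S$ really is $\fm$-adically complete and $R$-flat, so that \autoref{cor:complete-over-complete-great} is applicable — this is exactly the bookkeeping reconciling the $R$-module $\fm$-adic topology with the $S$-module $\fm S$-adic topology — and (ii) the standard fact that $S \to \widehat S^{\fm S}$ is faithfully flat when $S$ is Noetherian and $\fm S$ lies in its Jacobson radical, which is what feeds the purity input to \autoref{lem:projective-ORT} (1).
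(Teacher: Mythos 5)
Your proposal is correct and follows essentially the same route as the paper: for (1), flatness of $\widehat S$ over $R$ via \autoref{lem:completion-purity} plus $\fm$-adic completeness and then \autoref{cor:complete-over-complete-great}; for (2), faithful flatness of $S \to \widehat S$ and restriction of scalars to get $R$-purity. The only (harmless) divergence is the last step of (2): the paper concludes directly from the equivalence $(6)\Leftrightarrow(4)$ in \autoref{thm:ML-SML-ORT-intersection-flat} applied to the flat $R$-module $S$, whereas you route through part (1) and the cyclically-pure-submodule inheritance of \autoref{lem:projective-ORT}~(1) — both are valid and rest on the same underlying theorem.
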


\begin{proof}
    (1) $\widehat{S}$ is $R$-flat (\autoref{lem:completion-purity}) and $\fm$-adically complete since $\fm$ is a finitely generated ideal \cite[\href{https://stacks.math.columbia.edu/tag/05GG}{Tag 05GG}]{stacks-project}. Thus, $R \to \widehat{S}$ is ORT by \autoref{cor:complete-over-complete-great}.

    (2) Since $\fm S$ is contained in the Jacobson radical of $S$, $S \to \widehat{S}$ is faithfully flat because $S$ is Noetherian. Thus, by restriction of scalars, $S \to \widehat{S}$ is $R$-pure, so $R \to S$ is ORT by \autoref{thm:ML-SML-ORT-intersection-flat}.
\end{proof}

Another consequence of \autoref{thm:ML-SML-ORT-intersection-flat} is that over a Noetherian local ring, the intersection flat modules are precisely the flat Ohm-Rush modules such that the Ohm-Rush property is preserved under arbitrary base change. That is, intersection flat modules over a Noetherian local ring are precisely flat modules that are \emph{universally Ohm-Rush}. This was once believed to always be the case \cite[Proposition 6]{Pic-contenu}, although the author of that paper no longer believes his proof \cite{Picemail}.

\begin{corollary}(cf. \cite[Proposition 6]{Pic-contenu})
\label{cor:IF-universally-OR}
Let $(R,\fm)$ be a Noetherian local ring and $M$ be an $R$-module. Then the following are equivalent:
\begin{enumerate}[label=\textnormal{(\arabic*)}]   
    \item $M$ is intersection flat.
    \item $M$ is flat and Mittag-Leffler.
    \item $M$ is flat and universally Ohm-Rush, that is, for all $R$-algebras $S$, $M \otimes_R S$ is a flat and Ohm-Rush $S$-module.
    \item %If $\widehat{R}$ is the $\fm$-adic completion of $R$, then
    $M \otimes_R \widehat{R}$ is a flat and Ohm-Rush $\widehat{R}$-module, where $\widehat{R}$ is the $\fm$-adic completion of $R$.
\end{enumerate}
\end{corollary}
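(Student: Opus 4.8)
The plan is to prove the four conditions equivalent by running the cycle $(1)\Leftrightarrow(2)\Rightarrow(3)\Rightarrow(4)\Rightarrow(2)$, with most of the work being an assembly of results already established. For $(1)\Leftrightarrow(2)$ I would invoke \autoref{thm:MittagORIF}: an intersection flat module is automatically flat by \autoref{rem:IF-OR}(b), and \autoref{thm:MittagORIF} then identifies flat intersection flat modules with flat Mittag-Leffler modules.

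For $(2)\Rightarrow(3)$, if $M$ is flat and ML then \autoref{thm:MittagORIF} shows $M$ is universally intersection flat, so for every $R$-algebra $S$ the module $M\otimes_R S$ is intersection flat over $S$; by \autoref{rem:IF-OR}(a),(b) it is therefore flat and Ohm-Rush over $S$. The implication $(3)\Rightarrow(4)$ is simply the special case $S=\widehat R$, so the only genuine content is $(4)\Rightarrow(2)$.

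For $(4)\Rightarrow(2)$: since $R$ is Noetherian local, $R\to\widehat R$ is faithfully flat, in particular pure; faithfully flat descent of flatness (e.g.\ \cite[\href{https://stacks.math.columbia.edu/tag/08XD}{Tag 08XD}]{stacks-project}) then gives that $M$ is $R$-flat. Next, because $\widehat R$ is a \emph{complete} Noetherian local ring and $M\otimes_R\widehat R$ is a flat Ohm-Rush $\widehat R$-module, \autoref{thm:ML-SML-ORT-intersection-flat} (the equivalence of its conditions $(2)$ and $(5)$, applied with base ring $\widehat R$) upgrades this to $M\otimes_R\widehat R$ being ML over $\widehat R$. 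Finally, descent of the ML property along the pure ring map $R\to\widehat R$, namely \autoref{thm:descentOhm-Rush}(1), yields that $M$ is ML over $R$, which together with flatness is exactly $(2)$.

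The one step to handle with care is the invocation of \autoref{thm:ML-SML-ORT-intersection-flat}: I should verify that its hypotheses apply verbatim with the ring taken to be $\widehat R$ (complete local) and the module taken to be $M\otimes_R\widehat R$ (flat over $\widehat R$ by base change), so that the equivalence of "Ohm-Rush" and "Mittag-Leffler" there may be used as stated. I do not anticipate a real obstacle; the whole argument is a short chase through faithfully flat descent combined with the completed-ring dictionary of \autoref{thm:ML-SML-ORT-intersection-flat}.
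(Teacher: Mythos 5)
Your proposal is correct and follows essentially the same route as the paper: $(1)\Leftrightarrow(2)$ via \autoref{thm:MittagORIF}, base change for $(2)\Rightarrow(3)\Rightarrow(4)$, and for the return implication an application of \autoref{thm:ML-SML-ORT-intersection-flat} over $\widehat R$ followed by pure descent along $R\to\widehat R$. The only cosmetic difference is that the paper descends intersection flatness via \autoref{cor:pure-descent-IF} while you descend the ML property directly via \autoref{thm:descentOhm-Rush}(1) together with descent of flatness, which is exactly how \autoref{cor:pure-descent-IF} is proved in the first place.
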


\begin{proof}
The equivalence of $(1)$ and $(2)$ follows by \autoref{thm:MittagORIF}. Thus, it suffices to show that $(1) \implies (3) \implies (4) \implies (1)$. Since the intersection flatness property is preserved under base change (\autoref{thm:MittagORIF}), we get $(1) \implies (3)$. Moreover, $(3) \implies (4)$ is clear. For $(4) \implies (1)$, since $M \otimes_R \widehat{R}$ is a flat and Ohm-Rush $\widehat{R}$-module, it is an intersection flat $\widehat{R}$-module by \autoref{thm:ML-SML-ORT-intersection-flat}. Then by descent of intersection flatness (\autoref{cor:pure-descent-IF}) along the faithfully flat map $R \to \widehat{R}$, $M$ is an intersection flat $R$-module.
\end{proof}

\begin{remark}
We do not know if the intersection flatness property is equivalent to the properties of being flat and universally Ohm-Rush over a Noetherian ring that is not local, and, more generally, over an arbitrary commutative ring.
\end{remark}

\section{Future work and Acknowledgements}
\label{sec:futureworkandacknowledgements}

\subsection*{Future Work}
Let $R$ be a Noetherian ring of prime characteristic $p > 0$. Kunz showed \cite{KunzCharacterizationsOfRegularLocalRings} that $R$ is regular if and only if the Frobenius map $F \colon R \to R$ is flat. Viewing the target copy of $R$ as an $R$-module by restriction of scalars along $F$ and denoting the subsequent $R$-algebra by $F_*R$, Hochster and Huneke first studied when $F_*R$ is an Ohm-Rush $R$-module in the local case \cite{HochsterHunekeFRegularityTestElementsBaseChange}. This question was later examined in detail by Sharp \cite{SharpBigTestElements} (see also \cite{KatzmanLyubeznikZhangOnDiscretenessAndRationality,BlickleMustataSmithDiscretenessAndRationalityOfFThresholds, EpsteinShapiroOhmRushI,EpsteinShapiroOhmRushII,EpsteinShapiroOhmRushIII}). We will use the techniques developed in this paper to study the Ohm-Rush, ORT and intersection flatness properties for the $R$-algebra $F_*R$ in a forthcoming paper \cite{DESTtate}. We will prove new cases of intersection flatness of $F_*R$, give a characteristization of when $F_*R$ is an intersection flat $R$-module in the local case in terms of a purity property of the relative Frobenius $F_*R \otimes_R \widehat{R} \to F_*\widehat{R}$, prove openness of pure loci statements for quotients of excellent regular rings of prime characteristic, and we will show that excellent regular rings arising in Tate's approach to rigid analytic geometry \cite{TateRigid} have intersection flat Frobenius. The last result is particularly interesting because these rigid analytic regular rings are not essentially of finite type over an excellent local ring, and so, the current techniques of prime characteristic commutative algebra do not readily apply to the homomorphic images of these rings.

\subsection*{Acknowledgements}
This project branched off from a project that the authors began with Takumi Murayama and Karl Schwede, and was facilitated by a SQuaRE at the American Institute of Mathematics (AIM). The authors thank AIM for providing a supportive and mathematically rich environment.

We are grateful to Takumi and Karl for allowing us to write this standalone paper and have greatly benefited from our conversations with them. We have also benefited from conversations with Karen Smith, Mel Hochster, Yongwei Yao, Gabriel Picavet, Jay Shapiro, Johan de Jong, Alex Perry, Remy van Dobben de Bruyn and Bhargav Bhatt.

\bibliographystyle{skalpha}
\bibliography{main,preprints}

\end{document}